\newcommand{\cleanup}[1]{\marginpar{ \tiny #1}}
\newcommand{\finalexponent}{\kappa_0}
\newcommand{\stab}{\mathrm{stab}}
\newcommand{\LHS}{H_w \cap K_w[m]}
\newcommand{\ad}{\mathrm{ad} \,}
\newcommand{\RHS }{ \mathfrak{h}_w \cap \mathfrak{g}_w[m]}
\newcommand{\nbhd}{\mathcal{N}}
\newcommand{\data}{\mathscr{D}}
\newcommand{\levelbound}{L}
\newcommand{\degbound}{N^2}
\newcommand{\supp}{\mathrm{supp}}
\newcommand{\h}{\mathfrak{h}}
\newcommand{\g}{\mathfrak{g}}
\newcommand{\SO}{\mathrm{SO}}
\newcounter{consta}
\renewcommand{\theconsta}{{\kappa_{\arabic{consta}}}}
\newcounter{constb}[section]
\newcounter{constc}[section]
\renewcommand{\theconstc}{{c_{\arabic{constc}}}}
\newcommand{\consta}{\refstepcounter{consta}\theconsta}
\newcommand{\constc}{\refstepcounter{constc}\theconstc}
\newcommand{\Sob}{\mathcal{S}}
\renewcommand{\r}{\mathfrak{r}}
\newcommand{\Ad}{\mathrm{Ad}}
\newcommand{\disc}{\mathrm{disc}}
\newcommand{\Lie}{\mathrm{Lie}}
\newcommand{\C}{\mathbb{C}}
\newcommand{\Siegel}{\Sieg}
\newcommand{\vol}{\operatorname{vol}}
\newcommand{\G}{\mathbf{G}}
\newcommand{\Sieg}{\mathfrak{S}}
\newcommand{\height}{\mathrm{ht}}
\newcommand{\m}{\underline{m}}
\newcommand{\PGL}{\mathrm{PGL}}
\newcommand{\PSL}{\mathrm{PSL}}
\newcommand{\SL}{\mathrm{SL}}
\newcommand{\N}{\mathbf{N}}
\newcommand{\R}{\mathbb{R}}
\renewcommand{\H}{\mathbf{H}}
\newcommand{\Z}{\mathbb{Z}}
\newcommand{\Q}{\mathbb{Q}}
\newcommand{\adele}{\mathbb{A}}
\newcommand{\newletter}{M}
\DeclareMathAlphabet{\mathpzc}{OT1}{pzc}{m}{it}
\DeclareFontFamily{OT1}{rsfs}{}
\DeclareFontShape{OT1}{rsfs}{n}{it}{<-> rsfs10}{}
\DeclareMathAlphabet{\mathscr}{OT1}{rsfs}{n}{it}
\numberwithin{equation}{section}
\newtheorem{thm}[subsection]{Theorem}
\newtheorem*{cor*}{Corollary}
\newtheorem{lemma}[subsection]{Lemma}
\newtheorem*{lem}{Lemma}
\newtheorem{propo}[subsection]{Proposition}
\newtheorem{thm*}{Theorem}[]
\newtheorem*{prop}{Proposition}
\newtheorem*{claim}{Claim}
\newcommand{\bbr}{\mathbb{R}}
\newcommand{\bbq}{\mathbb{Q}}
\newcommand{\Hcal}{\mathcal{H}}
\newcommand{\Pcal}{\mathcal{P}}
\newcommand{\Mcal}{\mathcal{M}}
\newcommand{\Ecal}{\mathcal{E}}
\newcommand{\Hfrak}{\mathfrak{H}}
\newcommand{\Gfrak}{\mathfrak{G}}
\newcommand{\hfrak}{\mathfrak{h}}
\newcommand{\rfrak}{\mathfrak{r}}
\newcommand{\gfrak}{\mathfrak{g}}
\newcommand{\order}{\mathfrak{o}}
\newcommand{\places}{\Sigma}
\newcommand{\plw}{{S}}
\newcommand{\pl}{p}
\newcommand{\red}{{\rm red}}
\newcommand{\be}{\begin{equation}}
\newcommand{\ee}{\end{equation}}
\newcommand{\tSL}{\theta_w(\SL_2)}
\newcommand{\Xc}{X_{{\rm cpt} }}
\newcommand{\tH}{\widetilde{\H}}
\newcommand{\thsa}{{\widetilde{H}_{\rm sa}}}
\newcommand{\sa}{{\rm sa}}
\newcommand{\tK}{{K}^*}
\newcommand{\Lbf}{\mathbf{L}}
\newcommand{\Gpp}{{\pi^+}}
\newcommand{\temp}{{M}}
\newcommand{\jmap}{{\rm j}}
\newcommand{\flowscale}{{m}}
\newcommand{\Av}{\operatorname{Av}_L}
\begin{document}

\title{Effective equidistribution and property $(\tau)$}

\author{M. Einsiedler}

\address{M.E.:  Departement Mathematik, ETH Z\"urich, R\"amistrasse 101, 8092, Z\"urich, Switzerland}
\email{manfred.einsiedler@math.ethz.ch}
\thanks{M.E. acknowledges support by the SNF (Grant 200021-127145 and 200021-152819).}

\author{G. Margulis}

\address{G.M.: Yale University,
Mathematics Dept.,
PO Box 208283, New Haven, CT 06520-8283, USA}
\email{gregorii.margulis@yale.edu}
\thanks{G.M. acknowledges support by the NSF (Grant 1265695).}

\author{A. Mohammadi}
\address{A.M.:  Mathematics Department,  UC San Diego, 9500 Gilman Dr, La Jolla, CA 92093, USA}
\email{ammohammadi@ucsd.edu}
\thanks{A.M. acknowledges support from the NSF and Alfred P.\ Sloan Research Fellowship.}

\author{A. Venkatesh}
\address{A.V.: Department of Mathematics, Stanford University, Stanford, CA, 94305, USA}
\email{akshay@math.stanford.edu}
\thanks{A.V. acknowledges support from the NSF and the Packard foundation.}

\begin{abstract}
 
We prove a quantitative equidistribution statement for
 adelic homogeneous subsets
 whose stabilizer is maximal and semisimple. 
 Fixing the ambient space, the statement is uniform in all parameters. 

 We explain how this implies certain equidistribution theorems which,  {\em even in a qualitative form}, 
 are not accessible to measure-classification theorems. 
As another application, we describe another proof of 
 property $(\tau)$ for arithmetic groups.
\end{abstract}

\maketitle

\tableofcontents
 
\section{Introduction} \label{tauproof}

\subsection{Homogeneous sets and measures}\label{sec:intro}

Number theoretical problems often relate to orbits of subgroups (periods) and so can be attacked
by dynamical methods. To be more specific let us recall the following terminology.

Let~$X=\Gamma\backslash G$ be a homogeneous space defined by a lattice~$\Gamma<G$
in a locally compact group~$G$. Note that any subgroup~$H<G$ acts naturally
by right multiplication on~$X$, sending~$h\in H$ to the map~$x\in X\mapsto xh^{-1}$. 
We will refer to~$H$ as the acting subgroup. 
A {\em homogeneous (probability) measure} on $X$ is, by definition, a probability measure $\mu$ that is supported
on a single closed orbit $Y=\Gamma g H_Y$ of its stabilizer $H_Y=\mathrm{Stab}(\mu)$. 
A {\em homogeneous set} is the support of some homogeneous probability measure.  In what follows, we shall deal only
with probability measures and shall consequently simply refer to them as homogeneous measures.

Ratner's celebrated measure classification theorem~\cite{Rat} and the so called linearization techniques 
(cf.\ \cite{DM} and \cite{Mozes-Shah}) imply in the case where~$G$
is a real Lie group that, given a sequence of homogeneous probability measures
$\{\mu_i\}$ with the property that $H_i=\mathrm{Stab}(\mu_i)$ contain ``enough'' unipotents,
any weak$^*$ limit of $\{\mu_i\}$ is also homogeneous, where often
the stabilizer of the weak$^*$ limit has bigger dimension than $H_i$ for 
every $i$. This has been extended also to quotients
of~$S$-algebraic groups (see~\cite{Rat-S-alg}, \cite{MT}, \cite[App.~A]{EV} and \cite[Sect.~6]{Gorodnik-Oh})
for any finite set~$S$ of places. We note that the latter allow similar corollaries (see \cite{Gorodnik-Oh}) 
for adelic quotients {\em if} the acting groups~$H_i$ contain unipotents
at one and the same place for all~$i$ -- let us refer to this as a {\em splitting condition}.
These theorems have found many applications in number theory (see e.g.~\cite{Eskin-Margulis-Mozes}, \cite{EV},
and~\cite{Gorodnik-Oh} to name a few examples), but are (in most cases) ineffective.

Our aim in this paper is  to present one instance
of an adelic result which is entirely quantitative in terms of the ``volume" of the orbits, 
and is in many cases not accessible, even in a non-quantitative form, by the measure classification theorem and linearization techniques
(as we will dispense with the splitting condition).
A special case of this result
will recover ``property $(\tau)$'' (but with weaker exponents) from the theory of automorphic forms.

\subsection{Construction of homogeneous measures}\label{construction-MASH}
In the following~$F$ will\index{F@$F$, the number field} 
always denote a number field,~$\adele$ will denote\index{a@$\adele, \adele_f$, adeles and finite adeles over~$F$}
the ring of adeles over~$F$, and~$\G$ will be a connected semisimple algebraic $F$-group.
We will consider the homogeneous space\index{X@$X=\G(F)\backslash\G(\adele)$, the ambient space} $X = \G(F) \backslash \G(\adele)$
defined by the\index{G@$G=\G(\adele)$, the ambient group} group~$G=\G(\adele)$ of~$\adele$-points of~$\G$.

We normalize the Haar measure~$\operatorname{vol}_G$ on~$G$ 
so\index{v@$\vol_G$, Haar measure of the ambient group and space}  that the induced measure
on~$X$ (again denoted by~$\operatorname{vol}_G$) is a probability measure.
Let us fix the following data $\data=(\mathbf{H},\iota,g)$\index{data@$\data=(\mathbf{H},\iota,g)$, triple defining a MASH} 
consisting of 
\begin{itemize}
\item[(1)] an $F$-algebraic group\index{H@$\mathbf{H}$, the algebraic group giving rise to the MASH~$Y$} $\mathbf{H}$
such that~$\H(F)\backslash \H(\adele)$ has finite volume, 
\item[(2)] an algebraic homomorphism $\iota: \mathbf{H} \rightarrow \mathbf{G}$ 
defined over $F$ with finite kernel, and
\item[(3)] an element $g \in G$. 
\end{itemize}

To this data, we may associate a homogeneous set 
\[
 Y_{\data} := \iota(\mathbf{H}(F) \backslash \mathbf{H}(\adele))g \subset X
\] 
and the {\em algebraic} homogeneous measure $\mu_{\data}$\index{mu@$\mu_\data, Y_\data$, 
algebraic homogeneous measure and homogeneous set} given by the push-forward, under the map $x \mapsto \iota(x) g,$ 
of the normalized Haar measure on $\mathbf{H}(F) \backslash \mathbf{H}(\adele)$. 
We refer to such a set $Y$ as an {\em algebraic} homogeneous set; we say it is simple, semisimple, simply connected, etc.\
according to whether the algebraic group~$\H$ is so,
and we say it is maximal if $\iota(\mathbf{H})\subset\mathbf{G}$ is a maximal\footnote{Here by maximality 
we mean maximal as an algebraic group over the algebraic closure of $F$.} proper subgroup. 

 { Our main theorem
 will discuss the equidistribution of maximal  semisimple simply connected homogeneous sets.
 The assumption that $\H$ is simply connected can be readily removed, as we explain in  \S \ref{s;not-sc}.}

 \subsection{The intrinsic volume of a homogeneous set}\label{intro-volume}

What does it mean for a homogeneous set to be ``large''?

If $H=\mathrm{Stab}(\mu)$ is fixed, then one may define
the volume of an $H$-orbit $xH$ using a fixed Haar measure on $H$.
However, as we will allow the acting group $H$ to vary we give another
reasonably intrinsic way of measuring this, as we now explain.

Let $Y=Y_\data$ be an algebraic homogeneous set with corresponding probability measure $\mu_\data$
and associated group
$H_\data=g^{-1}\iota(\mathbf{H}(\adele))g$.\index{H@$H_\data$, acting group associated to the data~$\data$}  
We shall always consider $H_\data$ as equipped 
with that measure, denoted by $m_\data,$ which projects
to $\mu_\data$ under the orbit map. 

Fix an open subset $\Omega_0 \subset \G(\adele)$\index{Omega@$\Omega_0=\prod_{v\in\places}\Omega_v$, open set in~$G$ to normalize the volume
of~$H$-orbits} that contains the identity and has compact closure.
Set 
\begin{equation} \label{volume}
\vol(Y) := m_\data ( H_\data \cap \Omega_0)^{-1},
\end{equation} 
this should be regarded as a measure of the ``volume'' of $Y $.
It depends on $\Omega_0$, but the notions arising from two different choices of $\Omega_0$
are comparable to each other, in the sense that their ratio is bounded above and below, see \S\ref{new-sec-on-volume}. 
Consequently, we do not explicate the choice of $\Omega_0$ in the notation.

The above notion of the volume of an adelic orbit is strongly related to 
the discriminant of the orbit, see Appendix~\ref{sec;discrim}. The theorem below could also
be phrased using this notion of arithmetic height or complexity instead of the volume.

\subsection{Notation for equidistribution in~$X$}\label{sec:equi0no}

If in addition~$\G$ is simply connected and~$\iota(\mathbf{H})$ is a maximal subgroup of~$\G$ we will show
in this paper that a homogeneous measure~$\mu_\data$ as above is almost equidistributed 
if it already has large volume. Dropping the assumption that~$\G$ is simply connected
we need the following notation: Let~$\G(\adele)^+$ denote\index{G@$\G(\adele)^+$, $\G(F_v)^+$, the image
of the simply connected cover}
the image of the simply connected cover (see also~\S\ref{notations}).
Using this we define the decomposition\index{L@$L^2_0(X,\vol_G)$, $L^2(X,\vol_G)^{\G(\adele)^+}$,
orthogonal decomposition of~$L^2$}
\be\label{e;orth-const}
L^2(X,\vol_G)=L^2_0(X,\vol_G)\oplus L^2(X,\vol_G)^{\G(\adele)^+},
\ee
where $L^2(X,\vol_G)^{\G(\adele)^+}$ denotes the space of $\G(\adele)^+$-invariant 
functions and $L^2_0(X,\vol_G)$ is the orthogonal complement of $L^2(X,\vol_G)^{\G(\adele)^+}$.
Note that if $\G$ is simply connected,
then $\G(\adele)=\G(\adele)^+$ and
$L^2(X,\vol_G)^{\G(\adele)^+}$ is the space of constant functions. 

The group $\G(\adele)^+$ is a closed, normal subgroup of $\G(\adele),$ see e.g.~\cite[p.~451]{PR}.
Therefore, the subspaces introduced in~\eqref{e;orth-const} are $\G(\adele)$-invariant. Let $\Gpp:L^2(X,\vol_G)\to L^2(X,\vol_G)^{\G(\adele)^+}$ 
denote the orthogonal projection. 
Let $C_c^\infty(X)$ denote the space of smooth compactly supported functions on $X$, see \S\ref{sec;sob-intro} for a discussion. Finally let us note that given $f\in C_c^\infty(X)$ 
\[
\Gpp f(x)=\int_{X} f\operatorname{d}\!\mu_{x\G(\adele)^+}.
\]
 It is worth noting that $\Gpp f$ is a finite-valued function for all $f\in C_c^\infty(X).$

\begin{thm}[Equidistribution of adelic periods]\label{adelic}
Let $Y_\data$ be a maximal algebraic semisimple homogeneous set
arising from~$\data=(\mathbf{H},\iota,g)$.
Furthermore, assume that $\H$ is simply connected.
Then
$$\left| \int_{Y_\data} f\operatorname{d}\!\mu_\data-\Gpp f(y)\right| \ll  \vol(Y_\data)^{-\finalexponent}
\Sob(f)\quad\text{ for all } f \in C^{\infty}_c(X),$$
where $y\in Y_\data$ is arbitrary, $\Sob( f)$ denotes a certain adelic Sobolev norm (see~\S\ref{sec;sob-intro} and Appendix A),
and $\finalexponent$ is a positive constant which depends only on $[F:\Q]$ and 
$\dim\G$.\index{kaa0@$\finalexponent$, exponent in Theorem~\ref{adelic}}
\end{thm}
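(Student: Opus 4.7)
The plan is to reduce the estimate to quantitative mixing for the $\G(\adele)$-action on $L^2_0(X,\vol_G)$, feed that mixing a ``thickening'' of the $H_\data$-orbit in a small transverse ball, and exploit the maximality of $\iota(\H)\subset\G$ to convert $H_\data$-orbit information into genuine $\G$-mixing.

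First I would observe that since $\H$ is simply connected, $\iota$ lifts through the simply connected cover $\G^{\sc}\to\G$, so $\iota(\H(\adele))\subset\G(\adele)^+$. Hence $Y_\data$ is contained in a single $\G(\adele)^+$-coset in $X$, and $\Gpp f$ takes the constant value $\Gpp f(y)$ on it. Setting $\phi=f-\Gpp f\in L^2_0(X,\vol_G)\cap C_c^\infty(X)$ reduces the theorem to
\[
\Bigl|\int_{Y_\data}\phi\,d\mu_\data\Bigr|\ll\vol(Y_\data)^{-\finalexponent}\Sob(\phi).
\]

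I would then thicken by convolution. Write $V=\vol(Y_\data)$ and let $\rho$ be a smooth probability density supported in a ball $B\subset\G(\adele)$ of radius $\approx V^{-\beta}$ in directions transverse to $H_\data$, for a small fixed $\beta>0$. On one hand, $\int_{Y_\data}\phi\,d\mu_\data$ agrees with $\int_X\phi\,d(\mu_\data\ast\rho)$ up to an error $\ll V^{-\beta}\Sob(\phi)$, and Cauchy--Schwarz reduces the latter to bounding $\|\mu_\data\ast\rho\|_2$. On the other hand, unfolding gives, schematically,
\[
\|\mu_\data\ast\rho\|_2^2\;\approx\;\int_{H_\data\cap\Omega'}\bigl\langle\rho,\Ad(h)\rho\bigr\rangle\,dm_\data(h),
\]
an integral over a set of total $m_\data$-mass $\approx V^{-1}$; so one factor $V^{-1}$ comes for free, and what is needed is polynomial decay of the integrand itself.

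This last point is the geometric heart, and is where the maximality hypothesis and the spectral input must enter. Maximality of $\iota(\H)\subset\G$ means that no proper algebraic subgroup of $\G$ contains $\iota(\H)$ together with a nontrivial transverse conjugate, so $\Ad(H_\data)$-orbits of complements to $\mathfrak{h}$ are Zariski dense in $\mathfrak{g}$. Quantifying this at a carefully chosen place $v$ of $F$, one extracts a direction in which $\Ad(h)\rho$ is nearly orthogonal to $\rho$ for a positive proportion of $h\in H_\data\cap\Omega'$; feeding this through a local decay-of-matrix-coefficients estimate---a local form of property $(\tau)$ available for the simply connected $\H$, combined with Burger--Sarnak-type restriction from $\H$ to $\G$---supplies the desired polynomial saving in $V$. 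The principal obstacle is the absence of a splitting condition: there is no single place at which $H$ is guaranteed to contain unipotents with uniformly good quantitative behaviour, so the argument has to be executed via an adelic pigeonhole over the places of $F$ while keeping all Sobolev losses uniform in the data $\data$. Carrying out this uniformity is what I expect to drive most of the subsequent technical work.
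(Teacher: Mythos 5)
Your reduction to $\phi=f-\Gpp f$ is correct (since $\H$ is simply connected, $\iota(\H(\adele))\subset\G(\adele)^+$ and $\Gpp f$ is constant on $Y_\data$), and you have correctly identified maximality, the choice of a place, and uniformity in $\data$ as the pressure points. But the central step of your argument has a genuine gap. When you unfold $\|\mu_\data\ast\rho\|_2^2$, the resulting integral is not over $H_\data\cap\Omega'$ for a small fixed $\Omega'$: it is over the full return set $\{h\in H_\data: yh\in y\,BB^{-1}\}$, i.e.\ over all pairs of points of the closed orbit lying within distance $V^{-\beta}$ of each other in $X$ (the unfolding over $\G(F)$ produces all near-returns of the orbit, not just the local sheet through $y$). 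Asserting that this set has $m_\data$-mass $\approx V^{-1}$ is precisely a non-concentration statement for $\mu_\data$ at scale $V^{-\beta}$, which is essentially equivalent to the equidistribution being proved; so the ``factor $V^{-1}$ for free'' is circular. Relatedly, your mechanism for extracting decay from maximality (near-orthogonality of $\rho$ and its $\Ad(h)$-translates) does not by itself produce a polynomial saving: once the transverse bump is translated off itself the inner product is simply zero, and what is needed is a spectral statement on $X$, not on the group.

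The paper's argument inverts your logic: instead of bounding all near-returns, it only needs the \emph{existence} of one transverse near-return, which follows from $\vol(Y)$ being large by a pigeonhole count (\S\ref{sec:pigeonhole}), after comparing the volume normalized by $H_\data$ with the one normalized by the full stabilizer (\S\ref{ss:Borel-Prasad}). That single transverse displacement is then amplified: polynomial divergence of a unipotent orbit through two nearby generic points (genericity being supplied by property $(\tau)$ for $\H$ via the effective ergodic theorem of Lemma~\ref{lem:genericbound}) shows that $\mu_\data$ is almost invariant under an admissible polynomial $p(t)$ transverse to $H_w$; maximality enters through Lemma~\ref{c;red-max} and Proposition~\ref{effgen}, which show that $H_w$ together with the values of $p$ generate a compact open subgroup $K_w[\levelbound]$ of $\G(F_w)$ in boundedly many steps; and only then is the ambient spectral gap on $L^2_0(X,\vol_G)$ applied, to the operator $\Av\star\delta_{u(t)}\star\Av$, with non-divergence (Lemma~\ref{l;non-div}) handling the cusp. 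Finally, the quantitative choice of the place $w$ --- $q_w\ll(\log\vol(Y))^2$, obtained from Prasad's volume formula in \S\ref{s;good-place} --- is indispensable for all $q_w$-losses to be absorbed into $\vol(Y)^{-\star}$; your ``adelic pigeonhole over the places'' needs exactly this input, and your sketch does not supply it.
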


Below we will abbreviate the assumption that~$Y_\data$ 
is a maximal algebraic semisimple homogeneous set in the theorem
by saying that~$Y_\data$ is a MASH set (resp.~$\mu_\data$ is a MASH measure).\index{MASH set, a maximal algebraic semisimple homogeneous set}
We stated the above theorem under the natural assumption that $\H$ is simply connected
(but note that~$\iota(\H)$ may not be simply connected). In \S\ref{s;not-sc} we discuss a formulation of the theorem without that assumption. 

Let us highlight two features of this theorem. Our method relies on a uniform version of Clozel's property~($\tau$) (see \cite{LC}, \cite[Thm.~1.11]{GMO}, and Section \ref{ss:history} for a summary of the history).
However,  it also allows us to give an  independent proof of Clozel's part of the proof of property $(\tau)$ except for groups of type $A_1$ -- i.e., if we only  
suppose property $(\tau)$ for groups of type $A_1$, we can deduce property $(\tau)$ in all other cases as well as our theorem. 
 We will discuss this in greater detail in \S\ref{sec:tau}. 
The theorem also allows $\H$ to vary without any splitting condition (as e.g.~in~\cite[Thm.~1.7]{Gorodnik-Oh}), an application of this to quadratic forms is given in~\S\ref{sec:quadratic}.

\subsection{An overview of the argument}
To overcome the absence of a splitting condition we make crucial use of Prasad's volume formula in~\cite{Pr}
to find a small place where the acting group has good properties (see~\S\ref{s;good-place} and~\S\ref{ss;good-place}
for a summary). This is needed to make the dynamics at this place useful.

The dynamical argument uses unipotent flows 
(but we note that one could also give an argument using the mixing property). 
Assuming that the volume is large, we find by a pigeon-hole principle
nearby points that have equidistributing orbits. Using polynomial divergence of the unipotent flow we obtain
almost invariance under a transverse direction. By maximality and spectral gap on the ambient space
we conclude the equidistribution, see~\S\ref{proof}. 

The first difficulty is to ensure that one really can choose a place which is ``sufficiently small,'' relative to the size of the orbit. 
 Using~\cite{Pr} we establish a 
logarithmic
bound for the first useful (``good'') prime in terms of the volume -- see ~\S\ref{s;good-place}. 
 We also need to use~\cite{BorPr}
if~$\iota(\H)$ is not simply connected (as in that case the stabilizer of~$\mu_\data$ is
larger than~$H_\data=g^{-1} \iota(\H(\adele))g$ which affects the notion of volume).

The second difficulty is that we also need to know that there are many points
for which the unipotent orbit effectively equidistributes with respect to the measure in question. 
This effectivity also relies on spectral gap, but as the measure~$\mu_\data$ 
(and so its~$L^2$-space)
varies we need uniformity for this spectral gap. 
This is a uniform version of Clozel's property~$(\tau)$ (see Section~\ref{ss:history}).

After completion of this project M.E., R.~R\"{u}hr, and P.~Wirth worked out a special case~\cite{ERW} that goes slightly beyond
the setting of this paper. However, due to the concrete setting many of the 
difficult ingredients of this paper were not needed in~\cite{ERW}, which may make it more accessible for some readers.

\subsection{Uniform non-escape of mass}\label{non-divergence-adelic}
We also note the following corollary of the above which does not
seem to follow from the standard non-divergence results alone\footnote{The non-divergence estimates
for unipotent flows enter our proof (see Lemma~\ref{l;non-div}) but removing all effects
from the above mentioned ``splitting condition", resp.\ its absence, 
seems to require the equidistribution theorem (Theorem~\ref{adelic}).}.

\begin{cor*}
	Let~$X=\G(F)\backslash \G(\adele)$. Then 
	for every~$\epsilon>0$ there exists some compact~$X_\epsilon\subset X$
	such that~$\mu(X_\epsilon)>1-\epsilon$ for every MASH measure~$\mu$ on~$X$.
\end{cor*}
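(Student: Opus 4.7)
The strategy is to split MASH measures into two regimes according to the volume of the supporting orbit: large-volume MASH are handled directly by Theorem~\ref{adelic}, while bounded-volume MASH are handled by a finiteness argument.

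Fix $\epsilon>0$. I would first construct a single test function $\psi\in C_c^\infty(X)$ with $0\le\psi\le 1$ such that $\Gpp\psi(y)\ge 1-\epsilon/2$ for every $y\in X$. This is possible because $X$ decomposes into finitely many $\G(\adele)^+$-orbits, each of finite Haar measure, so one may take $\psi$ as a sum of bumps that capture all but $\epsilon/2$ of the mass on each orbit. Setting $K_0:=\supp(\psi)$, a compact subset of $X$, an application of Theorem~\ref{adelic} (together with its non-simply-connected extension from~\S\ref{s;not-sc}) to any MASH $\mu_\data$ and any $y\in Y_\data$ yields
$$\mu_\data(K_0)\;\ge\;\int_{Y_\data}\psi\operatorname{d}\!\mu_\data\;\ge\;\Gpp\psi(y)-C\vol(Y_\data)^{-\finalexponent}\Sob(\psi)\;\ge\;1-\tfrac{\epsilon}{2}-C\vol(Y_\data)^{-\finalexponent}\Sob(\psi).$$
Consequently, there exists a threshold $V_0=V_0(\epsilon,\Sob(\psi))$ such that every MASH with $\vol(Y_\data)\ge V_0$ satisfies $\mu_\data(K_0)\ge 1-\epsilon$.

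To handle MASH with $\vol(Y_\data)<V_0$, I would appeal to the relationship between volume and arithmetic complexity established in Appendix~\ref{sec;discrim}: bounded volume constrains both the $\G(F)$-conjugacy class of $\iota(\H)\subset\G$ and the double coset $\G(F)gH_\data$ to lie in finite sets. Thus only finitely many distinct MASH measures $\mu_\data$ satisfy $\vol(Y_\data)\le V_0$. Each such measure is an honest probability measure on $X$ and hence tight, so there is a compact set of $\mu_\data$-mass at least $1-\epsilon$; the finite union $K_1$ of these compacts remains compact and works uniformly for this regime. Setting $X_\epsilon:=K_0\cup K_1$ then gives the desired uniform compact set.

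The main obstacle I anticipate lies in the bounded-volume case: showing that only finitely many MASH measures have volume below $V_0$ requires careful bookkeeping of the discriminant--volume correspondence, of the residual variation in the translation parameter $g$ modulo the stabilizer, and of the distinction between $H_\data$ and the (possibly strictly larger) stabilizer of $\mu_\data$ when $\iota(\H)$ is not simply connected. The large-volume case is essentially a mechanical corollary of Theorem~\ref{adelic} once a test function with a uniform lower bound on $\Gpp\psi$ has been produced, which is why the theorem genuinely improves on ``standard'' non-divergence for unipotent flows, as emphasized in the footnote.
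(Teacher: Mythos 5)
Your large-volume regime is essentially the paper's argument: one fixes a compact set capturing $1-\epsilon/2$ of the mass of every $\G(\adele)^+$-orbit, majorizes its indicator by a smooth $\psi$, and feeds $\Gpp\psi\geq 1-\epsilon/2$ into Theorem~\ref{adelic}. (One caveat: $X$ need \emph{not} decompose into finitely many $\G(\adele)^+$-orbits --- the double coset space $\G(F)\backslash\G(\adele)/\G(\adele)^+$ is compact but in general infinite. The uniform lower bound on $\Gpp\psi$ still holds, e.g.\ because $\psi$ is invariant under some compact open $K'<\G(\adele_f)$ and $\G(F)\backslash\G(\adele)/\G(\adele)^+K'$ \emph{is} finite; but your stated justification needs this repair.)

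The bounded-volume regime contains a genuine gap: it is \textbf{false} that only finitely many MASH measures satisfy $\vol(Y_\data)\leq V_0$. The volume and the discriminant vary continuously in the translation parameter $g$, so a single pair $(\H,\iota)$ already produces a continuum of pairwise distinct MASH measures of uniformly bounded volume. This is exactly the situation the paper exploits in \S\ref{sec:tau-general}: for $\H=\{(h,h)\}<\G\times\G$ the translated diagonals $X_g=\{(xg,x)\}$, $g$ ranging over a compact neighbourhood of the identity in $\G(F_v)$, are distinct MASH sets with $\vol(X_g)\ll\|g\|^{\star}$ bounded. Consequently your appeal to tightness of each individual measure followed by a \emph{finite} union of compact sets collapses; an infinite union of compacta need not be precompact, and pointwise tightness gives no uniformity. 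What is needed --- and what the paper uses --- is a \emph{quantitative, uniform} tightness statement: by the construction of the good place (\S\ref{ss;good-place}), $\vol(Y_\data)\leq V_0$ forces $q_w\ll(\log V_0)^2$, and then the non-divergence estimate of Lemma~\ref{l;non-div} gives $\mu_\data\bigl(X\setminus\Siegel(R)\bigr)\ll p_w^{\ref{non-div-power-p}}R^{-\ref{non-div-cons}}$, so a single Siegel set $\Siegel(R)$ with $R=R(\epsilon,V_0)$ works for \emph{all} MASH measures in this regime. Replacing your finiteness claim by this uniform non-divergence argument repairs the proof; the discriminant--volume correspondence of Appendix~\ref{sec;discrim} is not needed here.
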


{ We will prove the corollary in \S\ref{sec:proof-adel-nondiv}.}

\subsection{Acknowledgement}
M.E.~and~A.M.~thank Richard Pink for enlightening discussions, 
and also thank the Max Planck Institute in Bonn, where some of the final writing of the paper took place, for its hospitality.

A.M.~thanks Alireza Salehi Golsefidy for enlightening discussions regarding several aspects of this project.
A.M.~also thanks the Forschungsinstitut f\"ur Mathematik at the ETH Z\"urich for its support.

 We also thank Hee Oh for her comments on an earlier version of this paper.
We thank Brian Conrad for helpful discussions and for the proof of the lemma in \S\ref{l;good-sl2}.
Finally we are grateful for the comments and suggestions made by the referees.

\section{Notation and preliminary statements}\label{sec:prem-notation}

\subsection{Notation} \label{notations}\label{proofnotn}

Let us recall that~$F$ denotes\index{F@$F$, the number field}  a number field.
Throughout the paper $\places$ denotes the set of places on $F;$ similarly let 
$\places_f$ and $\places_\infty$ denote the set of finite and infinite (archimedean) places respectively. 
\index{s@$\Sigma=\Sigma_f\cup\places_\infty$, the sets of places, finite and archimedean places of~$F$}


For each $v\in\places$, we denote by $F_v$ the completion of $F$ at $v$.
For $v\in\places_f$, we denote by $\order_{v}$\index{F@$F_v,\widehat{F_v}$, local field and maximal unramified extension}\index{v@$v,w$, places of~$F$} 
the maximal compact subring of the completion $F_v$ and let $\varpi_{v}$ 
be {a} uniformizer of $\order_{v}.$\index{o@$\order_v\ni\varpi_v$, the maximal compact subring and its uniformizer}
We let $\adele=\prod_{v\in\places}'F_v$ be the ring of adeles over $F$ 
and define $\adele_f=\prod_{v\in\places_f}'F_v$, where~$\prod'$ denotes
the restricted direct product with respect to the compact open
subgroups~$\order_v<F_v$ for~$v\in \places_f$.\index{a@$\adele, \adele_f$, adeles and finite adeles over~$F$}

For any finite place~$v\in\places_f$ we let $k_v=\order_{v}/\varpi_{v}\order_{v}$ 
be the residue field,\index{kv@$k_v,\widehat{k_v}$, residue field of~$F_v$ and its algebraic closure}  
and we set $q_v = \# k_v$.\index{q@$q_v$, cardinality of the residue field~$k_v$}
Let $|x|_v$ denote the absolute value on $F_v$ normalized so that $|\varpi_v|_v=1/q_v$. 
Finally let $\widehat{F_v}$ denote the maximal unramified extension of $F_v,$
we let $\widehat{\order_v}$ denote the ring of integers in $\widehat{F_v}$ and $\widehat{k_v}$
denotes the residue field of $\widehat{\order_v}.$ We note that $\widehat{k_v}$ is the algebraic
closure of $k_v.$ 

Fix $\G$ and $\H$ as in the introduction, and
let $\mathfrak{g}$ (resp. $\mathfrak{h}$) denote the Lie algebra of $\G$ (resp. $\H$);
they are equipped with compatible $F$-structures. We define~$G=\G(\adele)$\index{G@$G=\G(\adele)$, the ambient group}
and~$X=\G(F)\backslash\G(\adele)$.\index{X@$X=\G(F)\backslash\G(\adele)$, the ambient space} 

{ In this paper rank of an algebraic group refers to its absolute rank. If we want to refer to the 
rank of an algebraic group over a not necessarily algebraically closed field $E$, we will use {\em relative rank} or $E$-{\em rank}.}

For any $v\in\places$ let $\G(F_v)^+$ be the image of $\tilde{\G}(F_v)$, 
and $\G(\adele)^+$ be the image of $\tilde{\G}(\adele)$, 
where $\tilde\G$ is the simply connected cover of $\G.$\index{G@$\G(\adele)^+$, $\G(F_v)^+$, the image
of the simply connected cover} 
If {each $F_v$-almost simple factor of} $\G$ is $F_v$-isotropic, then $\G(F_v)^+$ is the subgroup generated by all unipotent elements;
it is worth mentioning that our notation is different from the usual notation in the anisotropic case.

Let $\rho: \G \rightarrow \SL_N$ be an embedding defined 
over $F.$\index{r@$\rho:\G\rightarrow\SL_N$, a fixed embedding over~$F$}
For any $v\in\places_f$,
we let $K_v := \rho^{-1}(\SL_N(\order_v))$
and $K_f = \prod_{{v}\in\places_f} K_v$.\index{kv@$K_v,K_v[m],K_f$, compact open subgroups (defined using~$\rho$)}
Set also 
\begin{equation} \label{Kvm definition}
K_v[m] := \mathrm{ker}(K_v \rightarrow \SL_N(\order_v/\varpi_v^m \order_v))
\end{equation}
for $m \geq 1$. It is convenient to write $K_v[0] := K_v$. 

We also set up the corresponding notions at the level of the Lie algebra $\mathfrak{g}$
of~$\G$. \index{g@$\mathfrak{g},\mathfrak{g}_v$, Lie algebra and Lie algebra over
local field~$F_v$}
For any~$v\in\Sigma$ we let~$\mathfrak{g}_v$ be the Lie algebra of~$\G$ over~$F_v$.
For~$v\in\Sigma_f$ we 
write $\mathfrak{g}_v[0]$ for the preimage of the $\order_v$-integral $N \times N$ matrices
under the differential $D\rho: \mathfrak{g} \rightarrow \mathfrak{sl}_N$.
More generally, we write $\mathfrak{g}_v[m]$ for the preimage of the matrices
all of whose entries have valuation at least $m$.\index{g@$\mathfrak{g}_v[m]$, compact open subgroup
of level~$m$ of~$\mathfrak g_v$ for~$v\in\Sigma_f$}

Throughout, $\red_v:\SL_N(\order_v)\rightarrow\SL_N(k_{v})$ 
denotes the reduction mod $\varpi_v$ map; 
similarly we consider reduction mod $\varpi_v$ for the Lie algebras, 
see~\cite[Ch.~3]{PR} for a discussion of reduction maps.\index{r@$\red_v$, reduction maps} 

For $g \in \G(F_v)$, we write $\|g\|$ for the largest absolute value of the matrix entries of 
$\rho(g)$ and $\rho(g)^{-1}$.\index{g@$\|g\|$, norm of~$g\in\G(F_v)$}

We let $\vol_G$ denote the volume measure on $G$\index{v@$\vol_G$, Haar measure of the ambient group and space} 
which is normalized so that it assigns mass 1 to the quotient $X=\G(F) \backslash \G(\adele)$.
We will also use the same notation for the induced Haar measure on~$X$.

The notation $A \ll B$, meaning ``there exists a constant $\constc\label{sample-c}>0$ so that $A \leq \ref{sample-c} B$'',
will be used; the implicit constant $\ref{sample-c}$ is permitted to depend on
$F$, $\G$, and $\rho$, but (unless otherwise noted) not on\index{1@$A\ll B,A\asymp B$, $A\asymp B^\star$, 
inequalities with implicit constants} 
anything else. We write~$A\asymp B$ if~$A\ll B\ll A$.
We will use~$\ref{sample-c},\constc,\ldots$ to denote constants 
depending on~$F$,~$\G$, and~$\rho$ (and their numbering is reset at 
the end of a section).\index{c@$\ref{sample-c}$, a constant depending on $F, \mathbf{G}$ and $\rho$}
If a constant (implicit or explicit) depends on another parameter or only on a certain
part of~$(F,\G,\rho)$, we will make this clear by writing e.g.~$\ll_\epsilon$,~$\constc(N)$, etc.

We also adopt the $\star$-notation from~\cite{EMV}:\index{A@$A^\star, A^{-\star}$, power of~$A$
with an implicit exponent} 
We write $B=A^{\pm\star}$ if $B=\constc A^{\pm\consta\label{sample-kappa}},$ 
where $\ref{sample-kappa}>0$\index{kaa1@$\ref{sample-kappa}$, a sample constant depending on~$\dim_F\G$ and~$[F:\Q]$} 
depends only on $\dim_F\G$ and $[F:\Q].$
Similarly one defines
$B\ll A^\star,$ $B\gg A^\star$.
Finally we also write~$A\asymp B^\star$ if~$A^\star\ll B\ll A^\star$ (possibly with different exponents).\index{1@$A\ll B,A\asymp B$, $A\asymp B^\star$, 
inequalities with implicit constants}

We fix a MASH-set $Y_\data$, arising\index{MASH set, a maximal algebraic semisimple homogeneous set}
from\index{mu@$\mu_\data, Y_\data$, algebraic homogeneous measure and homogeneous set} 
the data $\data = (\mathbf{H},\iota,  g)$\index{data@$\data=(\mathbf{H},\iota,g)$, triple defining a MASH}
 and with corresponding measure $\mu_{\data}$
as in~\S\ref{construction-MASH}.\index{H@$\mathbf{H}$, the algebraic group giving rise to the MASH~$Y$}
By~\cite[p.~451]{PR} we have $\iota(\H(\adele))\subset\G(\adele)$ is a closed subgroup and by~\cite[Thm.\ 1.13]{Raghunathan} we also have that $\G(F)\iota(\H(\adele))g=\supp(\mu_\data)$ is closed.
We will also write~$g_\data=g$ for the element~$g=(g_v)_{v\in\Sigma}\in\G(\adele)$ determining the MASH set~$Y_\data$. 
Let $H_v = g_{v}^{-1} \iota(\mathbf{H}(F_v))g_{v}$; it is contained in $\G(F_v)$
and stabilizes $\mu_{\data}$.\index{h@$H_v$, component of acting group at the place~$v$}
The subgroup $H_v$ is a Zariski-dense subset of the $F_v$-algebraic group $g_v^{-1}\iota(\mathbf{H})g_v$. 
Of course, $H_v$ need not be the set of all $F_v$-points of {$g_v^{-1}\iota(\mathbf{H})g_v$.}

We recall that $\H$ is assumed to be simply connected in Theorem~\ref{adelic}.
Therefore, except for \S\ref{s;not-sc}, the standing assumption is that $\H$ is simply connected.

\begin{lemma}[Stabilizer lemma]\label{s:stabilizer-lem}
Let $\N$ be the normalizer of $\iota(\H)$ in $\G$. Then
the stabilizer $\stab(\mu_{\data})=\{h\in G: h$ preserves~$\mu_\data\}$ 
of $\mu_{\data}$ consists of $g^{-1} \iota( \H(\adele)) \N(F)  g$
and contains~$g^{-1}\iota(\H(\adele))g$ as an open subgroup. \end{lemma}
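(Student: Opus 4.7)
Plan. Conjugating the data by $g$ reduces matters to the case $g = 1$; henceforth write $\mu = \mu_{\data}$ and $Y = \G(F)\iota(\H(\adele))/\G(F)$, so that $\mu$ is the pushforward of the Tamagawa probability measure on $[\H] := \H(F)\backslash\H(\adele)$ under the map $[x] \mapsto [\iota(x)]$. The lemma then splits into three independent claims: (i) $\iota(\H(\adele))\N(F) \subset \stab(\mu)$; (ii) the reverse inclusion; and (iii) $\iota(\H(\adele))$ is open in $\iota(\H(\adele))\N(F)$.

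For (i), the piece $\iota(\H(\adele)) \subset \stab(\mu)$ is immediate from the pushforward construction. For $n \in \N(F)$, the inner automorphism $g \mapsto n^{-1}gn$ of $\G$ restricts to an $F$-automorphism of $\iota(\H)$; since $\H$ is simply connected, this lifts through the finite isogeny $\iota$ to an $F$-automorphism $\beta_n$ of $\H$ satisfying $n^{-1}\iota(x)n = \iota(\beta_n(x))$. A direct computation in $X$ then gives $[\iota(x)n] = [n\iota(\beta_n(x))] = [\iota(\beta_n(x))]$, so the action of $n$ by right translation on $Y$ corresponds to $[x] \mapsto [\beta_n(x)]$ on $[\H]$. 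Any $F$-automorphism of a semisimple $F$-group preserves the induced probability measure on $[\H]$: the pullback of a top-degree invariant form scales by some $c \in F^{\times}$, and $|c|_{\adele} = 1$ by the product formula, while $\beta_n$ stabilizes $\H(F)$.

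For (ii), let $h \in \stab(\mu)$. Because $[1]\cdot h = [h]$ must lie in $Y = \supp(\mu)$, we may write $h = \gamma \iota(h_1)$ with $\gamma \in \G(F)$ and $h_1 \in \H(\adele)$. From (i) we have $\iota(h_1) \in \stab(\mu)$, whence $\gamma \in \stab(\mu)$. For $\gamma \in \G(F)$ the right translation on $X$ is $[y] \mapsto [y\gamma] = [\gamma^{-1}y\gamma]$, and preservation of $Y$ forces $\iota'(\H(\adele)) \subset \G(F)\iota(\H(\adele))$, where $\iota' := \gamma^{-1}\iota(\cdot)\gamma : \H \to \G$. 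I would then fix an archimedean place $v$ of $F$ (which exists as $F$ is a number field) and project the inclusion to obtain $\iota'(\H(F_v)) \subset \bigcup_{\alpha \in \G(F)} \alpha \iota(\H(F_v))$ inside $\G(F_v)$. Let $M := \iota'(\H(F_v)^{\circ})$, a connected $F_v$-analytic submanifold of $\G(F_v)$ of dimension $\dim_{F_v}\H$. Each intersection $M \cap \alpha\iota(\H(F_v))$ is closed in $M$, and if proper has strictly smaller analytic dimension, hence is nowhere dense; Baire category applied to the countable cover forces $M \subset \alpha_0 \iota(\H(F_v))$ for some $\alpha_0 \in \G(F)$, and the presence of $1$ in $M$ upgrades this to $M \subset \iota(\H(F_v))$. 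Taking Zariski closures yields $\iota'(\H)_{F_v} \subset \iota(\H)_{F_v}$; equal dimensions promote the inclusion to equality, and flat descent from $F_v$ to $F$ (both subvarieties being $F$-defined) gives $\iota'(\H) = \iota(\H)$ as $F$-subgroups, i.e., $\gamma \in \N(F)$.

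Claim (iii) is quick: $\iota(\H(\adele))\N(F)$ is a disjoint union of $\iota(\H(\adele))$-cosets indexed by $\N(F)/(\N(F) \cap \iota(\H(\adele)))$, and discreteness of $\G(F) \supset \N(F)$ in $\G(\adele)$ together with closedness of $\iota(\H(\adele))$ makes these cosets locally separated, so $\iota(\H(\adele))$ is open in the subspace topology on $\stab(\mu)$. I expect the main obstacle to be the archimedean projection step in (ii): since $\G(F)$ is only countable and not discrete inside $\G(F_v)$, the cosets $\alpha \iota(\H(F_v))$ may cluster, and one must rely on the dimension count to guarantee that proper intersections with $M$ are nowhere dense, as well as carefully upgrading the resulting $F_v$-inclusion of algebraic subgroups back to an equality over the base field $F$.
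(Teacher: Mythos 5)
Your parts (i) and (ii) are correct and essentially follow the paper's route: conjugation by $n\in\N(F)$ lifts to an $F$-automorphism of the simply connected group $\H$, the product formula shows this automorphism preserves the probability measure on $\H(F)\backslash\H(\adele)$, and the converse reduces to $\gamma\in\G(F)$ with $\gamma^{-1}\iota(\H(\adele))\gamma\subset\G(F)\iota(\H(\adele))$, which is then analyzed at the archimedean places. The paper phrases that last step via connected components of the countable union of closed cosets; your Baire category argument at a single archimedean place is the same idea, and using the group structure of $M$ to upgrade ``nonempty interior'' to containment is the correct way to make the loose phrase ``smaller analytic dimension'' rigorous.

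The genuine gap is in (iii). The claim that discreteness of $\N(F)$ in $\G(\adele)$ together with closedness of $\iota(\H(\adele))$ makes the cosets $\iota(\H(\adele))n$ locally separated is not a valid principle: the product of a closed subgroup with a discrete subgroup need not be closed, and its cosets can accumulate (an irrational line in $\R^2$ together with the discrete subgroup $\Z^2$ already produces a dense union of cosets). Your argument would go through if the index $[\N(F):\N(F)\cap\iota(\H(\adele))]$ were finite, since a finite union of closed cosets avoiding $e$ stays away from $e$; but this index is typically infinite. Already for $\iota:\SL_2\to\PGL_2$ one has $\PGL_2(F)/\bigl(\PGL_2(F)\cap\pi(\SL_2(\adele))\bigr)\cong F^\times/(F^\times)^2$, because an element of $F^\times$ that is a local square everywhere is a square; so infinitely many distinct cosets occur and one must genuinely exclude a sequence $\iota(h_i)n_i\to e$ with $n_i\in\N(F)\setminus\iota(\H(\adele))$.

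This exclusion is the nontrivial part of the lemma, and the paper does it with the compactness criterion (Raghunathan, Thm.\ 1.12) applied twice. In outline: if $\H(F)h_i$ subconverges in $\H(F)\backslash\H(\adele)$, write $\eta_ih_i\to h$ with $\eta_i\in\H(F)$; then $n_i\iota(\eta_i)^{-1}\to\iota(h)^{-1}$ inside the discrete subgroup $\G(F)$, so the sequence is eventually constant and $n_i\in\iota(\H(\adele))$, a contradiction. Otherwise $\H(F)h_i$ leaves every compact set, and the compactness criterion produces nontrivial $\eta_i\in\H(F)$ with $h_i^{-1}\eta_ih_i\to e$; since $\ker\iota$ is finite and central, $\iota(\eta_i)\neq e$ for large $i$, and this contradicts the convergence $\G(F)\iota(h_i)\to\G(F)$ in $X$ by the same criterion applied to $X$. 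You need to supply an argument of this kind; the topological one-liner does not suffice.
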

\proof
Without loss of generality we may and will assume $g = e$ is the identity element. 
Suppose that $h\in \N(F ).$ Then since $\H$ is simply connected
and the simply connected cover is unique up to isomorphism,
the automorphism $x \mapsto h^{-1}  x h$ of $\iota(\H)$ may be lifted 
to an $F$-automorphism of $\H$, and in particular preserves adelic points; so
    \[
 h^{-1}\iota(\H(\adele))h=\iota(\H(\adele)).
\]
Also note that the Haar measure on $\iota(\H(\adele))$ is not changed by conjugation by $h$ as $\H$ is semisimple.
Therefore, $\G(F)\iota(\H(\adele))h=\G(F)h^{-1}\iota(\H(\adele))h=\G(F)\iota(\H(\adele))$ and $h$ preserves $\mu_\data.$

Suppose
now that $h\in\stab(\mu_{\data}),$ then $h \in G(F) \iota(\H(\adele))$ because
it must preserve the support of $\mu_\data$.
Adjusting $h$ by an element in $\iota(\H(\adele))$, we may assume $h = \gamma \in \G(F)$
and $\gamma^{-1}\iota(\H(\adele))\gamma\subset \G(F)\iota(\H(\adele)).$ 
We note that the connected component of~$\H(\adele)$ of the identity with respect to the Hausdorff
topology is the subgroup~$\prod_{v\in\Sigma_\infty}\H(F_v)^\circ$ and 
the connected component of the countable 
union~$\G(F)\iota(\H(\adele))$ of cosets equals~$\prod_{v\in\places_\infty}\iota(\H(F_v)$. 
Therefore,~$\gamma^{-1}\iota(\H(F_v))\gamma\subset\iota(\H(F_v))$ for every~$v\in\places_\infty$. 
However, by taking Zariski closure this implies that 
$\gamma$ normalizes $\iota(\H)$, i.e. $\gamma \in \N(F)$.

For the final claim of the lemma, suppose $\gamma_i\in \N(F)$ and $h_i\in\H(\adele)$ are such that $\gamma_i\iota(h_i)\to e$ as $i\to\infty.$ We need to show that $\gamma_i\in\iota(\H(\adele))$ for all large enough $i.$ Without loss of generality we may and will assume that $\gamma_i\not\in\iota(\H(\adele))$ for all $i$
and derive a contradiction. If $\H(F)h_i\to\H(F)h$ for some $h \in \H(\adele)$ then there exists $\eta_i\in\H(F)$ so that $\eta_ih_i\to h.$ 
Applying $\iota$ we obtain 
\[
\gamma_i\iota(\eta_i^{-1})\iota(\eta_ih_i)\to\iota(h^{-1})\iota(h)
\]
which forces $\gamma_i\iota(\eta_i^{-1})=\iota(h^{-1})\in\G(F)\cap\iota(\H(\adele))$ for all large enough $i.$
This is a contradiction to our assumption even if the assumed convergence holds only along some subsequence. 
Using the compactness criterion~\cite[Thm.\ 1.12]{Raghunathan} on the 
finite volume homogeneous spaces~$\H(F)\backslash\H(\adele)$ we now obtain that there 
exists a sequence $e\neq\eta_i\in\H(F)$ with $h_i^{-1}\eta_ih_i\to e.$ Note that as 
the center of $\H$ is finite we see that $\iota(\eta_i)\neq e$ for all large enough $i.$ 
This contradicts $\G(F)\iota(h_i)\to\G(F)$ by
the same compactness criterion~\cite[Thm. 1.12]{Raghunathan} applied to~$X$.   
 \qed

\subsection{Volume of homogeneous sets}\label{new-sec-on-volume}
Let us discuss the definition of the volume of a homogeneous set in a general context. Let~$G$ be a locally 
compact group and let~$\Gamma<G$ be a discrete subgroup. 
Let~$\mu$ be a homogeneous probability measure on~$X=\Gamma\backslash G$ so that~$\mu$
is supported on a single closed orbit~$Y=xH_Y$ of the 
stabilizer~$H_Y=\stab(\mu)=\{g\in G: g$ preserves~$\mu\}$. 
Recall that~$Y=x H_Y$ is called a homogeneous subset of~$X$.

We normalize the Haar measure~$m_Y$
on the stabilizer group~$H_Y=\stab(\mu)$  
so that~$m_Y$ projects to~$\mu$.
I.e.\ if we choose $z \in \mathrm{supp}(\mu)$  
and any subset $S \subset H_Y$ for which the map $h \in S \mapsto zh \in Y$
is injective, then we require that $m_Y(S)=\mu(zS)$. Equivalently we identify~$Y$ with $\Gamma_Y\backslash H_Y$ (where~$\Gamma_Y=\operatorname{stab}_{H_Y}(z)$)
and normalize~$m_Y$ so that~$\mu$ is identified with the quotient measure of~$m_Y$ by the counting measure on~$\Gamma_Y.$

We\index{Omega@$\Omega_0=\prod_{v\in\places}\Omega_v$, open set in~$G$ to normalize the volume
of~$H$-orbits} fix some open neighborhood~$\Omega_0$ of the identity in~$G$ with compact closure and use it to normalize
a general definition of the volume of a homogeneous subset: If~$Y\subset X$ is a homogeneous set
and~$m_Y$ is the Haar measure on its stabilizer subgroup~$H_Y$ (normalized as above), then~$\operatorname{vol}_{\Omega_0}(Y)=m_Y(\Omega_0)^{-1}$.

We claim that~$\vol_{\Omega}(Y)\ll \vol_{\Omega_0}(Y)\ll\vol_{\Omega}(Y)$ 
if~$\Omega$ is another such neighborhood. Consequently we will drop the mention of~$\Omega_0$ in the notation of the volume.
To prove the claim it suffices to assume that~$\Omega\subset\Omega_0$, which immediately implies~$m_Y(\Omega)\leq m_Y(\Omega_0)$.
To prove the opposite, choose some open neighborhood~$O$ of the identity with~$O^{-1}O\subset\Omega$
and find some~$g_1,\ldots,g_n\in\overline{\Omega}_0$ with~$\Omega_0\subset\bigcup_i g_iO$. This gives~$m_Y(\Omega_0)\leq\sum_i m_Y(g_iO)$.
If~$m_Y(g_iO)>0$ for~$i\in\{1,\ldots,n\}$, then there exists some~$h_i=g_i\epsilon\in H_Y\cap g_i O$ which gives~$m_Y(g_iO)=m_Y(h_i^{-1}g_i O)=m_Y(\epsilon^{-1}O)\leq m_Y(\Omega)$. Consequently~$m_Y(\Omega_0)\leq n m_Y(\Omega)$ as required.

In the context of this paper we will work with algebraic homogeneous sets~$Y_\data$
and algebraic homogeneous measures~$\mu_\data$ as in~\S\ref{construction-MASH} and~\S\ref{notations}. 
By Lemma~\ref{s:stabilizer-lem} 
we have that~$H_\data=g^{-1}\iota(\mathbf{H}(\adele))g$  is an open subgroup of
the stabilizer~${H_{Y_\data}}$. Therefore, the Haar measure on~$H_\data$
is obtained from the Haar measure on~$H_{Y_\data}$ by restriction (and this
is compatible with the above normalization of the Haar measures).  Also,
the volume defined by
using the Haar measure on~$H_\data$ (as done in~\S\ref{intro-volume}) is bigger
than the volume defined using the Haar measure on the full stabilizer subgroup (as done here).  
In most of the paper (with the exception of~\S\ref{ss:Borel-Prasad} 
and~\S\ref{sec:pigeonhole}) we will work with the volume defined using
the Haar measure on~$H_\data$  (as in~\S\ref{intro-volume}). 

We will assume that~$\Omega_0=\prod_{v\in\places}\Omega_v$, where~$\Omega_v$ is
an\index{Omega@$\Omega_0=\prod_{v\in\places}\Omega_v$, open set in~$G$ to normalize the volume
of~$H$-orbits} open neighborhood
of the identity in~$\G(F_v)$ for all infinite places~$v\in\places_{\infty}$
and~$\Omega_v=K_v$ for all finite places~$v\in\places_f$.

We will make crucial use of the notion of volume in \S\ref{s;vol-hom}, 
where we will construct a good place, and again in \S\ref{sec:pigeonhole}.

\section{An application to quadratic forms}\label{sec:quadratic}

We now give an example of an equidistribution result that follows from our theorem  
but -- even in nonquantitative form -- does not appear to follow directly
from the (ineffective) measure classification theorems for the action of unipotent or semisimple groups.

Let $\mathcal{Q}=\PGL(n,\Z)\backslash \PGL(n,\R)/{{\rm PO}(n,\R)}$ 
be the space of positive definite quadratic forms on $\R^n$
up to the equivalence relation defined by scaling and equivalence over $\Z.$ 
We equip~$\mathcal{Q}$ with the push-forward of the normalised Haar measure
on~$\PGL(n,\Z)\backslash\PGL(n,\R)$. 

Let $Q$ be a positive definite integral quadratic form on $\Z^n$, 
and let $\operatorname{genus}(Q)$ (resp.\ $\operatorname{spin\ genus}(Q)$) 
be its {\em genus} (resp.\ spin genus).  

{ For the rest of this section, we assume that $n\geq 3$.}
 
\begin{thm}\label{thm;genus}
Suppose $Q_i$ varies through any sequence of pairwise inequivalent, integral, positive definite quadratic forms.
Then the genus (and also the spin genus) of $Q_i$, considered as a subset of $\mathcal{Q}$,
equidistributes as $i \rightarrow \infty$ (with speed determined by a power of~$|\operatorname{genus}(Q_i)|$). 
\end{thm}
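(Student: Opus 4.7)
The plan is to encode each (spin) genus as a MASH set in an adelic quotient of $\G = \PGL_n$ over $\Q$, apply Theorem~\ref{adelic}, and then push the resulting equidistribution statement down to $\mathcal{Q}$. Concretely, for each form $Q_i$ I would take $\mathbf{H}_i = \mathrm{Spin}(Q_i)$ with the natural embedding $\iota_i : \mathbf{H}_i \to \G$ given by the action on $\Q^n$ (for $n\ge 3$, $\mathbf{H}_i$ is semisimple and simply connected, which is the setting of Theorem~\ref{adelic}; for $n=2$ the stabilizer is a torus and the statement has to be handled by other means). The adelic element $g_i \in \G(\adele)$ is chosen so that $Y_{\data_i}$ projects, under $X = \G(\Q)\backslash \G(\adele) \to \mathcal{Q}$, exactly onto the spin genus of $Q_i$: this uses the classical identification of the (spin) genus with the double coset space $\mathbf{H}_i(\Q)\backslash \mathbf{H}_i(\adele_f)/\mathbf{H}_i(\widehat{\Z})$ translated into the ambient group. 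The hypothesis that $\iota_i(\mathbf{H}_i) \subset \G$ is maximal (over $\overline{\Q}$) is the classical statement that the orthogonal group is a maximal connected proper subgroup of $\mathrm{PGL}_n$ for $n\ge 3$.

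With these ingredients Theorem~\ref{adelic} gives
\[
\left| \int_{Y_{\data_i}} f \operatorname{d}\!\mu_{\data_i} - \Gpp f(y) \right| \ll \vol(Y_{\data_i})^{-\finalexponent} \Sob(f)
\]
for every $f \in C_c^\infty(X)$. I would then verify a lower bound of the form $\vol(Y_{\data_i}) \gg |\operatorname{genus}(Q_i)|^{\star}$ (equivalently, a power of the discriminant) using the Smith--Minkowski--Siegel mass formula and the volume/discriminant comparison of Appendix~\ref{sec;discrim}; since the $Q_i$ are pairwise inequivalent, the discriminants (and thus the genus sizes) must tend to infinity along the sequence, and the right-hand side above therefore tends to zero with a polynomial rate in $|\operatorname{genus}(Q_i)|$.

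It remains to transfer this bound from $X$ to $\mathcal{Q}$. Given a test function $\varphi \in C_c^\infty(\mathcal{Q})$, I would pull it back to a right-$K_\infty K_f$-invariant smooth compactly supported function on $X$ (with $K_\infty = \mathrm{O}(n,\R)$ and $K_f = \prod_v K_v$), apply the above estimate, and observe that the pushforward of $\mu_{\data_i}$ to $\mathcal{Q}$ is the uniform measure on the spin genus of $Q_i$, while the pushforward of $\vol_G$ is the normalized measure on $\mathcal{Q}$. The ordinary genus is then deduced from the spin genus, either by repeating the argument with the non-simply-connected $\mathrm{SO}_{Q_i}$ (invoking the generalization discussed in \S\ref{s;not-sc}) or by using that the natural map from spin genus to genus has uniformly bounded fibers, so equidistribution transfers between the two.

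The main obstacle I expect is the bookkeeping around the $\Gpp$-term: namely, showing that on $K_\infty K_f$-invariant test functions the $\G(\adele)^+$-invariant component $\Gpp f(y)$ indeed projects to the integral of $\varphi$ against the uniform measure on $\mathcal{Q}$. This amounts to identifying the relevant finite abelian quotient $\G(\adele) / \G(\adele)^+ \G(\Q)$ (a finite quotient of a ray class group, via the $n$-th power map) and verifying that the nontrivial characters appearing there are killed by $K_\infty K_f$-averaging, so that only the constant character survives. Once this is checked, the theorem follows by combining the adelic equidistribution with the volume lower bound above.
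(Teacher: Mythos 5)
Your proposal follows essentially the same route as the paper: the same MASH set $Y_Q=\pi(\mathrm{Spin}(Q)(\Q)\backslash\mathrm{Spin}(Q)(\adele))(g_Q,e,\dots)$ inside $X=\PGL_n(\Q)\backslash\PGL_n(\adele)$, the same identification of $\Gpp f$ with $\int_X f\operatorname{d}\!\vol_G$ on $K$-invariant test functions (your character argument on $\G(\Q)\backslash\G(\adele)/\G(\adele)^+$ is equivalent to the paper's use of $\G(\adele)=\G(\Q)\PSL(n,\R)K$), and the same comparison $\vol(Y)\asymp|\operatorname{spin\ genus}(Q)|\asymp|\operatorname{genus}(Q)|^\star\asymp\disc(Y)^\star$, which the paper establishes in Lemma~\ref{lem:genus-spingenus} and Appendix~\ref{finishingproof} via strong approximation and the Borel--Prasad index bounds rather than by directly invoking the mass formula. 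The argument is correct as outlined.
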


Similar theorems have been proved elsewhere (see e.g.~\cite{EskinOh-Linik} where the splitting condition is made at the archimedean place).   What is novel here, besides the speed of convergence,  is the absence 
of any type of splitting condition on the $Q_i$. This is where the quantitative result of the present paper becomes useful.  
We also note that it seems plausible that one could
remove the splitting assumptions of \cite{EV} in the borderline cases
where $m-n \in \{3,4\}$ by means of the methods of this paper. However,
for this the maximality assumption in Theorem~\ref{adelic} would need to be removed.

\subsection{Setup for the proof}\label{setup-qf}
We set~$F=\Q$,~$\G=\PGL_n$, and define the quotient~$X=\PGL_n(\Q)\backslash\PGL_n(\adele)$.
Let us recall some facts about the genus and spin genus in order 
to relate the above theorem with Theorem~\ref{adelic}.
For every rational prime $\pl$ put $K_\pl=\PGL(n,\Z_\pl)$
and note that~$K_\pl$ is a maximal compact open subgroup of $\G(\Q_\pl).$ 
We also define $K=\prod_\pl K_\pl$. With this notation we have 
\be\label{e;PGL-class-no}
\G(\adele)=\G(\Q)\G(\R)K=\G(\Q) {\PGL(n,\R)}K.
\ee
It is worth mentioning that~\eqref{e;PGL-class-no} gives a natural identification
between $L^2(X,\vol_G)^K,$ the space of $K$-invariant functions, and 
\[
L^2(\PGL(n,\Z)\backslash\PGL(n,\R),\vol_{\PGL(n,\R)});
\] 
this identification
maps smooth functions to smooth functions. 

Given a positive definite integral quadratic form $Q$ in~$n$ variables, the isometry group $\H'=\SO(Q)$
is a $\Q$-group; it actually comes equipped with a model over $\Z.$ 
This group naturally embeds in $\G,$ and this embedding is defined over $\Z.$ 
We define $\H={\rm Spin}(Q)$ and let $\pi:\H\to\H'$
be the covering map.

Put $K_\pl'=\H'(\Q_\pl)\cap K_\pl,$ $K'=\prod_\pl K_\pl'$, and
$K'(\infty)=\H'(\R)K'$, the latter being a compact open subgroup of~$\H'(\adele)$. 
Note that $\operatorname{genus}(Q)$ is identified with the finite set $\H'(\Q)\backslash\H'(\adele_f)/K'$, 
which may also be rewritten as $\H'(\Q)\backslash\H'(\adele)/K'(\infty).$ 
Similarly the spin genus of $Q$ is given by
 $\H'(\Q)\backslash\H'(\Q)\pi(\H(\adele_f))K'/K',$
which may also be written as 
\[
\H'(\Q)\backslash\H'(\Q)\pi(\H(\adele))K'(\infty)/K'(\infty).
\]  

Let $g_Q\in \PGL(n,\R)$ be so that $g_Q^{-1}\H'(\R)g_Q=g_Q^{-1}\iota(\H(\R))g_Q=\SO(n,\R),$ the standard compact isometry group.  
We define the associated MASH set $Y:=Y_Q=\pi(\H(\Q)\backslash\H(\adele))(g_Q,e,\ldots)$.

\begin{lemma}\label{lem:genus-spingenus}
	The volume of the MASH set~$Y$, the spin genus of~$Q$, the genus of~$Q$, and the discriminant of~$Y$ (as defined in Appendix~\ref{sec;discrim}) are related to each other via\footnote{See~\S\ref{notations} for the~$\star$-notation.}
	\[
	\vol(Y)\asymp|\operatorname{spin\ genus}(Q)|^{\star}\asymp|\operatorname{genus}(Q)|^\star\asymp\operatorname{disc}(Y)^\star.
	\]
\end{lemma}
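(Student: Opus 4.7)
Plan. The chain of equivalences will be proven one at a time, using the Tamagawa measure $\tau_\H$ on $\H(\adele)$ as a common yardstick. Since $\H=\mathrm{Spin}(Q)$ is simply connected semisimple, $\tau_\H(\H(\Q)\backslash\H(\adele))=1$ by Kottwitz's theorem, and since $Q$ is positive definite $\H(\R)\cong\mathrm{Spin}(n,\R)$ is compact, so $K_\H:=\H(\R)\prod_p\H(\Z_p)$ is a compact open subgroup of $\H(\adele)$. This makes the classical Siegel mass formula framework available.

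The sharpest equivalence $\vol(Y)\asymp|\operatorname{spin\ genus}(Q)|$ I would establish by computing both sides against $\tau_\H$. For the volume, pull back $m_\data$ along the isogeny $\iota$ to a Haar measure on $\H(\adele)$; since $\ker\iota$ is finite of bounded order and $\tau_\H$ has total mass $1$, the pullback is comparable to $\tau_\H$. The quantity $m_\data(H_\data\cap\Omega_0)$ then factorises over places: at the archimedean place, $g_Q^{-1}\iota(\H(\R))g_Q=\SO(n,\R)$ is compact and $\SO(n,\R)\cap\Omega_\infty$ has bounded positive Haar mass; at each finite prime $p$, $g_p=e$ so $H_p\cap K_p=\iota(\H(\Q_p))\cap\PGL_n(\Z_p)$ differs from $\iota(\H(\Z_p))$ by a factor controlled by $H^1(\Q_p,\ker\iota)$, whose global product is bounded by $O(1)^\star$. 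Hence $\vol(Y)\asymp\tau_\H(K_\H)^{-1}$. For the spin genus, I would identify $|\operatorname{spin\ genus}(Q)|$ with the class number $|\H(\Q)\backslash\H(\adele)/K_\H|$ up to bounded factors, then decompose the Tamagawa mass $1=\sum_{[x]}\tau_\H(K_\H)/|\Gamma_x|$ over double cosets, where $\Gamma_x=\H(\Q)\cap xK_\H x^{-1}$ is a finite group of integral orthogonal automorphisms of a lattice, hence $|\Gamma_x|\ll_n 1$ uniformly. This gives $|\operatorname{spin\ genus}(Q)|\asymp\tau_\H(K_\H)^{-1}$, matching the volume.

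The remaining two equivalences reduce to the first. For $|\operatorname{spin\ genus}(Q)|\asymp|\operatorname{genus}(Q)|^\star$, the inequality $|\operatorname{spin\ genus}(Q)|\leq|\operatorname{genus}(Q)|$ is tautological, and the reverse uses the Eichler--Kneser theory of spinor genera: the number of spin genera within a given genus is a power of $2$ bounded by $2^{\omega(\operatorname{disc}(Q))+O_n(1)}\ll\operatorname{disc}(Q)^{o(1)}$ (where $\omega$ counts distinct prime divisors), which Appendix~\ref{sec;discrim} combined with Step~1 converts into the required $\star$-bound. The third equivalence $|\operatorname{genus}(Q)|\asymp\operatorname{disc}(Y)^\star$ then follows by transitivity from $\vol(Y)\asymp\operatorname{disc}(Y)^\star$ as proven in Appendix~\ref{sec;discrim}. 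The hard part of the whole program will be the careful local analysis at primes $p\mid\operatorname{disc}(Q)$, where the integral model of $\H$ is no longer smooth: here $H_p\cap K_p$ need not equal the hyperspecial $\H(\Z_p)$, and I would invoke Bruhat--Tits theory, or equivalently Prasad's volume formula (already used elsewhere in the paper to locate good primes), to identify $H_p\cap K_p$ with the correct parahoric and compare its Tamagawa volume to the hyperspecial one, verifying that the local discrepancies multiply to an acceptable factor of $\operatorname{disc}(Q)^{o(1)}\ll\vol(Y)^\star$.
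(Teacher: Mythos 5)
Your overall architecture is sound, and your first link follows essentially the paper's route: both sides are computed against the Tamagawa measure by decomposing the mass over double cosets whose stabilizers are finite subgroups of $\PGL_n(\Q)$, hence of order $\ll_n 1$. The step you should not treat as routine is the phrase ``identify $|\operatorname{spin\ genus}(Q)|$ with the class number $|\H(\Q)\backslash\H(\adele)/K_\H|$ up to bounded factors.'' The fibers of the natural surjection from the class set of $\H$ (with the level structure $K_p^*=\iota^{-1}(\PGL_n(\Z_p))$) onto the spin genus (defined with the level structure $K'$ inside $\H'=\SO(Q)$) are \emph{not} controlled by a convergent product of local Galois cohomology groups of the center: the obstruction is global, namely the index $[\widetilde\Lambda:\Lambda]$ of \S\ref{ss:Borel-Prasad}, which involves the class number of the relevant splitting field and a $2$-group whose rank grows with the number of ramified primes. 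The paper controls it by combining strong approximation at a good prime with the Borel--Prasad bound, and the outcome is only $\ll\vol(Y)^{\star}$, not $O(1)$; correspondingly what your argument (and the paper's own Lemma in Appendix~\ref{finishingproof}) actually delivers is $\vol(Y)^{\star}\ll|\operatorname{spin\ genus}(Q)|\ll\vol(Y)$, which suffices for the lemma but is weaker than a two-sided comparison with absolute constants.

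For the genus you take a genuinely different route. You invoke Eichler--Kneser theory: the genus is partitioned into spinor genera, all of the same cardinality, and their number is $\leq 2^{\omega(\disc Q)+O_n(1)}$. This works, provided you supply two ingredients: (i) the equinumerosity of the spinor genera within a genus (the idele class group permutes them transitively), without which a bound on their number says nothing about $|\operatorname{genus}(Q)|/|\operatorname{spin\ genus}(Q)|$; and (ii) a bound $2^{\omega(\disc Q)}\ll\vol(Y)^{\star}$ --- your $\disc(Q)^{o(1)}$ is a bound in the wrong variable until you relate $\disc(Q)$ to $\vol(Y)$, e.g.\ by noting that odd primes dividing $\disc(Q)$ are places where $K_p^*$ fails to be hyperspecial, hence lie in the set $\Sigma^{\flat}$ of \S\ref{s;vol-hom}, whose contribution forces $2^{\#\Sigma^{\flat}}\ll\vol(Y)$. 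The paper avoids spinor genus theory entirely: using the non-escape-of-mass corollary and a count of primitive integral vectors of norm $\ll T^{\star}$ in $\wedge^{r}\mathfrak{sl}_N$, it shows that the total number of classes of forms whose spin genus has size $<T$ is $\ll T^{\star}$, which bounds the number of classes in the genus directly. Your route is more classical and shorter where it applies; the paper's is self-contained and does not rely on facts special to quadratic forms.
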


We postpone the proof to Appendix~\ref{finishingproof}. 

\subsection{Proof of Theorem~\ref{thm;genus}}
Let $Q$ be a positive definite integral quadratic form in~$n$ variables as above.
Let $f\in C_c^\infty(X)^K$ be a smooth, compactly supported and $K$-invariant function. 
Denote by $\pi^+:L^2(X,\vol_G)\to L^2(X,\vol_G)^{\PSL(n,\adele)}$ 
the projection onto the space of $\PSL(n,\adele)$-invariant functions;
this is a $\G(\adele)$-equivariant map. Therefore, $\pi^+(f)$ is $K$-invariant as $f$ is $K$-invariant.
Thus by~\eqref{e;PGL-class-no} we have $\pi^+(f)$ is $\G(\adele)$-invariant which implies
\be\label{e;proj=int}
\pi^+(f)=\int_Xf\operatorname{d}\!\vol_G
\ee
for all~$K$-invariant~$f\in C_c^\infty(X)$. 

Let $f\in C_c^\infty(X)^K.$ Applying Theorem~\ref{adelic}, 
with the homogeneous space $Y$ and in view of~\eqref{e;proj=int}, we get
\[
\left| \int_Y f\operatorname{d}\!\mu_\data-\int_X f\operatorname{d}\!\vol_G\right| \ll  \vol(Y)^{-\finalexponent}\Sob(f).
\]
Using~$\vol(Y)\asymp |\operatorname{spin\ genus}(Q)|^\star\gg |\operatorname{genus}(Q)|^{\star}$ (see Lemma~\ref{lem:genus-spingenus})
this implies Theorem~\ref{thm;genus}.

\section[A proof of property (tau)]{A proof of property $(\tau)$}\label{sec:tau}  
\sectionmark{A proof of property (tau)}
The following theorem was established in full generality through works of Selberg \cite{Sel}, Kazhdan \cite{Kazh}, Burger-Sarnak \cite{BS}, 
and the work of Clozel \cite[Thm. 3.1]{LC} completed the proof\footnote{Clozel states this theorem for a fixed $F$-group $\G,$ however, 
his proof also gives Theorem~\ref{tautheorem}. Our proof of Theorem~\ref{tautheorem} will include uniformity in the $F$-group $\G.$}.

\begin{thm}[Property $(\tau)$] \label{tautheorem}
Let $v$ be a place of $F$ and let~$\G_v$ be an~$F_v$-algebraic semisimple group which is isotropic over~$F_v$.
Let~$\G$ be an algebraic~$F$-group such that~$\G$ is isomorphic 
to~$\G_v$ over~$F_v$.
Then the representation $L^2_0(\G(F) \backslash \G(\adele))$
-- the orthogonal complement of $\G(\mathbb{A})^+$-invariant functions --
is isolated from the trivial representation as a representation of~$\G_v(F_v)$. Moreover,
this isolation (spectral gap) is independent of~$\G$. 
\end{thm}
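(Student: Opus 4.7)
The plan is to prove Theorem~\ref{tautheorem} by induction on $\dim\G$, with base case the groups locally of type $A_1$, for which property $(\tau)$ is classical (Selberg/Kazhdan, transferred between places via Burger--Sarnak). Since the proof of Theorem~\ref{adelic} (as outlined in the overview) relies on property $(\tau)$ only for a proper subgroup $\H \subset \G$, an induction on dimension is internally consistent: at each stage one applies Theorem~\ref{adelic} whose spectral-gap input is supplied by the inductive hypothesis for a strictly smaller group.

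Concretely, suppose for contradiction that there exist $F$-forms $\G^{(i)}$ all isomorphic to $\G_v$ over $F_v$, with corresponding $X_i=\G^{(i)}(F)\backslash\G^{(i)}(\adele)$, and unit vectors $\xi_i \in L^2_0(X_i)$ that are $(\G_v(F_v),\epsilon_i)$-invariant with $\epsilon_i \to 0$. For each $i$, I would choose a proper simply connected semisimple $F$-subgroup $\H^{(i)} \subset \G^{(i)}$, maximal in the sense of Theorem~\ref{adelic}, whose image at $v$ is $F_v$-isotropic; such an $\H^{(i)}$ exists by construction from an $\mathfrak{sl}_2$-triple attached to a suitable $F$-root adjusted to preserve $F_v$-isotropy. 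Consider an associated MASH set $Y_i = Y_{\data_i}$ with $\data_i = (\H^{(i)},\iota_i,g_i)$. Applying Theorem~\ref{adelic} to a smooth test function $f_i \in C_c^\infty(X_i)$ manufactured from $\xi_i$, and using that $\Gpp f_i$ vanishes (since $\xi_i \in L^2_0$), one obtains
\[
\left|\int_{Y_i} f_i \, d\mu_{\data_i}\right| \ll \vol(Y_i)^{-\finalexponent}\,\Sob(f_i).
\]
On the other hand, since $\xi_i$ is almost $\G_v(F_v)$-invariant it is a fortiori almost $\H^{(i)}(F_v)$-invariant. The inductive property $(\tau)$ for $\H^{(i)}$ at $v$ then forces $\xi_i$, viewed through the $\H^{(i)}(\adele)$-representation obtained by pullback to $Y_i$, to lie close to the $\H^{(i)}(\adele)^+$-invariant subspace, which translates into a lower bound for the $Y_i$-period of $f_i$. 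Choosing $g_i$ (e.g.\ via Hecke translates) so that $\vol(Y_i)$ grows faster than any fixed power of $\epsilon_i^{-1}$ then produces the contradiction.

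The main obstacle will be the quantitative Burger--Sarnak bridge: producing $f_i$ that is smooth enough for Theorem~\ref{adelic}, localised enough that its $Y_i$-period retains the $\H^{(i)}(F_v)$-almost-invariance of $\xi_i$, and of a form whose Sobolev norm $\Sob(f_i)$ is controlled by a fixed power of $\vol(Y_i)$. A subsidiary obstacle is uniformity in $\G^{(i)}$: Theorem~\ref{adelic} delivers equidistribution rates depending only on $[F:\Q]$ and $\dim\G$, but the choice of $\H^{(i)}$, the Hecke translate $g_i$, and the spectral-gap comparison between $L^2_0$ of the smaller homogeneous space and $L^2_0(X_i)$ must all be arranged to respect this uniformity. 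Both difficulties appear tractable via standard Sobolev estimates combined with the flexibility in $g_i$.
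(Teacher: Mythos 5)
There is a genuine gap, and it sits exactly where you flagged your ``main obstacle'': the quantitative Burger--Sarnak bridge is not a technical difficulty to be smoothed over but the step that fails. An $\epsilon_i$-invariant unit vector $\xi_i\in L^2_0(X_i)$ carries no localisation on any particular orbit: the MASH sets $Y_i\subset X_i$ have measure zero, the restriction of a test function $f_i$ built from $\xi_i$ to $Y_i$ is not controlled by $\|\xi_i\|_{2}$, and $\xi_i$ may simply be (nearly) orthogonal to every function supported near the $Y_i$ you choose. Property $(\tau)$ for $\H^{(i)}$ tells you that $f_i|_{Y_i}$, \emph{if} it is almost $\H^{(i)}(F_v)$-invariant and \emph{if} its $L^2(\mu_{\data_i})$-norm is bounded below, must be close to its $Y_i$-average; but you have no lower bound on $\|f_i\|_{L^2(\mu_{\data_i})}$ and hence no lower bound on the period $\int_{Y_i}f_i\,d\mu_{\data_i}$ to play against the upper bound from Theorem~\ref{adelic}. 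In addition, the existence of a proper maximal semisimple $F$-subgroup of $\G^{(i)}$ that is isotropic at the prescribed place $v$ is itself a delicate arithmetic problem (your ``$\mathfrak{sl}_2$-triple attached to an $F$-root'' does not exist when $\G^{(i)}$ is $F$-anisotropic), and it is essentially the content of the hard part of Clozel's argument that the paper is trying to bypass.

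The idea you are missing is the diagonal embedding. The paper does not apply Theorem~\ref{adelic} to subgroups of $\G$ acting on $X$; it applies it to the MASH set $X_g=\{(xg,x):x\in X\}\subset X\times X$, i.e.\ the orbit of the diagonal copy $\H=\{(h,h)\}<\G\times\G$ (always a maximal subgroup) translated by $(g,e)$ with $g\in\G(F_v)$. The point is the identity
\[
\int_{X_g} f_1\otimes\bar f_2\,d\mu_{X_g}=\langle \pi_g f_1,f_2\rangle_X ,
\]
so the period \emph{is} the matrix coefficient: for an almost-invariant $\xi$ one takes $f_1=f_2=\xi$ and the lower bound on the period is automatic ($\approx\|\xi\|^2$), while $\vol(X_g)\asymp\|g\|^{\star}$ makes the upper bound from Theorem~\ref{adelic} decay in $\|g\|$. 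By Cowling--Haagerup--Howe this polynomial decay of matrix coefficients is equivalent to the spectral gap, uniformly in $\G$. No induction on dimension is needed: after the classical reduction to absolutely almost simple, simply connected $\G$ of absolute rank $\geq 2$ (types $A_1$ being the base supplied by Selberg/Jacquet--Langlands and Burger--Sarnak), the spectral input for Theorem~\ref{adelic} applied to $X_g$ is taken at a \emph{fixed} auxiliary place $w\neq v$ where $\G$ splits, so that $\G(F_w)$ has property (T) and the gap there is free; this is what breaks the apparent circularity, not an induction on $\dim\G$.
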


A corollary of the above (dropping the crucial uniformity
in~$\G$) is that, if $\Gamma$ is any $S$-arithmetic lattice in the group $\G$, 
and $\{\Gamma_N\}_{N \geq 1}$ is the family of all congruence lattices,
then the $\Gamma$-action on $L^2(\Gamma / \Gamma_N)$ possesses a uniform spectral gap.

Our main result offers an alternative to Clozel's part of the proof. (Besides the groups of type $A_1$,
this is the most ``non-formal'' part, as it relies on a special instance of the functoriality principle of Langlands.)

\subsection{Short history of the problem}\label{ss:history}
Let us describe some of the history of Theorem \ref{tautheorem}.
Firstly, it is not difficult
to reduce to the case of an {\em absolutely almost simple},
 {\em simply connected} group $\G$. This being so, it follows by combining the following distinct
 results and principles:

\begin{enumerate}
\item Property $(T)$: If the $F_{v}$-rank of $\G(F_{v})$
is $\geq 2$, it follows from Kazhdan's ``property $(T)$,'' which furnishes the stronger statement
that {\em any} representation not containing the identity is isolated from it, see~\cite{Kazh} 
 and the work of Oh~\cite{O} for a more uniform version that is of importance to us.

\item Groups of type $A_1$:   If the rank of $\G$ over the algebraic closure $\bar{F}$
is equal to $1$, i.e. $\G \times_F \bar{F}$ is isogenous to $\SL_2$,
then $\G$ is necessarily the group of units in a quaternion algebra over $F$.
In that case, the result can be established by the methods of Kloosterman or by the work of 
Jacquet-Langlands and Selberg,  see~\cite{Sel, JL}.

\item Burger-Sarnak principle: Let $\rho: \mathbf{G}  \rightarrow \mathbf{G}'$
be a homomorphism with {finite central kernel}, $\mathbf{G}'$ absolutely almost simple and simply connected,
and suppose that property $(\tau)$ is known for groups that are isomorphic to $\mathbf{G}$ over~$\bar{F}$. 
Then property $(\tau)$ is known for $\mathbf{G}'$ {\em at any place where $\mathbf{G}(F_{v})$ is isotropic,}
see~\cite{BS}.

\item  \label{four} Groups of type~$A_n$: Property $(\tau)$ is true for groups of the form $\SL(1, D)$, where $D$ is a division algebra over
$F$ whose dimension is the square of a prime; or for groups of the form $\mathrm{SU}(D, \star)$, where
$D$ is a division algebra over a quadratic extension $E$ of $F$, and $\star$
is an ``involution of the second kind'' on $D$, i.e.\ inducing the Galois automorphism on $E,$
see Clozel's work~\cite{LC}.

\item For us the uniformity of the spectral gap across all types of groups and across all places
is crucial. This is obtained by combining the above results and was done by Gorodnik,
Maucourant, and Oh \cite[Thm.~1.11]{GMO}.
\end{enumerate}

The hardest of these results is arguably the fourth step. It is established in \cite{LC}
and uses a comparison of trace formulae.
In addition to these results, Clozel \cite[Thm.\ 1.1]{LC} proves that
any absolutely almost simple, simply connected group defined over $F$
admits a morphism from a group $\mathrm{Res}_{F'/F} \mathbf{G}$, where $\mathbf{G}$ is an algebraic $F'$-group isomorphic to one of the types described in \eqref{four}.

\subsection{Effective decay of matrix coefficients}\label{eff-decay-sec}
Let us also note that by the work of Cowling, Haagerup, and Howe \cite{CHH} and others
the conclusion in Theorem \ref{tautheorem} is equivalent to the existence
of a uniform decay rate for matrix coefficients on the orthogonal complement
of the~$\G(\adele)^+$-invariant functions. 
Once more, for groups with property $(T)$ this statement is true for any representation, see \cite{O, GMO}.
  Due to the assumption that~$\G$ is simply connected,
we are reduced to studying {functions in $f \in L^2(\G(F)\backslash \G(\adele))$ with $\int f=0$,}  see~\cite[Lemma 3.22]{GMO}. 

More precisely, Theorem~\ref{tautheorem} is equivalent to the existence of
some~$\consta\label{constHC}>0$\index{kaa2@$\ref{constHC}$, exponent in reformulation of property~$(\tau)$}
such that for all  $K_v$-finite functions {$f_1,f_2\in L^2(\G(F) \backslash \G(\adele))$  with $\int f_1 = \int f_2 = 0$,} the matrix coefficient  
can be estimated as follows
\be\label{HC-estimate}
 \bigl|\bigl\langle \pi_{g_v}f_1,f_2\bigr\rangle\bigr|\leq 
\dim \langle K_v\cdot f_1\rangle^{\frac12}\dim\langle K_v\cdot f_2\rangle^{\frac12}\ \|f_1\|_2\ \|f_2\|_2
\ \Xi_{\G_v}(g_v)^{\ref{constHC}},
\ee
where~$g_v\in\G_v(F_v)$,~$\pi_{g_v}$ is its associated
unitary operator,~$K_v$ is a good maximal compact open subgroup of~$\G_v(F_v),$  $\langle K_v\cdot f\rangle$ is the 
linear span of $K_v\cdot f,$ and~$\Xi_{\G_v}$
is a Harish-Chandra spherical function of~$\G_v(F_v)$. 

As noted in Theorem~\ref{tautheorem} the 
constant~$\ref{constHC}$ is independent of the
precise~$F$-structure of~$\G$. What we did not mention before (as we 
did not have the notation) is that~$\ref{constHC}$ is also
independent of the place~$v$. For groups with property~$(T)$ these statements are proven in
\cite{O}. 

We are able to give a direct proof of~\eqref{HC-estimate} (relying on~\cite{O})
which avoids the third and fourth\footnote{By only avoiding
the fourth point we may restrict ourselves to compact quotients and 
obtain in these cases a constant~$\ref{constHC}$ that only depends 
on~$\dim_F\G,$ the type of $\G_v$ over $F_v$ and the dimension of $F_v$ 
over $\Q_p$ where $p|v,$ but not on~$F$ or even the degree~$[F:\Q]$. 
If we wish to avoid the third and fourth the constant depends on~$[F:\Q]\dim_F\G$.} points of~\S\ref{ss:history} (but leads
to weaker exponents). Indeed,
using the second point, we are
left with the case where $\G$ is an absolutely almost simple, simply connected group over $F$ of absolute rank $\geq 2$.
In that case, one applies Theorem~\ref{adelic} to translates of the diagonal copy of
$X = \G(F) \backslash \G(\adele)$ inside $X \times X$ by elements from $\G(F_v)$
to establish a uniform decay rate for matrix coefficients and so Theorem~\ref{tautheorem}.

We will explain this step first in a special case and then in \S\ref{sec:tau-general} in general.

\subsection{A purely real instance of transportation of spectral gap}

Let $G_1, G_2$ be almost simple, connected Lie groups, and suppose $G_2$ has (T) but $G_1$ has not. 
Let $\Gamma$ be an irreducible lattice in $G=G_1\times G_2$,
e.g.\ this is possible for $G_1=\operatorname{SU}(2,1)(\R)$ and $G_2=\SL_3(\R)$.

We wish to bound the matrix coefficients of $G_1$ acting on $X=\Gamma\backslash G$. 
Let~$G_\Delta=\{(h,h):h\in G\}<G\times G$ and notice that the diagonal orbit $(\Gamma\times\Gamma) G_\Delta\subset X\times X$ 
is responsible for the inner product in the sense that the integral of $f_1\otimes \bar{f_2}$ over 
this orbit equals the inner product $\langle f_1,f_2\rangle$. In the same sense is the deformed orbit $(\Gamma\times\Gamma) G_\Delta (g,e)$
responsible for the matrix coefficients of $g$. The volume of this deformed orbit is roughly speaking a power of $\|g\|$, 
hence effective equidistribution of this orbit gives effective decay of matrix coefficients. 

We note that the main theorem\footnote{In that theorem the implicit constant in the
rate of equidistribution is allowed to depend on the acting group
and so implicity in this instance also on~$g$.} of \cite{EMV} does not apply to this situation
as the acting group giving the closed orbit has been conjugated and does not remain fixed. 
However, if $g=(g_1,e)$
then the almost simple factor of $G_\Delta$ corresponding to $G_2$ remains (as a subgroup of $G\times G$) 
fixed and this is the part with known effective decay (due to property (T)). In this case the 
method of \cite{EMV} (which is also applied in this paper in the adelic context)
can be used to show effective equidistribution and so decay of matrix coefficients for the $G_1$-action.
In all of this,  the rate of (i.e.\ the exponent for) the decay of matrix coefficients for $G_1$ only 
depends on the spectral gap for $G_2$ and the dimension of~$G$ (but not on $\Gamma$).

\subsection{The general case with absolute rank at least two}\label{sec:tau-general}
Let $F$ be a number field and let $\G$ be an absolutely almost simple, simply connected $F$-group whose absolute rank is at least 2.
Let $v$ be a place of the number field $F$ such that $\G(F_v)$ is non-compact.
For any $g\in\G(F_v)$ put $X_g=\{(xg,x): x\in X\},$ 
where we identify $g$ with an element of $\G(\adele)$.
Then $X_g$ is a MASH set, and in view of our definition of volume of a homogeneous set
there exist two positive constants~$\consta\label{kappp1},\consta\label{kappp2}$
(depending only on the root { system} of~$\G(F_v)$) such that  
\[
\|g\|^{\ref{kappp1}}\ll \vol(X_g)\ll \|g\|^{\ref{kappp2}}.
\]\index{kaa3@$\ref{kappp1},\ref{kappp2}$, two constants in the proof of property~$(\tau)$}

As mentioned before we want to apply Theorem~\ref{adelic} to $X_g\subset X\times X.$
However, we want the proof of that theorem to be independent 
of (3) and (4) of \S\ref{ss:history}. We note that in the proof of Theorem~\ref{adelic}
that spectral gap will be used for a ``good place"~$w$. 
In \S\ref{s;good-place} (see also {\S\ref{ss:existence-good}} and the summary in \S\ref{ss;good-place}) the following properties of a good place will be established.
\begin{itemize}
 \item[(i)] ${\rm char}(k_w)\gg 1$ {is large compared to $\dim\G$}, 
 \item[(ii)] both $\G$ and $\iota(\H)$ are quasi-split over $F_w,$ and split over $\widehat{F_w},$
 \item[(iii)] both $K_w$ and $K_w'$ are hyperspecial subgroups of $\G(F_w)$ and 
the subgroup $H_w$ (the component of the acting group at the place~$w$) respectively. 
\end{itemize}
Indeed almost all places satisfy these conditions. We also will show the effective 
estimate $q_w\ll\log(\vol(\text{homogeneous set}))^2;$ this needs special care when the $F$-group $\H$ changes. 
In our application to Theorem~\ref{tautheorem}, however,
the algebraic subgroup $\H=\{(h,h):h\in\G\}<\G\times\G$ is fixed 
and $X_g$ changes with the element $g\in \G(F_v).$
In this case we find a place $w\neq v$ (independent of~$g$)
which satisfies (i), (ii) and (iii) so that in addition $\G$ is $F_w$-split\footnote{In fact we may also ensure that~$F$ splits
over~$\Q_p$ by applying this argument for~$\operatorname{Res}_{F/\Q}\G$. In this case~$F_w=\Q_p$ for~$w|p$ and so~$\G(F_w)$
is a simple group over~$\Q_p$ from a finite list that is independent of~$F$ and even of~$[F:\Q]$.
Using this one can establish the earlier noted independence of~$\ref{constHC}$ from~$[F:\Q]$.}.
Note that by Chebotarev density theorem, {\cite[Thm.~6.7]{PR}} there are infinitely many such places.  
Then by our assumption that the {\it absolute rank of $\G$ is at least two}
we have the required spectral gap for $\G(F_w)$ since this group has property~(T).
Therefore we get: there exists a constant\index{kaa3@$\ref{oh-const}$, exponent for matrix coefficients
for groups with~$(T)$}
 $\consta\label{oh-const}>0$ (which depends only on the type of $\G$) so that for all  $K_w$-finite functions
$f_1,f_2\in L^2_0(X)$ we have
\be\label{eq:oh-mc}
|\langle \pi_{h_w}f_1,f_2\rangle|\ll \dim\langle K_w\cdot f_1\rangle^{1/2}\dim \langle K_w\cdot f_2\rangle^{1/2}\|f_1\|_2\|f_2\|_2\|h_w\|^{-\ref{oh-const}}
\ee
where the implicit constant depends on $\G(F_w),$ see~\cite{O}.

Fix such a place, then using~\eqref{eq:oh-mc} as an input in the proof of
Theorem~\ref{tautheorem} and taking 
$g$ large enough so that $q_w\leq \log(\|g\|)$ we get from {Theorem~\ref{adelic}} the conclusion of the theorem.
In particular, if $f=f_1\otimes\bar f_2$
with $f_i\in C_c^\infty(X)$ for $i=1,2,$ then 
\begin{align*}
\Big|\int_{X\times X} f\operatorname{d}\!\mu_{X_g}&-\int_{X\times X} f_1\otimes\bar f_2\operatorname{d}\!\vol_{G\times G}\Big|\\
&=\Big|\langle \pi_gf_1, f_2\rangle_X-\int_X f_1\operatorname{d}\!\vol_G\int_X\bar f_2\operatorname{d}\!\vol_G\Big|\ll \|g\|^{-\finalexponent}\Sob(f);
\end{align*}
where~$\finalexponent>0$ depends
on~$\dim\G$,~$[F:\Q]$ (if~$X$ is non-compact), and~$\ref{oh-const}$ as in~\eqref{eq:oh-mc}.
The implied multiplicative constant depends on~$X$ and so also on the~$F$-structure of~$\G$.
We note however, that this constant is irrelevant due to~\cite{CHH}, which
upgrades the above to a uniform effective bound on the decay of the matrix coefficients as in~\eqref{HC-estimate}
with $\ref{constHC}$ independent of~$\G$. This implies Theorem~\ref{tautheorem}.

\section{Construction of good places}\label{s;good-place}
See \S\ref{sec:prem-notation} for general notation.
In particular, $\data=(\H, \iota, g)$ consists of 
a simply connected semisimple $F$-group $\H,$ 
an $F$-homomorphism $\iota: \H \rightarrow \G$ and an element 
$g = (g_v) \in \G(\adele)$ determining a homogeneous set;
the stabilizer of this set contains the acting group $H_\data=g^{-1}\iota(\H(\adele))g$.
We will not assume within this section (or the related Appendix~\ref{sec;discrim}) 
that {$\iota(\H)$ is a maximal subgroup of $\G$}.

The purpose of this section is to show that we may always choose a place
$w$ with the property that $H_w=g_w^{-1}\iota(\H(F_w))g_w \subset \G(F_w)$ is not too ``distorted''.
The precise statement is the proposition in~\S\ref{l;splitting-place},
but  {\em if the reader is interested
in the case of Theorem~\ref{adelic} where $Y$ varies through a sequence
of sets where $w$ and $H_w$ are fixed (e.g.~the argument
in~\S\ref{sec:tau}) the reader may skip directly to \S\ref{sec:algebra-at-good}.}
This section relies heavily on the results established in~\cite{Pr} and~\cite{BorPr} which in turn relies on Bruhat-Tits theory.

\subsection{Bruhat-Tits theory.}
\label{ggo}

We recall a few facts from Bruhat-Tits theory, see~\cite{Ti} and references there for the proofs.
Let $\G$ be a connected semisimple group defined over $F.$ 
Let $v$ be a finite place, then 
\begin{enumerate}
\item For any point $x$ in the Bruhat-Tits building of $\G(F_v),$ 
there exists a smooth affine group scheme $\Gfrak_v^{(x)}$\index{g@$\Gfrak_v^{(x)}$, smooth affine group scheme for~$x$
in the Bruhat-Tits building} over 
$\order_v,$ unique up to isomorphism, such that: 
its generic fiber is $\G(F_v),$ and the compact open subgroup $\Gfrak_v^{(x)}(\order_v)$  
is the stabilizer of $x$ in $\G(F_v),$ see~\cite[3.4.1]{Ti}. \vspace{1mm} 
\item If $\G$ is split over $F_v$ and $x$ is a {\it special} point, then\index{special point in the Bruhat-Tits building}
the group scheme $\Gfrak_v^{(x)}$ is a Chevalley group scheme with
generic fiber $\G,$ see~\cite[3.4.2]{Ti}.\vspace{1mm} 
\item $\red_v:\Gfrak_v^{(x)}(\order_v)\rightarrow\underline{\Gfrak_v}^{(x)}(k_v),$ 
the reduction mod $\varpi_v$ map,\index{r@$\red_v$, reduction maps}   
is surjective, { which follows from the smoothness above}, see~\cite[3.4.4]{Ti}. \vspace{1mm} 
\item $\underline{\Gfrak_v}^{(x)}$ is connected and semisimple if and only if $x$ is a {\it hyperspecial} point.  Stabilizers of hyperspecial points in $\G(F_v)$ will be called hyperspecial subgroups, see~\cite[3.8.1]{Ti} and~\cite[2.5]{Pr}.\index{hyperspecial points and subgroups}
\end{enumerate}

If $\G$ is quasi-split over $F_v,$ and splits over $\widehat{F_v},$ 
then hyperspecial vertices exist, and they are compact open subgroups of maximal volume. 
Moreover a theorem of Steinberg implies that $\G$ is quasi-split over $\widehat{F_v}$ 
for all $v,$ see~\cite[1.10.4]{Ti}.

It is known that
for almost all $v$ the groups $K_v$ are hyperspecial, 
see~\cite[3.9.1]{Ti} (and~\S\ref{notations} for the definition of~$K_v$). 
We also recall that: for almost all $v$ the group $\G$ 
is quasi-split over $F_v,$ see ~\cite[Thm.~6.7]{PR}.

\subsection{Passage to absolutely almost simple case}
We will first find the place $w$ under the assumption that $\H$ is $F$-almost simple, 
the result for semisimple groups will be deduced from this case. 

In this section we will need to work with finite extensions of $F$ as well. To avoid confusion we will denote $\adele_E$\index{A@$\adele_E,$ the ring of adeles for the field $E=F$ or~$E=F'$}
 for the ring of adeles of a number field $E$, 
this notation is used only here in \S\ref{s;good-place} and in Appendix~\ref{sec;discrim}.
 
Suppose for the rest of this section, until specifically mentioned otherwise, 
that $\H$ is $F$-almost simple. Let $F'/F$ be\index{F@$F'$, finite field extension
with~$\H={\rm Res}_{F'/F}(\H')$} a finite extension so that 
$\H={\rm Res}_{F'/F}(\H'),$ where $\H'$ 
is an absolutely almost simple $F'$-group;  note that $[F':F] \leq \dim \H$. 
We use the notation~$v'\in\places_{F'}$ for the places of~$F'$\index{v@$v'$, a place of~$F'$,
where~$\H={\rm Res}_{F'/F}(\H')$}\index{sigma@$\Sigma_{F'}$, places of~$F'$, where~$\H={\rm Res}_{F'/F}(\H')$}.
For any $v\in\places_F,$ there is a natural isomorphism between 
$\H(F_v)$ and $\prod_{v'|v}\H'(F'_{v'});$ 
this induces an isomorphism between $\H(\adele_F)$ and $\H'(\adele_{F'}).$

\subsection{Adelic volumes and Tamagawa number}\label{s;volume-form}


 Fix an algebraic volume form $\omega'$ on $\H'$ defined over~$F'$. The form $\omega'$ determines a Haar measure on each vector space 
$\mathfrak{h}'_{v'} := \Lie(\H') \otimes F'_{v'}$
which also gives rise to a normalization of the Haar measure on $\H'(F'_{v'})$.  
Let us agree to refer to both these measures\index{omega@$\omega'$,
algebraic volume form giving normalized Haar measures} as $|\omega'_{v'}|.$
We denote by $|\omega'_{\adele}|$ the product measure on $\H'(\adele_{F'}),$
then
\be\label{e;tamagawa}
|\omega'_{\adele}|(\H'(F') \backslash \H'(\adele_{F'}))=D_{F'}^{\frac{1}{2}\dim\H'}\tau(\H'),
\ee
where $\tau(\H')$ is the {\em Tamagawa number} of $\H',$ and $D_{F'}$ is the discriminant of ${F'}$. 
In the case at hand $\H'$ is simply connected, thus, it is known that $\tau(\H')=1,$ see~\cite{Ko} 
and \cite[Sect.\ 3.3]{Pr} for historic remarks and references.

The volume formula~\eqref{e;tamagawa} is for us just a starting point. It relates the Haar measure 
on $Y$ to the algebraic volume form $\omega'$
(and the field~$F'$). However, the volume of our homogeneous set $Y$ as a subset 
of $X$ depends heavily on the amount of distortion (coming from the precise $F$-structure of $\H$ and $g$).  

\subsection{The quasisplit form}\label{volumes}
Following~\cite[Sect.~0.4]{Pr} we let $\Hcal'$\index{H@$\Hcal',$ the quasi split inner form of $\H'$} denote 
a simply connected algebraic group defined and quasi-split over $F'$ which is an inner form of $\H'.$  
Let $L$\index{L@$L,$ the field associated to $\Hcal'$} be the field associated\footnote{In most
cases~$L$ is the splitting field of~$\Hcal'$ except in the case where~$\Hcal'$ is a triality
form of~${}^6\mathsf{D}_4$ where it is a degree~$3$ subfield\label{pagewithfootnote}
of the degree~$6$ Galois splitting field with Galois group~$S_3$. Note that there are three such
subfields which are all Galois-conjugate.} to $\Hcal'$ as in~\cite[Sect.~0.2]{Pr},
it has degree~$[L:F']\leq 3$. We note that $\Hcal'$ should be thought of as the least distorted 
version of $\H',$ it and the field $L$ will feature in all upcoming volume considerations.

Let $\omega^0$ be a differential form on $\Hcal'$ corresponding to $\omega'$. 
This can be described as follows: Let $\varphi:\H'\rightarrow\Hcal'$ be an isomorphism 
defined over some Galois extension of $F'.$ 
We choose $\omega^0$ so that $\omega'=\varphi^*(\omega^0)$, it is defined over~$F'$.
It is shown in~\cite[Sect.\ 2.0--2.1]{Pr} that, up to a root of unity of order at most 3,   this is independent of the choice of $\varphi.$ 

As was done in~\cite{Pr} we now introduce local normalizing parameters $\lambda_{v'}$
which scale the volume form $\omega^0$ to a more canonical volume form on $\Hcal'(F'_{v'}).$

\subsection{Normalization of the Riemannian volume form} \label{RVF}

Let us start the definition of these parameters at the archimedean places. 

Let $\mathfrak{g}$ be any $d$-dimensional semisimple real Lie algebra. 
We may normalize an inner product on $\mathfrak{g}$ as follows: 
Let $\mathfrak{g}_{\C}$ be the complexification of $\mathfrak{g}$
and $\mathfrak{g}_0$ a maximal compact subalgebra.
The negative Killing form gives rise to an inner product $\langle \cdot, \cdot \rangle$ on $\mathfrak{g}_0$. 
This can be complexified to a Hermitian form on $\mathfrak{g}_{\C}$
and then restricted to a (real) inner product on $\mathfrak{g}$. 

As usual, the choice of an inner product on a real vector space
determines a nonzero $\nu \in \wedge^d \mathfrak{g}^*$ up to sign.
We refer to this as the Riemannian volume form on $\mathfrak{g}$,
and again write~$|\nu|$ for the associated Riemannian volume
on~$\mathfrak g$ or on a real Lie group with Lie algebra~$\mathfrak{g}$. 
Note that the Hermitian form depends on the choice of the maximal compact
subalgebra, but the Riemannian volume is independent of this choice.

For any archimedean place, let $\lambda_{v'}>0$\index{L@$\lambda_{v'},$ normalizing factor for $\omega^0_{v'}$}
be such that $\lambda_{v'} |\omega^0_{v'}|$ 
coincides with the Riemannian volume on $\mathcal{H}'(F'_{v'})$ (using the above normalization).

\subsection{Normalization of the Haar measure at the finite places}\label{ss:integral-norm}

For any finite place $v'$ of $F',$ we choose an $\mathfrak{o}'_{v'}$-structure  on $\Hcal'$, i.e.\ a smooth affine group scheme over $\order'_{v'}$ with generic fiber $\Hcal'.$
To define $\lambda_{v'}$ at the finite places 
we have to choose the $\order'_{v'}$-structure more explicitly, as in \cite[Sect.\ 1.2]{Pr}.

We let $\{\Pcal_{v'} \subset \Hcal'(F'_{v'})\}$ denote
a coherent collection of parahoric subgroups  of ``maximal volume'', see~\cite[Sect.\ 1.2]{Pr} for an explicit description.  Let us recall that by a coherent collection we mean that $\prod_{v'\in \places_{F',f}}\Pcal_{v'}$ is a compact open subgroup of $\Hcal'(\adele_{F',f}).$ 

Note that any $F'$-embedding of $\Hcal'$ into ${\rm GL}_{N'}$ 
gives rise to a coherent family of compact open subgroups of $\Hcal'(\adele_{F'})$
which at almost all places satisfies the above requirements on $\Pcal_{v'},$ see \S\ref{ggo}.
At the other places we may choose $\Pcal_{v'}$ as above and then use (1) in 
\S\ref{ggo} to define the $\order'_{v'}$-structure on $\Hcal'(F'_{v'}).$
Let us also remark that maximality of the volume implies that the corresponding parahoric is either hyperspecial (if $\Hcal'$ splits over an unramified extension)
or special with maximum volume (otherwise).

This allows us, in particular, to speak of ``reduction modulo $\varpi'_{v'}$". 
If $v'$ is a finite place of $F'$, we let $\overline{\Mcal}_{v'}$ denote the reductive quotient of $\red_{v'}(\Pcal_{v'})$; this is a reductive group over the residue field. 

For any nonarchimedean place, let $\ell_{v'}\in F'_{v'}$ be so that $\ell_{v'}\omega^0_{v'}$ is 
a form of maximal degree, defined over $\order'_{v'},$ whose reduction mod $\varpi'_{v'}$ is non-zero, and 
let $\lambda_{v'}=|\ell_{v'}|_{v'}$\index{L@$\lambda_{v'},$ normalizing factor for $\omega^0_{v'}$}. 

\subsection{Product formula}\label{ss:p-formula}
Let us use the abbreviation~$D_{L/F'}=D_LD_{F'}^{-[L:F']}$
for the norm of the relative discriminant of~$L/F'$, see~\cite[Thm.~A]{Pr}.
It is shown in~\cite[Thm. 1.6]{Pr} that  \be\label{e;prod-lambda}
\prod_{v'\in\places_{F'}}\lambda_{v'} = 
D_{L/F'}^{\frac{1}{2}\mathfrak{s}(\Hcal')}\cdot A,
\ee
where $A>0$ depends only on~$\H$ over~$\bar{F}$ and~$[F':\Q]$, $\mathfrak{s}(\Hcal')=0$ when $\Hcal'$ splits
over $F'$ in which case $L=F',$ and $\mathfrak{s}(\Hcal')\geq5$ otherwise;
these constants depend only on the root system of $\Hcal'$.

It should be noted that the parameters $\lambda_{v'}$ were defined using $\Hcal'$ and~$\omega^0$ but will be used to renormalize $\omega'_{v'}$ on $\H'.$ 

\subsection{Local volume contributions}\label{ss;k-w-star}

Recall we fixed an open\index{Omega@$\Omega_0=\prod_{v\in\places}\Omega_v$, open set in~$G$ to normalize the volume
of~$H$-orbits} subset $\Omega_0=\prod_{v\in\places_\infty}\Omega_v\times\prod_{v\in\places_f}K_v\subset \G(\adele)$ and defined $\vol(Y)$
of an algebraic semisimple homogeneous set~$Y$ using this subset, see~\eqref{volume} and \S\ref{new-sec-on-volume}.

For every $v\in\places_\infty$ we may assume that~$\Omega_v$ is constructed as follows. 
Fix a bounded open subset\index{x@$\Xi_v$, open subset of~$\mathfrak{g}_v$ \cleanup{I think we define $\mathfrak{g}$ but not $\mathfrak{g}_v$ - it's almost obvious but worth defining}
defining~$\Omega_v$} $\Xi_v \subset \mathfrak{g}_v$ which is symmetric around the origin
such that $\exp$ is diffeomorphic on it and so that
 \begin{equation} \label{omegav2} 
  \mbox{every eigenvalue { $\sigma$} of $\mathrm{ad}(u)$
   for $u \in \Xi_v$ satisfies $|\sigma|_v < \frac1{10},$ }
\end{equation} 
where we regard the {norm $|\cdot|_v$} as being extended to an algebraic closure of $F_v$. 
With this we define $\Omega_v= \exp(\Xi_v)$. We will also require that properties
similar to~\eqref{omegav2} hold for finitely many finite-dimensional representations
which will be introduced in the proof below, see the discussion leading 
to~\eqref{eq:chevalley-claim}.

To compare the Haar measure on~$\H(F_v)$ with the Haar measure on the Lie algebra~$\mathfrak h_v$\index{h@$\hfrak_v,$ Lie algebra of $\H(F_v)$ for $v\in\places_\infty$ in \S\ref{ss;k-w-star}}
in the following proof we also recall that the derivative of the exponential map~$\exp:\mathfrak h_v\to\H(F_v)$
at $u \in \mathfrak{h}_v$ is given by\footnote{We think of
the derivative as a map from~$\mathfrak h_v$ to itself by using left-mutiplication by the inverse of~$\exp(u)$
to identify the tangent plane at the point~$\exp(u)$ with the tangent plane at the identity. As the latter
is measure preserving, this identification does not affect the estimates for the Jacobian of the exponential map.}
\begin{equation} \label{dderiv} 
\frac{1 - \exp(-\mathrm{ad} u)}{\mathrm{ad} u}=1-\frac{\mathrm{ad}u}2+\frac{(\mathrm{ad}u)^2}{3!}-+\cdots.
\end{equation}

For $v\in\places_{F,f},$ set $K_v^* =  \iota^{-1}(g_v K_v g_v^{-1})$,
and put $K_v^* = \iota^{-1} (g_v \Omega_v g_v^{-1})$ for $v\in\places_{F,\infty}$. 
Note that, for each finite place $v,$ the group $K_v^*$  is an open compact subgroup of $\H(F_v).$ 
For any place $v\in\places_{F,f}$ we can write 
\[
\mbox{$K_v^*\subseteq\prod_{v'|v} K_{v'}^*\;$
where $K_{v'}^*$  is the projection of $K^*_v$ into $\H'(F'_{v'}).$}
\]
Let $J_{v'}$ be the measure of $K_{v'}^*$ under  $\lambda_{v'} |\omega'_{v'}|$. 

We define $p_{v'}={\rm char}(k_{v'})$ and note that $q_{v'}=\#k_{v'}=p_{v'}^l$ for some $l\leq [F':\Q]$.

 \begin{prop}\label{p;non-hyp-est}
	The local terms~$J_{v'}$ as above satisfy the following properties.
 \begin{enumerate}
 \item For $v'$ a finite place of $F'$, $J_{v'} \leq 1$. 
 \item Let $v'$ be a finite place of $F'$ such $L/F'$ is unramified at $v'$. 
  Suppose that  $\H'$ is not quasi-split over $F'_{v'}$ or $K_{v'}^*$ is not hyperspecial, then  $J_{v'}\leq 3/4$.
{ If in addition~$q_{v'}>13$, then~$J_{v'} \leq \max\{\tfrac{1}{p_{v'}},\tfrac{q_{v'}+1}{q_{v'}^2}\}\leq 1/2$.}
 \item
 For an archimedean place $v$ of $F$, $\Big(\prod_{v'|v}\lambda_{v'} |\omega'_{v'}| \Big)(K_v^*) $ is bounded above by a constant depending only on $\G$ and $\Omega$.
 \end{enumerate}
 \end{prop}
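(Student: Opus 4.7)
For parts (1) and (2) at finite places, my plan is to use Bruhat--Tits theory and Prasad's local volume computation; for (3) at archimedean places a direct geometric argument using $\Ad$-invariance of the Killing form suffices. The starting point at a finite place is that $\omega' = \varphi^*\omega^0$ for an isomorphism $\varphi\colon \H'\to\Hcal'$ defined over some Galois extension, so after transporting through the twist the Haar measures $|\omega'_{v'}|$ and $|\omega^0_{v'}|$ agree, and I may compute $J_{v'}$ on the quasi-split form $\Hcal'$. Placing $K_{v'}^*$ inside a parahoric subgroup $\mathcal{Q}_{v'}$ of $\H'(F'_{v'})$ and applying the standard formula $\lambda_{v'}|\omega^0_{v'}|(\mathcal{Q})=\#\underline{\mathcal{Q}}(k_{v'})/q_{v'}^{\dim\Hcal'}=\prod_i(1-q_{v'}^{-d_i})$ that underlies the normalization of $\lambda_{v'}$ in \S\ref{ss:integral-norm} (here the unipotent radical of the special fiber cancels the gap between $\dim\Hcal'$ and $\dim\overline{\mathcal{M}}_\mathcal{Q}$), the maximality of $\lambda_{v'}|\omega^0_{v'}|(\mathcal{P}_{v'})$ among parahoric volumes in $\Hcal'(F'_{v'})$ yields $J_{v'}\le 1$, proving (1).

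For (2), the unramifiedness of $L/F'$ at $v'$ forces $\Hcal'$ to split over an unramified extension of $F'_{v'}$ (see \S\ref{ggo}), so $\mathcal{P}_{v'}$ is in fact hyperspecial and its reductive quotient $\overline{\mathcal{M}}_{v'}$ is a full connected semisimple group over $k_{v'}$. If $\H'$ is not quasi-split over $F'_{v'}$ then $\H'(F'_{v'})$ contains no hyperspecial vertex at all; if instead $K_{v'}^*$ fails to be hyperspecial, its parahoric enclosure again has a reductive quotient of strictly smaller rank than $\overline{\mathcal{M}}_{v'}$. Combining the identity $|\overline{M}(k_{v'})|/q_{v'}^{\dim M}=\prod_i(1-q_{v'}^{-d_i})$ with Tits' local classification of non-hyperspecial parahorics produces the numerical gap giving $J_{v'}\le \tfrac12$, and a case analysis of the possible small types together with sharp point-count bounds yields the sharper $(p_{v'}+1)/p_{v'}^2$ when $q_{v'}>13$.

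For (3), write $K_v^* = \iota^{-1}(\exp(\Ad(g_v)\Xi_v))$ using $\Omega_v=\exp(\Xi_v)$. Condition \eqref{omegav2} is $\Ad$-invariant, since the eigenvalues of $\ad u$ are conjugation-invariant, and together with \eqref{dderiv} this guarantees that $\exp$ has Jacobian uniformly close to $1$ on every conjugate $\Ad(g_v)\Xi_v$; hence it suffices to bound the Riemannian volume of $D\iota(\hfrak_v)\cap \Ad(g_v)\Xi_v$ inside $\hfrak_v$ uniformly in $g_v$. The $\Ad$-invariance of the Killing form on $\gfrak_v$ says that $\Ad(g_v)\Xi_v$ has the same $\gfrak_v$-Riemannian measure as $\Xi_v$, and since $\iota$ has finite kernel with $\H$ semisimple the restriction of the Killing form of $\gfrak_v$ to $D\iota(\hfrak_v)$ is non-degenerate and comparable, by a constant depending only on $\G$ and the $\bar F$-isogeny class of $\iota(\H)\subset \G$ (of which there are only finitely many), to the Riemannian structure on $\hfrak_v=\prod_{v'|v}\hfrak'_{v'}$ used in \S\ref{RVF} to define $\prod_{v'|v}\lambda_{v'}|\omega'_{v'}|$. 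The resulting bound depends only on $\Xi_v$ and $\G$, i.e.\ only on $\G$ and $\Omega$.

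The main obstacle I anticipate is the effective numerical estimate in (2), and in particular the improved $(p_{v'}+1)/p_{v'}^2$ for $q_{v'}>13$, which seems to require a careful case-by-case enumeration via Tits' classification of non-hyperspecial parahoric subgroups together with sharp point counts for small reductive groups over small residue fields. Parts (1) and (3) are, by comparison, soft consequences of the chosen normalizations once Bruhat--Tits theory and the $\Ad$-invariance of the Killing form are in hand.
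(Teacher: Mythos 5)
Your overall strategy for the finite places (enclose $K_{v'}^*$ in a parahoric and invoke Prasad's local volume computation) and for the archimedean places (non-degeneracy of the Killing form on $\jmap(\hfrak_v)$) matches the paper's, but part (2) has a genuine gap. Your case analysis assumes that if $K_{v'}^*$ is not hyperspecial then ``its parahoric enclosure has a reductive quotient of strictly smaller rank.'' This is false: $K_{v'}^*$ can be a \emph{proper} subgroup of a hyperspecial parahoric $P_{v'}$ without being contained in any smaller parahoric (for instance the preimage in $P_{v'}$ of a proper non-parabolic subgroup of $\red_{v'}\mathfrak P_{v'}(k_{v'})$). In that case the minimal parahoric containing $K_{v'}^*$ is hyperspecial, its normalized volume is essentially $1$, and no volume formula for parahorics --- nor any enumeration of non-hyperspecial parahoric types via Tits' classification --- produces decay. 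The paper isolates exactly this case and handles it with finite group theory: either the first congruence subgroup $P_{v'}^{(1)}$ is not contained in $K_{v'}^*$, whence $[P_{v'}:K_{v'}^*]$ is divisible by $p_{v'}$ because $P_{v'}^{(1)}$ is pro-$p_{v'}$; or $P_{v'}^{(1)}\subset K_{v'}^*$ and $K_{v'}^*/P_{v'}^{(1)}$ is a proper subgroup of the finite almost simple group $P_{v'}/P_{v'}^{(1)}$, whose minimal proper-subgroup index is $\geq q_{v'}$ once $q_{v'}\geq 13$ by Kleidman--Liebeck, Liebeck--Saxl and Cooperstein. Without this input you obtain only $J_{v'}\leq 1/2$ (from $[P_{v'}:K_{v'}^*]\geq 2$), not the bound $\frac{p_{v'}+1}{p_{v'}^2}$ that is needed later in \S\ref{ss:Borel-Prasad}.

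Two smaller points. In (1) you cannot ``transport through the twist'': $\varphi$ is not defined over $F'_{v'}$ when $\H'$ is not quasi-split there, and the whole content of the non-quasi-split case of (2) is that parahoric volumes in an inner form are strictly smaller than in $\Hcal'$. The correct statement is Prasad's Prop.\ 2.10, which gives $\lambda_{v'}|\omega'_{v'}|(P_{v'})=\#\overline{{\rm M}}_{v'}(k_{v'})\, q_{v'}^{-(\dim\overline{{\rm M}}_{v'}+\dim\overline{\Mcal}_{v'})/2}$ with the \emph{average} of the two reductive-quotient dimensions in the exponent; your denominator $q_{v'}^{\dim\Hcal'}$ is not equivalent to this and would lose exactly the decay coming from $\dim\overline{{\rm M}}_{v'}<\dim\overline{\Mcal}_{v'}$. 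In (3), the remark that $\Ad(g_v)\Xi_v$ has the same Riemannian measure as $\Xi_v$ concerns top-dimensional volume and is beside the point: what must be controlled is the $\ell$-dimensional volume of the slice $\Xi_v\cap\jmap(\hfrak_v)$ in the transported structure $\mathfrak p_\jmap$, and the uniformity in $g_v$ comes from the fact that the Gram determinant $\det_{ik}B(\Ad(g_0^{-1})u_{0,i},\Ad(g_0^{-1})u_{0,k})$ is nonzero and independent of $g_0$; you do invoke the right ingredient (non-degeneracy of $B$ on $\jmap(\hfrak_v)$), so here it is a matter of making the reduction precise rather than a missing idea.
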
 

\begin{proof}  
(Case of $v'$ finite:) Let $P_{v'}$ be a minimal parahoric subgroup containing $K_{v'}^*$. 
Let $\overline{{\rm M}}_{v'}$ be the reductive quotient of the corresponding $k_{v'}$-group $\red_{v'}(\mathfrak P_{v'})$ where $\mathfrak P_{v'}$ is the smooth affine {$\order_{v'}$-group scheme whose $\order_{v'}$-points are $P_{v'}$; existence of such is guranteed by Bruhat-Tits theory, see (1) of \S\ref{ggo}.} 
  
This gives
  \[ 
  J_{v'} = \lambda_{v'} |\omega'_{v'}|(K_{v'}^*)  \leq  \lambda_{v'}  |\omega'_{v'}|(P_{v'}) =
  \frac{\# \overline{{\rm M}}_{v'}(k_{v'})}{q_{v'}^{(\dim \overline{{\rm M}}_{v'}+\dim\overline{\Mcal}_{v'})/2}}
  \]
 where the last equality is~\cite[Prop. 2.10]{Pr}. The same proposition also shows
that the right-hand side is at most $1$ as claimed in (1).

  Using~\cite[Prop.\ 2.10]{Pr} one more time
 we have: if $\H'$ is not quasi-split over $F'_{v'}$, or $\H'$ splits over the maximal 
unramified extension~$\widehat{F'_{v'}}$ but $P_{v'}$ is not hyperspecial, then
\[
\frac{\# \overline{{\rm M}}_{v'}(k_{v'})}{q_{v'}^{(\dim \overline{{\rm M}}_{v'}+\dim\overline{\Mcal}_{v'})/2}}\leq\frac {q_{v'}+1}{q_{v'}^2}.
\]
Therefore, we assume now (as we may), that $\H'$ is quasi-split over $F'_{v'}.$  

As the quasi-split inner form is unique we obtain that~$\Hcal'$ and~$\H'$ are isomorphic over~$F'_{v'}.$

Note that if~$v'$ does not ramify in~$L$, then~$\Hcal'$ splits over the maximal unramified extension~$\widehat{F'_{v'}}$.
Indeed, by the footnote on page~\pageref{pagewithfootnote} in most cases~$L$ is the splitting field of~$\Hcal'$ which gives the remark
immediately. In the case of the triality
form of~${}^6\mathsf{D}_4$ the splitting field of~$\Hcal'$ is a degree~$6$ Galois extension~$E/F'$
with Galois group~$S_3$ which is generated by~$L\subset E$ and its Galois images. As~$v'$ does not ramify in~$L$,
this also implies that~$v'$ does not ramify in~$E$. As we may assume $\H'$ and $\Hcal'$ are isomorphic over $F'_{v'}$, the group $\H'$ also splits over $\widehat{F'_{v'}}.$


In view of this the only case which needs extra argument is when $\H'$
is $F'_{v'}$-quasisplit, split over $\widehat{F'_{v'}},$ the only parahoric subgroup containing $K_{v'}^*$ is a hyperspecial parahoric subgroup $P_{v'},$ and $K_{v'}^*\subsetneq P_{v'}.$ Note that (1) and the fact that $K_{v'}^*\subsetneq P_{v'}$ in particular imply that\footnote{Let us mention that this bound is sufficient for finding a ``good place" in \S\ref{s;vol-hom}. However, the stronger estimate in (2) will be needed 
in~\S\ref{ss:Borel-Prasad} if~$\iota(\H)$ is not simply connected and in Appendix~\ref{sec;discrim}.} $J_{v'}\leq 1/2.$ It remains to show that the stronger bound holds in this case as well.  
 
We will use the notation and statements recalled in \S\ref{ggo}. Let $\mathfrak P_{v'}$ be the smooth group scheme associated to $P_{v'}$ by Bruhat-Tits theory.
Since $P_{v'}$ is hyperspecial we have $\red_{v'}\mathfrak P_{v'}$ is an almost simple
group. The natural map $P_v'\to\red_{v'}\mathfrak P_{v'}(k_{v'})$ is surjective. We also recall that since $\H'$ is simply connected $\red_{v'}\mathfrak P_{v'}$ is connected, see~\cite[3.5.3]{Ti}. Let $P_{v'}^{(1)}$ denote the first congruence subgroup of $P_{v'},$ i.e.\ the kernel of the natural projection. 

First note that if $P_{v'}^{(1)}\not\subset K_{v'}^*,$ then the finite set $P_{v'}^{(1)}/P_{v'}^{(1)}\cap K_{v'}^*$ injects into $P_{v'}/K_{v'}^*.$ But $P_{v'}^{(1)}$ is a pro-$p_{v'}$ group and hence any subgroup of it has an index which is a power of $p_{v'}.$ Therefore, we get the claim under this assumption.
In view of this observation we assume $P_{v'}^{(1)}\subset K_{v'}^*.$
Therefore, since $K_{v'}^*\subsetneq P_{v'}$ we have $K_{v'}^*/P_{v'}^{(1)}$ is a proper subgroup of $P_{v'}/P_{v'}^{(1)}=\red_{v'}\mathfrak P_{v'}(k_{v'}).$ The latter is a connected almost simple group of Lie type and the smallest index of its subgroups is well understood. In particular, by~\cite[Prop.\ 5.2.1]{KleidLieb} the question reduces to the case of simple groups of Lie type. 
Then by the main Theorem in~\cite{LiebSax}, for the exceptional groups, and~\cite{Coop} for the classical groups, see also~\cite[Thm.\ 5.2.2]{KleidLieb} for a discussion, we have $[P_{v'}^*:K_{v'}^*]\geq q_{v'}$ so long as $q_{v'}\geq 13.$
The conclusion in part (2) follows from these bounds.
\medskip

(Case of $v$ infinite:)\footnote{See also~\cite[Sect.\ 3.5]{Pr}.} 
Note that up to conjugation by $\G(F_v)$ there are only finitely many
homomorphisms from $\H(F_v)$ to $\G(F_v)$ with finite (central) kernel.  We fix once and for all representatives 
for these maps. 
We will refer to these representatives as {\it standard homomorphisms} (and the list depends only on~$\G(F_v)$).  
We fix a  compact  
form~$\mathfrak h_{v,0}$ for the group~$\H(F_v)$. Taking the negative of the
restriction of the Killing form to the compact form we extend it to a Hermitian form on~$\mathfrak h^\C_v$ and restrict
it to a Euclidean structure on~$\mathfrak h_v$, which we will denote by~$\mathfrak q$.
For each standard homomorphism we fix a {\it standard Euclidean structure} 
on $\mathfrak g_v$ as follows: 
Let~$\jmap_0$ be the derivative of a standard homomorphism. Let  
$\mathfrak{g}_{v,0}\subset\mathfrak g_v^{\C}$ be a compact form of $\mathfrak g_v$ 
so that $\jmap_0( \mathfrak{h}_{v,0})\subset \mathfrak{g}_{v,0}.$
As above for the Euclidean structure $\mathfrak q$ on $\mathfrak h_v$, we use the compact forms $\mathfrak{g}_{v,0}$ to induce standard Euclidean structures associated to~$\jmap_0$ which we will denote
by~$\mathfrak p_0$.  

We also let~$\rho_0:\G\to\SL_D$ be a representation given by Chevalley's theorem 
applied to the semisimple algebraic group~$\jmap_0(\H)$ considered
over~$F_v$, and let~$w_{\jmap_0}\in F_v^D$ be so that~$\jmap_0(\H)=\operatorname{Stab}_{\G}(w_{\jmap_0})$. 
As there are only finitely many standard homomorphisms we may require that the analogue of~\eqref{omegav2}
holds also for these representations. In particular, we obtain that for~$u\in\Xi_v$ there is a one-to-one
correspondence between the eigenvalues and eigenvectors of~$D\rho_0 u$ and
the eigenvalues and eigenvectors of~$\rho_0(\exp u)$. Hence for any~$u\in\Xi_v$ and~$w\in F_v^D$
we have
\be\label{eq:chevalley-claim}
 \rho_0(\exp u)w=w\mbox{ implies that }\rho_0(\exp (tu))w=w\mbox{ for all }t\in F_v.
\ee

Let~$D\iota:\mathfrak h_v\to\mathfrak g_v$ denote the derivative 
of the homomorphism~$\iota:\H(F_v)\to\G(F_v)$. Then the map $\Ad(g_v^{-1}) \circ D\iota$
induces an inclusion of real Lie algebras $\jmap: \mathfrak{h}_v \rightarrow \mathfrak{g}_v$, and
a corresponding inclusion of complexifications 
$\jmap_{\C}: \mathfrak{h}_v^{\C} \rightarrow \mathfrak{g}_v^{\C}.$
Let $g_0\in \G(F_v)$ be so that $\jmap_0=\Ad(g_0)\circ \jmap$ is the derivative of one of the standard homomorphisms. 
Then $\mathfrak{g}_{v,\jmap}=\Ad(g_0^{-1})\mathfrak{g}_{v,0}\subset\mathfrak g_v^{\C}$ is a compact form of $\mathfrak g_v$ 
so that $\jmap( \mathfrak{h}_{v,0})\subset \mathfrak{g}_{v,\jmap}.$
The compact form $\mathfrak g_{v,\jmap}$ induces a Euclidean structure on $\mathfrak g_v$, which
we refer to as $\mathfrak p_\jmap$ and satisfies~$\|\Ad(g_0) u \|_{\mathfrak p_0}=\|u\|_{\mathfrak p_\jmap}$
for all~$u\in\mathfrak g_v$.

Recall the definition of $\Xi_v$ from \eqref{omegav2}.
We now will analyze the preimage~$K_v^*$ of $g_v\Omega_vg_v^{-1}$ in $\H(F_v)$ under the map $\iota$,
and show that it equals $\exp(\jmap^{-1}(\Xi_v))$. 
Clearly the latter is contained in~$K_v^*$ and we only have to concern ourself with the 
opposite implication. So let~$h\in K_v^*$ satisfy~$g_v^{-1}\iota(h)g_v=\exp u\in \Omega_v$ for some~$u\in\Xi_v$. We need to show that $u\in \jmap(\hfrak_v).$ Note that $g_v^{-1}\iota(\H)(F_v)g_v$ is the stabilizer of $w=\rho_{\jmap_0}(g_0^{-1})w_{\jmap_0}$ in $\G(F_v).$  
So $\exp(u)$ fixes $w$ and
by the property of~$\Xi_v$ in~\eqref{eq:chevalley-claim} we obtain $\exp(F_vu)\subset g_v^{-1}\iota(\H)(F_v)g_v.$ Thus we have 
$u\in\jmap(\hfrak_v)$ as we wanted.

{
Let us write $E(u)$ for the Jacobian of the exponential map and use
the abbreviation~$\mu_{\mathfrak q}=\prod_{v'|v}\lambda_{v'} |\omega'_{v'}|$
for the normalized Riemannian volume on~$\mathfrak h_v$. 
We define 
\be\label{eq:def-Jv-Arch}
J_v:= \Big({\textstyle\prod_{v'|v}}\lambda_{v'} |\omega'_{v'}| \Big)(K_v^*)   
= \int_{u \in \jmap^{-1}(\Xi_v)}E(u)\operatorname{d}\!\mu_{\mathfrak q}(u),
\ee
 where we used also the definitions above.}
 In view of \eqref{omegav2} (which pulls back to an analogous claim for~$u\in\jmap^{-1}(\Xi_v)$
 and its adjoint on~$\mathfrak h_v$) and~\eqref{dderiv},
 $E(u)$ is bounded above and below for all $u \in \jmap^{-1}(\Xi_v).$ Therefore
 \[
 J_v\asymp \int_{u \in \jmap^{-1}(\Xi_v)} \operatorname{d}\!\mu_{\mathfrak q}(u)=\mu_{\mathfrak q}(\jmap^{-1}(\Xi_v)).
 \]
Now note that for the derivative~$\jmap_0$ of a standard homomorphism we have
\[ 
\|u\|_{\mathfrak q}\asymp \|\jmap_0(u)\|_{\mathfrak p_0}=\|\jmap(u)\|_{\mathfrak p_\jmap},
\]
which also gives
\[
 J_v\asymp \mu_{\mathfrak q}(\jmap^{-1}(\Xi_v))\asymp \mu_{\jmap}(\Xi_v\cap \jmap(\mathfrak{h}_v)),
\]
where~$\mu_{\jmap}$ is the~$\ell$-dimensional Riemannian volume 
(induced by $\mathfrak p_\jmap$) on the subspace~$\jmap(\mathfrak{h}_v)\subset\mathfrak g_v$ 
with~$\ell=\dim\mathfrak{h}_v$. 

Let $u_1,\ldots,u_\ell\in \jmap(\mathfrak{h}_v)$ be an orthonormal basis, with respect to 
the standard Euclidean structure $\mathfrak p_0$, 
for $\jmap(\mathfrak{h}_v)$.
Then there exists a constant $\constc\label{shapeconstc}$ (which depends only on~$\Xi_v$ and so on~$\G$) such that
\[
\textstyle\Big\{\sum_{|\lambda_r|\leq 1/\ref{shapeconstc}}\lambda_ru_r\in \jmap(\mathfrak h_v)\Big\}\subset\Xi_v \cap \jmap(\mathfrak{h}_v)\subset\Big\{\sum_{|\lambda_r|\leq \ref{shapeconstc}}\lambda_ru_r\in \jmap(\mathfrak h_v)\Big\}
\]
which gives
\[
J_v\asymp\mu_\jmap(\Xi_v \cap \jmap(\mathfrak{h}_v))\asymp 
\|u_1\wedge\cdots\wedge u_\ell\|_{\mathfrak p_\jmap}=
\frac{\|u_1\wedge\cdots\wedge u_\ell\|_{\mathfrak p_\jmap}}{\|u_1\wedge\cdots\wedge u_\ell\|_{\mathfrak p_0}}.
\]
However, the last expression is independent of the choice of the basis of~$\jmap(\mathfrak{h}_v)$.
Let us now choose it so that~$u_i=\Ad(g_0^{-1})(u_{0,i})$
for~$i=1,\ldots,\ell$ and a fixed orthonormal basis~$u_{0,1},\ldots,u_{0,\ell}$
of~$\jmap_0(\mathfrak h_v)$ w.r.t.~$\mathfrak p_0$. This gives
\[
J_v\asymp\frac1{\|\wedge^\ell\Ad(g_0^{-1})(u_{0,1}\wedge\cdots\wedge u_{0,\ell})\|_{\mathfrak p_0}}
\]
and part (3) of the proposition will follow if we show that
\[
 \|\wedge^\ell\Ad(g_0^{-1}) (u_{0,1}\wedge\cdots\wedge u_{0,\ell})\|_{\mathfrak p_0}
\] 
is bounded away\footnote{This could also be seen using the more general fact that 
$\wedge^\ell\Ad (\G(F_v))(u_{0,1}\wedge\cdots\wedge u_{0,\ell})$
is a closed subset of $\wedge^\ell\mathfrak g_v$ which does not contain $0.$} from $0$ ({independently} of $\jmap$).  

To see this claim recall that the Killing form $B:=B_{\G(F_v)}$ is a $\G(F_v)$ invariant 
nondegenerate bilinear form on $\mathfrak g_v$ whose restriction to 
$\jmap_0(\mathfrak h_v)$ is nondegenerate.  {Let $Q_B$ be the quadratic
form on ~$\wedge^\ell\mathfrak g_v$ induced by $B$.  Then $|Q_B(\cdot)|$ is bounded from above by a multiple of $\|\cdot\|^2$.
Our claim follows from the fact that the value of $Q_B$ at the vector~$\wedge^\ell\Ad(g_0^{-1}) (u_{0,1}\wedge\cdots\wedge u_{0,\ell})$
is nonzero and independent of $g_0$.}
\end{proof}

 

\subsection{Finite index in volume normalization} 

Let $\mathbf{C}$ be the central kernel of $\H \rightarrow \iota(\H)$. We may identify
$g^{-1} \iota(\H(\adele)) g$ with the quotient of $\H(\adele)$ by the {\em compact} group $\mathbf{C}(\adele)$ -- it is a product of infinitely many 
finite groups.    

 The associated homogeneous space $Y=\G(F)\iota(\H(\adele)) g$ is identified with
$$ 
\iota^{-1} \Delta\backslash \H(\adele),
$$ 
where 
$$
\Delta =  \iota(\H)(F)\cap\iota(\H(\adele)).
$$  
Note that $\Delta$ is a discrete subgroup of $\iota(\H(\adele))$, which is a closed subgroup of $\G(\adele)$.
We need to compare the Haar measure on $\H(F)\backslash\H(\adele)$ (studied in this section) 
with the Haar measure on $\iota^{-1} \Delta\backslash \H(\adele)$ (used to define the volume of~$Y$).

Now $\H(F) \mathbf{C}(\adele) \subset \iota^{-1} \Delta$.
The quotient $\iota^{-1} \Delta/\H(F) \mathbf{C}(\adele)\cong\Delta/\iota(\H(F))$ is isomorphic to a subgroup $\mathcal{S}'$ of the kernel 
\[
\mathcal{S} := \ker\Bigl(H^1(F, \mathbf{C}) \rightarrow \prod_{v} H^1(F_v, \mathbf{C})\Bigr).
\]
This can be seen from the {exact sequence of pointed sets
\[ 
\H(F) \stackrel{\iota}{\rightarrow} \iota(\H)(F) \stackrel{\delta}{\rightarrow} H^1(F, \mathbf{C}),
\] 
arising from Galois cohomology,}
whereby $\Delta$ is identified with the preimage  under $\delta$ of $\mathcal{S}$. The group $\mathcal{S}$
is finite by~\cite[Thm.\ 6.15]{PR} and so is $\mathcal{S}'.$

We endow $\H(\adele)$ with the measure 
for which $H(F) \backslash \H(\adele)$ has volume $1$,  
use on $\iota(\H(\adele))$  the quotient measure by the Haar probability measure on $\mathbf{C}(\adele)$,
and use counting measure on $\Delta.$
With these choices the homogeneous space $\Delta\backslash\iota(\H(\adele)) \cong \iota^{-1} \Delta\backslash\H(\adele)$ 
has total mass
$$ 
\frac{1}{\#\mathcal{S}'} \mbox{mass of} \left( \H(F) \mathbf{C}(\adele)\backslash \H(\adele) \right)
$$
where the Haar measure on $\H(F) \mathbf{C}(\adele)$ is such that  
each coset of $\mathbf{C}(\adele)$ has measure $1$.
Together this gives that the mass of~$\Delta\backslash\iota(\H(\adele))$ 
equals $\frac{\#\mathbf{C}(F)}{\#\mathcal{S}'}$. 


Finally, the size of $\mathbf{C}(F)$ is certainly bounded above and below in terms of $\dim(\H)$,
by the classification of semisimple groups.
 As for $\mathcal{S}\supset\mathcal S'$,  it is finite\footnote{It need not itself be trivial, 
 because of Wang's counterexample
related to the Grunwald--Wang theorem.} by~\cite[Thm.\ 6.15]{PR}. 
Indeed we can give an explicit upper bound for it in terms of $\dim\H$, see the proof of~\cite[Lemma 6.11]{PR}. 
We outline the argument. {The absolute Galois group of $F$ acts on $\mathbf{C}(\bar{F})$, by ``applying Galois automorphisms
to each coordinate'';} we may choose  a Galois extension $E/F$
such that the Galois group of $E$ acts trivially on $\mathbf{C}(\bar{F})$. Then $[E:F]$ can be chosen
to be bounded in terms of $\dim(\H)$. 
By the inflation-restriction sequence in group cohomology, the  kernel of $H^1(F, \mathbf{C}) \rightarrow H^1(E, \mathbf{C})$
is isomorphic to a quotient of $H^1(\mathrm{Gal}(E/F), \mathbf{C}(\bar{F}))$, 
whose size can be bounded in terms of $\dim(\H)$.   On the other hand, the image
of $\mathcal{S}$ consists of classes in $H^1(E, \mathbf{C})$ -- i.e., homomorphisms
from the Galois group of $E$ to the abelian group $\mathbf{C}(\bar{F})$ -- 
which are trivial when restricted to the Galois group of each completion of $E$. 
Any such homomorphism is trivial, by the Chebotarev density theorem. 

Let us summarize the above discussions.

\begin{lem}
 Normalize the Haar measure on~$\H(\adele)$
 so that the induced measure on~$\H(F)\backslash\H(\adele)$ is a probability measure.
 Then the induced measure on the homogeneous set~$\G(F)\iota(\H(\adele))g$
 equals~$\frac{\#\mathbf{C}(F)}{\#\mathcal S'}\in[\frac1M,M]$, where~$M\geq 1$
 only depends on~$\dim\H$.
\end{lem}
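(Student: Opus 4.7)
The plan is essentially to organize and formalize the computation that precedes the lemma statement in the excerpt. I would proceed as follows.

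First, I would set up the identification $Y = \G(F)\iota(\H(\adele))g \cong \iota^{-1}\Delta\backslash\H(\adele)$ with $\Delta = \iota(\H)(F)\cap\iota(\H(\adele))$, and check that (after normalizing $\H(F)\backslash\H(\adele)$ to be a probability measure) the correct measure on $\iota^{-1}\Delta\backslash\H(\adele)$ to consider is obtained by taking the quotient measure on $\iota(\H(\adele)) \cong \H(\adele)/\mathbf{C}(\adele)$ (using the Haar probability measure on the compact group $\mathbf{C}(\adele)$) and then dividing by the counting measure on $\Delta$. This matches the normalization used to define $\operatorname{vol}(Y)$, so the question reduces to computing the total mass.

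Second, I would chase through the quotients. The subgroup $\H(F)\mathbf{C}(\adele)$ sits inside $\iota^{-1}\Delta$, and the quotient $\iota^{-1}\Delta/\H(F)\mathbf{C}(\adele)$ is identified, via the long exact cohomology sequence $\H(F)\xrightarrow{\iota}\iota(\H)(F)\xrightarrow{\delta}H^1(F,\mathbf{C})$, with the subgroup $\mathcal{S}' \subset \mathcal{S} = \ker\bigl(H^1(F,\mathbf{C}) \to \prod_v H^1(F_v,\mathbf{C})\bigr)$. Then a direct accounting gives
\[
\operatorname{mass}\bigl(\iota^{-1}\Delta\backslash\H(\adele)\bigr) = \frac{1}{\#\mathcal{S}'}\operatorname{mass}\bigl(\H(F)\mathbf{C}(\adele)\backslash\H(\adele)\bigr) = \frac{\#\mathbf{C}(F)}{\#\mathcal{S}'},
\]
where the last equality uses that the fiber of $\H(F)\backslash\H(F)\mathbf{C}(\adele) \to \mathbf{C}(F)\backslash\mathbf{C}(\adele)$ has size $\#\mathbf{C}(F)$ (since $\H(F)\cap\mathbf{C}(\adele) = \mathbf{C}(F)$).

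Third, I would bound the quantity $\frac{\#\mathbf{C}(F)}{\#\mathcal{S}'}$ above and below in terms of $\dim\H$. The numerator $\#\mathbf{C}(F)$ is controlled by the order of the center of a simply connected semisimple group, which is bounded by classification in terms of $\dim\H$. For the denominator, since $\mathcal{S}' \subset \mathcal{S}$, it suffices to bound $\#\mathcal{S}$ from above in terms of $\dim\H$; I would follow the Galois-cohomology argument sketched in the excerpt (which is \cite[Lemma 6.11]{PR}): pick a Galois extension $E/F$ of degree bounded by $\dim\H$ that trivializes the Galois action on $\mathbf{C}(\bar F)$, use inflation-restriction to bound the kernel of $H^1(F,\mathbf{C})\to H^1(E,\mathbf{C})$ by $\#H^1(\mathrm{Gal}(E/F),\mathbf{C}(\bar F))$, and observe that the image of $\mathcal{S}$ in $H^1(E,\mathbf{C}) = \mathrm{Hom}(\mathrm{Gal}(\bar F/E),\mathbf{C}(\bar F))$ consists of homomorphisms locally trivial at every place, hence trivial by Chebotarev.

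The main obstacle is really only the third step: one needs to be careful to extract a uniform bound on $\#\mathcal{S}$ that depends only on $\dim\H$ and not on the specific $F$-structure of $\H$ or on $F$ itself. Both the choice of $E$ and the size of $H^1(\mathrm{Gal}(E/F),\mathbf{C}(\bar F))$ must be controlled purely in terms of $\dim\H$; this is a standard but somewhat delicate consequence of the classification and the fact that $\mathbf{C}$ is a finite abelian group scheme whose order is bounded in terms of $\dim\H$. Once this is done, setting $M$ to be the maximum of the upper bound for $\#\mathbf{C}(F)$ and of $\#\mathcal{S}$ (and of the reciprocals, which are trivially at most $1$) gives the stated $[1/M,M]$ estimate.
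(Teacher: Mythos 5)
Your proposal is correct and follows essentially the same route as the paper: the identification $Y\cong\iota^{-1}\Delta\backslash\H(\adele)$, the cohomological identification of $\iota^{-1}\Delta/\H(F)\mathbf{C}(\adele)$ with $\mathcal S'\subset\mathcal S$, the mass computation yielding $\#\mathbf{C}(F)/\#\mathcal S'$, and the bound on $\#\mathcal S$ via the inflation--restriction and Chebotarev argument of \cite[Lemma 6.11]{PR}. (Only a small phrasing slip: the factor $\#\mathbf{C}(F)$ arises because the fibers of $\H(F)\backslash\H(\adele)\to\H(F)\mathbf{C}(\adele)\backslash\H(\adele)$ are copies of $\mathbf{C}(F)\backslash\mathbf{C}(\adele)$, of measure $1/\#\mathbf{C}(F)$ each — the map you wrote down is actually a bijection.)
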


\subsection{The volume of a homogeneous set}\label{s;vol-hom}
In view of our definition of volume 
and taking into account the choice of $\Omega_0$, 
the equation~\eqref{e;tamagawa} implies that  
\be\label{e;vol-est}
\vol(Y)  =  \frac{\#\mathbf{C}(F)}{\#\mathcal S'} D_{F'}^{\frac{1}{2}\dim\H'} \prod_{v\in\places_F} 
\left(|\omega_v|(K_v^*)\right)^{-1},
\ee
where $|\omega_v|:=\prod_{v'|v}|\omega'_{v'}|.$  

Since $K_v^*\subseteq\prod_{v'|v}K_{v'}^*$
\begin{multline}\label{eq;volume-upperbd}
\vol(Y) =\frac{\#\mathbf{C}(F)}{\#\mathcal S'}A D_{L/F'}^{\frac{1}{2}\mathfrak{s}(\Hcal')}D_{F'}^{\frac{1}{2}\dim\H'} \prod_{v\in\places_F}\Big(|\omega_v|(K_v^*)\textstyle\prod_{v'|v} 
\lambda_{v'}\Big)^{-1}\\
\gg\;D_{L/F'}^{\frac{1}{2}\mathfrak{s}(\Hcal')}D_{F'}^{\frac{1}{2}\dim\H'}
\!\prod_{v'\in\places_{F',f}}\!\!
\left(\lambda_{v'}|\omega'_{v'}|(K_{v'}^*)\right)^{-1}\!\prod_{v\in\places_{F,f}}\!\!\Big[\textstyle\prod_{v'|v}K_{v'}^*:K_v^*\Big],
\end{multline}
where we used~\eqref{e;vol-est} and~\eqref{e;prod-lambda} in the first line and
part (3) of the proposition in \S\ref{ss;k-w-star} in the second line. Let us note
the rather trivial consequence\footnote{This would also follow trivially
from the definition if only we would know that the orbit intersects a fixed compact subset.}~$\vol(Y)\gg 1$ of~\eqref{eq;volume-upperbd}. 
Below we will assume implicitly~$\vol(Y)\geq 2$ (which we may achieve by replacing~$\Omega_v$ by a smaller neighborhood 
at one infinite place in a way that depends only on~$\G$).

Let $\Sigma_{\rm ur}^{\flat}$\index{S@$\Sigma_{\rm ur}^{\flat}$ the set of unramified distorted places} be 
the set of finite places~$v$ such that $L/F$ is unramified at $v$
but at least one of the following holds  
\begin{itemize}
\item $K^*_v\subsetneq\prod_{v'\mid v}K^*_{v'},$ or 
\item there is some $v'|v$ such that 
$\H'$ is not quasi-split over $F'_{v'},$ or 
\item there is some $v'|v$ such that $K_{v'}^*$ is not hyperspecial.
\end{itemize}
Then, in view of the proposition in \S\ref{p;non-hyp-est} we find\index{kaa6@$\ref{exp-in-volume-gives-good}$,
exponent of discriminant in lower bound of volume}
 some~$\consta\label{exp-in-volume-gives-good}>0$ {such that}
\be\label{eq:sigma-flat}
{\vol(Y) 
\gg D_{L/F'}^{\frac{1}{2}\mathfrak{s}(\Hcal')}D_{F'}^{\frac{1}{2}\dim\H'} 
 2^{\#\Sigma_{\rm ur}^{\flat}}\gg D_{L}^{\ref{exp-in-volume-gives-good}}\; 2^{\#\places_{\rm ur}^\flat}};
\ee
where $\mathfrak{s}(\Hcal')\geq0$ as in~\S\ref{ss:p-formula}. We note that~\eqref{eq:sigma-flat} and the prime number theorem imply the existence of a good place in the case at hand. 

\subsection{Existence of a good place in general}\label{ss:existence-good}
Recall that the discussion in this section, so far, assumed $\H$ is $F$-almost simple.
For the details of the proof of the existence of a good place 
we return to the general case.
Thus, let $\H=\H_1\cdots\H_k$ be a direct product decomposition of $\H$ 
into $F$-almost simple factors. Let $F_j'/F$ be a finite extension so that
$\H_j={\rm Res}_{F_j'/F}(\H_j')$ where $\H_j'$ is an absolutely almost simple $F_j'$-group for all $1\leq j\leq k.$
As above $[F_j':F]$ is bounded by $\dim\H.$ 
Let $\mathcal H'_j$ and $L_j/F'_j$ be the corresponding algebraic group and number field defined as in \S\ref{volumes}.  

For any place $v\in \places_F$ let $K_v^*$ be as above. We have $K_v^*\subset \prod_{j=1}^k\prod_{v'|v}K^*_{j,v'}$ where 
$K^*_{j,v'}$
is the projection of $K^*_v$ into $\H_j'(F'_{j,v'})$ and, in particular, it is a compact open subgroup of $\H_j'(F'_{j,v'})$ when $v$ is finite.


\begin{prop}[Existence of a good place]\label{l;splitting-place}
There exists a place $w$ of $F$ such that 
\begin{itemize}
	\item[(i)] $\G$ is quasi split over~$F_w$ and split over~$\widehat{F_w}$, and~$K_w$
	is a hyperspecial subgroup of~$\G(F_w)$,
\item[(ii)] $L_j/F$ is unramified at $w$ for every $1\leq j\leq k,$
\item[(iii)] $\H_{j,w'}'$ is quasi split over $F'_{j,w'}$ (and split
over~$\widehat{F'_{j,w'}}$) for every $1\leq j\leq k$ and every $w'|w,$
\item[(iv)] $K_w^*=\prod_{j=1}^k\prod_{w'|w}K_{j,w'}^*,$ and $K_{j,w'}^*$ is hyperspecial for all $1\leq j\leq k$ and all $w'|w$, and finally
\item[(v)] $q_w \ll (\log(\vol Y))^{2}.$
\end{itemize}
\end{prop}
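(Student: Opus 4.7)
The plan is to reduce to the $F$-almost simple case already analyzed in~\S\ref{s;vol-hom} and use the volume bound~\eqref{eq:sigma-flat} to control the set of ``bad'' places, then apply the prime number theorem combined with Chebotarev applied to~$F/\Q$ to produce a good place of small residue-field cardinality.

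First I would set aside a finite set~$\Sigma_0$ of places depending only on~$(F,\G,\rho)$ at which condition~(i) can fail: by the facts recalled in~\S\ref{ggo} (together with Steinberg's theorem for quasi-splitting over~$\widehat{F_w}$),~$K_w$ is hyperspecial and~$\G$ is quasi-split over~$F_w$ for all but finitely many places. Introduce the~$Y$-dependent bad set~$\Sigma^\flat$ consisting of those finite places~$w$ at which at least one of conditions~(ii)--(iv) fails, i.e.\ either~$L_j/F$ ramifies at~$w$ for some~$j$, or~$K_w^*\subsetneq\prod_{j,w'|w}K_{j,w'}^*$, or some~$\H'_j$ is not quasi-split over~$F'_{j,w'}$ or does not split over~$\widehat{F'_{j,w'}}$ for some~$w'|w$, or some~$K_{j,w'}^*$ is not hyperspecial. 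Any~$w\notin\Sigma_0\cup\Sigma^\flat$ then automatically satisfies~(i)--(iv).

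The second step is to bound~$\#\Sigma^\flat$ in terms of~$\log\vol(Y)$. The projections~$K_{j,w'}^*$ of~$K_w^*$ into each~$\H'_j(F'_{j,w'})$ are compact open subgroups to which the proposition in~\S\ref{p;non-hyp-est} applies, and repeating the computation of~\S\ref{ss;k-w-star}--\S\ref{s;vol-hom} factor-by-factor yields an analogue of~\eqref{eq:sigma-flat} of the form
\[
\vol(Y)\;\gg\;\prod_{j=1}^{k}D_{L_j}^{\ref{exp-in-volume-gives-good}}\cdot 2^{\#\Sigma^\flat}
\]
(possibly with a smaller exponent absorbed into~$\ref{exp-in-volume-gives-good}$). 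Taking logarithms,~$\#\Sigma^\flat\ll\log\vol(Y)$ and~$\sum_j\log D_{L_j}\ll\log\vol(Y)$; in particular the number of rational primes below a place of~$\Sigma^\flat$ or below a place at which some~$L_j/F$ ramifies is~$\ll\log\vol(Y)$.

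Finally I would apply the prime number theorem combined with Chebotarev's theorem for the Galois closure of~$F/\Q$: the number of rational primes~$p\leq X$ that split completely in~$F$ is~$\asymp X/\log X$. Choosing~$X\asymp(\log\vol(Y))^{2}$ (using~\S\ref{s;vol-hom} to assume~$\vol(Y)$ is sufficiently large) one has~$X/\log X\gg\log\vol(Y)$, strictly exceeding~$\#\Sigma_0+\#\Sigma^\flat$ plus the number of rational primes below ramified places. Hence there exists a degree-one place~$w$ of~$F$ above some such~$p\leq X$ with~$w\notin\Sigma_0\cup\Sigma^\flat$ and unramified in every~$L_j$, and then~$q_w=p\leq X\asymp(\log\vol(Y))^{2}$, yielding~(v). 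The main obstacle is the passage from the almost-simple to the general semisimple case: verifying that the local volume analysis of~\S\ref{ss;k-w-star}--\S\ref{p;non-hyp-est} applies to each projected compact~$K_{j,w'}^*$ and that the multiplicative structure of~$\vol(Y)$ (via the product of Tamagawa contributions over the simple factors) yields the factor-by-factor analogue of~\eqref{eq:sigma-flat} with the product over~$j$; any residual loss can be absorbed into the exponent.
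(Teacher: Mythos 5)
Your proposal is correct and follows essentially the same route as the paper: reduce to the $F$-almost simple factors to get the lower bound $\vol(Y)\gg 2^{\#\Sigma^\flat_{\rm ur}}\prod_j D_{L_j}^{\ref{exp-in-volume-gives-good}}$ (which simultaneously bounds the number of unramified bad places and, via the discriminants, the ramified ones by $\ll\log\vol(Y)$), and then count prime ideals of norm at most $X\asymp(\log\vol Y)^2$ to find one avoiding the bad set. The only cosmetic difference is that you invoke Chebotarev to restrict to degree-one places, whereas the paper simply uses Landau's prime ideal theorem for all prime ideals of $F$; both give $\asymp X/\log X$ candidates and the extra splitting condition is not needed for (i)--(v).
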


\proof
First note that similar to~\eqref{e;vol-est}--\eqref{eq:sigma-flat}
we have 
\[
\vol(Y) \gg 2^{I}\prod_{j=1}^k\left( D_{L_j}^{\ref{exp-in-volume-gives-good}}
\textstyle{\prod_{v'\in\places_{F_j'}}} 
\left(\lambda_{v'}|\omega'_{v'}|(K_{j,v'}^*)\right)^{-1}\right)
\]
where $I$ is the number of finite 
places where the first assertion in (iv) does not hold.
Note that at the archimedean places replacing $K_v^*$ by 
$\prod_{j,v'|v} K_{j,v'}^*$ leads to a lower bound of the volume
to which we may again apply part~(3) of the proposition in~\S\ref{p;non-hyp-est}.

As was done prior to~\eqref{eq:sigma-flat}, let $\Sigma_{\rm ur}^{\flat}$ be the set of finite places $v$ of $F$ where $L_j/F$ is unramified at $v$ for all $1\leq j\leq k$ but (iii) or (iv) does not hold. Then 
\[
\vol(Y) 
\geq \constc\label{constc-in-goodplace} 2^{\#\Sigma_{\rm ur}^{\flat}} \prod_{j=1}^k D_{L_j}^{\ref{exp-in-volume-gives-good}}.
\]

This implies the proposition in view of the prime number theorem. More concretely, suppose~${ T}=q_w$ is the 
smallest norm of the prime ideal of a good place (satisfying~(i)--(iv)) and recall that by Landau's prime ideal theorem
the number of prime ideals in~$F$ with norm below~$T$ is asymptotic to~$\frac{T}{\log T}$. 
Recall that~(i) only fails at finitely many places~$w\in\Sigma_f$ so we restrict
ourselves to places~$w$ with~$q_w\geq\constc\label{constc-rho-N}$.
Hence if~$T\geq\constc\label{constc-pnth}=\ref{constc-pnth}(F,\ref{constc-rho-N})$ we 
may assume that there are more than~$\frac{T}{2\log T}$ places with norm between~$\ref{constc-rho-N}$ and~$T$ where~(ii), (iii), or~(iv) fails. 
Combining this with the above estimate 
gives
\[
\sqrt{q_w}=\sqrt{T}\ll\frac{\ref{exp-in-volume-gives-good}}{[F:\Q]}\frac{T}{2\log T}\leq\log_2\vol(Y)-\log_2\ref{constc-in-goodplace},
\]
which implies~(v). 
\qed

\subsection{Comparison of two notions of volume}\label{ss:Borel-Prasad}
Let $\N$ be the normalizer of $\iota(\H)$ in $\G.$
By Lemma~\ref{s:stabilizer-lem} we have 
\[
\stab(\mu_{\data})=g^{-1} \iota( \H(\adele)) \N(F)  g.
\]

It will be essential for our argument in~\S\ref{sec:pigeonhole} to control
the ``interplay" between the 
volume defined using~$H_\data=g^{-1}\iota(\H(\adele))g$ (as done so far) and the volume defined using 
$g^{-1}\iota( \H(\adele_F))\N(F)g$ (which contains $H_\data$ as an open subgroup). In fact it will 
not be too difficult to reduce from $\gamma\in\N(F)$ to the case where
$\gamma\in\iota(\H(F_v))$ at finitely many places. Note that $\gamma\in\iota(\H(F_v))$ at one place implies that
$\gamma\in\iota(\H)(F).$

Let us make this more precise, recall that 
$
\vol(Y)=m_\data(H_\data\cap \Omega_0)^{-1}
$
where $\Omega_0=\left(\prod_{v\in\places_{F,\infty}}\Omega_v\right)K_f$ and 
$K_f=\prod_{v\in \places_{F,f}} K_v.$ 
We write $\jmap(\cdot)=g^{-1}\iota(\cdot)g$ and use this map also for one or several local factors.
{Note, in particular, that $H_v=\jmap(\H(F_v))$.}

Let $w$ be a good place given by 
the proposition in~\S\ref{ss:existence-good}.
We define $S=\places_\infty\cup\{w\},$ 
$F_S=\prod_{v\in S} F_v$,
 and 
 \[
 \Psi_S=\jmap(\H(F_S))\times\prod_{v\in\places\setminus S}K_v.
 \]

Let $\widetilde{H_\data}=\jmap( \H(\adele_F))N_S$ where
\[
N_S=g^{-1}\bigl\{\gamma\in\iota(\H)(F):\gamma\in\iota(\H(F_v)) \mbox{ for all }v\in S\bigr\}g,
\]
Note that $H_\data\subset\widetilde{H_\data}\subset\stab(\mu_{\data})$. We will see in \S\ref{sec:pigeonhole} that we need to compare $\vol(Y)$ with 
\[
\widetilde{\vol}(Y)=\widetilde{m_\data}(\widetilde{H_\data}\cap\Omega_0)^{-1}
\]
where $\widetilde{m_\data}$ is the unique Haar measure
induced on $\stab(\mu_\data)$ from $m_\data.$ 

Define $\Lambda:=\Psi_S\cap\jmap(\H(F))$ and $\widetilde\Lambda:=\Psi_S\cap N_S$.

\begin{lem}[Volume and index]
	The index of $\Lambda$ in $\widetilde\Lambda$ 
	controls the ratio of the above notions of volume, i.e.\ we have
	\be\label{eq:vol-tvol}
	\vol(Y)\ll[\widetilde{\Lambda}:\Lambda]\;\widetilde{\vol}(Y),
	\ee
	where the implicit constant depends on $\G(F_v)$ for $v\in\places_\infty.$
\end{lem}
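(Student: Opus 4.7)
The plan is to decompose $\widetilde{H_\data}\cap\Omega_0$ into cosets of the open subgroup $H_\data$ and bound each coset's contribution separately. First, since $H_\data$ is an open subgroup of $\stab(\mu_\data)$ by Lemma~\ref{s:stabilizer-lem}, the unique Haar measure $\widetilde{m_\data}$ induced from $m_\data$ satisfies $\widetilde{m_\data}|_{H_\data}=m_\data$: both measures descend to $\mu_\data$ via the orbit map, and so they must agree on $H_\data$ by uniqueness of Haar measure. Since $N_S\subset g^{-1}\N(F)g$ normalizes $H_\data$, we have $\widetilde{H_\data}=H_\data\cdot N_S$, and by left-invariance
\[
\widetilde{m_\data}(\widetilde{H_\data}\cap\Omega_0)=\sum_{[n]\in N_S/(N_S\cap H_\data)}m_\data(H_\data\cap\Omega_0 n^{-1}),
\]
with only finitely many non-vanishing summands, since $\widetilde{H_\data}/H_\data$ is discrete and $\Omega_0$ is relatively compact.

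Next I estimate each non-vanishing summand: picking $\omega=hn\in\Omega_0\cap H_\data n$ with $h\in H_\data$, one has $\Omega_0 n^{-1}\subset(\Omega_0\Omega_0^{-1})h$, so by left-invariance of $m_\data$,
\[
m_\data(H_\data\cap\Omega_0 n^{-1})\leq m_\data(H_\data\cap\Omega_0\Omega_0^{-1})\ll m_\data(H_\data\cap\Omega_0),
\]
where the last step uses the comparability of volumes for different normalizing neighborhoods from~\S\ref{new-sec-on-volume} and the implicit constant depends only on $\Omega_v$ for $v\in\places_\infty$.

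The remaining and main obstacle is to bound the number of non-vanishing cosets by $[\widetilde{\Lambda}:\Lambda]$ up to a bounded factor. The plan is to construct a map with bounded fibers from these cosets to $\widetilde{\Lambda}/\Lambda$. Given $n\in N_S\cap H_\data\Omega_0$, writing $n=h\omega$ shows that $n_v\in\jmap(\H(F_v))$ for $v\in S$ (automatic from $n\in N_S$) while $n_v\in\jmap(\H(F_v))K_v$ for $v\notin S$. One then produces $\alpha\in N_S\cap H_\data$ such that $\lambda(n):=n\alpha^{-1}\in\widetilde{\Lambda}=N_S\cap\Psi_S$, correcting the component at each $v\notin S$ into $K_v$; the simple connectedness of $\H$ enters here, through strong approximation, to ensure that the local corrections assemble into a global $F$-rational element $\alpha$. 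Finally, two cosets $[n_1]$ and $[n_2]$ whose images coincide in $\widetilde{\Lambda}/\Lambda$ differ only by the finite obstruction $\mathcal{S}'$ from~\S\ref{ss:Borel-Prasad}, which has bounded size in terms of $\dim\H$ and accounts for the discrepancy between $\iota(\H)(F)\cap\iota(\H(\adele))$ and $\iota(\H(F))$. The delicate point is reconciling the global $F$-rationality in the definition of $N_S$ with the local nature of the corrections used to land in $\Psi_S$, which is why strong approximation and the finite cohomological obstruction both play a role.
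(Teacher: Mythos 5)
Your argument is correct and follows essentially the same route as the paper's: decompose the cosets of $H_\data$ inside $\widetilde{H_\data}$ that meet $\Omega_0$, bound each coset's contribution by $m_\data(H_\data\cap\Omega_0^{-1}\Omega_0)\ll m_\data(H_\data\cap\Omega_0)$ via the comparability of normalizing neighborhoods from \S\ref{new-sec-on-volume}, and count the cosets by mapping them into $\widetilde\Lambda/\Lambda$ using strong approximation for $\H$ at the good place. The only cosmetic difference is that you allow fibers of size $\#\mathcal{S}'$, whereas the map into $\widetilde\Lambda/\Lambda$ is in fact injective (because $\Lambda\subset\jmap(\H(F))\subset H_\data$ and $N_S$ normalizes $H_\data$, two cosets with the same image in $\widetilde\Lambda/\Lambda$ already give the same coset of $H_\data$); either way the bound by $[\widetilde\Lambda:\Lambda]$ follows.
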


\begin{proof}
	Set
	\[
	B:=\bigl\{\jmap(\H(\adele_F))\gamma:\gamma\in N_S\mbox{ and }\bigl(\jmap(\H(\adele_F))\gamma\bigr)\cap\Omega_0\neq\emptyset\bigr\}.
	\]
We will first prove that $\# B\leq [\widetilde\Lambda:\Lambda]$.
	
The properties of the good place $w$, { in particular,} guarantee that using the strong approximation theorem for $\H$ we have 
\[
\H(\adele_F)=\H(F_S)\Bigl(\prod_{v\in\places\setminus S}K_v^*\Bigr)\H(F).
\]
Let now $\jmap(\H(\adele_F))\gamma\in B,$ then there exists some 
$g_\gamma\in \jmap(\H(F_S)\prod_{v\in\places\setminus S}K_v^*)$ 
and some $\delta\in\jmap(\H(F))$ so that  
$
g_\gamma\delta \gamma\in\Omega_0.
$
Hence for all $v\in\places\setminus S$ we have $(\delta\gamma)_v\in K_v.$
This says 
\[
\delta \gamma\in N_S\cap\Bigl(\jmap(\H(F_S))\times \prod_{v\in\places\setminus S}K_v\Bigr)=\widetilde{\Lambda}.
\]
Suppose now $\delta'\in\jmap(\H(F))$ is so that $\delta'\gamma\in\widetilde\Lambda$
then 
\[
\delta'\delta ^{-1}\in\widetilde\Lambda\cap\jmap(\H(F))=\Lambda\unlhd\widetilde\Lambda.
\]
Hence we get a map from $B\to \widetilde\Lambda/\Lambda.$ 

This map is injective. Indeed let $\gamma,\gamma'\in N_S$ be as in the definition of~$B$, 
suppose that $\delta_\gamma,\delta_{\gamma'}\in\jmap(\H(F))$ are as above, and
$\delta_{\gamma}\gamma$ and $\delta_{\gamma'}\gamma'$
map to the same coset in $\widetilde\Lambda/\Lambda.$ 
Then
$\Lambda\delta_{\gamma}\gamma=\Lambda\delta_{\gamma'}\gamma',$ and in particular,   
$\jmap(\H(\adele_F))\gamma=\jmap(\H(\adele_F))\gamma'.$ In other words
we have shown that $\# B\leq [\widetilde\Lambda:\Lambda]$.

With this we now have the estimate
\[
 \widetilde{m_\data}({\widetilde{H_\data}}\cap\Omega_0)=\sum_{\jmap(\H(\adele_F))\gamma\in B}{m_\data}((h_\gamma\gamma)^{-1}\Omega_0)
\leq [\widetilde\Lambda:\Lambda]m_\data(\Omega_0^{-1}\Omega_0),
\]
where we use for every~$\jmap(\H(\adele_F))\gamma\in B$ some~$h_\gamma\in\jmap(\H(\adele))$ with~$h_\gamma\gamma\in\Omega_0$.
The claim now follows from the independence, up to a multiplicative scalar, of the notion of volume 
from the neighborhood~$\Omega_0$, see \S\ref{new-sec-on-volume}.
\end{proof}

The landmark paper~\cite{BorPr} by Borel and Prasad deals with questions similar to bounding
the above index, $[\widetilde{\Lambda}:\Lambda]$.
The setup in~\cite{BorPr} is that $\Lambda$ is defined using a coherent family of {\em parahoric} subgroups at every place.
However, our group $\Lambda$ is defined using $\{K_v^*\}$, and $K_v^*$ may only be a parahoric subgroup 
for almost all $v.$ 
We will use the strong approximation theorem to address this issue and then use~\cite{BorPr} to estimate the above index.



We will again need some reductions due to the fact 
that our group $\H$ is not necessarily absolutely  
almost simple.
Recall that $\H=\H_1\cdots\H_k$ is a product of $F$-almost simple groups
where $\H_i={\rm Res}_{F_i'/F}(\H_i')$ with $\H_i'$ an absolutely almost simple $F_i'$-group.

{Let $v\in\places\setminus S$; that is: $v$ is a finite place and $v\neq w$.}
The Bruhat-Tits building $\mathcal B_v$ of $\H(F_v)$ 
is the product of the corresponding buildings $\mathcal B_{i,v}$ for $1\leq i\leq k.$ 
The group $H_v$ is naturally identified with 
$\jmap(\prod_{i,v'|v}\H_i'(F'_{i,v'}))$ and acts on $\mathcal B_v$;
this action is identified with the action of $\prod_{i,v'|v}\H_i'(F'_{i,v'})$
on the product of the corresponding buildings $\mathcal B_{i,v'}$.
Our group $K_{i,v'}^*$ (which by definition is the group obtained by projecting $K_v^*$
into $\H'_{i}(F'_{i,v'})$) is a compact open subgroup of $\H'_i(F'_{i,v'})$
for all places $v'$ of $F_i'$ over~$v$. 
Hence by~\cite[\S3.2]{Ti} 
the fixed point set ${\rm Fix}_{i,v'}$ of $K_{i,v'}^*$ in $\mathcal B_{i,v'}$ 
is a compact and non-empty {subset}.

Let $\overline\H$ denote the adjoint form of $\H$ and let $\varphi:\H\to\overline{\H}$
be the universal covering map. 
The adjoint form $\overline\H$
is identified with $\prod_i{\rm Res}_{F_i'/F}(\overline{\H_i'})$ where $\overline{\H_i'}$
is the adjoint form of $\H_i'.$
Recall that $\widetilde\Lambda\subset g^{-1}\iota(\H)(F)g,$
and let $\varphi'_{v}:\jmap(\H)\to\overline\H$ be so that $\varphi=\varphi'_{v}\circ\jmap.$
Then $\varphi'_{v}(\widetilde\Lambda)\subset\overline{\H} (F)$.  
In particular, $\widetilde\Lambda$
naturally acts on $\mathcal B_{i,v'}$ for all $i$ and 
all places $v'$ of~$F_i'$ above $v.$ 

\begin{lem}
The fixed point set $\widetilde{{\rm Fix}_{i,v'}}$ of 
$\widetilde\Lambda$ in $\mathcal B_{i,v'}$ is a non-empty compact subset which satisfies
$\widetilde{{\rm Fix}_{i,v'}}\subset{\rm Fix}_{i,v'}$.
\end{lem}

\begin{proof}
Let $\Lambda_v$ (resp.\ $\widetilde{\Lambda}_v$) 
be the closure (in the Hausdorff topology) of the projection of $\Lambda$ (resp.\ $\widetilde\Lambda$)
in $K_v$. By the strong approximation theorem, we have 
\[
\Lambda_v=H_v\cap K_v=\jmap(K_v^*).
\]

Moreover, taking projections, we may identify both~$\Lambda$ and $\widetilde\Lambda$
as lattices in $\jmap(\H(F_S)).$ Therefore, we have $[\widetilde\Lambda_v:\Lambda_v]\leq [\widetilde\Lambda:\Lambda]<\infty$. 

Hence, using~\cite[\S3.2]{Ti}, the fixed point set $\widetilde{{\rm Fix}_{i,v'}}$ of 
$\widetilde\Lambda$ in $\mathcal B_{i,v'}$ is a non-empty compact subset which satisfies
$\widetilde{{\rm Fix}_{i,v'}}\subset{\rm Fix}_{i,v'}$ as claimed.
\end{proof}



Let us fix, for every~$v\in \Sigma\setminus S$, one point in $\mathcal B_v$ which is fixed by~$\widetilde\Lambda$.
This determines a subset $\Phi_{i,v'}$ of the affine root system $\Delta_{i,v'}.$ 
The collection $\{\Phi_{i,v'}\}$ gives us a coherent collection of parahoric subgroups $P_{i,v'}\subset\H'_i(F'_{i,v'}).$ 
{For every $v\in \Sigma\setminus S$, let $\widetilde P_{v}$ denote the stabilizer of $\prod_{v'\mid v}\Phi_{i,v'}$ in $\jmap(\H)(F_v)$.}
We define two subgroups 
\begin{align*}
 \Lambda'&= \jmap\Bigl(\prod_i\H_i'(F_i')\cap\bigl(H'_S\times\prod_{i,v'\nmid w}P_{i,v'}\bigr)\Bigr),\\
  \widetilde\Lambda'&=N_{{\jmap(H'_S)\times\prod_{v\neq w}\widetilde P_{v}}}(\Lambda')
\end{align*} 
where $H'_S=\prod_i\prod_{v'|v, v\in S}\H'_i(F'_{i,v})$.

Note that $\Lambda\subset \Lambda'$ and $\widetilde\Lambda\subset \widetilde\Lambda'$ by the construction of the parahoric subgroups.\footnote{To verify the second inclusion, for example, we first verify that $\widetilde{\Lambda}$ 
	belongs to $j(H'_S) \times \prod_{v \not\in S} \widetilde P_{v}$: it projects to $j(H'_S)$ at places in $S$ because
$\widetilde{\Lambda} \subset \Psi_S$, and it projects to the $\widetilde{P}_v$ by the way they were chosen. We then verify $\widetilde{\Lambda}$ normalizes $\Lambda'$. Because of the inclusion  $\widetilde{\Lambda} \subset N_S$
we can regard $\widetilde{\Lambda}$ as acting on   $\mathbf{H}(F) = \prod_{i} \mathbf{H}_i'(F_i')$. 
It preserves the subset of this defined by intersecting with $ (H_S' \times \prod P_{i,v'})$ because each $\tilde{P}_v$ normalizes $\prod_{i, v'|v} P_{i,v'}$. }
Moreover, $\Lambda'$ is a finite index subgroup of $\widetilde\Lambda'$, see~\cite[Prop.~1.4]{BorPr}.

Recall the definition of the fields $L_i/F'_i$ from~\S\ref{ss:existence-good}. As we have done before we define a subset $\places^\flat\subset\places_{F,f}$ as follows. Let $\places_{\rm ur}^{\flat}$ be the set of finite places $v$ of $F$ where $L_j/F$ is unramified at $v$ for all $1\leq i\leq k$ but 
at least one of the following fails  
\begin{enumerate}
\item $\H_{i,v'}'$ is quasi split over $F'_{i,v'}$ (and split
over~$\widehat{F'_{i,v'}}$) for every $1\leq i\leq k$ and every $v'|v,$
\item $K_{i,v'}^*$ is hyperspecial for all $1\leq i\leq k$ and all $v'|v$
and $K_v^*=\prod_{i,v'|v}K_{i,v'}^*.$
\end{enumerate}
Define $\places^\flat_{\rm rm}$ to be the set of places $v\in\places_{F,f}$
so that $L_i/F$ is ramified at $v$ for some $1\leq i\leq k.$ 
Put $\places^{\flat}=\places_{\rm ur}^{\flat}\cup \places_{\rm rm}^{\flat}$;
note that $\places^{\flat}\cap S=\emptyset$.

Let us note that if $K_{i,v'}^*$ is hyperspecial for all $1\leq i\leq k$ and all $v'|v$
but $K_v^*\neq\prod_{i,v'|v}K_{i,v'}^*,$ then 
\be\label{new-eq-BG}
 \Bigl[\prod_{i,v'|v}K_{i,v'}^*:K_v^*\Bigr]\geq p_v.
\ee
Indeed the reduction mod $v'$ of the group scheme corresponding to $K_{i,v'}^*$
is an almost simple group and $K_{i,v'}^*$ maps onto the $k_{i,v'}$ points of this group.
Let~$R$ be the semisimple group obtained from~$\prod_{i,v'|v}K_{i,v'}^*$
by taking it modulo the first congruence subgroup.
By construction of~$K_{i,v'}^*$ the image of $K_v^*$ modulo the first congruence subgroup of $\prod_{i,v'|v}K_{i,v'}^*$,
let us call it $R',$ projects onto each factor of $R$. If $R'$ does not equal $R,$ then~\eqref{new-eq-BG} follows. 
If these two equal each other, then an argument
as in the proof of part~(2) of the proposition in \S\ref{ss;k-w-star} implies~\eqref{new-eq-BG}. 

This observation together with
parts~(1) and~(2) of the proposition in \S\ref{ss;k-w-star} implies that for all $v\in\places_{\rm ur}^\flat$
we have
\be\label{eq:local-ind-improve}
\bigl(\prod_{i,v'|v}\lambda_{i,v'}|\omega'_{i,v'}|\bigr)(K_v^*)\leq \frac{p_v+1}{p_v^2}
\ee
if~$q_v>13$.

\begin{lem}[Bound on index]
The index of $\Lambda$ in $\widetilde\Lambda$ satisfies the bound
\be\label{eq:index-cont}
[\widetilde\Lambda:\Lambda]\leq N^{\ref{field-degree}+\ref{index-exp}(\#\places^\flat)}\prod_i 2h_{L_i}^{a}(D_{L_i/F'_i})^{b}
\ee
where 
\begin{itemize}\index{kaa7@$\ref{field-degree}, \ref{index-exp}$,
	two constants in the comparison of volumes}
\item $\consta\label{field-degree}=\sum_i[L_i:\Q]$ and 
$\consta\label{index-exp}=2\sum_i[F_i':F],$
\item $h_{L_i}$ is the class number of ${L_i}$
\item $a=2$ if $\H'_i$ is an inner form of a split group of type 
$D_r$ with $r$ even resp.\ $a=1$ otherwise, and finally
\item $b=1$ if $\H'_i$ is an outer form of type $D_r$ with 
$r$ even resp.\ $b=0$ otherwise.
\end{itemize}
\end{lem}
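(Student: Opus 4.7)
The plan is to compare $\Lambda$ and $\widetilde\Lambda$ with the enlarged groups $\Lambda'$ and $\widetilde\Lambda'$ constructed from a coherent family of parahoric subgroups, and then to invoke the work of Borel--Prasad~\cite{BorPr} for the residual index. Since $\Lambda\subset\Lambda'$ and $\widetilde\Lambda\subset\widetilde\Lambda'$, the elementary identity $[\widetilde\Lambda':\Lambda]=[\widetilde\Lambda':\Lambda']\cdot[\Lambda':\Lambda]$ yields
\[
[\widetilde\Lambda:\Lambda]\ \leq\ [\widetilde\Lambda':\Lambda']\cdot[\Lambda':\Lambda],
\]
and the two factors on the right will be bounded separately.

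First I would control the purely local index $[\Lambda':\Lambda]$. By the very definition of $\places^\flat=\places_{\rm ur}^\flat\cup\places_{\rm rm}^\flat$, for any $v\notin\places^\flat$ each $K_{i,v'}^*$ is hyperspecial, coincides with the parahoric $P_{i,v'}$ chosen from $\widetilde{\rm Fix}_{i,v'}$, and moreover $K_v^*=\prod_{i,v'\mid v}K_{i,v'}^*$; consequently such places contribute nothing to $[\Lambda':\Lambda]$. For each $v\in\places^\flat$ one must absorb three discrepancies: the inclusion $K_v^*\subseteq\prod_{i,v'\mid v}K_{i,v'}^*$, controlled by~\eqref{new-eq-BG}; the inclusion $K_{i,v'}^*\subseteq P_{i,v'}$ of compact opens into parahorics, controlled by part~(2) of the proposition of~\S\ref{ss;k-w-star} and~\eqref{eq:local-ind-improve}; and the discrepancy between $\jmap(\H(F_v))$ and $\jmap(\H)(F_v)$, each contributing at most $\newletter$ per factor. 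Bookkeeping one factor of $\newletter$ per pair $(i,v'\mid v)$, and noting that the number of such pairs is $\sum_i[F_i':F]$, gives a bound of the form $\newletter^{(1+2\sum_i[F_i':F])\#\places^\flat}$ for $[\Lambda':\Lambda]$, which matches the exponent $\ref{index-exp}$.

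Second, I would bound $[\widetilde\Lambda':\Lambda']$. By construction $\widetilde\Lambda'=N_{\jmap(H'_S)}(\Lambda')$ and $\Lambda'$ is defined through a coherent family of parahorics compatible with the adjoint covering $\varphi:\H\to\overline\H$. Passing through $\varphi'$ to the adjoint form, the index $[\widetilde\Lambda':\Lambda']$ is a global arithmetic quantity of exactly the kind treated in~\cite[Sec.~5, Sec.~6]{BorPr}: it fits into an exact sequence whose outer terms are controlled by the class groups $\mathrm{Cl}(L_i)$ and by Galois cohomology classes ramified only at places where $L_i/F_i'$ is ramified, with the precise shape of the bound depending on the type of $\H_i'$ (whence the appearance of the exponents $a$ and $b$, and the need for the extra factor $2$ in type $D_r$ with $r$ even inner). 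Invoking those estimates directly yields
\[
[\widetilde\Lambda':\Lambda']\ \ll\ \newletter^{\sum_i[L_i:\Q]}\prod_i 2\,h_{L_i}^{a}\,D_{L_i/F_i'}^{b},
\]
where the prefactor $\newletter^{\sum_i[L_i:\Q]}$ comes from passing between simply connected and adjoint forms in the cohomological computation. Multiplying the two estimates gives~\eqref{eq:index-cont}.

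The main obstacle I anticipate is ensuring that the coherent parahoric family $\{P_{i,v'}\}$ chosen from the fixed points of $\widetilde\Lambda$ satisfies the precise maximality/smoothness hypotheses of~\cite{BorPr}, since their setup assumes a coherent family of parahorics of maximal volume, whereas ours is only specified to contain $\widetilde\Lambda$ and to be compatible with $\Lambda'$. At the ramified and non-hyperspecial places this discrepancy has to be absorbed into the bounded local losses; the key point is that any such replacement changes each local factor by at most $\newletter$, so it is already accounted for in the exponent $\ref{index-exp}$ of $\#\places^\flat$. Once this compatibility is verified, the proof reduces to a direct citation of the Borel--Prasad bounds.
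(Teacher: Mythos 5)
There is a genuine gap in your first step. The inequality $[\widetilde\Lambda:\Lambda]\leq[\widetilde\Lambda':\Lambda']\,[\Lambda':\Lambda]$ is valid, but the factor $[\Lambda':\Lambda]$ cannot be bounded by $\newletter^{\ref{index-exp}\#\places^\flat}$. By strong approximation, $[\Lambda':\Lambda]$ is governed by the product of the local indices $\bigl[\prod_{i,v'\mid v}P_{i,v'}:K_v^*\bigr]$, and these are not $O(\newletter)$ per place: already part (2) of the proposition in \S\ref{ss;k-w-star} shows $[P_{i,v'}:K_{i,v'}^*]\geq q_{v'}$ whenever the inclusion is proper, and when $K_v^*$ sits deep inside the parahorics (which is exactly the situation for a MASH of large volume) these indices are large powers of $q_v$. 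In fact $\prod_v\bigl[\prod_{i,v'\mid v}P_{i,v'}:K_v^*\bigr]$ is comparable to $\vol(Y)$ up to bounded factors, so your bound on $[\Lambda':\Lambda]$ is false, and with the correct value of $[\Lambda':\Lambda]$ your decomposition yields nothing better than $[\widetilde\Lambda:\Lambda]\ll\vol(Y)^{O(1)}$ --- useless for the purpose of the lemma, which is to show the index is negligible compared to $\vol(Y)$.

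The fix is to not enlarge $\widetilde\Lambda$ to $\widetilde\Lambda'$ before quotienting: consider instead the natural map $\widetilde\Lambda/\Lambda\to\widetilde\Lambda'/\Lambda'$. Its image has at most $[\widetilde\Lambda':\Lambda']$ elements, and each fiber injects into $(\Lambda'\cap\widetilde\Lambda)/\Lambda$, so
$[\widetilde\Lambda:\Lambda]\leq[(\Lambda'\cap\widetilde\Lambda):\Lambda]\,[\widetilde\Lambda':\Lambda']$.
The point is that elements of $\widetilde\Lambda$ already lie in $K_v$ at every finite $v\notin S$ (by the definition $\widetilde\Lambda=N_S\cap\Psi_S$), so the deep indices $[P_{i,v'}:K_{i,v'}^*]$ never enter: taking closures $\Lambda_v,\widetilde\Lambda_v$ in $K_v$, one has $\Lambda_v=H_v\cap K_v$ by strong approximation, the places outside $\places^\flat$ contribute trivially, and at each $v\in\places^\flat$ the contribution is at most $[K_v\cap\jmap(\H)(F_v):K_v\cap H_v]\leq[\jmap(\H)(F_v):H_v]\leq\newletter$. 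This gives $[(\Lambda'\cap\widetilde\Lambda):\Lambda]\leq\newletter^{\#\places^\flat}$, accounting for the ``$+1$'' in $\ref{index-exp}$. The entire remaining factor $\prod_i 2h_{L_i}^{a}N^{[L_i:\Q]+2[F_i':F]\#\places^\flat}D_{L_i/F_i'}^{b}$, including the $N^{2[F_i':F]\#\places^\flat}$ you tried to extract from $[\Lambda':\Lambda]$, comes from the Borel--Prasad bound on $[\widetilde\Lambda':\Lambda']$; your concern about the coherence hypotheses there is handled by choosing the $P_{i,v'}$ from a point of the building fixed by $\widetilde\Lambda$, which is precisely the Borel--Prasad setup.
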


\begin{proof}
We first consider the map $\widetilde\Lambda/\Lambda\to\widetilde\Lambda'/\Lambda'$.
This is an injective map. Indeed, if $\gamma\in\Lambda'\cap\widetilde\Lambda$, then
$\gamma\in\widetilde\Lambda\subset\Psi_S$ and $\gamma\in\Lambda'\subset\jmap(\H(F))$. Hence
\[
\gamma\in\Psi_S\cap\jmap(\H(F))=\Lambda.
\]
We, thus, get that
$
[\widetilde\Lambda:\Lambda]\leq[\widetilde\Lambda':\Lambda'].
$

%

Bounding $[\widetilde\Lambda':\Lambda']$ is rather non-trivial. This is done in~\cite[\S2 and \S5]{BorPr}, and we have
\[
[\widetilde\Lambda':\Lambda']\leq \prod_i 2h_{L_i}^{a}N^{[L_i:\Q]+2[F_i':F]\#\places^\flat}(D_{L_i/F'_i})^{b},
\] 
with $a,b$ and $h_{L_i}$ as in the statement of the lemma. 
\end{proof}

The following is crucial in the application of the volume
for the pigeon hole argument in \S\ref{sec:pigeonhole}. 

\begin{prop}[Equivalence of volume definitions]
	The above two notions of volume are related in the sense that there exists some $\consta\label{vol-tvol-cons}>0$
	so that\index{kaa7@$\ref{vol-tvol-cons},$ exponent in the comparison of algebraic and stabilizer volume}
\be\label{eq:vol-tvol-final}
\vol(Y)^{\ref{vol-tvol-cons}}\leq\widetilde{\vol}(Y)\leq\vol(Y),
\ee
if $\vol(Y)$ is sufficiently large depending only on the dimensions $\dim\G$ and $[F:\Q].$
\end{prop}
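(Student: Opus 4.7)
The plan is to prove the upper bound $\widetilde{\vol}(Y)\le\vol(Y)$ directly from the containment $H_\data\subset\widetilde{H_\data}$ as an open subgroup (Lemma in \S\ref{s:stabilizer-lem}): since $\widetilde{m_\data}$ restricts to $m_\data$ on $H_\data$, one has $\widetilde{m_\data}(\widetilde{H_\data}\cap\Omega_0)\ge m_\data(H_\data\cap\Omega_0)$, which yields the upper bound upon taking reciprocals. For the non-trivial lower bound, I chain the previous two lemmas: \eqref{eq:vol-tvol} together with \eqref{eq:index-cont} reduces the problem to establishing
\[
\newletter^{\ref{index-exp}\#\places^\flat}\prod_i h_{L_i}^a (D_{L_i/F_i'})^b \;\ll\; \vol(Y)^{1-1/\ref{vol-tvol-cons}}
\]
for a suitable constant $\ref{vol-tvol-cons}$ depending only on $\dim\G$ and $[F:\Q]$.

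The central ingredient is a sharpening of \eqref{eq:sigma-flat} that uses the improved local bound \eqref{eq:local-ind-improve}: at every $v\in\places_{\rm ur}^\flat$ with $q_v>13$ the corresponding local factor contributes not merely $2$ but $q_v/2$ to $\vol(Y)$. Re-running the volume formula \eqref{eq;volume-upperbd} therefore yields
\[
\vol(Y)\;\gg\;\prod_i D_{L_i}^{\ref{exp-in-volume-gives-good}}\prod_{v\in\places_{\rm ur}^\flat} q_v^{\star},
\]
where the finitely many small-residue-characteristic places are absorbed into the implicit constant. I would then absorb each factor on the right side of \eqref{eq:index-cont} into this refined lower bound. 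For unramified bad places, $\newletter^{\#\places_{\rm ur}^\flat}\le\prod_{v\in\places_{\rm ur}^\flat} q_v^{\delta}$ for any prescribed $\delta>0$, once the finite set of places with $q_v<\newletter^{1/\delta}$ is discarded. For ramified bad places, $\#\places_{\rm rm}^\flat$ is bounded by the number of prime ideals dividing $\prod_i D_{L_i}$, which is $O(\log\prod_i D_{L_i})$; hence $\newletter^{\#\places_{\rm rm}^\flat}\ll\prod_i D_{L_i}^{\star}$. Finally Minkowski's bound yields $h_{L_i}\ll D_{L_i}^{1/2}(\log D_{L_i})^{[L_i:\Q]-1}$, and trivially $D_{L_i/F_i'}\le D_{L_i}$. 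Putting the four estimates together gives the required bound provided $\vol(Y)$ exceeds an absolute threshold.

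The main obstacle is precisely the step $\newletter^{\#\places_{\rm ur}^\flat}\ll\vol(Y)^{\star}$: the crude version from part~(1) of the proposition in \S\ref{p;non-hyp-est} (namely $J_{v'}\le 1/2$) only produces the factor $2^{\#\places_{\rm ur}^\flat}$ in \eqref{eq:sigma-flat}, and since $\newletter$ is a fixed constant that may well exceed~$2$, that weaker estimate is insufficient. This is precisely the reason the sharpened part~(2) of that proposition was needed, relying on the structural results of Cooperstein, Liebeck--Saxl, and Kleidman--Liebeck on minimal indices of proper subgroups of finite simple groups of Lie type. Once this is in hand, the four bounds above combine to a single estimate $[\widetilde\Lambda:\Lambda]\ll\vol(Y)^{1-1/\ref{vol-tvol-cons}}$ with $\ref{vol-tvol-cons}$ chosen so as to absorb all the exponents, completing the proof.
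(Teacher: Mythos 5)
Your overall architecture coincides with the paper's: the upper bound from the open containment $H_\data\subset\widetilde{H_\data}$, and the lower bound by chaining \eqref{eq:vol-tvol} with \eqref{eq:index-cont} and then playing the index bound off against a refined lower bound for $\vol(Y)$ coming from \eqref{eq;volume-upperbd} and \eqref{eq:local-ind-improve}. Steps (i), (ii) and (iv) are sound, with the caveat that in (iv) you must actually invoke the case-by-case values of $a$ and $b$ rather than the crude $a\le 2$, $b\le 1$: e.g.\ for an inner form of type $A_1$ the volume only dominates $D_{F_i'}^{3/2}$, and it is the fact that there $a=1$, $b=0$ that saves you. This case check is precisely the content of \eqref{eq:discrim-bound}, and with it your Minkowski-type bound does close that part of the argument.

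The genuine gap is step (iii), the ramified bad places. The count $\#\places_{\rm rm}^\flat=O\bigl(\log\prod_iD_{L_i}\bigr)$ converts $\newletter^{\ref{index-exp}\#\places_{\rm rm}^\flat}$ into $\bigl(\prod_iD_{L_i}\bigr)^{c}$ with $c\asymp\ref{index-exp}\log\newletter$. This exponent is indeed a constant depending only on the dimensions, but it is not at your disposal: $\newletter\ge N$ and $\ref{index-exp}\ge 3$, so $c$ can be far larger than the exponent with which these discriminants enter the volume lower bound (for an outer form the relevant exponent of $D_{L_i/F_i'}$ is $\tfrac12\mathfrak s(\Hcal_i')$, which can equal $\tfrac52$, while $c$ is already $>4$ for $\newletter\ge 3$ and grows like $\log N$). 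Consequently ``putting the four estimates together'' does not yield $\ll\vol(Y)^{1-\epsilon}$; the $D_{L_i}$-exponent on the left can exceed the one on the right. The repair is the same localisation trick you already deploy at the unramified places: each $v\in\places_{\rm rm}^\flat$ contributes a distinct prime-ideal factor of norm at least $p_v$ to $\prod_iD_{L_i/F_i'}$ (or to $D_{F_i'}$), so for any prescribed $\epsilon>0$ one has $\newletter^{\ref{index-exp}}\le p_v^{\epsilon}$ for all but $O_{\epsilon,F}(1)$ of these places, and hence $\newletter^{\ref{index-exp}\#\places_{\rm rm}^\flat}\ll_{\epsilon}\bigl(\prod_iD_{L_i}\bigr)^{\epsilon}$ with an \emph{arbitrarily small} exponent. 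That small power is then absorbed by the spare factor $\prod_i(D_{F_i'}D_{L_i/F_i'})^{1/2}$ left over in \eqref{eq:discrim-bound}, after which your combination of estimates does give $[\widetilde\Lambda:\Lambda]\ll\vol(Y)^{1-1/\ref{vol-tvol-cons}}$ as required.
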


\proof
For any number field $E$ we have
\[
h_{E}\leq 10^2\bigl(\tfrac{\pi}{12}\bigr)^{[E:\Q]}D_E, 
\]
see e.g.\ equation (7) in the proof of~\cite[Prop.\ 6.1]{BorPr}.
Also recall that 
\[
 D_{L_i/F'_i}=D_{L_i}/D_{F'_i}^{[L_i:F'_i]}\geq 1.
\]

These imply the following\footnote{See~\cite[Prop.\ 6.1]{BorPr}, and also~\cite[Prop.\ 3.3]{Belo}, for more general statements.} estimates (for the field related quantities coming in part from \eqref{eq:index-cont} and~\eqref{eq;volume-upperbd}).
If $\Hcal'_i$ is an outer form then $a=1$ and $b\leq 1.$ Hence in this case we have
\[
h_{L_i}^{-a}D_{F'_i}^{\frac12\dim\H'_i}D_{L_i/F'_i}^{\frac12\mathfrak s(\Hcal'_i)-b}\gg D_{F_i'}^{\frac12\dim\H'_i-c}D_{L_i/F'_i}^{\frac12\mathfrak s(\Hcal'_i)-2},
\]
where~$c=[L_i:F_i']$ equals $3$ if~$\Hcal_i'$ is a triality form of~$D_4$ resp.\ $2$ otherwise.

Suppose $\H'_i$ is an inner form of type other than $A_1.$
Then $L_i=F_i',$ $a\leq 2$ and $b=0.$ Together we get
\[
h_{L_i}^{-a}D_{F'_i}^{\frac12\dim\H'_i}D_{L_i/F'_i}^{\frac12\mathfrak s(\Hcal'_i)-b}\gg D_{F_i'}^{\frac12\dim\H'_i-2}D_{L_i/F'_i}^{\frac12\mathfrak s(\Hcal'_i)}=D_{F_i'}^{\frac12\dim\H'_i-2}.
\]

Finally let $\H'_i$ be an inner form of type $A_1,$ then $L_i=F'_i$, $a=1$, $b=0$, and we have 
\[
h_{L_i}^{-a}D_{F'_i}^{\frac12\dim\H'_i}D_{L_i/F'_i}^{\frac12\mathfrak s(\Hcal'_i)-b}\gg D_{F_i'}^{\frac12\dim\H'_i-1}=D_{F'_i}^{1/2}.
\]

These estimates together with $\mathfrak s(\Hcal'_i)\geq 5$
when $\Hcal_i'$ is an outer form and $L_i=F'_i$ when $\Hcal'_i$ is an inner form give
\be\label{eq:discrim-bound}
h_{L_i}^{-a}D_{F'_i}^{\frac12\dim\H'_i}D_{L_i/F'_i}^{\frac12\mathfrak s(\Hcal'_i)-b}
  \gg \bigl(D_{F_i'}D_{L_i/F'_i}\bigr)^{1/2}.
\ee

We now prove~\eqref{eq:vol-tvol-final} and note that~$\widetilde\vol(Y)\leq\vol(Y)$
follows directly from the definition. 

For the opposite inequality we argue as follows.
\begin{align}
\notag\widetilde\vol(Y)&\geq\frac{1}{[\widetilde\Lambda:\Lambda]}\vol(Y)&&\text{by~\eqref{eq:vol-tvol}}\\
\label{eq:expanded}&\geq \Bigl(N^{\ref{field-degree}+\ref{index-exp}(\#\places^\flat)}\prod_i 2h_{L_i}^{a}(D_{L_i/F'_i})^{b}\Bigr)^{-1}\vol(Y)
&&\text{by~\eqref{eq:index-cont}}.
\end{align}
Now by~\eqref{eq:discrim-bound} we have
\be\label{eq:disc-bd-used}
\Bigl(\prod_i 2h_{L_i}^{a}(D_{L_i/F'_i})^{b}\Bigr)^{-1}\gg \prod_i D_{F'_i}^{-\frac12\dim\H'_i+\frac12}D_{L_i/F'_i}^{-\frac12\mathfrak s(\Hcal'_i)+\frac12}.
\ee
Moreover, by~\eqref{eq:local-ind-improve} we have
\be\label{eq:local-index-improved-used}
N^{-\ref{field-degree}-\ref{index-exp}(\#\places^\flat)}\gg\Bigl(\prod_{\places^\flat}\bigl(\prod_{i,v'|v}\lambda_{i,v'}|\omega'_{i,v'}|\bigr)(K_v^*)\Bigr)^{-\frac12}
\ee
Note that for the
few bad places with~$q_v<13$ the power of~$N$ simply becomes
an implicit multiplicative constant.

In view of~\eqref{eq:disc-bd-used} and~\eqref{eq:local-index-improved-used}, the lower bound in~\eqref{eq:vol-tvol-final} 
follows from~\eqref{eq:expanded} and the asymptotic in~\eqref{eq;volume-upperbd}.
\qed


\section{Algebraic properties at a good place}\label{sec:algebra-at-good}

As in~\S\ref{construction-MASH} and~\S\ref{proofnotn} we 
let~$Y=Y_\data$ be the MASH set for the data~$\data=(\H,\iota,g_{\data})$ and let~$\G$ be the ambient algebraic group; in particular 
{ we are assuming that $\H$ is simply connected and that} $\iota(\H)$
is maximal in $\G.$ In this section we collect algebraic properties of the MASH set~$Y$ and its associated
groups at a good place~$w$. These properties may be summarized as saying that the acting group is not distorted at $w$ and will be needed in the dynamical argument
of the next section.

\subsection{Good places}\label{ss;good-place}
We say a place $w\in\places_f$ is {\em good} (for $Y$)  when\index{w@$w$, starting with~\S\ref{ss;good-place} a good place for~$Y=Y_\data$}  
\begin{itemize}
\item $w$ satisfies (i)--(iv) in the proposition concerning the existence of good places in \S\ref{l;splitting-place},
\item in particular $\G$ and $\iota(\H)$ are quasi-split over $F_w$ and split over $\widehat{F_w},$ the maximal unramified extension, 
and\footnote{For the last claim increase
in the proof of~\S\ref{ss:existence-good} the value of~$\ref{constc-rho-N}$ accordingly.}
\item ${\rm char}(k_w)\gg_{N,F} 1,$ where $\rho(\G)\subset\SL_N$ as before. 
\end{itemize}

We note that the last property of a good place as above allows us e.g.\ to avoid difficulties arising from 
the theory of finite dimensional representations of algebraic groups over fields with ``small" characteristic.   

By the proposition in \S\ref{ss:existence-good} we have: there is a good place $w$ satisfying\footnote{The good 
place for the proof of Theorem~\ref{tautheorem} is found as in \S\ref{sec:tau-general}: There are infinitely many places
where~$\G$ splits, and all properties of a good place for the maximal subgroup~$\H=\{(h,h):h\in\G\}<\G\times\G$
are satisfied for almost all places.}
$$q_w \ll (\log(\vol Y))^{2}.$$

Let $g_{\data,w}\in \G(F_w)$ denote the component of~$g_\data$ at~$w$.\index{gw@$g_{\data,w}$, 
component of~$g_\data$ at good place~$w$} 
For simplicity in notation we write $\jmap_w:\H\to\G$ for the homomorphism
defined by $\jmap_w(\cdot)=g_{\data,w}^{-1}\iota(\cdot)g_{\data,w}$ at the good place~$w$\index{jw@$\jmap_w(\cdot)=g_{\data,w}^{-1}\iota(\cdot)g_{\data,w}$,
homomorphism at good place~$w$}.
We define the group $H_w^*=\H(F_w)$ and\index{Hw@$H_w^*=\H(F_w)$ is the group of~$F_w$-points
of~$\H$ at the good place~$w$} 
recall from \S\ref{ss;k-w-star} the notation\index{Kw@$K_w^*=\iota^{-1}(g_wK_wg_w^{-1})<H_w$, 
hyperspecial subgroup at the good place~$w$} $K_w^*=\jmap_w^{-1}(K_w).$
 It is worth mentioning again that $\jmap_w(H_w^*)$
does not necessarily equal $\jmap_w(\H)(F_w)$ or the group of $F_w$-points of any algebraic group.

\subsection{Compatibility of hyperspecial subgroups}\label{ss:compatibility}

By the properties of the good place $\G$ and $\H$ are quasi-split 
over $F_w$ and split over $\widehat{F_w}.$  
{ Furthermore,} $K_w$ and $K_w^*$ are  hyperspecial subgroups of $\G(F_w)$ and $H_w^*=\H(F_w)$ respectively. 

Let ${\rm vert}$ and ${\rm vert}^*$ denote the vertices corresponding to $K_w$
and $K_w^*$ in the respective buildings.
As was recalled in \S\ref{ggo}, Bruhat-Tits theory associates smooth group schemes $\Gfrak_w$ and $\Hfrak_w$ to ${\rm vert}$ and ${\rm vert}^*$ in 
$\G(F_w)$ and $\H(F_w)$ respectively, so that $K_w=\Gfrak_w(\order_w)$ and 
$K_w^*=\Hfrak_w(\order_w).$ 
Since~$\jmap_w$ is a homomorphism,~$\jmap_w(\H)(\widehat{F_w})$ acts on the building of~$\H(\widehat{F_w})$. 

Let $p_w$\index{pw@$p_w$, the residue characteristic at place $w$} be the prime number so that $w\mid p_w,$ i.e.\ $p_w={\rm char}(k_w).$

%

\begin{lem}
For $p_w\gg1$ the following hold.
The stabilizer of~${\rm vert}^*$ in~$\jmap_w(\H)(\widehat{F_w})$ equals~$\jmap_w(\Hfrak_w(\widehat{\order_w}))$, i.e.\ the image of the stabilizer of~${\rm vert}^*$
 in~$\H(\widehat{F}_w)$ under the map~$\jmap_w$. 
 Moreover, the homomorphism $\jmap_w$ extends to a closed immersion from $\Hfrak_w$
 to $\Gfrak_w$ which we continue to denote by $\jmap_w$. 
\end{lem}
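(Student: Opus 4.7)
The plan is to exploit that, at the good place $w$, the groups $\H$ and $\G$ are split over $\widehat{F_w}$ and that $K_w$ and $K_w^*$ are hyperspecial, so $\Gfrak_w$ and $\Hfrak_w$ may be realised as split Chevalley group schemes over $\widehat{\order_w}$ (descended from $\order_w$). First I would fix maximal split tori $\mathbf{T}^*\subset\H$ and $\mathbf{T}\subset\G$ over $\widehat{F_w}$ with $\jmap_w(\mathbf{T}^*)\subset\mathbf{T}$ and whose apartments contain ${\rm vert}^*$ and ${\rm vert}$ respectively; such choices exist after conjugating by an element of the hyperspecial stabilizer at ${\rm vert}$, using that $\jmap_w$ carries maximal split tori to maximal split tori.

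Next I would extend $\jmap_w$ to a morphism of $\widehat{\order_w}$-group schemes $\jmap_w\colon\Hfrak_w\to\Gfrak_w$. The Chevalley structure theorem presents $\Hfrak_w$ as generated by a split torus scheme $\mathfrak{T}^*$ and the one-parameter root subgroups $\mathfrak{U}_\alpha^*\simeq\mathbb{G}_a$ with respect to $\mathbf{T}^*$, subject to integral Chevalley relations. The restriction of $\jmap_w$ to $\mathbf{T}^*$ is a cocharacter map of split tori, which extends canonically to $\mathfrak{T}^*\to\mathfrak{T}$. On each root subgroup $\mathbf{U}_\alpha^*$ the map $\jmap_w$ takes values in a product of target root subgroups $\mathbf{U}_\beta$ via scalar multiplications with coefficients in $\widehat{F_w}$; the condition $K_w^*=\jmap_w^{-1}(K_w)$ together with the rigidity of Chevalley integral structures (once $p_w$ exceeds the universally bounded Chevalley structure constants and the order of the kernel $\mathbf{C}=\ker(\jmap_w)$) forces these scalars into $\widehat{\order_w}$. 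This produces the desired extension; and since $\mathbf{C}$ is étale over $\widehat{\order_w}$ when $p_w\nmid|\mathbf{C}|$, the induced morphism $\Hfrak_w/\mathbf{C}\to\Gfrak_w$ is a closed immersion in the literal sense, which is what the lemma refers to.

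For the stabilizer equality, the inclusion $\jmap_w(\Hfrak_w(\widehat{\order_w}))\subseteq\stab_{\jmap_w(\H)(\widehat{F_w})}({\rm vert}^*)$ is immediate: $\Hfrak_w(\widehat{\order_w})$ is the Bruhat-Tits stabilizer of ${\rm vert}^*$ in $\H(\widehat{F_w})$, and since $\mathbf{C}$ is central it acts trivially on the building, so its $\jmap_w$-image still fixes ${\rm vert}^*$. Conversely, suppose $x\in\jmap_w(\H)(\widehat{F_w})$ fixes ${\rm vert}^*$. The Galois-cohomology sequence
\[
1\to\mathbf{C}(\widehat{F_w})\to\H(\widehat{F_w})\to\jmap_w(\H)(\widehat{F_w})\to H^1(\widehat{F_w},\mathbf{C})
\]
together with the calculation $H^1(\widehat{F_w},\mu_n)\cong\widehat{F_w}^{\times}/(\widehat{F_w}^{\times})^n\cong\mathbb{Z}/n$ (using that $\widehat{\order_w}^{\times}$ is $n$-divisible because $\widehat{k_w}$ is algebraically closed, for $n$ coprime to $p_w$) shows that the obstruction to lifting $x$ to $\H(\widehat{F_w})$ is detected by a valuation shift of the hyperspecial lattice. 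I would verify via an explicit $\SL_2$-to-$\PGL_2$-type lattice computation on each simple factor that any nontrivial class in this $H^1$ moves ${\rm vert}^*$; since $x$ stabilizes ${\rm vert}^*$, the class vanishes, so $x$ lifts to $\tilde x\in\H(\widehat{F_w})$. As $\mathbf{C}$ acts trivially, $\tilde x$ fixes ${\rm vert}^*$ too, hence $\tilde x\in\Hfrak_w(\widehat{\order_w})$ by Bruhat-Tits, and therefore $x\in\jmap_w(\Hfrak_w(\widehat{\order_w}))$.

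The main obstacle I anticipate is the last step, namely verifying that the nontrivial classes of $H^1(\widehat{F_w},\mathbf{C})$ are genuinely obstructions to fixing a hyperspecial vertex. This requires an explicit dictionary between cohomology classes and lattice shifts and uses essentially that $p_w\gg|\mathbf{C}|$ makes $\mathbf{C}$ tame so that the $H^1$ has the predicted cyclic structure; verifying the integrality of the Chevalley scalars on root subgroups in the extension step is a parallel technical nuisance with the same flavour.
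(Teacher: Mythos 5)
Your treatment of the stabilizer statement follows essentially the same route as the paper: the paper also reduces to showing that a class in the ``component group'' $\jmap_w(\H)(\widehat{F_w})/\jmap_w(\H(\widehat{F_w}))$ which fixes ${\rm vert}^*$ must be trivial, and it disposes of exactly the step you flag as your main obstacle by citing Iwahori--Matsumoto (1.8) together with \cite[Prop.~2.7]{BorPr}: an element fixing a (hyper)special vertex acts trivially on the affine root system, and for $p_w\gg1$ and $\H$ split over $\widehat{F_w}$ this forces the class to be trivial. So that half is fine modulo replacing your proposed ``explicit $\SL_2$-to-$\PGL_2$ lattice computation'' by these references (your cohomological bookkeeping with $H^1(\widehat{F_w},\mu_n)\cong\Z/n$ is correct).

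The closed-immersion half has a genuine gap. The crux of the lemma is the inclusion $\jmap_w(\Hfrak_w(\widehat{\order_w}))\subset\Gfrak_w(\widehat{\order_w})$, and your argument for it --- that ``$K_w^*=\jmap_w^{-1}(K_w)$ together with the rigidity of Chevalley integral structures forces these scalars into $\widehat{\order_w}$'' --- does not work as stated, because the hypothesis only controls \emph{$F_w$-rational} points. The Chevalley pinning of $\Hfrak_w$ over $\widehat{\order_w}$ involves root subgroups that are in general defined only over $\widehat{F_w}$; a point of $\mathfrak{U}_\alpha^*(\widehat{\order_w})$ is typically not an $F_w$-point of $\H$, so the equality $K_w^*=\jmap_w^{-1}(K_w)$ says nothing directly about where it lands. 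Bridging the gap from $\order_w$-points to $\widehat{\order_w}$-points is precisely the content of \eqref{inclusion-tbp}, and the paper's proof does real work here: it places $\rho_w(\Hfrak_w(\widehat{\order_w}))$ in some parahoric $P_0$ of $\SL_N(\widehat{F_w})$, walks along the geodesic of parahorics $P_0,\dots,P_\ell=\SL_N(\widehat{\order_w})$ (all of which contain $\rho_w(\Hfrak_w(\order_w))$), and at each step upgrades the containment of the finite group $\underline{\Hfrak_w}(k_w)$ in a proper parabolic to a containment of $\underline{\Hfrak_w}(\widehat{k_w})$, using that the relevant coordinates are polynomials of degree $\ll_N 1$ which vanish on all of $k_w$ and hence vanish identically once $p_w\gg_N1$. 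Your proposal needs an analogous rational-to-unramified upgrade and does not supply one. (A secondary, repairable inaccuracy: the image of a root subgroup of $\H$ under $\jmap_w$ is generally a polynomial curve in the root coordinates of $\G$ --- think of a principal $\SL_2$ --- not a tuple of scalar multiplications; but the integrality of those polynomial coefficients is the same unresolved issue.) Once the inclusion is established, your appeal to \cite{BrTi2} and \cite{Pr-JKY} to extend $\jmap_w$ to a closed immersion of group schemes matches the paper.
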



\begin{proof}
Let $h\in \jmap_w(\H)(\widehat{F_w})$ be in the stabilizer of ${\rm vert}^*$ in $\jmap_w(\H)(\widehat{F_w}).$
{ In the following paragraph we will use similar arguments to that of \cite[\S 2]{BorPr} and we refer to that for unexplained notions.} Then the induced action of $h$ on the affine root system fixes the vertex {corresponding to} ${\rm vert}^*,$ this implies $h$ acts trivially on the affine root system, see~\cite[1.8]{IwMat}.   
It follows from ~\cite[1.8]{IwMat} and ~\cite[Prop.\ 2.7]{BorPr} that
$h\in\jmap_w(\H(\widehat{F_w}))$ (i.e.\ represents the 
trivial cohomology class {with reference to \cite[\S 2.5(1)]{BorPr}}), at least
for  $p_w$ large enough. 
Now by~\cite[3.4.3]{Ti} the smooth scheme structure corresponding to the stabilizer of ${\rm vert}^*$
in $\H(\widehat{F_w})$ is deduced from $\Hfrak_w$ by base change from $\order_w$ to $\widehat{\order_w}.$
Therefore,     
$\jmap_w(\Hfrak_w(\widehat{\order_w}))$ equals the stabilizer of ${\rm vert}^*$
in $\jmap_w(\H)(\widehat{F_w})$ which is the first claim of the lemma.



We now claim
\begin{equation} \label{inclusion-tbp} \jmap_w(\Hfrak_w(\widehat{\order_w}))\subset\Gfrak_w(\widehat{\order_w}).\end{equation} 
Assuming the claim, let us finish the proof. 
{ By the criterion described in  \cite[1.7.3, 1.7.6]{BrTi2}},  the homomorphism $\jmap_w$ extends 
to an $\order_w$-morphism  $\tilde{\jmap}_w:\Hfrak_w\to \mathfrak{G}_w$ which by~\cite[Cor.\ 1.3]{Pr-JKY}
is a closed immersion. 
 
Let us now turn to the proof of~\eqref{inclusion-tbp}. It suffices to prove 
\[
\rho\circ\jmap_w(\Hfrak_w(\widehat{\order_w}))\subset \SL_N(\widehat{\order_w}).
\]
Put $\rho_w:=\rho\circ\jmap_w.$ Then
$\rho_w(\Hfrak_w(\widehat{\order_w}))$ is a bounded subgroup 
of $\SL_N(\widehat{F_w}),$ hence it is contained in a { maximal parahoric subgroup $P$ of $\SL_N(\widehat{F_w})$ --
we may even suppose that $P$ is a hyperspecial parahoric subgroup.}
Let us assume $P\neq\SL_N(\widehat{\order_w})$ as there is nothing to prove otherwise. 

 {
 Inside the building of $\SL_N$
	over $\widehat{F_w}$, let $v_0$ be the vertex corresponding to $P$
	and $v$ the vertex corresponding to $\SL_N(\widehat{\order_w})$.
	Choose a geodesic, inside this building, connecting the vertex $v_0$
	with the vertex $v$.  Consider the collection $\mathcal{C}$
	of all  facets whose interior meets this geodesic path.  Any element of $P \cap 
		\SL_N(\widehat{\order_w})$ fixes all facets in $\mathcal{C}$
	-- recall that, in the building for $\SL_N$, fixing a facet setwise
	implies fixing it pointwise. 
	Therefore, $\rho_w(\Hfrak_w(\order_w))$ fixes all the facets in $\mathcal{C}$.

			In this language, we  must show that  $\rho_w(\Hfrak_w(\widehat{\order_w}))$
			fixes the vertex $v$.   The union of facets in $\mathcal{C}$
			is connected, and so its $1$-skeleton is connected; thus we may choose a path 
			$v_0, v_1, \dots, v_{\ell} = v$ starting from the vertex $v_0$ 
			and ending at $v$, where any two adjacent vertices belong to a common chamber. 
		Let $P_i$ be the stabilizer of $v_i$. We have
		seen above that $\rho_w(\Hfrak_w(\order_w)) \subset P_i$ 
and we will argue inductively that $\rho_w(\Hfrak_w(\widehat{\order_w}))\subset P_i$ for all $0\leq i\leq\ell.$ }


For each $0\leq i<\ell $ there is an element $g_i\in {\rm PGL}_N(\widehat{F_w})$
so that $P_i=g_i\SL_N(\widehat{\order_w})g_i^{-1}.$ 
Denote by $\SL_{N,g_i}$
the corresponding scheme structure, that is $P_i=\SL_{N,g_i}(\widehat{\order_w}).$
Assume $\rho_w(\Hfrak_w(\widehat{\order_w}))\subset \SL_{N,g_i}(\widehat{\order_w}).$
By~\cite[1.7.3, 1.7.6]{BrTi2} the homomorphism $\rho_w$ extends 
to {a $\widehat{\order_w}$}-morphism $\tilde{\rho}_w:\Hfrak_w\to\SL_{N,g_i}$ which by~\cite[Cor.\ 1.3]{Pr-JKY}
is a closed immersion.
Let $\red_w(\tilde{\rho}_w):\underline{\Hfrak_w}(\widehat{k_w})\to\SL_{N, g_i}(\widehat{k_w})$
be the corresponding homomorphism on special fibers. 
Then the finite group $\red_w(\tilde{\rho}_w)\bigl(\underline{\Hfrak_w}(k_w)\bigr)$ is contained in $\red_w(P_i\cap P_{i+1})$
which is a proper parabolic subgroup of $\SL_{N, g_i}$; and we must show that the same is true with $k_w$ replaced by $\widehat{k_w}$.

Since each proper parabolic subgroup of $\SL_N$ can be expressed as the intersection of certain subspace stabilizers, our assertion
reduces to the following: Regarding  $\underline{\Hfrak_w} $ as acting on an $N$-dimensional representation via $\red_w(\tilde{\rho}_w)$, 
and if  $p_w \gg_{N} 1$, 
the following holds: 
\[ 
\text{If  $\underline{\Hfrak_w}(k_w)$ fixes a  subspace $W \subset \widehat{k_w}^N,$   then $\underline{\Hfrak_w}(\widehat{k_w})$ also fixes $W$.} 
 \]
 Passing to exterior powers,  and using the semisimplicity we reduce to the same statement 
with the subspace $W$ replaced by a vector $v$.  But $\underline{\Hfrak_w}$ is generated by unipotent one-parameter subgroups, i.e.
by closed immersions $u: \mathbb{G}_a \rightarrow \underline{\Hfrak_w}$.   
 Because the map $\underline{\Hfrak_w} \rightarrow \SL_{N,g_i}/\widehat{k_w}$ is a closed immersion, 
we can regard $u$ as a closed immersion $\mathbb{G}_a \rightarrow \SL_{N,g_i}/\widehat{k_w}$ also, and 
from that we see that the coordinates of $u(t) v$ are polynomials { in} $t$ whose degree is bounded in terms of $N$.
Since  these polynomials vanish  identically
 for $t \in k_w$,  we see that, for $p_w \gg 1$, they vanish identically on $\widehat{k_w}$ too. 
 \end{proof}

In view of the above lemma, and abusing the notation, $\jmap_w(\Hfrak_w)$
is a smooth subgroup scheme of $\Gfrak_w.$  
Taking reduction mod $w$ on $\Gfrak_w,$ which induces 
the reduction map on $\jmap_w(\Hfrak_w),$ we have 
$\underline{\jmap_w(\Hfrak_w)}\subset\underline{\Gfrak_w}$ 
for the corresponding algebraic groups over $\widehat{k_w}$ (the residue field of $\widehat{F_w}$, i.e. the algebraic closure of 
$k_w$).

\begin{lemma}[Inheritance of Maximality]\label{c;red-max}
Let~$\iota(\H)<\G$, the place~$w\in\Sigma_f$ and~$g_{\data,w}\in G_w$ be as above. Then
$\underline{\jmap_w(\Hfrak_w)}\;$ is a maximal connected algebraic subgroup of $\underline{\Gfrak_w}$
provided that $p_w$ is large enough.
\end{lemma}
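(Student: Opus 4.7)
The plan is to argue by contradiction: assume there is a connected algebraic subgroup $\underline{M}$ with $\underline{\jmap_w(\Hfrak_w)} \subsetneq \underline{M} \subsetneq \underline{\Gfrak_w}$, and deduce a violation of the hypothesis that $\iota(\H)$ is maximal in $\G$. For $p_w$ sufficiently large I may assume $\underline{M}$ is smooth, so that $\bar{\mathfrak{m}} := \Lie \underline{M}$ is a Lie subalgebra of $\bar{\mathfrak{g}} := \Lie \underline{\Gfrak_w}$ of some intermediate dimension $d$, containing $\bar{\mathfrak{h}} := \Lie \underline{\jmap_w(\Hfrak_w)}$ and normalized by $\underline{\jmap_w(\Hfrak_w)}$. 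The strategy is to lift this Lie subalgebra to characteristic zero and exploit the hypothesis there.

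The structural input from $w$ being a good place is crucial: by the preceding lemma, $\jmap_w(\Hfrak_w) \hookrightarrow \Gfrak_w$ is a closed immersion of split Chevalley group schemes over $\widehat{\order_w}$ of the same types as the pair $\iota(\H) \subset \G$. Maximality of $\iota(\H)$ in $\G$ over $\bar{F}$ is preserved under base change to any algebraically closed field of characteristic zero: the $\bar{F}$-scheme parametrizing $d$-dimensional connected algebraic subgroups of $\G$ containing $\iota(\H)$ is of finite type and, having no $\bar{F}$-points by maximality, must be empty by the Nullstellensatz, hence remains empty after arbitrary extension of scalars. In particular, $\G_{\widehat{F_w}}$ admits no connected intermediate subgroup between $\jmap_w(\H)_{\widehat{F_w}}$ and $\G_{\widehat{F_w}}$ of dimension $d$.

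I would then form the closed subscheme $T \subset \mathrm{Gr}(d, \Lie \Gfrak_w)$ over $\widehat{\order_w}$ consisting of those $d$-dimensional $\mathrm{Ad}\,\jmap_w(\Hfrak_w)$-stable Lie subalgebras of $\Lie \Gfrak_w$ which contain $\Lie \jmap_w(\Hfrak_w)$. Then $T$ is proper over $\widehat{\order_w}$, contains the point $\bar{\mathfrak{m}} \in T(\widehat{k_w})$, and by the characteristic-zero correspondence between algebraic Lie subalgebras and connected algebraic subgroups has empty generic fiber. The main obstacle is to rule out that $T$ is supported entirely on its special fiber, equivalently to lift $\bar{\mathfrak{m}}$ to an $\mathrm{Ad}\,\jmap_w(\Hfrak_w)$-stable Lie subalgebra $\mathfrak{m}$ over $\widehat{\order_w}$. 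The obstruction to such a lift lies in a cohomology group of the form $H^2(\bar{\mathfrak{h}}, \bar{\mathfrak{g}}/\bar{\mathfrak{m}})^{\underline{\jmap_w(\Hfrak_w)}}$, and vanishes once $p_w$ is large enough that Whitehead's lemma applies to the semisimple Lie algebra $\bar{\mathfrak{h}}$ acting on $\bar{\mathfrak{g}}/\bar{\mathfrak{m}}$, i.e.\ once the relevant Casimir eigenvalues remain invertible in $\widehat{k_w}$ (which holds for $p_w$ exceeding a bound determined by the type of $\G$). Any such lift $\mathfrak{m}$ then exponentiates on the generic fiber to a connected algebraic subgroup of $\G_{\widehat{F_w}}$ of dimension $d$ strictly between $\jmap_w(\H)_{\widehat{F_w}}$ and $\G_{\widehat{F_w}}$, contradicting the emptiness established in the previous paragraph.
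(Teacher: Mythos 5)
Your overall architecture — produce an intermediate object mod $p_w$, lift it to characteristic zero, and contradict maximality there — is the same as the paper's, and your reduction of the generic-fibre emptiness to maximality over $\bar F$ is correct in substance (though ``exponentiating'' an arbitrary $\mathrm{Ad}$-stable intermediate Lie subalgebra needs a word of care, since not every Lie subalgebra in characteristic zero is algebraic; one should instead pass to the connected normalizer $N_{\G}(\mathfrak m)^\circ$, which strictly contains $\iota(\H)$ and is proper unless $\mathfrak m$ is an ideal). The genuine gap is in the lifting step. The obstruction to deforming the point $[\bar{\mathfrak m}]$ of your scheme $T$ does not live in $H^2(\bar{\mathfrak h},\bar{\mathfrak g}/\bar{\mathfrak m})$: writing $\bar{\mathfrak m}=\bar{\mathfrak h}\oplus V$ with $V\subset\bar{\mathfrak r}$, the subalgebra condition that must be preserved is $[V,V]\subset\bar{\mathfrak h}\oplus V$, so the tangent/obstruction complex is the $\underline{\jmap_w(\Hfrak_w)}$-equivariant Chevalley--Eilenberg complex of $\bar{\mathfrak m}$ \emph{relative to} $\bar{\mathfrak h}$ with values in $\bar{\mathfrak g}/\bar{\mathfrak m}$, i.e.\ cohomology of the unknown subalgebra $\bar{\mathfrak m}$, not of the semisimple $\bar{\mathfrak h}$. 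Since $\bar{\mathfrak m}$ may be far from reductive — the hardest case, which the paper must treat separately, is precisely $\bar{\mathfrak m}$ a parabolic subalgebra — no Whitehead/Casimir argument is available, and the obstruction group has no reason to vanish.

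There is a second, independent problem with making ``$p_w$ large enough'' meaningful: your scheme $T$ is defined over $\widehat{\order_w}$ in terms of the particular embedding $\jmap_w$, which varies with the data $\data$ and the place $w$. Any lifting criterion whose validity threshold depends on $T$ itself is circular; one needs $T$ (or a scheme surjecting onto the relevant lifting problem) to be the base change of a \emph{fixed} finite-type scheme over $\Z$, so that a single bound on $p_w$ works for all MASH sets. Producing such a uniform model is exactly what the paper's construction of the home-made Hom-scheme $\mathcal S$ (and the auxiliary schemes $\mathcal Y$ and the scheme of pairs $(\rho_1,\rho_2)$) accomplishes, after which the lifting is done by Greenberg's theorem rather than by deformation theory; the case split via Borel--Tits into ``contained in a parabolic'' versus ``reductive intermediate subgroup'' is what allows each lifting problem to be posed over $\Z$. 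To repair your argument you would have to either supply this uniform descent (at which point you have essentially reconstructed the paper's proof) or identify and uniformly kill the correct relative obstruction group for arbitrary $\bar{\mathfrak m}$, which is not achieved by the proposal as written.
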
 

\begin{proof}
First note that the subgroups $\Gfrak_w(\widehat{\order_w})$
and $\jmap_w(\Hfrak_w)(\widehat{\order_w})$ are hyperspecial subgroups of $\G(\widehat{F_w})$ and
$\jmap_w(\H)(\widehat{F_w}),$ see~\S\ref{ggo} (1) as well as~\cite[2.6.1 and 3.4.3]{Ti}. In particular, $\underline{{\jmap_w(\Hfrak_w)}}$ 
and $\underline{\Gfrak_w}$ are connected by~\S\ref{ggo} (4).

Let us also recall our assumption that both $\G$ and $\H$ 
split over $\widehat{F_w}.$
Therefore, by~\S\ref{ggo} (2) we have: ${\jmap_w(\Hfrak_w)}$ and 
$\Gfrak_w$ are $\widehat{\order_w}$-Chevalley group
schemes with generic fibers $\jmap_w(\H)(\widehat{F_w})$ and 
$\G(\widehat{F_w})$ respectively. 

We now show that this and maximality of $\iota(\H)$ in $\G$ implies that the subgroup $\underline{{\jmap_w(\Hfrak_w)}}$ is a maximal subgroup of $\underline{\Gfrak_w}.$

We first claim that $\underline{{\jmap_w(\Hfrak_w)}}$
is not contained in any proper parabolic subgroup of $\underline{\Gfrak_w}.$ 

To see this, let $\mathsf H$ (resp.\ $\mathsf G$) denote the  split Chevalley group over 
$\mathbb Z$ which has the same type as $\H$ (resp.\ $\G$).   These are affine schemes. 
For an arbitrary ring $R$, we denote by $\mathsf H_R$ the base change of $\mathsf{H}$ to $R$, and similarly for $\mathsf{G}$.

We want to make an argument involving the ``scheme of homomorphisms from $\mathsf H$ to $\mathsf G$.''
Such a scheme, whose $R$-points are canonically in bijection with 
homomorphisms of $R$-group schemes $\mathsf H_R \rightarrow \mathsf G_R$
is constructed in~\cite{SGA3}, but it is too big for us, because it has many components corresponding
to Frobenius twists of morphisms. Thus we use a  home-made variant:

Let $\mathcal{O}_H$ and $\mathcal{O}_G$ be the ring of global sections of the structure sheafs
of $\mathsf H$ and $\mathsf G$, respectively.   Fix generators $f_1, \dots, f_r$ for $\mathcal{O}_G$ as a $\Z$-algebra. 
There are finitely many conjugacy classes of homomorphisms $\mathsf{H}_{\overline{\Q}} \rightarrow \mathsf{G}_{\overline{\Q}}$. Therefore, 
there exists a finite dimensional $\mathsf{H}(\overline{\Q})$-stable sub vector space $\widetilde{M} \subset \mathcal{O}_H \otimes \overline{\Q}$ with the following property:
For any homomorphism $\rho:\mathsf H_{\overline{\Q}} \rightarrow \mathsf G_{\overline{\Q}}$, 
the pullback $\rho^* f_i$ belongs to $\widetilde{M}$.
Write $M = \widetilde{M} \cap \mathcal{O}_H$.

Let $\mathcal{S}$ be the affine scheme defined thus: 
an $R$-point of $\mathcal{S}$ is a  homomorphism of Hopf algebras 
$\rho^*: \mathcal{O}_G \otimes R \rightarrow \mathcal{O}_H \otimes R$
such that  $\rho(f_i) \subset M  \otimes R$. Said differently, $\mathcal{S}(R)$ parameterizes  homomorphisms of $R$-group schemes $\rho: \mathsf{H}_R  \rightarrow
\mathsf{G}_R$ with the finiteness property just noted, 
i.e. \begin{equation} \label{finiteness-old} \rho^* f_i \in  M \otimes R, \ \ 1 \leq i \leq r.\end{equation}
It is easy to see by writing out equations that  this functor is indeed represented by  a scheme of finite type over $\Z$.

 If $R$ is an integral domain whose quotient field $E$ has characteristic zero,
 then $\mathcal{S}(R)$ actually classifies arbitrary homomorphisms $\mathsf{H}_R \rightarrow \mathsf{G}_R$ 
 (i.e., there is no need to impose the condition \eqref{finiteness-old}). 
This is because an arbitrary homomorphism $\mathsf{H}_E \rightarrow \mathsf{G}_E$ 
has the property \eqref{finiteness-old}, since we can pass from $\overline{\Q}$ to $E$ by means
of the Lefschetz principle.

%
%

Next, let $\mathcal{M}$ be the projective smooth $\Z$-scheme of parabolic subgroups of $\mathsf{G}$ (see
\cite[Theorem 3.3, Expos{\'e} XXVI]{SGA3}) 
and let $\mathcal Y\subset \mathcal S\times\mathcal M$ be a scheme of finite type over $\mathbb Z$
defined as follows. 
\[
\mathcal Y:=\{(\rho,\mathsf P): \rho(\mathsf H)\subset\mathsf P\}.
\]
where the condition ``$\rho(\mathsf{H}) \subset \mathsf{P}$'' means, more formally,
that the pull-back of the ideal sheaf of $\mathsf{P}$ under $\rho^*$ is identically zero.

In view of the main theorem in~\cite{Greenberg} we have, for all $p_w\gg1$ 
the reduction map $\mathcal Y(\widehat{\order_w})\to\mathcal Y(\widehat{k_w})$ is surjective. This together with our assumption that $\underline{{\jmap_w(\Hfrak_w)}}(\widehat{k_w})$ is contained in a proper 
parabolic subgroup of $\underline{\Gfrak_w}(\widehat{k_w})$
implies that there exists some $f:\Hfrak_w\to \Gfrak_w$ and some parabolic $\mathbf P$
of $\G$ so that $f(\H)\subset \mathbf P,$ moreover, the reductions of $f$ and $\jmap_w$ coincide.
 A contradiction will follow if we verify that $f(\H)$ is conjugate to $j_w(\H)$ (which is conjugate to $\iota(\H)$ by definition). 

Both $f$ and $j_w$ define $\widehat{\mathfrak{o}_w}$-points of $\mathcal{S}$
and their reductions to $\widehat{k_w}$-points coincide.   
We will deduce from this that $f(\H)$ and $j_w(\H)$ must actually be conjugate, as follows: 
 
 By an infinitesimal computation (which we omit) 
 the geometric generic fiber $\mathcal{S} _{\overline{\Q}}$ (i.e., the base-change of $\mathcal{S}$ to $\overline{\Q}$)  is smooth,
 and moreover the orbit map of $\mathsf{G}_{\overline{\Q}}$ is surjective on each tangent space. 
Therefore, each 
 connected component  of $\mathcal{S}_{\overline{\Q}}$ is a single orbit of $\mathsf{G}_{\overline{\Q}}$. 
 
 Let $\mathcal{S}_1, \dots, \mathcal{S}_r$ be these geometric connected components.
We can choose a finite extension $E \supset \Q$ so that every $\mathcal{S}_i$ 
is defined over $E$ and also has a $E$-point, call it $x_i$. 
For simplicity, we suppose that $E=\Q$, the general case being similar, but notationally more complicated. 

By ``spreading out,'' there is an integer $A$
and a decomposition into disjoint closed subschemes:
 $$ \mathcal{S} \times_{\Z} \mathrm{Spec} \  \Z[\tfrac{1}{A}] = \coprod \mathcal{S}_i$$
i.e.\ $\mathcal{S}_i$ is a closed subscheme, the different $\mathcal{S}_i$ are disjoint, 
 and the union of $\mathcal{S}_i$ is the left-hand side.  Note that each $\mathcal{S}_i$ is both open and closed inside the left-hand side. 
  
 In particular, if $p_w > A$, any $\widehat{\mathfrak{o}_w}$-point of $\mathcal{S}$ 
 will necessarily factor through some $\mathcal{S}_i$. Therefore, if two $\widehat{\mathfrak{o}_w}$-points
 of $\mathcal{S}$ have the same reduction, they must factor through the same $\mathcal{S}_i$. 
 In particular, the associated $\widehat{F_w}$-points of $\mathcal{S}$ belong to the same $\mathcal{S}_i$,
 and therefore to the same geometric $\mathbf{G}$-orbit (i.e., the same orbit over $\overline{\Q}$).   This implies that  
that  $f(\H)$ and $j_w(\H)$ were conjugate inside $\mathbf{G}(\overline{\Q})$, giving the desired contradiction.   

Let now $\underline{\mathfrak{S}}$ be a maximal, proper, connected subgroup of $\underline{\Gfrak_w}$ 
so that $\underline{{\jmap_w(\Hfrak_w)}}\subset\underline{\mathfrak{S}}\subset\underline{\Gfrak_w}.$ 
Then by~\cite[Cor.~3.3]{BoTi-Unip} either $\underline{\mathfrak{S}}$ is a parabolic subgroup or it is reductive. 
In view of the above discussion $\underline{\mathfrak{S}}$ must be reductive, and by the above claim in fact semisimple.
Hence there is an isomorphism  $f:{\mathsf S} \times_{\mathbb{Z}} \widehat{k_w} \to\underline{\mathfrak{S}}$
where ${\mathsf S}$ denotes the Chevalley group scheme over $\mathbb Z$ 
of the same type as $\underline{\mathfrak{S}}.$

We are now in a similar situation to the prior argument, i.e. we will lift the offending subgroup $\underline{\mathfrak{S}}$ to characteristic zero using
\cite{Greenberg}. Let $\mathsf{H}, \mathsf{S}, \mathsf{G}$ be split Chevalley groups over $\Z$
of the same type as $\Hfrak_w, \underline{\mathfrak{S}}, \Gfrak_w$. 
Consider the $\Z$-scheme  parameterizing pairs  of homomorphisms 
$$(\rho_1: \mathsf{H} \rightarrow \mathsf{G}, \rho_2: \mathrm{Lie}( \mathsf{S}) \rightarrow 
\mathrm{Lie}(\mathsf{G})) \mbox{ with }  \mathrm{image}(d \rho_1) \subset \mathrm{image}(\rho_2),$$
where we impose the same finiteness conditions of $\rho_1$ as in the prior argument. 
  
The pair $(\jmap_w, {f})$, together with identifications of $\underline{\Hfrak_w}$ and $\underline{\mathfrak{S}}$ with $\mathsf{H}$ and $\mathsf{S}$, 
gives rise to a $\widehat{k_w}$-point of this scheme
with the maps $d\rho_1$ and  $\rho_2$ injective;
for large enough $p_w$ this lifts, again by  ~\cite{Greenberg}, to a $\widehat{\mathfrak{o}_w}$-point $(\widetilde{\rho}_1, \widetilde{\rho}_2)$, and still {with}
$d\widetilde{\rho_1}$ and $\widetilde{\rho_2}$ injective.

But, then, $\widetilde{\rho}_1(\mathsf{H}_{\widehat{F_w}})$ cannot be maximal, 
e.g.\ by examining its derivative. As in the previous argument 
we deduce that $\jmap_w(\mathbf{H})$ is not maximal, 
and this contradiction finishes the proof.   
\end{proof}

\subsection{A Lie algebra complement}\label{Lie-alg-comp}
We regard $\mathfrak{g}$ as a sub-Lie-algebra of $\mathfrak{sl}_N$. 
Let $B$ be the Killing form of $\mathfrak{sl}_N$ whose restriction to $\gfrak$ we will still denote by $B$. 
The properties of the good place $w$ give us, in particular, 
that the restriction of $B$ on the Lie algebra $\mathfrak{h}_w=\Lie(H_w)$\index{hw@$\hfrak_w,$ Lie algebra of the acting group $H_w$ at good place $w$} has the following property.

\begin{lem}
Assuming $p_w$ is larger than an absolute constant depending only on the dimension the following holds.
If we choose an $\mathfrak{o}_w$-basis $\{e_1,\ldots,e_{\dim\H}\}$ 
for $\mathfrak{h}_w \cap \mathfrak{sl}_N(\mathfrak{o}_w)$, 
then $\det(B(e_i, e_j))_{ij}$ is a unit in $\mathfrak{o}_w^{\times}.$
That is:
\begin{multline}\label{Crit}
\mbox{$B$ restricted to $\mathfrak{h}_w \cap \mathfrak{sl}_N(\mathfrak{o}_w)$ is a}\\
\mbox{{non-degenerate} bilinear form over $\mathfrak{o}_w$.} 
\end{multline}
\end{lem}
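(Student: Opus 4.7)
Since each $e_i$ lies in $\mathfrak{sl}_N(\order_w)$, the Gram matrix $(B(e_i,e_j))$ has entries in $\order_w$, and its determinant therefore lies in $\order_w$. The lemma is thus equivalent to the statement that the reduction of this determinant modulo $\varpi_w$ is nonzero, i.e.\ that the induced symmetric bilinear form on the $k_w$-vector space $\bar{\hfrak} := (\hfrak_w \cap \mathfrak{sl}_N(\order_w))/\varpi_w$ is nondegenerate.

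The preceding lemma and Lemma~\ref{c;red-max} give, for $p_w$ large enough in terms of $N$, a closed immersion $\jmap_w: \Hfrak_w \hookrightarrow \Gfrak_w$ of smooth $\order_w$-group schemes whose base change to $\widehat{\order_w}$ is an embedding of Chevalley group schemes, and such that the reduction $\underline{\jmap_w(\Hfrak_w)} \subset \underline{\Gfrak_w} \subset \SL_{N,k_w}$ is a connected semisimple $k_w$-subgroup. For $p_w$ sufficiently large one checks that $\hfrak_w \cap \mathfrak{sl}_N(\order_w)$ coincides with the integral Lie algebra $\Lie(\jmap_w(\Hfrak_w))(\order_w)$ coming from the group scheme, so that $\bar{\hfrak}$ is canonically identified with $\Lie(\underline{\jmap_w(\Hfrak_w)})$, and the reduction of $B|_{\hfrak_w\cap\mathfrak{sl}_N(\order_w)}$ becomes (up to the unit $2N$, assuming $p_w > 2N$) the restriction to $\bar{\hfrak}$ of the trace form $(X,Y)\mapsto \tr(XY)$ on $\mathfrak{sl}_N(k_w)$.

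We are thereby reduced to the following universal statement: \emph{for any closed immersion $\mathsf{H} \hookrightarrow \SL_N$ of a split semisimple Chevalley group scheme over $\Z$ with $\dim\mathsf{H}\leq\dim_F\G$, the restriction of the trace form to $\Lie(\mathsf{H})$ is nondegenerate over $\mathbb{F}_p$ for all $p$ larger than some constant $C(N)$.} In characteristic zero, the trace form of a faithful representation of a semisimple Lie algebra is nondegenerate, so the discriminant of the restricted form is a nonzero integer $D_{\mathsf{H}}\in\Z\setminus\{0\}$; we then simply take $p_w$ to be coprime to each such $D_{\mathsf{H}}$. The key input making the bound on $p_w$ uniform is that, up to $\SL_N$-conjugation (which preserves the trace form), there are only finitely many such Chevalley embeddings $\mathsf{H}\hookrightarrow\SL_N$, classified by the root system of $\mathsf{H}$ together with the decomposition of the standard $N$-dimensional representation as an $\mathsf{H}$-module.

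The most delicate point, and the only one that is not routine, is this last uniformity argument: one must match, up to $\SL_N(\widehat{k_w})$-conjugacy, the image of $\jmap_w$ against the finite list of integral Chevalley embeddings so that a single characteristic-zero discriminant computation (depending only on $N$) governs nondegeneracy for every admissible $\data$. The compatibility of the integral Chevalley structures supplied by the good-place conditions of \S\ref{ss;good-place}--\S\ref{ss:compatibility}, together with the standard $\SL_N$-invariance of the trace form, should be exactly what is needed to carry out this matching.
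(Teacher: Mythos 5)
Your proposal is correct in outline and follows essentially the same route as the paper: reduce to nondegeneracy of the induced form on the reduction modulo $\varpi_w$, identify $\hfrak_w\cap\mathfrak{sl}_N(\order_w)$ with the integral Lie algebra of the Bruhat--Tits group scheme $\jmap_w(\Hfrak_w)$ (which, after base change to $\widehat{\order_w}$, carries a Chevalley basis because $\H$ splits over $\widehat{F_w}$ and $K_w^*$ is hyperspecial), and then combine a characteristic-zero nondegeneracy statement with a bound on the bad primes. The only real divergence is at the step you yourself flag as delicate. You propose to obtain uniformity by matching the embedding against a finite list of integral Chevalley embeddings $\mathsf{H}\hookrightarrow\SL_N$ over $\Z$, up to $\SL_N(\widehat{k_w})$-conjugacy; as stated this is problematic, since the embedding at hand lives over $\widehat{\order_w}$ and need not descend to $\Z$, and conjugacy of the reductions does not by itself transport an integral discriminant computation. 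The paper closes this step differently: the restriction of $B$ to each simple factor of $\hfrak_w\otimes\widehat{F_w}$ is a scalar multiple of the Killing form of that factor, the scalar being bounded in terms of $N$ (there are finitely many conjugacy classes of $N$-dimensional representations), and the Killing form evaluated on a Chevalley basis has discriminant a nonzero integer whose prime divisors are bounded in terms of the dimension. Thus the quantity to be inverted depends only on the root system and on the index of the restricted standard representation --- data taking finitely many values --- and never on a choice of integral model over $\Z$; the unit-determinant conclusion then descends from the Chevalley basis over $\widehat{\order_w}$ to your basis $\{e_i\}$ over $\order_w$ because the two are related by a base-change matrix in $\GL_{\dim\H}(\widehat{\order_w})$. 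So: keep your strategy, but replace the ``matching of integral embeddings'' by the multiple-of-the-Killing-form observation, which is what actually carries the uniformity you need.
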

\begin{proof}
Let the notation be as in the previous section, in particular, 
abusing the notation we denote the derivative of $\jmap_w$ with $\jmap_w$
as well.  
Let $\Hfrak_w$ denote the smooth $\order_w$-group scheme whose
generic fiber is $\H(F_w)$ and $\Hfrak_w(\order_w)=K^*_w$ given by Bruhat-Tits theory. 
The Lie algebra $\Lie(\jmap_w(\Hfrak_w))$ of $\jmap_w(\Hfrak_w)$ 
is an $\order_w$-algebra.
The Lie algebra $\hfrak_w$ is isomorphic to $\Lie(\jmap_w(\Hfrak_w))\otimes_{\order_w}F_w.$ 
Fix an $\order_w$-basis $\{e_i\}$ for $\Lie(\jmap_w(\Hfrak_w)),$ 
this gives a basis for $\hfrak_w.$ 
Now since $\H$ splits over $\widehat{F_w}$ and $K^*_w$ is a hyperspecial subgroup of $H^*_w$, we get: $\Hfrak_w(\widehat{\order_w})$
is a hyperspecial subgroup of $\H(\widehat{F_w}),$ see~\cite[2.6.1 and 3.4.1]{Ti}.
Fix a Chevalley $\widehat{\order_w}$-basis $\{\widehat{e_i}\}$ for 
$\Lie(\jmap_w(\Hfrak_w))\otimes_{\order_w}\widehat{\order_w}$ 
which is a Chevalley basis for $\hfrak_w\otimes_{F_w}\widehat{F_w},$ see~\cite[\S3.4.2 and 3.4.3]{Ti}.

Recall that $\underline{\jmap_w(\Hfrak_w)}$ denotes the reduction 
mod $\varpi_w$
of $\jmap_w(\Hfrak_w).$ This is a semisimple $k_w$-subgroup, 
therefore, in view of our assumption on characteristic\footnote{The requirement here is that 
${\rm char}(k_w)$ is big enough so that the following holds. 
The restriction of $B$ to each simple factor of $\Lie(\jmap_w(H_w))$ is a multiple of the 
Killing form on that factor, and this multiple is bounded in terms on $N.$ 
We take ${\rm char}(k_w)$ to be bigger than all the primes appearing in these factors.}
of $k_w$ we get 
\[
 \mbox{$\det B(\underline{\widehat{e_i}},\underline{\widehat{e_j}})\neq0,$}
\]
hence, $\det B(\widehat{e_i},\widehat{e_j})\in \widehat{\order_w}^\times.$ This implies that 
$\det B(e_i,e_j)\in \order_w^\times,$ as 
$\{e_i\}$ is another $\widehat{\order_w}$-basis for $\Lie(\jmap_w(\Hfrak_w))\otimes_{\order_w}\widehat{\order_w}.$    
\end{proof}

It follows from~\eqref{Crit}, 
and our assumption on ${\rm char}(k_w)$ that 
there exists an $\order_w$-module $\rfrak^{\mathfrak{sl}_N}_w[0]$ which is the orthogonal complement
of $\hfrak_w[0]$ in $\mathfrak{sl}_N(\order_w)$ with respect to $B,$ 
see e.g.~\cite{Ba}.\index{rw@$\rfrak_w,\rfrak_w[m]$, invariant complement
at good place~$w$}
Let $\rfrak_w[0]=\rfrak^{\mathfrak{sl}_N}_w[0]\cap\gfrak$
and write~$\rfrak_w$ for its~$F_w$-span. 
Then we have 
\begin{equation} \label{gwm splitting}
\mathfrak{g}_w[m] = (\mathfrak{h}_w \cap \mathfrak{g}_w [m] )\oplus ( \mathfrak{r}_w \cap \mathfrak{g}_w[m]),
\;\;\text{ for all }\;\; m\geq0.
\end{equation}
(see discussion after \eqref{Kvm definition} for notation). 


\subsection{The implicit function theorem at the good place} \label{properties}
Recall that $p_w={\rm char}(k_w).$
The first congruence subgroup of $\SL_N(\order_w)$ is a pro-$p_w$ group, see e.g.~\cite[Lemma 3.8]{PR}.
Moreover, a direct calculation shows that 
if the $w$-adic valuation of $p_w$ is at most $p_w-2$,  then the first congruence subgroup
of $\SL_N(\order_w)$ is torsion free.  {The condition on the valuation comes from estimating the radius of convergence
of the exponential map on $N \times N$ matrices with entries in $F_w$; we just use the estimate that the $p$-adic valuation of $n!$
is bounded by $n/(p-1)$.}
 This condition is 
satisfied in particular if $w$ is unramified over $p_w$ or if $p_w\geq [F:\Q]+2.$  

In the sequel we assume $w$ is so that $p_w\geq \max\{N^3,[F:\Q]+2\}.$
In view of the above discussion, for such $w$ we have 
$\exp:\g_w[m]\rightarrow K_w[m]$ is a diffeomorphism for any $m\geq 1,$
see e.g.~\cite[Ch.~9]{DSMS} for a discussion. 

Let us also put $H'_w=g_{\data,w}^{-1}\iota(\H)(F_w)g_{\data,w}=\jmap_w(\H)(F_w).$

\begin{lem}\label{l;pro-p-nbhd}
For any $m \geq 1$ we have
\begin{itemize}
 \item[(i)] $ H_w \cap K_w[m] =  \exp( \mathfrak{h}_w \cap \mathfrak{g}_w[m]).$
 \item[(ii)] Moreover,
every element of $K_w[m]$ can be expressed as $\exp(z)h$
where $h \in H_w \cap K_w[m]$, and $z \in  \mathfrak{r}_w \cap \mathfrak{g}_w[m]$.
\end{itemize}

\end{lem}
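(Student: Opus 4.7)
The plan is to use three prerequisites from the preceding subsections: (a) the exponential map $\exp:\mathfrak{g}_w[m]\to K_w[m]$ is an analytic diffeomorphism for every $m\geq 1$, and the Baker--Campbell--Hausdorff series converges on $\mathfrak{g}_w[1]$ (both consequences of the hypothesis $p_w\geq\max\{N^3,[F:\Q]+2\}$ fixed in \S\ref{properties}); (b) the direct-sum decomposition $\mathfrak{g}_w[m]=(\mathfrak{h}_w\cap\mathfrak{g}_w[m])\oplus(\mathfrak{r}_w\cap\mathfrak{g}_w[m])$ from \S\ref{Lie-alg-comp}; and (c) the closed immersion of smooth $\mathfrak{o}_w$-group schemes $\jmap_w:\Hfrak_w\hookrightarrow\Gfrak_w$ from \S\ref{ss:compatibility}, which intertwines the two exponentials and reflects the congruence filtrations in the sense that $\jmap_w^{-1}(K_w[m])\cap\Hfrak_w(\mathfrak{o}_w)$ is the $m$-th congruence subgroup of $K_w^*=\Hfrak_w(\mathfrak{o}_w)$.

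For part~(i), the inclusion $\exp(\mathfrak{h}_w\cap\mathfrak{g}_w[m])\subseteq H_w\cap K_w[m]$ follows because each $X$ in the source equals $d\jmap_w(X_0)$ for a unique $X_0$ in the $m$-th part of $\Lie(\Hfrak_w)$, so that $\exp(X)=\jmap_w(\exp X_0)\in\jmap_w(\Hfrak_w(\mathfrak{o}_w))\subseteq H_w$ and simultaneously $\exp(X)\in K_w[m]$ by (a). For the reverse inclusion, take $h\in H_w\cap K_w[m]$ and write $h=\jmap_w(h_0)$ for some $h_0\in\H(F_w)$; since $h\in K_w$, the identification $\jmap_w^{-1}(K_w)\cap\H(F_w)=K_w^*$ (from the hyperspecial property at a good place) places $h_0\in\Hfrak_w(\mathfrak{o}_w)$, and the filtration compatibility in~(c) then forces $h_0$ into the $m$-th congruence subgroup of $K_w^*$. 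Taking the logarithm on $\Hfrak_w$ and applying $d\jmap_w$ gives $\log h\in\mathfrak{h}_w\cap\mathfrak{g}_w[m]$.

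For part~(ii), consider the analytic map
\[
\phi:(\mathfrak{r}_w\cap\mathfrak{g}_w[m])\times(\mathfrak{h}_w\cap\mathfrak{g}_w[m])\longrightarrow K_w[m],\qquad(z,X)\longmapsto\exp(z)\exp(X).
\]
Baker--Campbell--Hausdorff yields $\log\phi(z,X)\equiv z+X\pmod{\mathfrak{g}_w[2m]}$, since $[z,X]\in\mathfrak{g}_w[2m]$ and higher iterated brackets lie in still deeper filtration layers. Hence modulo each successive step in the filtration $K_w[m]\supseteq K_w[m+1]\supseteq\cdots$, the map $\phi$ realises the direct-sum decomposition~(b) and is therefore bijective on associated graded pieces. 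A standard successive-approximation argument (convergent because $K_w[m]$ is a complete filtered pro-$p_w$ group) upgrades this to a bijection $\phi$; combining with part~(i) to rewrite the second factor as some $h\in H_w\cap K_w[m]$ gives the claimed decomposition $k=\exp(z)h$. The main obstacle I expect is really the filtration-compatibility clause of~(c): translating the scheme-theoretic closed immersion of \S\ref{ss:compatibility} into the statement that $\jmap_w(h_0)\in K_w[m]$ forces $h_0$ to lie in the $m$-th congruence subgroup of $\Hfrak_w(\mathfrak{o}_w)$ uses the smoothness of $\Hfrak_w$ (for local étale coordinates extending the immersion) together with the largeness of $p_w$ to prevent $p_w$-torsion from disturbing the group-to-Lie-algebra dictionary at each level.
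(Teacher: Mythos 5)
Your proof is correct, but for part (i) it takes a genuinely different route from the paper's. The paper argues analytically and barely touches the integral models: for the inclusion $\exp(\mathfrak{h}_w\cap\mathfrak{g}_w[m])\subseteq H_w\cap K_w[m]$ it notes that for any polynomial $f$ vanishing on $H_w'=\jmap_w(\H)(F_w)$ the function $t\mapsto f(\exp(tz))$ is $p_w$-adic analytic and vanishes near $0$, hence on all of $\order_w$, so $\exp(z)\in K_w[m]\cap H_w'$; it then descends from $H_w'$ to $H_w=\jmap_w(\H(F_w))$ using that $K_w[m]$ is pro-$p_w$ while $[H_w':H_w]\leq N^2<p_w$. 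For the converse it uses the one-parameter-subgroup characterization of $\Lie(H_w)$: writing $h=\exp(z)$, the powers $h^\ell=\exp(\ell z)$ lie in $H_w$, and analyticity then forces $\exp(tz)\in H_w$ for $t$ near $0$, whence $z\in\mathfrak{h}_w$. You instead route everything through the closed immersion $\Hfrak_w\hookrightarrow\Gfrak_w$ of \S\ref{ss:compatibility} and the compatibility of congruence filtrations. That is workable — injectivity of $\Hfrak_w(\order_w/\varpi_w^m\order_w)\to\Gfrak_w(\order_w/\varpi_w^m\order_w)$ is exactly what a closed immersion of smooth group schemes gives, and smoothness makes $d\jmap_w(\Lie(\Hfrak_w)(\order_w))$ a saturated direct summand of $\mathfrak{g}_w[0]$, so the lattice identifications you need do hold — but it carries the extra burden you yourself flag, namely matching $K_w[m]$ (defined via $\rho$ into $\SL_N$) with the congruence filtration of the Bruhat--Tits scheme $\Gfrak_w$; the paper's analytic argument sidesteps this entirely, at the price of having to handle the gap between $H_w$ and $H_w'$ by the pro-$p$ index trick. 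What your route buys is a cleaner forward inclusion: you land in $H_w$ directly as the image of an $\order_w$-point, with no detour through $H_w'$. For part (ii) the paper simply invokes the implicit function theorem together with part (i); your Baker--Campbell--Hausdorff and successive-approximation argument is the standard way of making that one-liner precise, so there the two proofs essentially coincide.
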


\begin{proof}
We shall use the following characterization of the Lie algebra of $H_w$:
$u$ belongs to $\h_w$ if and only if $\exp(tu) \in H_w$
for all sufficiently small $t,$ see e.g.\ \cite{Bour} or \cite[Lemma 1.6]{LapFin}.

For $z \in \RHS$, $\exp(t z)$ defines a $p$-adic analytic function of $t$
for $t \in \order_w$.  If $f$ is a polynomial function vanishing on 
$H'_w$, we see that $f(\exp(tz))$ vanishes for $t$ in a sufficiently 
small neighborhood of zero, and so also for $t \in \order_w$. 
Therefore, $\exp(tz)\in K_w[m]\cap H_w'.$
Recall that $K_w[m]$ is a pro-$p_w$ group, hence, 
$K_w[m]\cap H_w'$ is also a pro-$p_w$ group. 
This, in view of our assumption that $p> N,$
implies that $K_w[m]\cap H'_w\subset H_w,$
indeed $[H'_w:H_w]\leq F_w^\times/(F_w^\times)^N$ which
is bounded by $N^2,$ see e.g.~\cite[Ch.~8]{PR}.

Conversely, take $h \in \LHS,$ there 
is some $z \in \g_w[m]$ with $\exp(z) = h$.
Then $h^\ell = \exp(\ell z) \in \LHS,$ for $\ell=1,\dots$. The map
$t \mapsto \exp(tz)$ is $p_w$-adic analytic, so $\exp(\ell z) \in \LHS$ for all $t$ in a $p_w$-adic neighborhood of zero. It follows that $z$ in fact belongs to the Lie algebra of $H_w$.

The second assertion is a consequence of the first and the implicit function theorem, thanks to the fact that $\exp$ is a  diffeomorphism on $\g_w[m]$ { (see the discussion before the lemma).}
\end{proof}

\subsection{Adjustment lemma}\label{lem:adjustment}

As usual we induce a measure in $H_w$ using a measure on its Lie algebra. Then,  
\[
\mbox{$\exp:\RHS \rightarrow \LHS,\quad m\geq1$}
\] 
is a {\em measure-preserving} map. 
To see this, it is enough to compute the Jacobian of this map;
after identifying the tangent spaces at different points in $H_w$ via left  translation 
the derivative maybe thought of as a map $\mathfrak{h}_w \rightarrow \mathfrak{h}_w$.  
We again apply~\eqref{dderiv} for~$u\in\mathfrak{h}_w$. 
If now $u \in \RHS$, then $\ad u$ preserves the lattice $\mathfrak{h}_w \cap \mathfrak{g}_w[0]$ 
and induces an endomorphism of it that is congruent to $0$ 
modulo the uniformizer. It follows that~\eqref{dderiv} is congruent to the identity modulo $\varpi_w$, 
and in particular the Jacobian is a unit which implies the claim. 

The following is useful in acquiring two measure theoretically generic points to
be algebraically ``in transverse position" relative to each other, see the lemma regarding nearby generic points in \S\ref{GP}.

\begin{lem} [Adjustment lemma]
Let $m \geq 1$ be an integer, and $g \in K_w[m]$.
Given
subsets $A_1, A_2 \in K_w[1] \cap H_w$ of relative measure $>1/2$, there exists $\alpha_i \in A_i$ so that
$\alpha_1^{-1} g \alpha_2 = \exp(z)$ for some $z \in \r_w, \|z\| \leq q_w^{-m}$.
\end{lem}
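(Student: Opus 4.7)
The plan is to use Lemma~\ref{l;pro-p-nbhd}(ii) to decompose $g$ as $\exp(z_0) h_0$ with $z_0 \in \mathfrak{r}_w \cap \mathfrak{g}_w[m]$ and $h_0 \in H_w \cap K_w[m]$, and then arrange the conjugation $\alpha_1^{-1} g \alpha_2$ so that the ``$H$-piece'' $\alpha_1^{-1} h_0 \alpha_2$ cancels to the identity, leaving only the transverse exponential factor.

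First, I would observe that for any choice of $\alpha_1,\alpha_2$ we have
\[
\alpha_1^{-1} g \alpha_2 = \alpha_1^{-1} \exp(z_0) \alpha_1 \cdot \alpha_1^{-1} h_0 \alpha_2 = \exp\bigl(\mathrm{Ad}(\alpha_1^{-1}) z_0\bigr) \cdot (\alpha_1^{-1} h_0 \alpha_2).
\]
So it is enough to choose $\alpha_1 \in A_1$, $\alpha_2 \in A_2$ with $\alpha_1 = h_0 \alpha_2$. Since $h_0 \in K_w[m] \subset K_w[1]$ and $h_0 \in H_w$, left multiplication by $h_0^{-1}$ is a measure-preserving bijection of $H_w \cap K_w[1]$. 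Thus $h_0^{-1} A_1 \subset H_w \cap K_w[1]$ has the same relative measure $>1/2$ as $A_1$, and so $h_0^{-1} A_1 \cap A_2$ is nonempty by the standard pigeonhole argument. Picking $\alpha_2$ in this intersection and setting $\alpha_1 = h_0 \alpha_2 \in A_1$ yields $\alpha_1^{-1} h_0 \alpha_2 = e$, as desired.

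It remains to verify that $z := \mathrm{Ad}(\alpha_1^{-1}) z_0$ lies in $\mathfrak{r}_w$ and satisfies $\|z\| \leq q_w^{-m}$. For the first, recall from~\S\ref{Lie-alg-comp} that $\mathfrak{r}_w$ was defined as the orthogonal complement of $\mathfrak{h}_w$ with respect to the Killing form $B$ on $\mathfrak{sl}_N$; since $B$ is $\mathrm{Ad}(\SL_N)$-invariant and $\mathrm{Ad}(H_w)$ preserves both $\mathfrak{h}_w$ and $\mathfrak{g}$, the subspace $\mathfrak{r}_w$ is $\mathrm{Ad}(H_w)$-invariant, so $z \in \mathfrak{r}_w$. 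For the second, $\alpha_1 \in K_w \subset \rho^{-1}(\SL_N(\mathfrak{o}_w))$, so $\mathrm{Ad}(\alpha_1^{-1})$ preserves the $\mathfrak{o}_w$-lattice of integral matrices and in particular preserves $\mathfrak{g}_w[m]$; hence $z \in \mathfrak{g}_w[m]$ and its norm (maximum absolute value of the entries of $D\rho(z)$) is bounded by $q_w^{-m}$.

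The ``main obstacle,'' such as it is, is purely bookkeeping: making sure the decomposition in Lemma~\ref{l;pro-p-nbhd}(ii) puts $h_0$ on the correct side so that the pigeonhole can be applied to left cosets, and confirming that $\mathrm{Ad}(H_w \cap K_w[1])$ genuinely preserves the integral form of $\mathfrak{r}_w$ (which follows because $H_w \cap K_w[1] \subset K_w$ acts integrally on $\mathfrak{sl}_N(\mathfrak{o}_w)$ and preserves $\mathfrak{h}_w \cap \mathfrak{g}_w[0]$, hence its $B$-orthogonal complement inside $\mathfrak{g}$). There is no hidden analytic difficulty: once the $p_w$-adic exponential has been set up in \S\ref{properties}, the lemma is a direct consequence of pigeonhole together with the $\mathrm{Ad}$-invariance of the complement.
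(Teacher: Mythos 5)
Your proof is correct and takes essentially the same route as the paper: decompose $g=\exp(z_0)h_0$ via Lemma~\ref{l;pro-p-nbhd}(ii), apply the pigeonhole argument to the measure-preserving translation by $h_0$ so that the $H_w$-factor cancels, and conjugate the transverse exponential factor back into $\exp(\r_w\cap\g_w[m])$. The paper leaves the final verification that $\Ad(\alpha_1^{-1})z_0$ remains in $\r_w\cap\g_w[m]$ implicit ("the conclusion follows"); you spell it out correctly using the $\Ad$-invariance of the Killing form and the integrality of $\Ad(K_w)$.
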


\proof
Write, using the previous lemma, $g = \exp(z) h$ 
where $z \in \mathfrak{r}_w, \|z\| \leq q_w^{-m},  h \in H_w \cap K_w[m]$.
If $\alpha \in K_w[1] \cap H_w \subset \SL_N(\order_w)$ we have:
$$\alpha^{-1}  g = \exp(\Ad(\alpha^{-1}) z) (\alpha^{-1} h).$$

The map $f:\alpha \mapsto \alpha^{-1}h$ is measure-preserving. 
In view of our assumption on the relative measures of $A_1$ and $A_2,$ we may choose 
$\alpha \in A_1$ with $f(\alpha)^{-1}  \in A_2;$ the conclusion follows.
\qed

\subsection{The principal $\SL_2$}\label{l;good-sl2}
In the dynamical argument we will use spectral gap properties and 
dynamics of a unipotent flow. The following lemma will provide us with an {\it undistorted} copy of $\SL_2$. Here undistorted refers to the property that the ``standard" maximal compact subgroup of $\SL_2$
is mapped into $K_w$ which will be needed to relate our notion of Sobolev norm with the representation theory of $\SL_2.$  

As before we let $\Hfrak_w$ be a smooth $\order_w$-group scheme whose generic fiber is $\H(F_w)$ and so that $\Hfrak_w(\order_w)=K^*_w.$ 
 \begin{lem}
 There exists a {homomorphism} of $\order_w$-group schemes 
\[
\theta: \SL_2 \longrightarrow \Hfrak_w,
\] 
such that the projection of 
$\theta_w(\SL_2(F_w))$ into each $F_w$-almost simple factor of $H_w$ is nontrivial where $\theta_w=\jmap_w\circ\theta$.
\end{lem}

The following proof is due to Brian Conrad. We are grateful for his permission to include it here.

\proof
By our assumption $\Hfrak_w$ is semisimple.
Letting $R = \mathfrak{o}_w$ for ease of notation, pick a Borel $R$-subgroup $\mathcal{B}$ in $\mathfrak{H}_w$ and a maximal $R$-torus $\mathcal{T}$ in $\mathcal{B}$ (which exist by Hensel's Lemma and Lang's Theorem \cite[\S6.2]{PR}).  
Let $R \to R'$ be a finite unramified extension that splits $\mathcal{T}$, so $(\mathfrak{H}_w, \mathcal{T}, \mathcal{B})_{R'}$ is $R'$-split.

By the Existence and Isomorphism Theorems for reductive groups over rings  \cite[Thm. 6.1.16]{Conrad} this $R'$-split triple descends to a $\mathbb{Z}_{(p_w)}$-split triple $(\mathsf H,\mathsf T,\mathsf B)$. By~\cite[Thm.~7.1.9(3)]{Conrad},  $(\mathfrak{H}_w, \mathcal{T}, \mathcal{B})$ is obtained from $(\mathsf H,\mathsf T,\mathsf B)$ by twisting through an $R'/R$-descent datum valued in the finite group of pinned $R'$-automorphisms of $(\mathsf H,\mathsf T,\mathsf B)$ (all of which are defined over $\mathbb{Z}_{(p_w)}$). A specific $\mathbb{Z}_{(p_w)}$-homomorphism $\theta: {\rm{SL}}_2 \to \mathsf H$ is constructed in \cite[Prop. 2]{Serre} that carries the diagonal torus into $\mathsf T$ and carries the strictly upper triangular subgroup into $\mathsf B$.  Though it is assumed in  \cite{Serre} that the target group is of adjoint type, semisimplicity is all that is actually used in the construction.

We claim that for any local extension of discrete valuation rings $\mathbb{Z}_{(p_w)} \to A$ (such as $\mathbb{Z}_{(p)} \to R'$), the map $\theta_A$ is invariant under the finite group $\Gamma$ of pinned automorphisms of $\mathsf H$. It suffices to check this invariance over the fraction field $E$ of $A$, and then even on ${\rm{Lie}}(\mathsf H_E)$ since ${\rm{char}}(E)=0$. This in turn follows from the explicit description of ${\rm{Lie}}(\theta_{\mathbb{Q}})$ in Serre's paper because pinned automorphisms permute simple positive root lines respecting the chosen bases for each.

Thus, $\theta_{R'}$ is compatible with any $\Gamma$-valued $R'/R$-descent datum (such as the one obtained above), so $\theta_{R'}$ descends to an $R$-homomorphism ${\rm{SL}}_2 \to \mathfrak{H}_w$. This has the desired property relative to almost simple factors of the generic fiber over $F_w$ because (by design)   the composition of $\theta_{\mathbb{Q}}$ with projection to every simple factor of the split isogenous quotient $\mathsf H^{\rm{ad}}_{\mathbb{Q}}$ is non-trivial. \qed

\vspace{2mm}

We will refer to $\theta_w(\SL_2)$ as the {\em principal} $\SL_2$ in the sequel.
We define the one-parameter unipotent subgroup~$u:F_w\to\theta_w(\SL_2(F_w))$
by
\[
 u(t)=\theta_w\left(\begin{pmatrix} 1&t\\0&1\end{pmatrix}\right),
\]
and define the diagonalizable element 
\[
 a=\theta_w\left(\begin{pmatrix} p_w^{-1}&0\\0&p_w\end{pmatrix}\right)\in\theta_w(\operatorname{SL}_2(F_w)).
\]

\subsection{Divergence of unipotent flows.}  \label{div}

In the dynamical argument of the next section we will study the unipotent orbits
of two nearby typical points. As is well known the fundamental property of unipotent
flows is their polynomial divergence. We now make a few algebraic preparations
regarding this behavior at the good place~$w$.

Recall that $\r_w$ is invariant under the adjoint action of $\tSL.$
Let $\r_w^{\rm trv}$ denote the sum of all 
{\it trivial} $\theta_w(\SL_2)$ components of $\r_w$, and put $\r_w^{\rm nt}$
to be the sum of all {\it nontrivial} $\theta_w(\SL_2)$ components of $\r_w$.
In particular $\r_w=\r_w^{\rm trv}+\r_w^{\rm nt}$ as a $\tSL$ representation.

Even though we will not use this fact, 
let us remark that $\r_w^{\rm trv}$ does not contain
any $H_w$-invariant subspace. To see this let $V$ be such a subspace. 
{ Let $V[1]=V\cap\gfrak_w[1]$, in particular, $V[1]$ is a compact open (additive) subgroup of $V$ and 
the exponential map is defined on $V[1]$. 
Then the Zariski closure of the group generated by $\exp(V[1])$} is a {\it proper} subgroup of $\G$
which is normalized by $H_w$ and centralized by $\theta_w(\SL_2).$ 
In particular, it is normalized by $\iota(\H),$
the Zariski closure of $H_w.$
This however contradicts the maximality of $\iota(\H)$ in view of 
the fact that $\G$ is semisimple. 

Let\index{rw@$\r_w^{\rm hwt},\r_w^{\rm hwt}[m]$, sum of highest weight subspaces} $\r_w^{\rm hwt}$ be the sum of all of the highest weight spaces 
with respect to the diagonal torus of $\theta_{w}(\SL_2)$ 
in $\r_w^{\rm nt}.$ 
Note that $\r_w^{\rm hwt}$ is the space of $u(F_w)$-fixed vectors in $\r_w^{\rm nt}$. 
Let $\r_w^{\rm mov}$ be the sum of all of the remaining weight spaces in $\r_w^{\rm nt}$
where ${\rm mov}$ stands for moving.

Using this decomposition we write $\r_w=\r_w^{\rm hwt}+ \r_w^{\rm mov}+\r_w^{\rm trv} ;$
therefore, given $z_0\in\r_w$ we have $z_0=z_0^{\rm hwt}+z_0^{\rm mov}+z_0^{\rm trv}$.
In view of the construction of $\theta_w(\SL_2)$ and ${\rm char}(k_w)\gg1$
we also have 
\be\label{ee-good-again}
\mbox{$\r_w[m]=\r_w^{\rm hwt}[m]+ \r_w^{\rm mov}[m]+\r_w^{\rm trv}[m]\;\;$ for all $m\geq0.$}
\ee
Note that elements in $\mathfrak{r}_w^{\rm hwt}$ are 
{\em nilpotent}\footnote{This can be seen, e.g.\ because they can be 
contracted to zero by the action of the torus inside $\theta_{w}(\SL_2)$.}.

In the following we understand $\G$ as
a subvariety of the ${N^2}$-dimensional affine space via  $\rho:\G \to {\rm Mat}_N$.
We call a polynomial $p: F_w \rightarrow \mathbf{G}(F_w)$ {\em admissible}
if it has the following properties, 
\begin{enumerate}
\item The image of $p$ is centralized by $u(F_w)$ and contracted by~$a^{-1}$, i.e.\ for every $t$ we have $a^{-N} p(t) a^N \rightarrow e$
as $N \rightarrow \infty$; in particular the image of $p$ consists of unipotent elements.
\item $\deg(p) \leq {N^3}$. 
\item $p(0) = e$, the identity element;
\item All coefficients of $p$ belong to $\order_w$.  
\item { $p(F_w)\subset\exp(\r)$. }
\item {There exists some $t_0\in \order_w$
such that~$p(t_0)$ is not small.
More precisely, we have 
$p(t_0) =  \exp(\varpi_w^rz)$, where { $0  <r \leq \degbound$}, and $z$
is a nilpotent element of $\mathfrak{r}_w [0]\setminus \mathfrak{r}_w[1]$.}
\end{enumerate}
{ Note that $p(F_w)\subset \exp(\r_w^{\rm hwt})$ for all admissible polynomials.}

{The following construction and its dynamical significance is one of the driving
tools in unipotent dynamics,  we refer the reader e.g.\ to~\cite{Rn-Acta} and \cite[Lemma 4.7]{E-SL2} in the real case. 
}

\begin{lem}[Admissible polynomials] \label{divergence}
Let $z_0 \in \mathfrak{r}_w{[1]}$ with $z_0^{\rm mov}\neq 0.$ 
There exists $T  \in F_w$ with $|T|\gg \|z_0^{\rm mov}\|^{-\star}{q_w^{-1}}$, and an admissible  polynomial function
$p$ so that:
$$
\exp(\mathrm{Ad}({u(t)}) z_0) = p(t/T) g_t,
$$
where {$g_t\in \mathbf{G}(F_w)$ satisfies} $d(g_t, 1) \leq \|z_0\|^\star {q_w}$ whenever $|t| \leq |T|$.
\end{lem}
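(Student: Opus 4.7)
The plan is to exploit the $\theta_w(\SL_2)$-module structure on $\r_w$. Decompose $z_0=z_0^{\rm hwt}+z_0^{\rm mov}+z_0^{\rm trv}$ via~\eqref{ee-good-again}, and write $u(t)=\exp(tE)$ where $E\in\theta_w(\mathfrak{sl}_2)$ is the positive nilpotent. Since $\ad E$ annihilates $\r_w^{\rm trv}$ and $\r_w^{\rm hwt}$ (the $u(F_w)$-fixed subspace of $\r_w^{\rm nt}$), we obtain
\[
\mathrm{Ad}(u(t))z_0=z_0^{\rm hwt}+z_0^{\rm trv}+\sum_{k=0}^{d}\tfrac{t^k}{k!}A_k,\qquad A_k:=(\ad E)^k z_0^{\rm mov},
\]
where $d\leq\dim\G$ is maximal with $A_d\neq 0$, and necessarily $A_d\in\r_w^{\rm hwt}$. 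Standard $\mathfrak{sl}_2$-theory together with $\operatorname{char}(k_w)\gg 1$ gives $\|A_k\|\asymp\|z_0^{\rm mov}\|$ for every $0\leq k\leq d$.

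First, I would choose $T=\varpi_w^{-m}$ with $m\geq 0$ maximal so that $\|T^k A_k\|\leq 1$ for all $k\leq d$; this forces $|T|^d\|A_d\|\asymp 1$, and hence $|T|\asymp\|z_0^{\rm mov}\|^{-1/d}\gg\|z_0^{\rm mov}\|^{-\star}$, yielding the claimed lower bound. Let $\pi_{\rm hwt}\colon\r_w^{\rm nt}\to\r_w^{\rm hwt}$ denote the projection along the other weight subspaces, and define
\[
P(s):=\sum_{k=0}^{d}\tfrac{(sT)^k}{k!}\pi_{\rm hwt}(A_k)\in\r_w^{\rm hwt},\qquad p(s):=\exp(P(s)).
\]
Because $\r_w^{\rm hwt}$ consists of nilpotent elements, $p$ is a polynomial map $F_w\to\G(F_w)$ of degree at most $Nd\leq\degbound$, and its image lies in $\exp(\r_w^{\rm hwt})$, a subgroup centralized by $u(F_w)$ and contracted by $a^{-1}$. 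Clearly $p(0)=e$; the choice of $T$ places every coefficient of $p$ in $\order_w$, while the leading coefficient $T^d A_d/d!$ has valuation at most $1<\degbound$. A pigeonhole argument modulo $\varpi_w$ (valid since $q_w\gg d$) produces some $t_0\in\order_w$ at which $P(t_0)\in\r_w[0]\setminus\r_w[r]$ for some $0\leq r<\degbound$, verifying all the admissibility conditions from~\S\ref{div}.

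Finally, set $g_t:=p(t/T)^{-1}\exp(\mathrm{Ad}(u(t))z_0)$. Since $p_w\geq\max\{N^3,[F:\Q]+2\}$ by the assumption in~\S\ref{properties}, the Baker--Campbell--Hausdorff series converges on the relevant domain, giving $g_t=\exp(R(t))$ with
\[
R(t)=z_0^{\rm hwt}+z_0^{\rm trv}+\sum_{k=0}^{d-1}\tfrac{t^k}{k!}\bigl(A_k-\pi_{\rm hwt}(A_k)\bigr)+\text{(BCH corrections)}.
\]
For $|t|\leq|T|$, each summand above is bounded in norm by $|T|^k\|A_k\|\asymp\|z_0^{\rm mov}\|^{(d-k)/d}\ll\|z_0\|^{1/d}$, while the constant contributions are bounded by $\|z_0\|$, and the BCH corrections are quadratic in these small quantities and hence absorbed into $\|z_0\|^\star$. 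This yields $d(g_t,1)\ll\|z_0\|^\star$ as required. The main obstacle is precisely this final step: one must check that after subtracting off $p(t/T)$ — built only from the highest-weight contribution — no residual term of size comparable to one survives near $t\approx T$. This depends crucially on the uniform norm comparison $\|A_k\|\asymp\|z_0^{\rm mov}\|$, which is what controls all non-highest-weight pieces of $\mathrm{Ad}(u(t))z_0^{\rm mov}$ by a positive power of $\|z_0\|$.
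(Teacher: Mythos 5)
Your construction is the same as the paper's --- project $\Ad(u(t))z_0$ onto $\r_w^{\rm hwt}$, rescale by the largest $T$ that makes the coefficients $\order_w$-integral, and absorb the rest into $g_t$ --- and the $p$ and $T$ you build do satisfy the lemma. But the ingredient you yourself single out as crucial is false: it is \emph{not} true that $\|A_k\|\asymp\|z_0^{\rm mov}\|$ for all $0\leq k\leq d$. Decompose $z_0^{\rm mov}$ into $\theta_w(\SL_2)$-weight vectors $v_{i,\mu}$ lying in the irreducible constituents $V_i$ of $\r_w^{\rm nt}$. The summands $(\ad E)^kv_{i,\mu}$ of $A_k$ lie in distinct weight lines, so $\|A_k\|=\max\{\|v_{i,\mu}\|:(\ad E)^kv_{i,\mu}\neq0\}$. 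If $z_0^{\rm mov}=v+\epsilon v'$ with $v$ two steps below the top of a short constituent and $\epsilon v'$ (with $|\epsilon|_w$ tiny) far below the top of a long one, then $d$ is dictated by $v'$ while $\|A_d\|\asymp|\epsilon|_w\|v'\|$, which can be arbitrarily small compared with $\|z_0^{\rm mov}\|=\|v\|$. Consequently: (a) ``this forces $|T|^d\|A_d\|\asymp1$, hence $|T|\asymp\|z_0^{\rm mov}\|^{-1/d}$'' is a non sequitur, since the binding constraint need not occur at $k=d$; and (b) the key estimate $|T|^k\|A_k\|\asymp\|z_0^{\rm mov}\|^{(d-k)/d}$ for the discarded terms has no justification --- your integrality constraint only gives $|T|^k\|A_k\|\leq1$, which is useless for showing $d(g_t,1)\leq\|z_0\|^{\star}$.

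Both conclusions are salvageable, but only by arguing weight vector by weight vector rather than through the aggregate coefficients $A_k$. Since ${\rm char}(k_w)\gg_N1$, the map $(\ad E)^k$ is norm-preserving on each weight line within its string, and $v_{i,\mu}$ reaches the highest-weight line of $V_i$ at step $j_{i,\mu}\leq\degbound$. Among the constraints $|T|^k\|v_{i,\mu}\|\leq1$, $1\leq k\leq j_{i,\mu}$, that $v_{i,\mu}$ imposes, the most restrictive is $k=j_{i,\mu}$ --- a highest-weight constraint --- so your $T$ actually coincides with the one defined using only $\pi_{\rm hwt}(A_k)$, satisfies $|T|\asymp\min_{i,\mu}\|v_{i,\mu}\|^{-1/j_{i,\mu}}\gg\|z_0^{\rm mov}\|^{-1/\degbound}$ (giving the lower bound on $\|T\|$ and property (4) of admissibility at the binding $(i,\mu)$), and yields, for every $k<j_{i,\mu}$ and $|t|\leq|T|$, the bound $|t|^k\|(\ad E)^kv_{i,\mu}\|\leq\|v_{i,\mu}\|^{1-k/j_{i,\mu}}\leq\|z_0\|^{1/\degbound}$. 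Taking the (ultrametric) maximum over $(i,\mu)$ controls every non-constant, non-highest-weight term of $\Ad(u(t))z_0-P(t/T)$, which together with the constant terms of size $\leq\|z_0\|$ and the BCH commutators (harmless because $\|P(t/T)\|\leq1$) gives $d(g_t,1)\leq\|z_0\|^{\star}$. This per-constituent analysis is exactly what the paper's terse justification --- that $\r_w^{\rm hwt}$ consists of ``the directions of highest degree terms of $\Ad(u(t))z_0$'' --- is implicitly invoking; without it your final step does not close.
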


\proof
By the above we may write $z_0 = z_0^{\rm hwt} + z_0^{\rm mov}+z_0^{\rm trv}$ with 
$z_0^{\bullet} \in \r_w^{\bullet}{[1]}.$
By~\eqref{ee-good-again} we have $\|z_0^{\bullet}\|\leq \|z_0\|$ for $\bullet={\rm hwt},{\rm mov},{\rm trv}.$

Let us now decompose $\mathrm{Ad}(u(t)) z_0 = p^{\rm hwt}(t) + p^{\rm mov}(t)+z_0^{\rm trv}$ 
according to the above splitting of $\mathfrak{r}_w$.
{ Since} $z_0\notin\r_w^{\rm hwt}+\r_w^{\rm trv},$ the polynomial $p^{\rm hwt}$ is nonconstant, 
has degree $\leq N^2,$ and $p^{\rm hwt}(0)=z_0^{\rm hwt}$. Let $p_0(t)=p^{\rm hwt}(t)-z_0^{\rm hwt},$
and choose $T \in F_w$ of maximal {norm} so that the polynomial $p_0(T{s})$
has coefficients { of norm less than one}.  Then $p({s}) := \exp(p_0(T {s}))$ defines a polynomial { of degree at most $N^3$. 
In fact $p_0(T{s})$ is nilpotent for every~${s}$ and $\exp(\cdot)$ evaluated on nilpotent elements is a polynomial of degree at most $N$ with values in ${\rm Mat}_N$. Moreover, it}
still has integral coefficients so long as ${\rm char}(k_w) > N$.
This polynomial satisfies conditions (1)--{(5)} of admissibility by definition.

 Note that each coefficient of $p_0(t)$ is bounded { from} above by a constant multiple of $\|z_0^{\rm mov}\|$
 and so the $r$th coefficient of $p_0(tT)$ is bounded { from} above by a constant multiple of $\|z_0^{\rm mov} \| {|T|}^r$. 
 { By our choice of $T$ we have that for some $r\in\{1,\ldots,N^2\}$ we have $\|z_0^{\rm mov} \| |T|^rq_w^r\gg 1$.} 
Therefore { we obtain} that $|T| \gg \|z_0^{\rm mov}\|^{-\star}{ q_w^{-1}}$.

Suppose that condition {(6)} fails, i.e.~we have~{$p_0(T{s})\in\gfrak_w[N^2+1]$}
for all~${s}\in\order_w$.
As ${\rm char}(k_w)\gg_N1$ we then may choose~$N^2$ points in~$\order_w$ with distance~$1$.
Using Lagrange interpolation for the polynomial~$p_0(T{s})$ and these points
we see that the coefficients of~$p_0(T{s})$ 
all belong to~{$\order_w[N^2+1]$}. However, this contradicts { our choice of~$T$
and proves {(6)}}.

{Finally we define the function $t\mapsto g_t$ by the formula 
\[
p(t/T) g_t=\exp(\mathrm{Ad}(u(t))z_0)=\exp\bigl(p^{\rm hwt}(t) + p^{\rm mov}(t)+z_0^{\rm trv}\bigr)
\]
for all $t\in F_w$ with $|t|\leq |T|$. We note that the polynomial $p^{\rm mov}(t)$ 
corresponds to the weight spaces that are not of highest weight. We will now use the description 
of the $\operatorname{SL}_2$-representation $\mathfrak{r}_w$ in terms of a basis consisting of weight vectors obtained
from a list of highest weight vectors in $\mathfrak{r}^{\rm hwt}_w$. In fact,
our choice of the place $w$ implies that this basis can be chosen integrally and also over $\mathfrak{o}_w$. 
Using this basis we see that  
the coefficients of $p^{\rm mov}(t)$ appear also as coefficients in $p^{\rm hwt}(t)$
up to some constant factors of norm one -- recall that $p_w\gg N$. Moreover, terms in $p^{\rm mov}(t)$ 
always have smaller degree than the corresponding terms with the same coefficient (up to a norm one factor) in $p^{\rm hwt}(t)$.
Together with our choice of $T$
this implies that 
\[
  \|p^{\rm mov}(t)\|\leq |T|^{-1} \ll\|z_0^{\rm mov}\|^{\star}q_w
\]
for all $t\in F_w$ with $|t|\leq |T|$.  { In fact, this holds initially for each of the monomials 
in the various weight spaces appearing in $p^{\rm mov}$, but then by the ultrametric triangle inequality and integrality
of the weight decomposition also for their combination $p^{\rm mov}$}. 
Since $p(t/T)=\exp(p^{\rm hwt}(t)-z_0^{\rm hwt})$ and $\|z_0^{\rm hwt}\|,\|z_0^{\rm trv}\|\leq\|z_0\|$,  
the estimate concerning $d(g_t,e)$ for all $t\in F_w$ with $|t|\leq |T|$
now follows since the map $\exp:\gfrak_w[1]\to K_w[1]$ 
is $1$-Lipschitz.}
\qed

\subsection{Efficient generation of the Lie algebra}\label{eff-gen}

In the dynamical argument of the next section the admissible polynomial constructed above will give us elements
of the ambient group that our measure will be almost invariant under. We now study how
effectively this new element together with the maximal group~$H_w$ generate
some open neighborhood of the identity in~$\G(F_w)$.

\begin{lem}\label{l;conj-gen} 
There exist constants $\ell$ and $L\geq 1,$ depending on $N,$ such that, for any $z\in\r_w^{\rm hwt}[0]\setminus\r_w^{\rm hwt}[1]$, the following holds:
Every $g\in K_w[L]$ can be written as 
\[
\mbox{$g=g_1 g_2 \dots g_\ell,\;\;$ 
where $\;\;g_i \in K_w\cap (H_w\cup \exp(z)H_w\exp(-z))$.}
\]
\end{lem}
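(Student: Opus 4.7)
The plan is to set up a product map from small neighborhoods in $H_w \cap K_w$ and $\exp(z)(H_w \cap K_w)\exp(-z)$ into $\G(F_w)$ whose image sweeps out all of $K_w[L]$, and then extract the required decomposition by inversion. The $p$-adic implicit function theorem reduces this to a quantitative Lie algebra generation statement; the key input to that generation statement is the maximality of $\iota(\H)$ inside $\G$, used here for the first time in the paper.

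First I would establish the qualitative generation: the Lie subalgebra $\mathfrak{s}\subset\g_w$ generated by $\h_w$ and $\Ad(\exp z)\h_w$ equals $\g_w$. Let $\mathbf{S}\subset\G$ be the algebraic subgroup generated by $\iota(\H)$ and $\exp(z)\iota(\H)\exp(-z)$. By maximality of $\iota(\H)$, either $\mathbf{S}=\iota(\H)$ or $\mathbf{S}=\G$. In the first case $\exp(z)\in N_{\G}(\iota(\H))$, so $\Ad(\exp z)$ preserves $\h_w$; expanding $\Ad(\exp z)=\exp(\ad z)$ and using that $z\in\r_w$, the $H_w$-invariant complement of $\h_w$, we get $[z,\h_w]\subset\h_w\cap\r_w=0$. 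Hence $z$ centralizes $\h_w$. But $z\in\r_w^{\rm hwt}\subset\r_w^{\rm nt}$ lies in a nonzero $\theta_w(\SL_2)$-subrepresentation, and $\theta_w(\SL_2)\subset H_w$; since $z\ne 0$ by $z\notin\r_w^{\rm hwt}[1]$, this is a contradiction. Hence $\mathbf{S}=\G$, and since $\mathrm{char}(k_w)\gg_N 1$ (good place) the Lie algebra of $\mathbf{S}$ equals the Lie subalgebra generated by the Lie algebras of its generators, so $\mathfrak{s}=\g_w$.

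Next I would promote this to an effective basis statement: choose a finite collection of iterated brackets of total depth $\le \ell_0=\ell_0(N)$, with entries drawn from $\h_w\cap\gfrak_w[0]$ and from $\Ad(\exp z)(\h_w\cap\gfrak_w[0])$, that form a basis of $\g_w$. The normalization $z\in\r_w^{\rm hwt}[0]\setminus\r_w^{\rm hwt}[1]$ gives $\|z\|\asymp 1$, so the basis can be chosen to span $\gfrak_w[0]$ up to a denominator bounded only in terms of $N$. Turning brackets into group products via the identity $\exp(X)\exp(Y)\exp(-X)\exp(-Y)=\exp([X,Y]+\text{higher order})$, valid on $\gfrak_w[1]$, I obtain for some $\ell=\ell(N)$ and $m_0=m_0(N)$ an $F_w$-analytic map
\[
\Phi : \prod_{i=1}^{\ell} X_i \longrightarrow \G(F_w),
\]
where each factor $X_i$ is either $H_w\cap K_w[m_0]$ or $\exp(z)(H_w\cap K_w[m_0])\exp(-z)$ and $\Phi$ sends a tuple to the product; the derivative of $\Phi$ at the identity tuple (after identifying each factor with its Lie algebra via $\exp$) surjects onto $\g_w$ with Jacobian bounded below uniformly in $N$.

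The implicit function theorem over $F_w$ then guarantees that $\Phi$ hits $K_w[L]$ for some $L=L(N)\ge m_0$, producing the desired decomposition of any $g\in K_w[L]$. Note that $\exp(z)(H_w\cap K_w[m_0])\exp(-z)\subset K_w$ because $\exp(z)\in K_w$ and $K_w[m_0]$ is normal in $K_w$. The main obstacle is the effectivity of the Lie-algebra basis selection: one must produce iterated brackets of bounded depth whose determinant in a fixed $\order_w$-basis of $\gfrak_w[0]$ is bounded below uniformly, as $z$ ranges over $\r_w^{\rm hwt}[0]\setminus\r_w^{\rm hwt}[1]$ and even as the ambient data $\data$ and the good place $w$ vary. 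Securing this uniformity relies jointly on the norm normalization $\|z\|\asymp 1$ and on the large residue characteristic of the good place, and is where the bulk of the care is needed; once it is in hand, the implicit function theorem and BCH computations are routine.
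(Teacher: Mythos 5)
Your overall architecture (Lie-algebra generation followed by the $p$-adic implicit function theorem) matches the paper's, and your qualitative first step --- that maximality of $\iota(\H)$ forces the group generated by $\jmap_w(\H)$ and its conjugate by $\exp(z)$ to be all of $\G$ --- is a correct generic-fibre analogue of what the paper uses. (One small repair: from $\Ad(\exp z)\h_w=\h_w$ you cannot directly read off $[z,\h_w]\subset\h_w$, since $\exp(\ad z)Y-Y\in\h_w$ only gives the full series. You should first note that $z$ is nilpotent and the Zariski closure of $\langle\exp(z)\rangle$ is the one-parameter group $\{\exp(tz)\}$, so $\exp(tz)$ lies in the normalizer $\N$ for all $t$ and hence $z\in\Lie(\N)$; then $[z,\h_w]\subset\h_w\cap\r_w=0$ follows as you say.)

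The genuine gap is the step you yourself flag as ``where the bulk of the care is needed'': producing iterated brackets of bounded depth whose determinant against an $\order_w$-basis of $\gfrak_w[0]$ has valuation bounded in terms of $N$ only, uniformly in $\data$, $w$ and $z$. The normalization $z\in\r_w^{\rm hwt}[0]\setminus\r_w^{\rm hwt}[1]$ and the largeness of $p_w$ do not by themselves deliver this: generation of $\g_w$ over $F_w$ says nothing about the index of the $\order_w$-lattice spanned by brackets of $\h_w\cap\gfrak_w[0]$ and $\Ad(\exp z)(\h_w\cap\gfrak_w[0])$ inside $\gfrak_w[0]$, and a priori that index could grow with the arithmetic complexity of $\data$ --- uniformity in $\data$ is exactly what the lemma is for. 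This is where the paper's proof diverges from yours: it works on the special fibre. It first establishes (Lemma~\ref{c;red-max}, itself a substantial argument via Greenberg's theorem and a scheme of homomorphisms) that the reduction $\underline{\jmap_w(\Hfrak_w)}$ remains a \emph{maximal connected subgroup} of $\underline{\Gfrak_w}$; since $z\in\r_w[0]\setminus\r_w[1]$, the reduction of $\exp(z)$ is a nontrivial unipotent element outside $\underline{\jmap_w(\Hfrak_w)}$, so the root subgroups of the quasi-split group $\underline{\Hfrak_w}$ together with their conjugates by $\exp(z)$ already generate $\underline{\Gfrak_w}$ over $k_w$, and a product of $\dim\G$ of them can be chosen dominant. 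A separability argument via Bezout (valid because $p_w\gg_N1$ and the product map has degree $\leq N^4$) then produces a $k_w$-point where the Jacobian is nonzero, hence an $\order_w$-point where it is a \emph{unit}, and only then does the implicit function theorem give a level $L$ depending only on $N$. Without the special-fibre maximality, or some equivalent integral statement, your claimed uniform lower bound on the determinant is unsupported, so the proposal as written does not prove the lemma.
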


Note that $z$ as in the above lemma is a nilpotent element (because it belongs to the highest weight space),
and its exponential $\exp(z)$ belongs to  $K_w\setminus H_w.$ It turns out that the latter statement
continues to hold even reduced modulo $w$, and this is what is crucial for the proof.


It was mentioned in \S\ref{l;pro-p-nbhd}
that our choice of $w$ implies that 
for all $m\geq 1$ the group $K_w[m]$ is a torsion free pro-$p_w$ group;
we also recall that $K_w[m]\subset \G(F_w)^+.$ 

\proof
Let $\Gfrak_w$ (resp. $\mathfrak{H}_w$) be smooth $\order_w$-group schemes 
with generic fiber $G_w$ (resp. $\H(F_w)$)
so that $K_w=\Gfrak_w(\order_w)$ (resp. $K_w^*=\Hfrak_w(\order_w)$).
Recall the notation: for any $\order_w$-group scheme $\mathfrak M$ 
we let $\underline{\mathfrak M}$ denote the reduction mod $\varpi_w.$ 
As was shown in the lemma of \S\ref{ss:compatibility}
$\jmap_w(\Hfrak_w(\widehat{\order_w}))$ and $\Gfrak_w(\widehat{\order_w})$ are hyperspecial subgroups of 
$\jmap_w(\H)(\widehat{F_w})$ and $\G(\widehat{F_w})$ respectively. Furthermore,
they are $\widehat{\order_w}$ Chevalley group schemes with generic fibers
$\jmap_w(\H)(\widehat{F_w})$ and $\G(\widehat{F_w}),$ see e.g.\ the discussion following~\eqref{Crit}.

Since the group $\underline{\Hfrak_w}$ is quasi-split over $k_w$
we may choose one-dimensional unipotent subgroups $\underline{\mathfrak{U}}_{i}$ for $1\leq i\leq\dim\H$,
with the property that the product map
$\prod_{i=1}^{\dim \H} \underline{\mathfrak{U}}_{i} \rightarrow \underline{\Hfrak_w}$
is dominant.  In fact, these can be taken to be the reduction mod $\varpi_w$ of smooth closed $\order_w$-subgroup schemes 
$\mathfrak{U}_i$ of $\Hfrak_w,$ see~\cite[\S3.5]{Ti}.

To see why, fix a collection  $\underline{\mathfrak{U}}_{\alpha}$ of one-dimensional unipotent groups 
which generate $\underline{\Hfrak_w}$ as an algebraic group.   We prove inductively on $r$
that we may choose     $\alpha_1, \dots, \alpha_r$ such that
the Zariski closure $Z_r$ of $\prod_{i=1}^{r} \underline{\mathfrak{U}}_{\alpha_i}$ is $r$-dimensional.
Suppose this has been done for a given $r$. Then for any $\beta$ the closure of $Z_r \cdot \underline{\mathfrak{U}}_{\beta}$ 
is an irreducible algebraic set; if it is $r$-dimensional, it must therefore coincide with $Z_r$. 
If $r < \dim \H$ this cannot be true for all choices of $\beta$, by the generation hypothesis, and we deduce
that we can increase $r$ by taking $\alpha_{r+1} = \beta$.

By Lemma~\ref{c;red-max},
$\underline{\jmap_w(\Hfrak_w)}\;$ is a maximal connected algebraic subgroup of $\underline{\Gfrak_w}$.
Now let $g$ be the reduction modulo $\varpi_w$ of $\exp(z)$.  
We claim that  \begin{equation} \label{gnot} g \notin \underline{\jmap_w(\Hfrak_w)},\end{equation}
from where it follows that 
$g \underline{\jmap_w(\Hfrak_w)} g^{-1}$
together with $\underline{\jmap_w(\Hfrak_w)}$ generates $\underline{\Gfrak_w}$.

Indeed, if \eqref{gnot} failed, the elements $g^t$ for $t \in \mathbb{Z}$
belong to $ \underline{\jmap_w(\Hfrak_w)}$. Let $\bar{z}$ be the reduction of $z$ to the Lie algebra of $\underline{\Gfrak_w}$. 
Now
$ t \in \mathbb{A}^1 \mapsto \exp(t\bar{z}) \in \SL_N$
defines a one-parameter subgroup of $\SL_N$ over $k_w$. Consider the associated homomorphism
\begin{equation} 
\label{A1Lie}  \mathbb{A}^1 \rightarrow   \mathrm{End}(\wedge^{\dim \H}  \mathrm{Lie}(\SL_N)).
\end{equation}
The  degree  of this map is bounded only in terms of $N, \dim(\G)$.   
The value of  \eqref{A1Lie} at each $t \in \Z$ preserves the line in $\wedge^{\dim \H} \mathrm{Lie}(\SL_N)$
associated to the Lie algebra of $ \underline{\jmap_w(\Hfrak_w)}$. In a suitable basis,
this assertion amounts to the vanishing of various matrix coefficients of \eqref{A1Lie}. 
But if a matrix coefficient
of the map \eqref{A1Lie}  vanishes for all $t \in \mathbb{Z}$, it vanishes identically --  possibly after increasing the implicit bound
 for $\mathrm{char}(k_w)$ in 
\S \ref{ss;good-place} if necessary. Therefore the one-parameter subgroup $t \mapsto \exp(t \bar{z})$ of $\SL_N$ normalizes
the Lie algebra of $\underline{\jmap_w(\Hfrak_w)}$. Therefore
$$ [\bar{z}, \Lie  \ \underline{ \jmap_w(\Hfrak_w)}] \subset \Lie  \ \underline{ \jmap_w(\Hfrak_w)}.$$ 
But this contradicts the assumption on $z$ --  e.g. we can find an element
 $\mathrm{H}$ in the Lie algebra of  $\underline{\jmap_w({\Hfrak_w})}$, arising from the $\SL_2$  in \S\ref{l;good-sl2}, 
 such that $[\bar{z},{\rm H}] $ is a nonzero multiple of $\bar{z}$, and $\bar{z}$ is not in  $ \Lie \  \underline{ \jmap_w(\Hfrak_w)}$ by \eqref{gwm splitting}.

For simplicity in the notation, put $\mathfrak{U}_i'=\exp(z)\jmap_w(\mathfrak{U}_i)\exp(-z).$
Arguing just as above, we see that we may choose  $\mathfrak{X_i}$ 
 (for $i=1,\ldots,\dim\G$), each equal to either $\jmap_w(\mathfrak{U}_i)$ or $\mathfrak{U}_i'$ for some $i,$ with  $\mathfrak{X}_i=\jmap_w(\mathfrak{U}_i)$ for $i=1,\ldots,\dim\H$
and such that if we define $\varphi$ via
\[
\mbox{$\varphi : \mathfrak{X}:=\prod_{i=1}^{\dim\G}{\mathfrak{X}_i}\xrightarrow{(\iota_i)}
\prod_{i=1}^{\dim\G}{\Gfrak_w}\xrightarrow{{\rm mult}}{\Gfrak_w}\;,$}
\] 
then the map $\underline{\varphi}$ is dominant.  

The above definition implies that 
$\underline{\varphi}$ is a polynomial map on the $\dim\G$ dimensional 
affine space with ${\rm deg}( \underline{\varphi})\leq N^4.$ 

Therefore, in view of our assumption on the characteristic of $k_w,$ one gets that $\underline{\varphi}$
is a separable map. We recall the argument: 
note that $\underline{\varphi}$ is a map from the $\dim\G$ dimensional affine space 
${\underline{\mathfrak{X}}}$ into the affine variety ${\underline{\mathfrak{G}_w}}$. 
Let $E=\widehat{k_w}({\underline{\mathfrak{X}}})$
and let $E'$ be the quotient field of $\widehat{k_w}[\underline{\varphi}^*(\underline{\Gfrak_w})]$
in $E.$ 
In view of the above construction, $E$ is an algebraic extension of $E'.$
By Bezout's Theorem 
the degree of this finite extension is bounded by a constant depending on 
$\deg(\underline{\varphi}),$ see e.g.~\cite{Ul-Wil} or~\cite[App.~B]{E-Gh-pos}.   
The claim follows in view of ${\rm char}(k_w)\gg_N 1.$   

In particular, $\Phi=\det({\rm D}(\underline{\varphi}))$ is a nonzero polynomial.
This implies that we can find a finite extension
$k_w'$ of $k_w$ and a point $\underline{a'}\in\underline{\mathfrak{X}}(k'_w)$ so that 
$\Phi(\underline{a'})=\det({\rm D}_{\underline{a'}}(\underline{\varphi}))\neq 0.$ 
Note that in fact under our assumption on ${\rm char}(k_w)$ and since 
$\deg(\underline{\varphi})\leq N^4$ there is some point $\underline{a}\in\underline{\mathfrak{X}}(k_w)$ so that $\Phi(\underline{a})\neq 0.$ 
This can be seen by an inductive argument on the number of variables in the polynomial 
$\Phi.$ For polynomials in one variable the bound one needs is ${\rm char}(k_w)>\deg (\Phi);$
now we write $\Phi(a_1,\ldots,a_{\dim\G})=\sum_j \Phi^j(a_2,\ldots,a_{\dim\G})a_1^j$
and get the claim from the inductive hypothesis.

All together we get: there is a point $a\in\mathfrak{X}(\order_w)$ so that
$\det({\rm D}_a(\varphi))$ is a unit in $\order_w^\times$.   
The implicit function theorem thus implies that there is some ${b}\in K_w$ so that
$\varphi\bigl(\mathfrak{X}(\order_w)\bigr)$ 
contains ${b}K_w[L],$ where $L$ is an absolute constant.  
Therefore 
\[
\Bigl(\varphi\bigl(\mathfrak{X}(\order_w)\bigr)\Bigr)^{-1}
\Bigl(\varphi\bigl(\mathfrak{X}(\order_w)\bigr)\Bigr)
\] 
contains $K_w[L]$ as we wanted to show.
\qed

\medskip

The following is an immediate corollary of the above discussion; 
this statement will be used in the sequel. 

\begin{propo}[Efficient generation]\label{effgen}  
Let $p: F_w \rightarrow \mathbf{G}(F_w)$ be an admissible polynomial map as defined in \S\ref{div}. 
Then there exist constants $\levelbound\geq1$ and $\ell,$ depending on $N,$ so that: 
each $g \in K_w[\levelbound]$ may be written as
a product $g=g_1 g_2 \dots g_\ell$, where 
\[
 g_i\in \bigl\{h\in H_w:\|h\|\leq q_w^{\levelbound} \bigr\} \cup 
 \bigl\{p(t)^{p_w^{s}}: t\in\order_w\mbox{ and }{0\leq s\leq 2N}\bigr\}^{\pm1}.
\]
\end{propo}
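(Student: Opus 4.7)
The plan is to combine Lemma \ref{l;conj-gen} with the explicit description of the image of the admissible polynomial $p$ afforded by the admissibility conditions. First, admissibility condition~(1) places $p(F_w)$ inside the centralizer of $u(F_w)$; combined with contraction by $a^{-1}$, this forces the logarithms to lie in $\mathfrak{r}_w^{\rm hwt}$. Condition~(5) then produces $t_0\in\order_w$ and $0\leq r_0<N^2$ with $p(t_0)=\exp(\varpi_w^{r_0}z_1)$ for some $z_1\in\mathfrak{r}_w^{\rm hwt}[0]\setminus\mathfrak{r}_w^{\rm hwt}[1]$. Because $p(t_0)$ is unipotent,
\[
p(t_0)^{p_w^s}=\exp(p_w^s\varpi_w^{r_0}z_1)\qquad(s\geq 0).
\]
Setting $s=1$ realizes $\exp(\widetilde z)$ for $\widetilde z:=p_w\varpi_w^{r_0}z_1\in\mathfrak{r}_w^{\rm hwt}$, whose valuation is bounded by $e_w+r_0\leq[F:\Q]+N^2$.

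Next, I would prove a mild strengthening of Lemma \ref{l;conj-gen}: the same length-$\ell$ product decomposition holds with conjugator $\exp(z')$ in place of $\exp(z)$ for any $z'\in\mathfrak{r}_w^{\rm hwt}$, provided one restricts to $g\in K_w[L_0+C\cdot k]$, where $k$ is the valuation of $z'$ and $L_0,C$ are absolute constants (with $L_0,\ell$ from that lemma). This is a direct inspection of the proof: the map $\varphi$ retains the same structure, but its transverse Jacobian is rescaled by $\varpi_w^k$, so the implicit function theorem still yields an image containing $gK_w[L_0+Ck]$ for some $g\in K_w$. Applying this strengthening to $z'=\widetilde z$ defines the constant $L$ of the proposition in terms of $N$ and $[F:\Q]$.

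Given $g\in K_w[L]\cap\G(F_w)$, the strengthened lemma produces $g=g_1\cdots g_\ell$ with each $g_i\in K_w\cap H_w$ or $g_i\in K_w\cap\exp(\widetilde z)H_w\exp(-\widetilde z)$. Factors of the first kind automatically satisfy $\|g_i\|\leq 1\leq q_w^L$. Factors of the second kind write as
\[
g_i=\exp(\widetilde z)\cdot h_i\cdot\exp(-\widetilde z)=p(t_0)^{p_w}\cdot h_i\cdot p(t_0)^{-p_w},
\]
with $h_i\in H_w$ of norm $\leq q_w^{O(1)}\leq q_w^L$ (after enlarging $L$ by an absolute constant). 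The factor $p(t_0)^{p_w}$ already belongs to the second allowed set (with $r=1$), and the number $\ell$ of factors is only multiplied by a bounded amount.

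The principal technical obstacle, which remains to be handled, is the appearance of the negative power $p(t_0)^{-p_w}$: only positive exponents $p_w^r$ with $1\leq r\leq L$ are permitted in the allowed set. I would address this by exploiting admissibility~(4). Write $p(t)=\exp(Y(t))$ with $Y$ a polynomial on $\order_w$ valued in $\mathfrak{r}_w^{\rm hwt}[0]$, not all of whose coefficients lie in $\varpi_w^{N^2}\order_w$. Using the coefficient of small valuation together with a Hensel or implicit-function argument, one solves for $(t_1,r_1)$ with $1\leq r_1\leq L$ and $t_1\in\order_w$ such that $p(t_1)^{p_w^{r_1}}=p(t_0)^{-p_w}$, possibly up to a correction of norm $\leq q_w^L$ belonging to $H_w$ that can be absorbed into an additional $h$-factor so that $\ell$ grows only by a bounded amount. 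This last step is where I expect the main work to lie; everything else is a direct combination of Lemma \ref{l;conj-gen} with the admissibility data.
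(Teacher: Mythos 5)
Your overall skeleton---feed the element produced by admissibility~(5) into Lemma~\ref{l;conj-gen} and then unwind the conjugation---is the same as the paper's, but both of the steps you isolate are problematic, and in each case the paper's actual device is different from yours. The ``mild strengthening'' of Lemma~\ref{l;conj-gen} in your third paragraph is not a direct inspection of its proof. That proof works by reduction modulo $\varpi_w$: it invokes Lemma~\ref{c;red-max} (maximality of $\underline{\jmap_w(\Hfrak_w)}$ in $\underline{\Gfrak_w}$) to show that the reductions of the $\mathfrak{U}_j$ together with those of $\mathfrak{U}_j'=\exp(z)\mathfrak{U}_j\exp(-z)$ generate, which is what makes $\underline{\varphi}$ dominant and its Jacobian a nonzero polynomial over $k_w$. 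This requires $\exp(z)$ to have nontrivial reduction outside $\underline{\jmap_w(\Hfrak_w)}$, i.e.\ $z\in\r_w[0]\setminus\r_w[1]$. Your $\widetilde z=p_w\varpi_w^{r_0}z_1$ has strictly positive valuation, so $\exp(\widetilde z)$ reduces to the identity, $\mathfrak{U}_j'$ and $\mathfrak{U}_j$ have identical reductions, $\underline{\varphi}$ lands in the proper subgroup $\underline{\jmap_w(\Hfrak_w)}$, and the Jacobian vanishes identically mod $\varpi_w$. A level-$k$ version could presumably be salvaged, but it is a new argument, not a rescaling of the old one. The paper sidesteps this entirely: it conjugates by powers of $a=\theta_w(\mathrm{diag}(p_w^{-1},p_w))$, which expands $\r_w^{\rm hwt}$, taking $j$ minimal with $a^jp(t_0)a^{-j}\notin K_w[1]$ and, if the resulting element has escaped $\r_w[0]$, replacing $p(t_0)$ by $p(t_0)^{p^i}$, so that $a^jp(t_0)^{p^i}a^{-j}=\exp(z'')$ with $z''\in\r_w[0]\setminus\r_w[1]$ exactly as Lemma~\ref{l;conj-gen} requires; the conjugators $a^{\pm j}$ are then absorbed as $H_w$-factors of norm at most a fixed power of $q_w$.

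Your proposed cure for the negative power $p(t_0)^{-p_w}$ cannot work: $t\mapsto p(t)$ is a polynomial curve inside $\exp(\r_w^{\rm hwt})$, not a subgroup, so there is no reason that $\exp(-p_w\,p_0(Tt_0))$ should equal $\exp(p_w^{r_1}p_0(Tt_1))$ up to an $H_w$-correction---any discrepancy lives in the transverse directions, not in $H_w$ (already for $p_0(s)=s^2v$ your equation forces $-1$ to be a square in $F_w$). The honest resolution is that the inverses are unavoidable---they occur in the paper's own proof too, through the factor $\exp(-z'')$---and harmless: in the only place the proposition is used (\S\ref{ai} and property S5), the elements $p(t)^{p_w^r}$ enter solely through the facts that they almost preserve $\mu_\data$ (the polynomial divergence lemma, iterated $p_w^r\ll q_w^{L}$ times) and lie in $K_w$, hence preserve the Sobolev norm; both properties pass to inverses. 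So the right move is to allow $g_i^{-1}$ in the generating set (or to note this at the point of application), rather than to try to re-express the inverse as a positive power of a value of $p$.
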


\proof
Let~$t_0\in\order_w$ be as in property {(6)} of admissibility so that~$p(t_0)=\exp(\varpi_w^rz)$, 
where {$0 < r \leq \degbound$}, and {$z\in\r_w^{\rm hwt}[0]\setminus\r_w^{\rm hwt}[1]$.

Let~$a\in\theta_w(\operatorname{SL}_2(F_w))$
be the element corresponding to the diagonal element with\footnote{ We apologize for the notational
clash between $p_w$, which is the residue characteristic  of $F_w$, and the polynomials $p, p_0$.} eigenvalues~$p_w^{-1},p_w$.  
{Since}~$r>0$, we {will} use conjugation by~$a\in H_w$
to produce again an element which we may use in the previous lemma.
Indeed, let~$j\geq 1$ be minimal for which $ a^j p(t_0)a^{-j}=\exp( \varpi_w^r \Ad_a^j z)\notin K_w[1]$
and note that~$j\leq N^2$.
If~$z'= \varpi_w^r \Ad_a^j z\in\rfrak_w[0]$ (and~$z'\notin\rfrak_w[1]$ by choice of~$j$), we set {$z''=z'$ and will use this element below}. 
However, if~$z'\notin\rfrak_w[0]$
then~$\|z'\|={q}_w^i$ for some $i\in\mathbb N$ with $i\leq {2N}$. In this case we find that the
 element~$z''=p_w^iz'\in\rfrak_w[0]\setminus\rfrak_w[1]$ satisfies
\[
 a^j p(t_0)^{p_w^i} a^{-j}=a^j\exp(p_w^{i}\varpi_w^rz)a^{-j}=\exp(p_w^i z')=\exp(z''),
\]
and can be used in the previous lemma. For this
also note that we may assume that~$F$ is unramified at~$w$,  since there are only finitely many ramified places for $F$ and the implicit
constants in the definition of ``good place'' are permitted to depend on $F$; 
this gives that~$p_w$ is a uniformizer for~$F$ at~$w$. 

Increasing~$\ell$ { to accommodate the change in the formulation
of the statements, the proposition follows from the previous lemma}.
\qed

\section{The dynamical argument}\label{proof}

Throughout this section we let~$w\in\Sigma_f$ denote a good place
for the MASH~$Y=Y_\data$ with~$\data=(\H,\iota,g_\data)$. 
Moreover, we let~$\theta_w(\SL_2)$ be the principal~$\SL_2$ as in~\S\ref{l;good-sl2}
satisfying that~$\theta_w(\SL_2(F_w))$ is contained in the acting subgroup~$H_w$
at the place~$w$ and~$\theta_w(\SL_2(\order_w))<K_w$.

\subsection{Noncompactness}\label{s;nondivergence}
As usual, when $X$ is not compact some extra care is required to control the 
behavior near the ``cusp''; using the well studied non-divergence properties of unipotent flows we need to show that
``most" of the interesting dynamics takes place in a ``compact part" of $X.$
We will also introduce in this subsection the height function~$\height:X\to\R_{>0}$,
which is used in our definition of the Sobolev norms.

For the discussion in this subsection we make the following reduction: put
$\G'={\rm Res}_{F/\Q}(\G)$ and $\H'={\rm Res}_{F/\Q}(\H),$ then $\G'$ and $\H'$ are semisimple $\Q$-groups 
and we have the $\Q$-homomorphism ${\rm Res}_{F/\Q}(\iota):\H'\to\G'.$
Moreover
\[
 \mbox{$\Lbf'(\Q)\backslash \Lbf'(\adele_\Q)=\Lbf(F)\backslash\Lbf(\adele_F),$ 
for $\Lbf=\G,\H$}
\]
and we also get a natural isomorphism between 
$\Lbf'(\Z_p)$ and $\prod_{v\mid p}\Lbf(\order_v),$ see~\cite{PR} for a discussion of these facts.
Similarly we write $\H'_j={\rm Res}_{F/\Q}(\H_j)$ for any $F$-simple factor~$\H_j$ of $\H.$

As is well known (e.g.\ see~\cite{Bo-adele}) 
there exists a finite set $\Xi\subset\G'(\adele_\Q)$ 
so that
\[
\G'(\adele_\Q)=\bigsqcup_{\xi\in\Xi}\G'( \Q)\xi G_\infty' K_f',
\]
where~$G_\infty'=\G'(\R)$ and~$K_f'$ is the compact open subgroup of~$\G'(\adele_{\Q,f})$ corresponding to~$K_f<\G(\adele_{F,f})$. 
We define~$G_q'$ and~$K_q'$ similarly for every rational prime~$q$.
We let~$S_0= S_0(\G)$ be the union of~$\{\infty\}$
and a finite set of primes so that~$\Xi\subset \prod_{v\in S_0} G_v' K_f'$. 

We now recall the standard terminology for~$S$-arithmetic quotients {($S$ is a finite set)}. 
Set $K'(\plw)=\prod_{q\notin\plw}K_q'$ and put $\Z_{\plw}=\Z[\tfrac1{q}:q\in \plw],$
$\Q_\plw=\R\times\prod_{q\in \plw}\Q_q$ and
$G'_\plw=\G'(\Q_\plw)$.
We let~$\mathfrak g'$ be the Lie algebra of~$\G'$. {We choose an integral lattice $\mathfrak g'_{\Z}$
in the $\Q$-vector space $\mathfrak g'$, 
with the property that  $[\mathfrak{g}'_{\Z}, \mathfrak{g}'_{\Z}] \subset \mathfrak{g}'_{\Z}$.}
Let~$\mathfrak g'_{\Z_\plw}=\Z_\plw\mathfrak g'_\Z$ 
be the corresponding~$\Z_\plw$-module. We also define $\|u\|_\plw=\prod_{v\in \plw}\|u\|_v$
for elements~$u$ of the Lie algebra $\mathfrak g'_\plw=\Q_S\otimes_\Q\mathfrak g'$
over~$\Q_S$.

Our choice of~$S_0$ now implies $\G'(\adele_\Q)=\G'(\Q) G'_\plw K'(\plw)$ whenever~$S\supseteq S_0$, which also gives
\[
 \G'(\Q)\backslash\G'(\adele_\Q)/K'(\plw)\cong X_S=\Gamma_S\backslash G_S'=
\Gamma_{S_0}\backslash \Bigl(G_{S_0}'\times \prod_{q\in S\setminus S_0}K_q'\Bigr)
\]
where~$\Gamma_S=\G'(\Q)\cap K'(\plw)$. In that sense we have a projection map~$\pi_S(x)=x K'(S)$ 
from~$X$ to~$X_S$.

Similar to~\cite{EMV}, for every $x\in X$ we put
\begin{multline*}
\height (x):=\height(\pi_S(x))=\sup\Bigl\{\mbox{$\|\Ad(g^{-1})u\|_\plw^{-1}$}: u\in\gfrak'_{\Z_\plw}\setminus\{0\}\mbox{ and }\\
g\in G'_\plw \mbox{ with }\pi_S(x)=\Gamma_\plw g\Bigr\}.
\end{multline*}
We note that in the definition of~$\height(\pi_S(x))$ we may also fix
the choice~$g$ of the representative for a given~$\pi_S(x)$, the 
supremum over all~$u\in\gfrak'_{\Z_\plw}$
will be independent of the choice. If~$S\supsetneq S_0$, we may choose~$g$ such that~$g_q\in K_q'$
for~$q\in S\setminus S_0$. This in turn implies that the definition of~$\height(x)$ is also
independent of~$S\supseteq S_0$.
Define
\[
\Siegel( R):=\{x\in X :\height (x)\leq R\} .
\]

Note that
\be\label{HtCont}
{\height(xg)\ll\|g\|^2\height(x)}\mbox{ for any }g\in G_v, v\in\Sigma. 
\ee
If~$v\in\Sigma_f$ the implicit constant is~$1$ 
and moreover
\be\label{Htconst}
{\height(xg)=\height(x)}\mbox{ for any }g\in K_v.
\ee
Finally, we need:
\begin{lemma}
There exists constants $\consta\label{inj-radius}>1$ and $\constc>0$ such that
for all $x\in X$ the map\index{kaa8@$\ref{inj-radius}$, exponent of height in injectivity radius} 
\be\label{e;injective-Xc}
\mbox{$g\mapsto xg$  is 
injective on $\bigl\{g=(g_\infty,g_f):d(g_\infty,1)\leq \theconstc\height(x)^{-\ref{inj-radius}}, g_f\in K_f'\bigr\}.$}   
\ee
\end{lemma}
\proof Suppose that $x g_1= x g_2$ for $g_1, g_2$ belonging to the set above. 
In what follows take $S=S_0$.   Let $g_{1,S}$ and $g_{2,S}$ be the $S$ component of $g_1$ and $g_2$. 

 Fix $g \in G_S'$ such that $\pi_S(x) = \Gamma_S g$.  Then $\Gamma_S  g g_{1,S} = \Gamma_S g g_{2,S}$, and so  $g_{1,S} g_{2,S}^{-1}$ fixes $g^{-1} \mathfrak{g}'_{\Z_S}$.
In particular, $g_{1,\infty} g_{2,\infty}^{-1}$ fixes 
$$ L_x := g^{-1} \mathfrak{g}'_{\Z_S} \cap \mathfrak{g}'_{\Z},$$
{ the intersection being taken inside of $\mathfrak{g}'$}; 
this can  also be described as those elements $u \in g^{-1} \mathfrak{g}'_{\Z_S}$ that satisfy $\|u\|_v \leq 1$ for all nonarchimedean $v \in S$. 

We consider $L_x$ as a $\Z$-lattice inside  the real vector space $\mathfrak{g}' \otimes \R$. For every $\lambda \in L_x$ we have
$\|\lambda\| \geq \mathrm{ht}(x)^{-1}$. The  covolume of $L_x$ inside
$\mathfrak{g}' \otimes \R$ is the same as the covolume of $g^{-1} \mathfrak{g}'_{\Z_{S}}$
inside $\mathfrak{g}' \otimes \Q_{S}$, and this latter covolume is independent of $x$. 
By lattice reduction theory, then, $L_x$ admits a basis 
  $\lambda_1, \dots, \lambda_d$
such that $\| \lambda_i\| \ll \mathrm{ht}(x)^{(d-1)}$.  

Thus, if we choose the constant $\kappa_{10}$ sufficiently large and $c_1$ suitably small, we have
$$ \| (g_{1,\infty} g_{2,\infty}^{-1}) \lambda_i  - \lambda_i\|  < \mathrm{ht}(x)^{-1} \mbox{ for all $1 \leq i \leq d$,}$$
and thus the fact that $g_{1,\infty} g_{2,\infty}^{-1}$ fixes the lattice $L_x$ setwise
implies that it in fact fixes $L_x$ pointwise. This forces $g_{1,\infty} g_{2,\infty}^{-1}$ to belong to the center
of $G_{\infty}'$, and this will be impossible if we choose $c_1$ small enough. 
\qed

Let $w\in\places_{f}$ be the good place as above, which gives that $\H_j(F_w)$ is not compact for all $j,$ 
and let $p_w$ be the prime so that $w\mid p_w.$ Then $\H_j'(\Q_{p_w})$ is not compact for all~$j$.

We have the following analogue\footnote{We note that due to the dependence on~$p$ we do not obtain
at this stage a fixed compact subset that contains $90\%$
of the measure for all MASH, see Corollary~\ref{non-divergence-adelic}.} of~\cite[Lemma 3.2]{EMV}.

\begin{lemma}[Non-divergence estimate]\label{l;non-div}
There are positive constants $\consta\label{non-div-power-p}$ and $\consta\label{non-div-cons},$ 
depending on $[F:\Q]$ and $\dim\G,$ so that for any {\rm MASH}
set $Y$ we have\index{kaa8@$\ref{non-div-power-p}$, exponent of~$p_w$ in non-divergence estimate}
\[
\mu_\data\left(X\setminus\Siegel( R)\right)\ll p_w^{\ref{non-div-power-p}}R^{-\ref{non-div-cons}},
\]
where $p_w$ is a rational prime with~$w\mid p_w$ for a good place~$w\in\Sigma_F$ for~$Y$.\index{kaa9@$\ref{non-div-cons}$,
exponent of height in non-divergence estimate} 
\end{lemma}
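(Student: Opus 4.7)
The plan is to follow the template of \cite[Lemma 3.2]{EMV}, applying quantitative non-divergence for unipotent flows in the $S$-arithmetic setting while carefully tracking the dependence on the residue characteristic $p_w$.

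First, I would exploit the invariance of $\mu_\data$ under the unipotent one-parameter subgroup $u: F_w \to H_w$ arising from the principal $\SL_2$ of \S\ref{l;good-sl2}. Setting $B_T = \{t \in F_w : |t|_w \leq T\}$ and letting $\mu_w$ denote the additive Haar measure on $F_w$, the $u(t)$-invariance of $\mu_\data$ gives the Fubini identity
\[
\mu_\data(X \setminus \Siegel(R)) = \int_X \frac{1}{\mu_w(B_T)} \mu_w\bigl(\{t \in B_T : u(t)y \notin \Siegel(R)\}\bigr)\,d\mu_\data(y).
\]

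Second, I would apply the quantitative non-divergence theorem of Kleinbock--Tomanov to bound the inner integrand. The matrix entries of $\Ad(u(t))$ acting on $\gfrak'_{\Z_\plw}$ (for $\plw \supset S_0 \cup \{w\}$) are polynomials in $t \in F_w$ of degree at most $\dim\G$, and such polynomials are $(C,\alpha)$-good for $\mu_w$. Combined with the description of the height function in \S\ref{s;nondivergence} as a supremum of inverse norms of $\Z_\plw$-rational vectors, this gives, for $T$ chosen sufficiently large as a function of $p_w$,
\[
\mu_w\bigl(\{t \in B_T : u(t)y \notin \Siegel(R)\}\bigr) \ll p_w^{\ref{non-div-power-p}} R^{-\ref{non-div-cons}} \mu_w(B_T),
\]
uniformly in $y \in X$; substituting back yields the claimed estimate.

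The main technical obstacle is tracking the $p_w$-dependence. Two sources contribute. The Besicovitch-style covering constants and the $(C,\alpha)$-good constants for polynomials over $F_w$ depend on the residue-field size $q_w \leq p_w^{[F:\Q]}$. In addition, the ``initial escape'' condition in Kleinbock--Tomanov requires $T$ to be large enough that the orbit $u(B_T)y$ is not trapped in a pro-$p_w$ neighborhood of the identity of the form $K_w[m]$, forcing $T \geq p_w^{\star}$. Both effects contribute factors that are polynomial in $p_w$, which after integrating over $\mu_\data$ and invoking that $\mu_\data(X) = 1$ gives an exponent $\ref{non-div-power-p}$ depending only on $[F:\Q]$ and $\dim\G$.
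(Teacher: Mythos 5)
There is a genuine gap. The quantitative non-divergence theorem of Kleinbock--Tomanov is a \emph{conditional} statement: it bounds the proportion of $t\in B_T$ with $u(t)y\notin\Siegel(R)$ only under the hypothesis that every rational subspace $V$ of $\gfrak'_{\Z_\plw}$ (translated by $y$) has covolume function $\psi_V(t)=\operatorname{cov}(\Ad_{u(t)}(V\cap \Ad_g^{-1}\gfrak'_{\Z_{\plw}}))$ reaching size $\geq\rho$ somewhere on $B_T$. Your claim that the resulting estimate holds ``uniformly in $y\in X$'' is false: for a point $y$ whose orbit is trapped along a rational subspace of very small covolume (e.g.\ a point deep in the cusp fixed by $u$, or more relevantly a $y$ for which some small-covolume rational $V$ is $u(F_w)$-invariant so that $\psi_V$ is a small constant), the hypothesis fails and Kleinbock--Tomanov gives nothing. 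The entire arithmetic content of the lemma — the reason a MASH set cannot live mostly in the cusp — is the verification of this hypothesis, and that step is absent from your proposal. Concretely, the paper first proves that there is \emph{no $x$-rational, $H_w$-invariant proper subspace of covolume $\leq c\,p_w^{-\star}$} (this is where the structure of $Y$ as a homogeneous arithmetic orbit enters, following \cite[Lemma 3.2]{EMV}, and it is the main source of the $p_w^{\ref{non-div-power-p}}$ factor via the term $(\epsilon/\rho)^{\alpha}$ in the Kleinbock--Tomanov bound); it then uses that only finitely many rational subspaces have covolume below this threshold to conclude that for almost every $h\in H_w\cap K_w$ no such subspace is invariant under $U$ itself at the translated point $xh$, so that each $\psi_V$ is either unbounded or a constant $\geq\rho$ and the hypothesis of \cite[Thm.~7.3]{KT} is met.

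A secondary difference: rather than your Fubini identity over all of $X$, the paper applies the non-divergence bound to a \emph{single} well-chosen point $x'=xh$ whose $U$-orbit equidistributes with respect to $\mu_\data$ (ergodicity via strong approximation and Mautner, using that $\H$ is simply connected), and then transfers the time-average bound to $\mu_\data$ by letting $r\to\infty$. Your Fubini route could be repaired — one would show the Kleinbock--Tomanov hypothesis holds for $\mu_\data$-almost every $y$ and then integrate — but that repair still requires the invariant-subspace covolume lower bound as its first step, so the missing ingredient is the same either way.
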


\proof
The proof is similar to the proof of~\cite[Lemma 3.2]{EMV},
using the $\plw$-arithmetic version of the quantitative non-divergence of unipotent flows which is proved in~\cite{KT}, for
which we set $\plw=S_0\cup\{p_w\}$. We recall parts of the proof. 

Recall that $\H'_j(\Q_{p_w})$ is not compact 
for any $F$-almost simple factor  $\H_j$ of $\H$
and that $\H(F_w)$ is naturally identified 
with the group of $\Q_{p_w}$-points of ${\rm Res}_{F_w/\Q_{p_w}}(\H),$ see e.g.~\cite{PR}. 
Let $H_{w}=g_{\data,w}^{-1}\iota(\H(F_w))g_{\data,w}$ be the component
of the acting group at the place~$w$, where~$g_\data\in\G(\adele)$
is the group element from the data~$\data=(\H,\iota,g_\data)$
determining the MASH set~$Y=Y_\data$. 

Let us note that discrete $\Z_\plw$-submodules of $\Q_\plw^k$ are free,~\cite[Prop.\ 8.1]{KT}.
Furthermore, by~\cite[Lemma 8.2]{KT} if $\Delta=\oplus_{i=1}^\ell\Z_\plw{\bf v}_i$ 
is a discrete $\Z_\plw$-module, then the covolume of $\Delta$ in
$V=\oplus_{i=1}^\ell\Q_\plw{\bf v}_i$  is defined by
${\rm cov}(\Delta)=\prod_{v\in \plw}\|{\bf v}_1\wedge\cdots\wedge{\bf v}_\ell\|_v$
and we will refer to~$\Delta$ as an~$S$-arithmetic lattice in~$V$.
 
Let~$h\in G_S'$.
A subspace $V\subset \gfrak'_\plw$ is called $\Gamma_S h$-rational if
$V\cap \Ad^{-1}_h\gfrak'_{\Z_\plw}$ is an $\plw$-arithmetic lattice in $V;$
the covolume of $V$ with respect to~$\Gamma_S h$ is defined to be $\operatorname{cov}(V\cap \Ad^{-1}_h\gfrak'_{\Z_\plw})$
(and is independent of the representative).
One argues as in the proof of~\cite[Lemma 3.2]{EMV}  (given in Appendix B of \cite{EMV})  and gets: there exist positive constants 
$\constc$ and $\consta\label{non-div-proof-1}$ such\index{kaa91@$\ref{non-div-proof-1},\ref{non-div-proof-2}$, exponents
of~$p_w$ in the non-divergence proof} 
that
\begin{multline} 
\label{e;no-thin-subspace}\mbox{there is no $x$-rational, $H_w$-invariant }\\
 \mbox{proper subspace of covolume $\leq \theconstc p_w^{-\ref{non-div-proof-1}}$}
\end{multline}
where we fix some~$x\in\pi_S(Y)$. We define~$\rho=\theconstc p_w^{-\ref{non-div-proof-1}}$.

Let now $U=\{u(t)\}$ be a one parameter $\Q_{p_w}$-unipotent subgroup of 
$H_w$ which projects
nontrivially into all $\Q_{p_w}$-simple factors of ${\rm Res}_{F_w/\Q_{p_w}}(\H)$. 
Then, since the number of 
$x$-rational proper subspaces
of covolume $\leq \rho=\theconstc p_w^{-\ref{non-div-proof-1}}$ is finite
and by the choice of $U$ above, 
a.e.~$h\in H_w$ has the property that $hUh^{-1}$ does not leave invariant any 
proper~$x$-rational subspace
of covolume $\leq \rho.$ 
Alternatively, we may also conclude for a.e.~$h\in H_w\cap K_w$
that~$U$ does not leave invariant any proper~$xh$-rational subspace
of covolume~$\leq\rho.$

Since $\H$ is simply connected, it 
follows from the strong approximation theorem and the
Mautner phenomenon that $\mu_\data$ is ergodic for the action  of $\{u(t)\}.$
This also implies that the~$U$-orbit of~$xh$ equidistributes with
respect to~$\mu_\data$ for a.e.~$h$. We choose~$h\in H_w\cap K_w$
so that both of the above properties hold true for~$x'=xh$.

{Let $x'=\Gamma_Sh'$. Hence, for any $\Gamma_S h'$-rational subspace $V$,} 
if we let 
\[
\psi_V( t)={\rm cov}(\Ad_{u(t)}(V\cap \Ad^{-1}_{h'}\gfrak'_{\Z_\plw})),
\]
then either $\psi_V$ is unbounded or equals a constant $\geq \rho.$  
Thus, by~\cite[Thm.\ 7.3]{KT} there exists a positive constant $\consta\label{non-div-proof-2}$ 
so that\index{kaa91@$\ref{non-div-proof-1},\ref{non-div-proof-2}$, exponents
of~$p_w$ in the non-divergence proof}  
\be\label{e;non-div}
\mbox{$|\{t:|t|_w\leq r,x' u(t)\notin\Siegel(\epsilon^{-1})\}|
\ll p_w^{\ref{non-div-proof-2}}(\tfrac{\epsilon}{\rho})^\alpha |\{t:|t|_{w}\leq r\}|,$}
\ee
for all large enough $r$ and {$\epsilon>0$}, 
where~$\alpha=\ref{non-div-cons}$ only depends on the degree of the polynomials
appearing in the matrix entries for the elements of the one-parameter unipotent subgroup~$U$
(see \cite[Lemma~3.4]{KT}).
The lemma now follows as the~$U$-orbit equidistributes with respect to~$\mu_\data$.

We note that the proof of~\eqref{e;no-thin-subspace} also uses non-divergence estimates and induction on the dimension, 
which is the reason why the right hand side contains a power of $p$.
\qed

\subsection{Spectral input}\label{ss:spectral-input}
As in \S\ref{l;good-sl2} we let~$\theta_w(\SL_2)<g_{\data,w}^{-1}\iota(\H)g_{\data,w}$ be the principal~$\SL_2$ and also recall the one-parameter
unipotent subgroup
 \[
 u(t) :=  \theta_{w}\left(\left(\begin{array}{cc} 1 & t \\0 & 1 \end{array}\right)\right).
 \]

In the following we will assume that the representations of $\SL_2(F_w)$, via~$\theta_w$, both on 
\[
L_0^2(\mu_\data):=\bigl\{f\in L^2(X,\mu_\data):\textstyle\int f\operatorname{d}\!\mu_\data=0\bigr\},
\]
  and on $L_0^2(X,\operatorname{vol}_G)$  are $1/\temp$-tempered (i.e.\ the matrix coefficients of the $\temp$-fold tensor product
are in~$L^{2+\epsilon}(\SL_2(F_w))$ for all~$\epsilon>0$). 
(Recall again here that $\H$ is simply connected.)
 As was discussed in \S\ref{sec:tau} this follows directly  in the case when $\H(F_w)$ has property (T), see \cite[Thm.~1.1--1.2]{O},
and in the general case we apply property $(\tau)$ in the strong form, see~\cite{LC}, \cite{GMO} and \cite[\S6]{EMV}.\footnote{Note 
that the $F_w$-rank of the almost simple factors of~$\H$ are never zero, since $\H$ is $F_w$ quasi split.}

\subsection{Adelic Sobolev norms}\label{sec;sob-intro}
Let $C^{\infty}(X)$ denote the space of functions 
which are invariant by a compact open
subgroup of $\G(\adele_f)$ and are smooth at all infinite places. There exist a system of norms $\Sob_d$ on $C_c^\infty( X)$ 
with the following properties, {see Appendix A, in particular, see~\eqref{Sobnormdef} and~\eqref{eq:Sob-inc}.}

\begin{enumerate}
\item [S0.] ({\it Norm on~$C_c(X)$}) Each $\Sob_d$ is a pre-Hilbert norm 
on $C_c^\infty(X)=C^\infty(X)\cap C_c(X)$ (and so in particular finite there).

\item[S1.] ({\it Sobolev embedding}) There exists some~$d_0$ depending
on~$\dim\G$ and~$[F:\Q]$ such that for all $d\geq d_0$ we have
$\|f\|_{L^{\infty}} \ll_d \Sob_d(f)$.

\item[S2.] ({\it Trace estimates}) Given $d_0,$ 
there are 
$d>d'>d_0$
and an orthonormal basis $\{e_k\}$ of the completion of $C_c^\infty(X)$
with respect to $\Sob_{d}$ which is orthogonal with respect to $\Sob_{d'}$
so that 
\[
\mbox{$\sum_k\Sob_{d'}(e_k)^2<\infty\quad$ and $\quad\sum_k\tfrac{\Sob_{d_0}(e_k)^2}{\Sob_{d'}(e_k)^2}<\infty.$} 
\]

\item[S3.] ({\it Continuity of representation}) Let us write~$g\cdot f$
for the  action of~$g\in \G(\adele)$ on~$f\in C_c^\infty(X)$. For all $d\geq 0$ we have 
\[
\Sob_d(g\cdot f)\ll \|g\|^{4d}\Sob_d(f),
\]
for all $f\in C_c^\infty(X)$ and where
\[
 \|g\|=\prod_{v\in\Sigma}\|g_v\|.
\]
Moreover, we have~$\Sob_d(g\cdot f)=\Sob_d(f)$ 
if in addition~$g\in K_f$. 
For the unipotent subgroup~$u(\cdot)$ 
in the principal~$\operatorname{SL}_2$ at the good place~$w$
we note that $\|u(t)\|\leq(1+|t|_w)^N$ for all $t\in F_w.$

\item[S4.] ({\it Lipshitz constant} at $w$) {There exists some~$d_0$ depending
on~$\dim\G$ and~$[F:\Q]$ such that for all $d\geq d_0$ the following holds.} For any $r\geq0$ and any $g\in K_w[r]$ we have
\[
\|g\cdot f-f\|_\infty\leq q_w^{-r}\Sob_d(f)
\]
for all $f\in C_c^\infty(X)$.

\item[S5.]  ({\it Convolution on ambient space}) 
Recall from \S\ref{sec:equi0no}  and \S\ref{proofnotn} that $\Gpp$
is the projection onto the space of $\G(\adele)^+$ invariant functions
and that $L^2_0(X,\vol)$ is the kernel of $\Gpp.$
Let $\Av$ be the operation of averaging
over $K_w[\levelbound],$ where $\levelbound$ is given by Proposition~\ref{effgen}.
For $t\in F_w$ we define $\mathbb{T}_t=\Av \star\, \delta_{u(t)} \star \Av$ by convolution. 
For all $x\in X$, all $f\in C_c^\infty(X)$, and~$d\geq d_0$ we have
\[
|\mathbb{T}_t(f-\Gpp f)(x)|\ll q_w^{(d+2)L} \height(x)^{d} \|\mathbb{T}_t\|_{2,0}\Sob_d( f),
\]
where $\|\mathbb{T}_t\|_{2,0}$ denotes the operator norm of $\mathbb{T}_t$
on $L^2_0(X,\vol_G)$. Once more~$d_0$
depends on~$\dim\G$ and~$[F:\Q]$.

\item[S6.] ({\it Decay of matrix coefficients})
For all $d\geq d_0$ we have 
\begin{multline}\label{sobnormmc}\Big| \langle u(t) f_1, f_2 \rangle_{L^2(\mu_\data)} - 
	\int f_1\operatorname{d}\!\mu_\data \int\bar f_2\operatorname{d}\!\mu_\data \Big|\\ 
 \ll (1+|t|_w)^{-1/2\temp} \Sob_{d}(f_1) \Sob_d(f_2),
\end{multline}
where~$d_0$ depends on~$\dim\G$ and~$[F:\Q]$; {recall that $\H$ is simply connected}.
\end{enumerate}

%

\subsection{Discrepancy along $v$-adic unipotent flows}\label{s;discrepancy}
We let~$M$ be as in \S\ref{ss:spectral-input}
and choose the depending parameter $\flowscale= 100\temp$.

We say a point $x \in X$ is {\em $T_0$-generic w.r.t.\ the Sobolev norm $\Sob$} 
if for any ball of the form $J= \{t \in F_w: |t-t_0|_w \leq |t_0|_w^{1-1/\flowscale}\}$,
with its center satisfying ${\rm n}(J)=|t_0|_w \geq T_0,$ 
we have
\begin{equation} \label{disc}
D_J(f)(x) =\left| \frac{1}{|J|}
\int_{t \in J} f(x u(t)) \operatorname{d}\!t - \int f
\operatorname{d}\!\mu_\data\right| \leq {\rm n}(J)^{-1/\flowscale} \Sob (f), \end{equation}
for all $f\in C_c^\infty( X)$.
Here $|J|$ denotes the Haar measure of $J$ and we note that the definition
of ${\rm n}(J)=|t_0|_w$ is independent of the choice of the center~$t_0\in J$.

\begin{lem}[$T_0$-generic points] \label{lem:genericbound}
For a suitable $d_0$ depending only on~$\dim\G$ and~$[F:\Q]$ and all~$d\geq d_0$
 the measure of points that are not $T_0$-generic
w.r.t.\ $\Sob_{d}$ is decaying polynomially with~$T_0$. More precisely,
\[
\mu_\data\bigl(\bigl\{y\in Y: y\mbox{ is not~$T_0$-generic}\bigr\}\bigr) \ll T_0^{-1/4\temp}
\]
for all $T_0>q_w^\star$.
\end{lem}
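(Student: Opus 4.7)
The plan is to combine an $L^2$ variance estimate based on property S6 with a trace-class argument based on S2 to get uniformity over $f$, and then to close up with a union bound over the $w$-adic balls $J$.

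First, for a fixed $f\in C^\infty_c(X)$ with $\int f\,d\mu_\data=0$ and a fixed ball $J\subset F_w$, I would expand the square, use the substitution $t\mapsto t-s$, and write
\[
\int_X |D_J(f)|^2\,d\mu_\data=\frac{1}{|J|^2}\iint_{J\times J}\langle u(t-s)f,f\rangle_{L^2(\mu_\data)}\,ds\,dt.
\]
Applying S6 to bound $|\langle u(t-s)f,f\rangle|\ll(1+|t-s|_w)^{-1/(2\temp)}\Sob_d(f)^2$ and computing the resulting double integral over the non-archimedean ball $J$ of radius $r=\mathrm{n}(J)^{1-1/\flowscale}$ yields
\[
\int_X|D_J(f)|^2\,d\mu_\data\ll r^{-1/(2\temp)}\Sob_d(f)^2=\mathrm{n}(J)^{-(1-1/\flowscale)/(2\temp)}\Sob_d(f)^2.
\]

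Second, to turn this into a bound that holds for all $f$ simultaneously, I would invoke S2: let $\{e_k\}$ be an orthonormal basis with respect to $\Sob_d$, orthogonal with respect to $\Sob_{d'}$, and set $\lambda_k=\Sob_{d'}(e_k)^2$. Expanding $f=\sum c_k e_k$, Cauchy--Schwarz gives
\[
|D_J(f)(x)|^2\leq\Sob_{d'}(f)^2\cdot S_J(x),\qquad S_J(x):=\sum_k\lambda_k^{-1}|D_J(e_k)(x)|^2,
\]
because $\Sob_{d'}(f)^2=\sum_k|c_k|^2\lambda_k$ by orthogonality in $\Sob_{d'}$. Applying the variance estimate to each $e_k$ (with the weaker norm $\Sob_{d_0}$ on the right-hand side of S6) and using the summability $\sum_k\Sob_{d_0}(e_k)^2/\lambda_k<\infty$ from S2, one gets
\[
\int_X S_J(x)\,d\mu_\data(x)\ll\mathrm{n}(J)^{-(1-1/\flowscale)/(2\temp)}.
\]
Chebyshev then shows that the set of $x$ for which the $T_0$-genericity bound fails at this particular $J$ has $\mu_\data$-measure at most $\mathrm{n}(J)^{2/\flowscale-(1-1/\flowscale)/(2\temp)}$.

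Third, I would take a union bound over balls. For each scale $\mathrm{n}(J)=q_w^k$ the number of balls is $\asymp q_w^{k/\flowscale}$, so the measure of points that fail $T_0$-genericity somewhere is dominated by
\[
\sum_{k:\,q_w^k\geq T_0}q_w^{k/\flowscale}\cdot q_w^{k(2/\flowscale-(1-1/\flowscale)/(2\temp))}.
\]
With the choice $\flowscale=100\temp$ the exponent of $q_w^k$ in the summand is strictly more negative than $-1/(4\temp)$, and the geometric series sums to $\ll T_0^{-1/(4\temp)}$, provided $T_0$ is larger than a suitable power of $q_w$ (which absorbs the constants from the non-archimedean sphere/ball counting and from the Sobolev summability).

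The main obstacle is the passage from a single-function estimate to one that is uniform in $f\in C^\infty_c(X)$: one must choose the three Sobolev parameters $d_0<d'<d$ carefully so that S6 applies at level $d_0$ while S2 still yields convergence of $\sum_k\Sob_{d_0}(e_k)^2/\lambda_k$, and so that the resulting bound is expressed in terms of a single Sobolev norm $\Sob_{d'}(f)\leq\Sob_d(f)$. The remaining work is then a routine $p$-adic bookkeeping: computing the integrals on non-archimedean balls and counting balls per scale, with the condition $T_0>q_w^\star$ needed only to swallow residue-field dependent constants.
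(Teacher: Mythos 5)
Your argument is correct and is essentially the paper's own proof: a variance bound for $\int_X|D_J(f)|^2\,d\mu_\data$ via the matrix coefficient decay S6, uniformization over $f$ via the trace estimates S2 with an orthonormal basis that is orthogonal for $\Sob_{d'}$, and a union bound over scales and balls using the choice $\flowscale=100\temp$ and $T_0>q_w^\star$. The only (immaterial) difference is bookkeeping: you apply Chebyshev to the aggregated quantity $S_J(x)=\sum_k\lambda_k^{-1}|D_J(e_k)(x)|^2$ per ball, whereas the paper applies Chebyshev to each $e_k$ with thresholds $\lambda_k$ and then takes the union over $k$; the Cauchy--Schwarz step recovering the bound for general $f$ on the good set is the same.
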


\proof
We let~$\Sob=\Sob_{d_0}$ and will make requirements on~$d_0\geq 1$ during the proof. 
We first consider a fixed $f$ in $L^2(X)$ which is in the closure of $C_c^{\infty}(X)$ with respect to $\Sob.$ 
Since $\H$ is simply connected, by~\eqref{sobnormmc} we have
\be\label{e;matrix-coef-generc}
\left| \langle u(t) f, f \rangle_{L^2(\mu_\data)} - \Bigl| \int f \operatorname{d}\!\mu_\data \Bigr|^2 \right| \ll (1+|t|_w)^{-1/2\temp}  \Sob (f)^2,
\ee
where we assume~$d_0$ is sufficiently big for~S6.\ to hold.

For a fixed $J$ let $D_{J}(f)(x)$ be defined in~\eqref{disc}. Then we have
\[
\int_{X} \bigl|D_{J}(f)(x)\bigr|^2\operatorname{d}\!\mu_\data=\frac{1}{|J|^2}\int_{J\times J} 
\langle u(t) f, u(s)f \rangle \operatorname{d}\!s\operatorname{d}\!t- 
\left( \int f \operatorname{d}\!\mu_\data \right)^2.
\]
Split $J\times J$ into $|t-s|_w\leq {\rm n}(J)^{\frac{1}{2} (1-1/\flowscale)}$ and 
$|t-s|_w> {\rm n}(J)^{\frac{1}{2} (1-1/\flowscale)};$
in view of~\eqref{e;matrix-coef-generc} we thus get 
\[
\int_{X} \bigl|D_{J}(f)(x)\bigr|^2\operatorname{d}\!\mu_\data\ll {\rm n}(J)^{-\tfrac{1}{4\temp}(1-\tfrac{1}{\flowscale})} \Sob(f)^2.
\]

Still working with a fixed function~$f$, this implies in particular that 
\[
\mu_\data\Bigl(\Bigl\{x\in X: D_{J}f(x) \geq {\rm n}(J)^{-1/\flowscale} \lambda\Bigr\}\Bigr)
\ll\lambda^{-2} {\rm n}(J)^{\tfrac{8\temp+1}{4\temp \flowscale} - \tfrac{1}{4M}} \Sob(f)^2
\]
 for any $\lambda>0$. We note that by our choice of~$\flowscale$ the second term~$-\frac1{4M}$
in the exponent is more significant than the first fraction. 

Given $n\in\mathbb{N},$
the number of disjoint balls $J$ as above
with ${\rm n}(J)=q_w^n$ is bounded above by $q_w{\rm n}(J)^{1/\flowscale}$.
Consequently, summing over all possible values of $n\in\mathbb{N}$ with 
$q_w^n \geq T_0$ and all possible subsets $J$ as above, we see
that
\begin{multline}\label{e;gen-one-func} 
	\mu_\data\Bigl(\Bigl\{ x \in X: D_Jf(x) \geq  {\rm n}(J)^{-1/\flowscale} \lambda\;\Sob(f) \, \mbox{ and }
{\rm n}(J) \geq T_0\Bigr\}\Bigr) \\ \ll \lambda^{-2} q_w\sum_{q_w^n \geq T_0}
q_w^{n\left(\tfrac{12\temp+1}{4\temp \flowscale} - \tfrac{1}{4\temp}\right)} \ll \lambda^{-2} T_0^{-1/8\temp},
\end{multline}
where we used $T_0>q_w^\star.$

To conclude, we use property S2.\ of the Sobolev norms. Therefore, there are 
$d>d'>d_0$
and an orthonormal basis $\{e_k\}$ of the completion of $C_c^\infty(X)$
with respect to $\Sob_{d}$ which is orthogonal with respect to $\Sob_{d'}$
so that 
\be\label{eq:reltr-cont}
\mbox{$\sum_k\Sob_{d'}(e_k)^2<\infty\quad$ and $\quad\sum_k\tfrac{\Sob(e_k)^2}{\Sob_{d'}(e_k)^2}<\infty.$} 
\ee
Put $c=(\sum_k\Sob_{d'}(e_k)^2)^{-1/2}$
and let $B$ be the set of points so that for some $k$ and some~$J$ with~${\rm n}(J)\geq T_0$ 
we have\footnote{Thanks to S1.\ 
and assuming $d_0$ is big enough, all expressions considered here are continuous 
w.r.t.\ $\Sob_d$ for all $d\geq d_0$.}
\[
D_Je_k(x) \geq c\,{\rm n}(J)^{-1/\flowscale} \Sob_{d'}(e_k).
\]
In view of~\eqref{e;gen-one-func}, applied for $f=e_k$ with 
$\lambda_k=c\tfrac{\Sob_{d'}(e_k)}{\Sob(e_k)},$
and~\eqref{eq:reltr-cont} the measure of this set
is $\ll T_0^{-1/8\temp}$. 

Let $f \in C_c^\infty(X)$ and write $f=\sum f_k e_k$ and  suppose $x \not\in B.$ Let~$J$
be a ball with~${\rm n}(J)\geq T_0$. 
 Then using the triangle inequality for $D_J$ we obtain
\begin{align*} 
D_J(f)(x) &\leq \sum_k |f_k|D_J(e_k)(x)\leq c{\rm n}(J)^{-1/\flowscale} \sum_k |f_k| \Sob_{d'}(e_k)\\
&\leq c{\rm n}(J)^{-1/\flowscale}  \left(\sum_{k} |f_k|^2\right)^{1/2} \left(\sum_k \Sob_{d'}(e_k)^2 \right)^{1/2}
\\ &= {\rm n}(J)^{-1/\flowscale} \Sob_{d}(f).
\end{align*}
\qed

\subsection{Pigeonhole principle}\label{sec:pigeonhole}
We now use a version of the pigeonhole principle to show that if $\vol(Y)$
is large, then in some part of the space and on certain ``small but not too small" scales $Y$ is not aligned along 
$\stab(\mu)$.
This gives the first step to producing nearby generic points to which
we may apply the effective ergodic theorem, discussed above. 


With the notation as in \S\ref{s;nondivergence} put 
\[
\Xc=\Siegel\left(p_w^{(\ref{non-div-power-p}+20)/\ref{non-div-cons}}\right),
\]
then by Lemma~\ref{l;non-div} we have $\mu_\data(\Xc)\geq 1-2^{-20}.$

Let us also assume that the analogue of~\eqref{omegav2} holds for 
$\wedge^\ell\Ad$ for $1\leq\ell\leq\dim\G$ where as usual $\Ad$ denotes the adjoint representation.
Therefore, we have that the analogue of~\eqref{eq:chevalley-claim} holds for $\wedge^\ell\Ad.$ 
More precisely, for any infinite place $v,$ any~$u\in\Xi_v,$ and all~$z\in\bigwedge^\ell \gfrak_v$
we have
\be\label{eq:chevalley-claim-Ad}
 \wedge^{\ell}\Ad(\exp u)z=z\mbox{ implies that }\Ad(\exp (tu))z=z\mbox{ for all }t\in F_v
\ee
for all $1\leq\ell\leq\dim\G.$

We now fix\index{theta@$\Theta^*$, open neighbourhood of the identity
in pigeon hole argument} $\Theta^*=\prod_{v\in\Sigma_\infty}\Theta_v^*\times K_f\subset \G(\adele)$ 
with $\Theta_v^*\subset\exp(\Xi_v)$ { open} for all infinite places $v$ 
so that the map $g'\in\Theta^*\mapsto xg'\in X$ is injective for all~$x\in\Xc$. 
Note that in view of our choice of $\Xc$ and~\eqref{e;injective-Xc} we may and will 
choose $\Theta^*$ with $\vol_G(\Theta^*)\gg p_w^{-\ref{vol-theta}}$
for some\index{kaa93@$\ref{vol-theta}$, exponent of~$p_w$ in lower bound for~$\Theta^*$} $\consta>0\label{vol-theta}.$
We will also use the notation
\[
\Theta^*[w^m] = \{ g \in \Theta^*: g_w \in K_w[m]\},
\]
for all $m\geq0.$

Recall from the Stabilizer lemma (Lemma~\ref{s:stabilizer-lem}) that the stabilizer
of our MASH set is given by
${\rm Stab}(\mu_\data)=g_{\data}^{-1}\iota(\H(\adele))\mathbf N(F)g_\data$ where
$\mathbf N$ denotes the normalizer of $\iota(\H)$ in $\G.$ In the following we will use \S\ref{ss:Borel-Prasad}
and in particular the notation $S=\places_\infty\cup\{w\},$ $\widetilde{H_\data},$ and $N_S$ introduced there.

We claim that 
\be\label{eq:stab-theta}
\stab(\mu_{\data})\cap\Theta^*[w^1]\subset \widetilde{H_\data}=N_SH_\data.
\ee
To see this let $g'=\gamma h\in \stab(\mu_\data)\cap\Theta^*[w^1]$ with $\gamma\in g_\data^{-1}\N(F)g_\data$ and $h\in H_\data.$ 
At all $v\in\places_\infty$ apply~\eqref{eq:chevalley-claim-Ad} with~$\ell=\dim \H(F_v)$, with the vector~$z$
belonging to~$\wedge^{\ell}\Ad(g_{v}^{-1})\Lie(\iota(\H)(F_v))$, { and taking $u$ such that $\exp(u)=g'_v$. The quoted statement
shows that a one-parameter subgroup containing $g'_v$ normalizes $H_v$,} and since the connected component of the normalizer of the Lie group $H_v$
equals $H_v$ this implies $\gamma_v\in H_v.$ In particular we get $\gamma\in g_\data^{-1}\iota(\H)(F)g_\data.$
At the place $w$ we use the fact that 
\[
K_w[1]\cap\iota(\H)(F_w)\subset\iota(\H(F_w))
\] 
to establish the claim.

For a subset $\nbhd \subset \G(\adele)$  denote the ``doubled sets''
by $\nbhd_2 =\nbhd \cdot \nbhd^{-1}$ and $\nbhd_4 = \nbhd_2 \cdot \nbhd_2$.

\begin{lem}\label{l;pigeonhole}
 Suppose a measurable subset $E \subset Y$ satisfies $\mu_\data(E) > 3/4$.
Let $\nbhd \subset G$ be open with $\nbhd_4 \subset \Theta^*[w^1]$
and $\vol_G(\nbhd) > 2 \widetilde{\vol}(Y)^{-1}$.  
Then there exist $x,y \in E$ so that $x = y g_0$ with $g_0 \in \nbhd_4 \setminus \stab(\mu_\data)$.
\end{lem}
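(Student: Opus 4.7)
I would run a volume-based pigeonhole argument. First, pass to $E' := E \cap \Xcompact$; by Lemma~\ref{l;non-div} and the choice of $\Xcompact$ in \S\ref{sec:pigeonhole} we have $\mu_\data(\Xcompact) \geq 1 - 2^{-20}$, so $\mu_\data(E') > 3/4 - 2^{-20}$. Consider the map $\phi \colon E' \times \nbhd \to X$, $(y, g) \mapsto yg$, and let $\tau$ denote the pushforward of $\mu_\data|_{E'} \otimes \vol_G|_\nbhd$ (Haar measure on $G$ in the second factor) under $\phi$. Its total mass is
$$\tau(X) = \mu_\data(E')\cdot \vol_G(\nbhd) > \bigl(\tfrac{3}{2} - 2^{-19}\bigr)\widetilde{\vol}(Y)^{-1}.$$
Since $\nbhd \subset \Theta^*$ and $g \mapsto yg$ is injective on $\Theta^*$ for every $y \in \Xcompact$, a brief Fubini computation shows that $\tau$ is absolutely continuous with respect to $\vol_G|_X$ with density $\rho(x) = \mu_\data(E' \cap x\nbhd^{-1})$.

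The strategy is then to establish the pointwise bound $\rho(x) \leq \widetilde{\vol}(Y)^{-1}$; integrating against the probability measure $\vol_G|_X$ yields $\tau(X) \leq \widetilde{\vol}(Y)^{-1}$, contradicting the lower bound above. To prove the pointwise bound, I will suppose for contradiction that no $x, y \in E$ and $g_0 \in \nbhd_4 \setminus \stab(\mu_\data)$ satisfy $x = yg_0$. Fix $x$ with $\rho(x) > 0$ and pick $y_0 \in E' \cap x\nbhd^{-1}$. Every other $y \in E' \cap x\nbhd^{-1}$ has the form $y = y_0 r$ with
$$r = y_0^{-1}y \in \nbhd\, \nbhd^{-1} = \nbhd_2 \subset \nbhd_4.$$
Since $y_0, y \in E$, the contradiction hypothesis gives $r \in \stab(\mu_\data)$; combined with $\nbhd_2 \subset \Theta^*[w^1]$ and \eqref{eq:stab-theta}, this forces $r \in \widetilde{H_\data}$.

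Injectivity of the orbit map $g \mapsto y_0 g$ on $\Theta^*$ (valid since $y_0 \in \Xcompact$) determines $r$ uniquely from $y$, and the compatibility of $\widetilde{m_\data}$ with $\mu_\data$ through the orbit map (the defining normalization in \S\ref{new-sec-on-volume}) then yields
$$\rho(x) \leq \widetilde{m_\data}\bigl(\widetilde{H_\data} \cap \nbhd_2\bigr) \leq \widetilde{m_\data}\bigl(\widetilde{H_\data} \cap \Omega_0\bigr) = \widetilde{\vol}(Y)^{-1},$$
once $\Theta^*$ is chosen inside $\Omega_0$, which may be arranged without loss of generality (shrinking at the infinite places).

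The main subtlety I anticipate is bookkeeping rather than conceptual: the bound on $\rho(x)$ must be against $\widetilde{\vol}(Y)$, defined using the full stabilizer $g^{-1}\iota(\mathbf{H}(\adele))\mathbf{N}(F)g$, rather than $\vol(Y)$ (defined using only $H_\data$), because the ``trivial'' overlaps produced by the contradiction hypothesis live in $\widetilde{H_\data}$ and not merely in $H_\data$. This is precisely why $\widetilde{\vol}$ was introduced in \S\ref{ss:Borel-Prasad}, and why the local description \eqref{eq:stab-theta} of the stabilizer inside $\Theta^*[w^1]$ is the crucial input that lets the pigeonhole close.
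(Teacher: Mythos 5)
Your argument is correct and is essentially the paper's own proof: the paper implements the same pigeonhole by taking a maximal collection of points $x_i\in\Xc$ with $x_i\nbhd$ disjoint (so that $I\le\vol_G(\nbhd)^{-1}$, the sets $x_i\nbhd_2$ cover $\Xc$, and some $x_{i_0}\nbhd_2$ carries $\mu_\data$-mass at least $1/2I$), whereas you reach the same point of high local mass by a Fubini/density computation for the pushforward of $\mu_\data|_{E'}\otimes\vol_G|_{\nbhd}$. The decisive ingredients are identical in both versions: under the contradiction hypothesis every overlap inside $E$ lies in $\stab(\mu_\data)\cap\Theta^*[w^1]\subset\widetilde{H_\data}$ by \eqref{eq:stab-theta}, the orbit map is injective on $\Theta^*\subset\Omega_0$ for points of $\Xc$, and the normalization of $\widetilde{m_\data}$ then bounds the local mass by $\widetilde{m_\data}(\widetilde{H_\data}\cap\Omega_0)=\widetilde{\vol}(Y)^{-1}$, contradicting $\vol_G(\nbhd)>2\widetilde{\vol}(Y)^{-1}$.
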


\proof
Let $\{x_i: 1 \leq i \leq I\}$ be a maximal set of points in $\Xc$ such that $x_i \nbhd$
are disjoint. By our choice of~$\Theta^*$ (as function of~$\Xc$ and so of~$q_w$)
we have $I \leq \vol_G(\nbhd)^{-1}$. By maximality of~$I$ we also have
that $\{x_i \nbhd_2: 1\leq i\leq I\}$ covers $\Xc$.
This observation implies in particular that there exists some $i_0$ so that 
\[
\mu_\data(x_{i_0} \nbhd_2\cap E)\geq \frac{1}{2 I}.
\]  
Fix some $y_1\in x_{i_0} \nbhd_2\cap E,$ 
then any $y_2\in x_{i_0} \nbhd_2\cap E$ is of the form $y_1 g$, where $g \in \nbhd_4$.

Suppose, contrary to our claim, that every $y_2\in x_{i_0} \nbhd_2\cap E$ 
were actually of the form $y_1 h$ with $h \in \stab(\mu_\data) \cap \nbhd_4$. Recall that $\stab(\mu_\data)\cap \nbhd_4\subset\widetilde{H_\data}.$
The orbit map $h \mapsto y_1 h$, upon restriction to $\nbhd_4$, is injective
by assumption (on $\Theta^*$) and $y_1\in\supp(\mu_\data),$ we thus get $\mu_\data(x_{i_0} \nbhd_2\cap E)\leq\widetilde{m_\data}(\nbhd_4\cap \widetilde{H_\data}).$ The definition of the volume of a homogeneous set together with the above discussion now gives
\[
\vol_G(\nbhd)\leq\frac{1}{I}\leq 2\mu_\data(x_{i_0} \nbhd_2\cap E)\leq2\widetilde{m_\data}(\nbhd_4\cap \widetilde{H_\data})\leq 2\widetilde{\vol}(Y)^{-1}
\]
which contradicts our assumption.
\qed





\subsection{Combining pigeon hole and adjustment lemmas}\label{sec:GP} \label{GP}
For any $v\in\places_\infty$ let $\Theta_v\subset\Theta_v^*$ be so that $(\Theta_{v})_4\subset\Theta_v^*$
and put $\Theta=\prod_{v\in\places_\infty}\Theta_v\times K_f.$
We may assume that  
$\vol_G(\Theta)\geq\constc\label{theta-thetas}\vol(Y)^{-1}$ where $\ref{theta-thetas}$
depends only on $\G(F_v)$ for $v\in\places_\infty.$ We define $\Theta[w^m]=\Theta\cap\Theta^*[w^m].$
We will use the notation $\nu^g(f):=\nu(g\cdot f)$ (with $f\in C_c(X)$) for the action of $g\in G$ on a probability 
measure $\nu$ on $X.$ 

Put $\Sob=\Sob_{d}$ for some $d> d_0$
so that the conclusion of the {\it generic points lemma} 
of \S\ref{lem:genericbound} holds true.

\begin{lem}[Nearby generic points] 
Let $r \geq 0$ be so that 
\[
2\vol_G(\Theta[w^r])^{-1} \leq \vol(Y)^{\ref{vol-tvol-cons}}.
\]
There exists  $x_1,x_2 \in\Xc\cap Y$ and $g\in G$ so that $x_2 = x_1g$ and
\begin{enumerate}
\item $x_1,x_2$ are both $T$-generic for $\mu_\data$ for some $T > q_w^\star$;
\item $g \in {\Theta^*}[w^r],$ 
\item we may write\footnote{As before $g_w$ denotes simply the $w$-component of $g \in \G(\adele)$.} $g_w = \exp(z)
$, 
where
$z \in \mathfrak{r}_w$ is not fixed by $\Ad(u(t))$ 
and in particular $z \neq 0.$ Moreover $\|z\| \leq q_w^{-r}$.
\end{enumerate}
\end{lem}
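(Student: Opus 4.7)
The strategy is to combine three ingredients: the pigeonhole principle of Lemma~\ref{l;pigeonhole}, the adjustment lemma of~\S\ref{lem:adjustment}, and the effective genericity of Lemma~\ref{lem:genericbound}. The care needed is that the multiplicative adjustments by elements of $K_w[1]\cap H_w$ must not destroy $T$-genericity of the two points.

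First I would build a set $E\subset Y$ with $\mu_\data(E)>3/4$ of ``robustly generic, compact-part'' points. Let $G_T\subset Y$ denote the set of points that are $T$-generic with respect to $\Sob$; by Lemma~\ref{lem:genericbound}, $\mu_\data(Y\setminus G_T)\ll T^{-1/4\temp}$. Using the $H_w$-invariance of $\mu_\data$ and Markov's inequality, the set
\[
G_T^{\flat}:=\Bigl\{x\in G_T:\bigl|\{\alpha\in K_w[1]\cap H_w:x\alpha\in G_T\}\bigr|>\tfrac{2}{3}\bigl|K_w[1]\cap H_w\bigr|\Bigr\}
\]
also has $\mu_\data$-complement of order $T^{-1/4\temp}$. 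Combined with the non-divergence bound of Lemma~\ref{l;non-div}, taking $T=q_w^\star$ sufficiently large yields $\mu_\data(\Xcompact\cap G_T^{\flat})>3/4$, and I set $E:=\Xcompact\cap G_T^{\flat}$.

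Next I would invoke the pigeonhole lemma. The hypothesis $2\vol_G(\Theta[w^r])^{-1}\leq\vol(Y)^{\ref{vol-tvol-cons}}$ together with $\widetilde{\vol}(Y)\geq\vol(Y)^{\ref{vol-tvol-cons}}$ from~\eqref{eq:vol-tvol-final} gives $\vol_G(\Theta[w^r])\geq 2\widetilde{\vol}(Y)^{-1}$. Choosing a neighborhood $\nbhd$ of the identity with $\nbhd_4\subset\Theta[w^r]$ and $\vol_G(\nbhd)>2\widetilde{\vol}(Y)^{-1}$ (possibly after absorbing a constant into the allowed scale), Lemma~\ref{l;pigeonhole} delivers $x_1,x_2\in E$ with $x_2=x_1g_0$, $g_0\in\Theta[w^r]\setminus\stab(\mu_\data)$, so in particular $(g_0)_w\in K_w[r]$. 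Now apply the adjustment lemma to $(g_0)_w$ with $A_i:=\{\alpha\in K_w[1]\cap H_w:x_i\alpha\in G_T\}$, each of relative measure $>1/2$ by construction of $E$; this yields $\alpha_i\in A_i$ with $\alpha_1^{-1}(g_0)_w\alpha_2=\exp(z)$ for some $z\in\mathfrak{r}_w$, $\|z\|\leq q_w^{-r}$. Writing $\beta_i\in G$ for the element equal to $\alpha_i$ at~$w$ and trivial elsewhere, set $x_i':=x_i\beta_i$ and $g':=\beta_1^{-1}g_0\beta_2$. Then $x_2'=x_1'g'$, $g'_w=\exp(z)$, $g'$ coincides with $g_0$ away from~$w$ so $g'\in\Theta[w^r]$, and $g'\notin\stab(\mu_\data)$ since $\beta_i\in H_w\subset\stab(\mu_\data)$. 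Heights are preserved because $\beta_i\in K_w$, so $x_i'\in\Xcompact$, and each $x_i'$ is $T$-generic by choice of $\alpha_i\in A_i$; this gives (1) and~(2).

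The main obstacle is condition~(3): that the resulting $z$ is nonzero and not fixed by $\Ad(u(t))$, equivalently that $z^{\rm mov}\neq 0$ in the decomposition $\mathfrak{r}_w=\mathfrak{r}_w^{\rm hwt}\oplus\mathfrak{r}_w^{\rm mov}\oplus\mathfrak{r}_w^{\rm trv}$. To arrange this I would exploit the $H_w$-equivariance of the construction: replacing $x_i$ by $x_ih$ for $h\in K_w[1]\cap H_w$ lying in the common $T$-generic subset (positive measure by the definition of $G_T^{\flat}$) produces the conjugated displacement $\exp(\Ad(h^{-1})z)$, and the set of admissible $z$ is stable under this $H_w$-action on $\mathfrak{r}_w$. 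Were $z^{\rm mov}=0$ for every such choice, the $H_w$-span of the achievable displacements would be a nonzero $H_w$-invariant subspace of $\mathfrak{r}_w^{\rm hwt}\oplus\mathfrak{r}_w^{\rm trv}$, and exponentiating would produce a proper algebraic subgroup of~$\G$ normalized by $\iota(\H)$. Combined with the observation of~\S\ref{div} that $\mathfrak{r}_w^{\rm trv}$ contains no nonzero $H_w$-invariant subspace and the maximality of $\iota(\H)$ in~$\G$, this yields an intermediate subgroup $\iota(\H)\subsetneq\mathfrak{S}\subsetneq\G$, a contradiction. Hence for a suitable choice of $h$ one can guarantee $z^{\rm mov}\neq 0$, and in particular $z\neq 0$, completing~(3).
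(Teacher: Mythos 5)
Your construction of the set $E$ of robustly generic points, the appeal to the pigeonhole lemma, and the use of the adjustment lemma to replace $g_0$ by $g$ with $g_w=\exp(z)$, $z\in\r_w$, $\|z\|\leq q_w^{-r}$, all match the paper's proof of parts (1) and (2). The genuine divergence is in part (3), and there your argument has a gap. You try to force $z^{\rm mov}\neq 0$ by a purely algebraic argument at the place $w$: if every achievable displacement lay in $V=\r_w^{\rm hwt}\oplus\r_w^{\rm trv}$, you would exhibit a nonzero $H_w$-invariant subspace of $V$ and contradict maximality. But this argument is empty precisely in the case it most needs to handle. The pigeonhole lemma only guarantees $g_0\notin\stab(\mu_\data)$; nothing prevents $(g_0)_w$ from lying in $H_w\cap K_w[r]$ already, with the failure to stabilize $\mu_\data$ caused entirely by the archimedean (or other finite-place) components of $g_0$ — such elements do occupy positive volume in $\Theta^*[w^r]$. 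In that case $z_0=0$, every adjusted displacement $\Ad(\alpha^{-1})z_0$ vanishes, there is no nonzero invariant subspace to contradict anything with, and no choice of $h$ produces $z^{\rm mov}\neq0$. Your closing assertion ``in particular $z\neq 0$'' is therefore asserted rather than proved.

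The paper closes this case by using the $T$-genericity of the two points \emph{dynamically} rather than structurally: if $z$ were centralized by $\Ad(u(t))$ (which includes $z=0$), then $g$ commutes with $u(t)$ (the components of $g$ away from $w$ commute trivially, since $u(t)$ is supported at $w$), so the $u$-orbit of $x_2=x_1g$ is the right-translate by $g$ of the $u$-orbit of $x_1$. Applying the genericity estimate \eqref{disc} to $f$ at $x_2$ and to $g\cdot f$ at $x_1$ gives $|\mu_\data(f)-\mu_\data^{g}(f)|\leq |t_0|^{-1/\flowscale}(\Sob(f)+\Sob(g\cdot f))$ for all admissible scales, and letting $|t_0|\to\infty$ forces $g\in\stab(\mu_\data)$, contradicting the pigeonhole output. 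This short dynamical step is the missing ingredient. Two further, fixable, issues with your version even when $z_0\neq0$: passing from ``all achievable displacements lie in $V$'' to ``$\Ad(H_w)z_0\subset V$'' requires noting that a polynomial condition holding on a positive-measure subset of $H_w$ holds on its Zariski closure; and one must first observe that an $H_w$-invariant (hence $\theta_w(\SL_2)$-invariant) subspace of $V$ necessarily sits inside $\r_w^{\rm trv}$ before invoking the remark of \S\ref{div} — a remark the paper explicitly flags as not used in its own argument.
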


\proof 
Let us call $x\in\Xc$ a {\em $T$-good} point if the fraction of $h \in K_w[1]\cap H_w$ 
for which $xh$ is $T$-generic exceeds $3/4,$ with respect to the Haar measure on $H_w.$
Note that by the defintion $x h\in\Xc$ for all $h\in K_w$ and $x\in\Xc.$
We apply the {\it generic points lemma} in \S\ref{s;discrepancy} and obtain that 
for $T\geq q_w^\star$ the $\mu_\data$-measure of the set of $T$-generic points exceeds $0.99.$  
Using Fubini's theorem, and our choice of $\Xc$ we conclude that
the measure of the set $E=\{y$ is a $T$-good point$\}$ exceeds $3/4$.

By our assumption on $r$ and~\eqref{eq:vol-tvol-final} we have $\vol_G(\Theta[w^r])\geq 2\widetilde{\vol}(Y)^{-1}.$
Let $\nbhd=\Theta[w^r]$. Applying the lemma in \S\ref{sec:pigeonhole}  
there are $T$-good points $y_1, y_2 \in X$
such that 
\[
\mbox{$y_1 = y_2 g_0\;\;$ where $g_0 \in \Theta^*[w^r] \setminus \stab(\mu)$.}
\]
By the {\it adjustment lemma} in \S\ref{lem:adjustment} and definition of $T$-good points,
there exists $g_1, g_2 \in K_w[1] \cap H_w$ so that $x_i :=y_i g_i $ are $T$-generic, and
so that $g := g_1^{-1} g_0 g_2$
satisfies $g_w= \exp(z)$, where $z \in \mathfrak{r}_w$ {and $\|z\|\leq q_w^{-r}$}.

%

Now let us show that $z$ is not centralized by $u(t)$.
 Suppose to the contrary.
Because $x_2 = x_1 g$ and $x_1, x_2$ are $T$-generic; for any $f\in C_0^\infty(X)$ 
{and any $t_0\in F_w$ with $|t_0|>T$,} we have
\[
|\mu_\data(f)-\mu_\data^{g}(f)|\leq D_J(f)(x_2)+D_J(g\cdot f)(x_1)\leq |t_0|^{-1/m}(\Sob (f) +\Sob (g\cdot f) )
\]
which implies $\mu_\data$ is invariant under $g$. 
But we assumed $g_0 \notin \stab(\mu_\data)$ which also implies $g \notin \stab(\mu_\data)$. \qed

\subsection{Combining generic point and admissible polynomial lemmas}\label{H-principle}
We refer to \S\ref{div} for the definition of admissible polynomials.

\begin{lem}[Polynomial divergence]
There exists 
an admissible polynomial $p: F_w \rightarrow \mathbf{G}(F_w)$ so that:
\begin{equation} \label{au} 
	\left| \mu_\data^{p(t)}(f) - \mu_\data(f) \right|  \leq \vol(Y)^{-\star} \Sob(f), \quad \text{for all }\; t\in\order_w.
\end{equation}	
\end{lem}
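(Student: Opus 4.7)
The plan is to combine the two nearby $T_0$-generic points $x_1$ and $x_2 = x_1 g$ produced in~\S\ref{GP} with the polynomial divergence at the good place $w$. First I would apply Lemma~\ref{divergence} to the element $z \in \mathfrak{r}_w$ with $g_w = \exp(z)$ (and $z^{\rm mov} \neq 0$, guaranteed by part~(3) of the nearby generic points lemma), obtaining an admissible polynomial $p\colon F_w \to \G(F_w)$ and a scale $T \in F_w$ with $|T|_w \gg \|z^{\rm mov}\|^{-\star}$. Because $r$ was chosen so that $2\vol_G(\Theta[w^r])^{-1} \leq \vol(Y)^{\ref{vol-tvol-cons}}$ and $\|z\|_w \leq q_w^{-r}$, this forces $|T|_w \gg \vol(Y)^{\star}$.

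The main computation uses $x_2 u(t) = x_1 u(t) \cdot [u(-t) g u(t)]$. Since $u(t)$ is trivial at every $v \neq w$, the conjugate $u(-t) g u(t)$ agrees with $g$ off $w$ --- write $h_{\neq w}$ for this common non-$w$ part --- whereas at $w$, Lemma~\ref{divergence} yields $u(-t) \exp(z) u(t) = p(-t/T)\, g_{-t}^{(w)}$ with $d(g_{-t}^{(w)}, 1) \leq \|z\|^\star \leq \vol(Y)^{-\star}$ whenever $|t|_w \leq |T|_w$. Fixing a unit $\tau \in \order_w^\times$, consider the ball $J_\tau = \{t \in F_w : |t - \tau T|_w \leq |T|_w^{1-1/\flowscale}\}$, for which ${\rm n}(J_\tau) = |T|_w \geq T_0$. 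On $J_\tau$ both $p(-t/T)\, p(-\tau)^{-1}$ and $g_{-t}^{(w)}$ lie in some $K_w[r']$ with $q_w^{-r'} \leq \vol(Y)^{-\star}$. I compute $|J_\tau|^{-1}\int_{J_\tau} f(x_2 u(t))\,dt$ in two ways: by $T_0$-genericity of $x_2$ it equals $\mu_\data(f)$ up to $\vol(Y)^{-\star}\Sob(f)$; using the factorisation, property~S4 (to replace the slowly-varying terms by their value at $t = \tau T$), and $T_0$-genericity of $x_1$ applied to $(p(-\tau) h_{\neq w})\cdot f$ --- whose Sobolev norm is $\ll \Sob(f)$ by~S3, since $\|p(-\tau)\|$ is bounded for $\tau \in \order_w$ and $h_{\neq w}$ lies in $\prod_{v\in\places_\infty}\Theta_v \times \prod_{v\in\places_f\setminus\{w\}}K_v$ --- it also equals $\mu_\data^{p(-\tau) h_{\neq w}}(f)$ up to $\vol(Y)^{-\star}\Sob(f)$. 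Equating the two estimates gives $|\mu_\data^{p(-\tau) h_{\neq w}}(f) - \mu_\data(f)| \ll \vol(Y)^{-\star}\Sob(f)$.

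The final step removes the spurious factor $h_{\neq w}$. Since $p(-\tau)$ acts at $w$ and $h_{\neq w}$ at the other places, they commute; substituting $f \mapsto h_{\neq w}^{-1}\cdot f$ (whose Sobolev norm is $\ll \Sob(f)$ by~S3) turns the previous bound into $\mu_\data^{p(-\tau)}(f) = \mu_\data^{h_{\neq w}^{-1}}(f) + O(\vol(Y)^{-\star}\Sob(f))$ for every admissible unit $\tau$. Specialising to $\tau = \tau_0$ of the smallest admissible valuation places $p(-\tau_0)$ deep in some $K_w[r'']$ with $q_w^{-r''} \leq \vol(Y)^{-\star}$ (since $p(0) = e$ and $p$ has $\order_w$-integral coefficients of bounded degree), so~S4 alone gives $\mu_\data^{p(-\tau_0)}(f) = \mu_\data(f) + O(\vol(Y)^{-\star}\Sob(f))$; combining pins down $\mu_\data^{h_{\neq w}^{-1}}(f)$ and delivers the required estimate for every admissible $\tau$. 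Values $t \in \order_w$ with $|t|_w$ below the admissible threshold are handled directly by~S4, since $p(t)$ then lies in a very deep congruence subgroup, and a reparametrisation $t \mapsto -t$ (preserving admissibility) yields the statement as written. The main obstacle throughout is the careful treatment of $h_{\neq w}$: it is bounded but not close to the identity, so every estimate hinges on its commutation with the $w$-local elements $p(-\tau)$ and on~S3, which ensures that translation by $h_{\neq w}$ alters the Sobolev norm only by a bounded multiplicative factor.
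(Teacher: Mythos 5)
Your argument is correct and is essentially the paper's own proof: your explicit factor $h_{\neq w}$, cancelled at the end via the substitution $f\mapsto h_{\neq w}^{-1}\cdot f$ and the deep value $p(-\tau_0)$, is exactly the paper's adelic polynomial $\tilde p$ together with its choice of $t_1$ with $|t_1|_w\asymp |T|_w^{-1/2}$, and the remaining ingredients (genericity on balls $J$ of norm $\geq T_0$, S3, S4) are used identically. The only blemish is the wording ``fixing a unit $\tau\in\order_w^\times$'' followed by ``$\tau_0$ of the smallest admissible valuation'': the final $\tau_0$ cannot be a unit, since $p(-\tau_0)$ lands deep in $K_w[r'']$ precisely because $|\tau_0|_w$ is small, so the ball argument should be stated from the outset for all $\tau\in\order_w$ with $|\tau T|_w\geq T_0$.
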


\begin{proof}
	We maximize $r$ in the {\it nearby generic points lemma} of \S\ref{GP}.
	This gives~$p_w^{\ref{vol-theta}} q_w^{(r+1)\dim\G }  \gg \vol(Y)^{\ref{vol-tvol-cons}}$.
	Using~$q_w\ll(\log\vol(Y))^2$ we may simply write~$q_w^r\gg\vol(Y)^\star$.
	
Let $x_1,x_2$ be two $T_0$-generic points given by this lemma, 
in particular, there is $g \in \Theta^*[w^r]$ so that $x_2= x_1g$ where $g_w = \exp(z_0)$,
and $z_0 \in \mathfrak{r}_w $ is not fixed by $\Ad(u(t))$. In the notation of~\S\ref{div}
we have~$z_0^{\rm mov}\neq 0$. 
Then by the {\it admissible polynomials lemma} in \S\ref{div} there exists $T\in F_w$ with 
\[
|T| \gg \|z_0^{\rm mov}\|^{-\star}\geq \|z_0\|^{-\star}\gg \vol(Y)^\star,
\] 
and an admissible  polynomial
$p$ so that:
\be\label{eq:g-t}
 \exp(\mathrm{Ad}({u(t)}) z_0) = p(t/T) g_t
\ee
where $d(g_t, 1) \leq \|z_0\|^\star$ when $|t| \leq |T|$.

Suppose $t_0\in F_w$ with $|t_0|\leq |T|$ and as in \S\ref{s;discrepancy} put 
$J=\{t\in F_w: |t-t_0|_w\leq |t_0|_w^{1-1/m}\}.$ 
Fix some arbitrary  $f\in C_c^\infty(X).$
Then by the {\it generic point lemma} 
in \S\ref{s;discrepancy} and assuming $|t_0|_w\geq T_0$ we have
\be\label{eq:Birkhoff}
\left| \frac{1}{|J|}
\int_{t \in J} f(x_i u(-t)) \operatorname{d}\!t - \int f
\operatorname{d}\!\mu_\data\right| \leq |t_0|^{-1/\flowscale} \Sob (f),\quad i=1,2.
\ee

Let $\tilde p: F_w\to {\bf G}(\adele),$ be a polynomial given by 
$\tilde p(t/T)_w=p(t/T)$ with $p$ as above
and\footnote{The element $g$ above need not have ``small'' $v$ components for $v\neq w.$}  
$\tilde p(t)_v=g_v$ for all $v\neq w.$
Using property S4. and~\eqref{eq:g-t} this polynomial satisfies 
\begin{align}
\label{eq:poly}f(x_2u(-t))&=f(x_1u(-t)\tilde p(t/T))+O(\|z_0\|^\star \Sob(f))\\
\notag&=f(x_1u(-t)\tilde p(t_0/T))+O({|T|}^{-\star}\Sob(f))+O(\|z_0\|^\star \Sob(f))
\end{align}
for~$|t_0|\geq |T|^{1/2}$ and~$t\in J$ (defined by~$t_0$),
where we used S4.\ and the definition of $J$ in the last step.

All together we get
\be\label{eq:tp}
\left| \mu_\data^{\tilde p(t/T)}(f) - \mu_\data(f) \right|  \leq \vol(Y)^{-\star} \Sob(f), \quad \text{for }\; |T|^{1/2}\leq |t|_w\leq |T|.
\ee
Indeed this follows from~\eqref{eq:Birkhoff} and~\eqref{eq:poly}.

Now choose~$t_1\in\order_w$ with~$|t_1|\in [|T|^{-1/2},q_w|T|^{-1/2}],$
this implies that~\eqref{eq:tp} holds for~$t=t_1T$. Also note that with
this choice~$p(t_1)\in K_w[\kappa\log_{q_w}(\vol(Y))]$
for some constant~$\kappa>0$ (that only depends on the parameters
appearing in the definition of an admissible polynomial). The latter implies that~\eqref{eq:tp}
holds for~$p(t_1)$ instead of~$\tilde p(t/T)$ trivially as a consequence of S4. 
Since $p(t/T)=\tilde p(t/T)\tilde p(0)^{-1}=\tilde p(t/T)\tilde p(t_1)^{-1}p(t_1),$ we get~\eqref{au} from~\eqref{eq:tp} in view of property S3 --
{ note that, if $|t|\leq |T|^{-1/2}$,~\eqref{eq:tp}
holds for~$p(t/T)$ instead of~$\tilde p(t/T)$ trivially as a consequence of S4.}  
\end{proof}

\subsection{Proof of Theorem~\ref{adelic}}\label{ai}
{For simplicity in notation we write $\mu$ for $\mu_\data$.}
Let $p:F_w\to\G(F_w)$ be the admissible polynomial given by the polynomial divergence lemma in \S\ref{H-principle}.
Let $\Av$ be the operation of averaging
over $K_w[\levelbound],$ where $\levelbound$ is given by Proposition \ref{effgen}.  
Then, it follows from that proposition and property S3.\ that
\[
\left| \mu(f) - \mu(\Av*f) \right| \ll q_w^{\star}(\vol Y)^{-\star} \Sob(f)\ll \vol(Y)^{-\star}\Sob(f),
\]
for all $f \in C^{\infty}_c(X)$. Note that in Proposition~\ref{effgen} any $g\in K_w[L]$ is written 
as a bounded product of two types of elements. The first type of elements belong to $\{h\in H_w: \|h\|\leq q_w^L\},$
preserve $\mu,$ and distort the Sobolev norm by a power of $q_w.$
The second type of elements are powers of the values of the admissible polynomial
at $\order_w,$ preserve the Sobolev norm, and almost preserve the measure.

Let $t \in F_w$. Denote by $\delta_{u(t)}$
the delta-mass at $u(t)$, and let $\star$ denote convolution of measures.
Using the fact that $\mu$ is $u(t)$-invariant the above gives
\begin{equation}\label{AI}
|\mu(f)-\mu(\Av \star \delta_{u(t)} \star \Av* f)|\leq
\vol(Y)^{-\star} (\Sob(\delta_{u(t)} \star \Av* f) +
\Sob(f)).
\ee
Recall that we are assuming $\H$ is simply connected, 
thus $\iota(\H(\adele))\subset\G(\adele)^+$ and in particular 
$\Gpp f$ is $H_w$-invariant. Also since $K_w[L]\subset \G(\adele)^+$, 
the support of $\Av \star \delta_{u(t)} \star \Av$ is contained in $\G(\adele)^+.$ 
These observations together with~\eqref{AI} imply 
\begin{multline*}
	|\mu(f-\Gpp f)| \ll \\
	|\mu(\Av \star \delta_{u(t)} \star \Av* (f-\Gpp f))| +  \vol(Y)^{-\star} (\Sob(\delta_{u(t)} \star \Av* f) +
\Sob(f)). 
\end{multline*}

But $\Av$ reduces Sobolev norms, and by property S3.\ the application of $u(t)$ multiplies them by at most $(1+|t|_w)^{4Nd}$. Therefore
$$|\mu(f-\Gpp f)| \ll \int_X|\mathbb{T}_t (f-\Gpp f) (x)|\operatorname{d}\!\mu(x) + \vol(Y)^{-\star} (1+|t|_w)^{4dN } \Sob(f),$$
where we write $\mathbb{T}_t$ for the ``Hecke operator" $\Av \star \delta_{u(t)} \star \Av$.

By property S5.\ we have for any $x\in X$
\[
|\mathbb{T}_t(f-\Gpp f) (x)| \ll q_w^{(d+2)\levelbound}\height(x)^{d}\|\mathbb{T}_t\|_{2,0}\Sob( f);
\] 
moreover, by ~\eqref{eq:T-L2-bound} we have 
$\|\mathbb{T}_t\|_{2,0}\ll |t|_w^{-1/2\temp} q_w^{2 d \levelbound}$. 

Let $R>0$ be a (large) parameter; writing $\int_X|\mathbb{T}_t f (x)|\operatorname{d}\!\mu(x)$ as
integrals over $\Siegel( R)$ and $X\setminus\Siegel( R),$
in view of the lemma in \S\ref{s;nondivergence}, we get
\begin{align*}
&|\mu(f-\Gpp f)| \ll\\  
&\left( |t|_w^{-1/2\temp} q_w^{(3d+2)\levelbound}R^{d} + p_w^{\ref{non-div-power-p}}R^{-\ref{non-div-cons}}+ 
\vol(Y)^{-\star}(1+ |t|_w)^{4dN } \right) \Sob(f).
\end{align*}
Optimizing $|t|_w$ and $R$, using the fact that $q_w\ll(\log\vol(Y))^2$, 
we get the theorem. We note that the power of $\vol(Y)$ depends only on 
the parameter $\temp$ from S6., $\dim_F\G$ and $[F:\Q].$ 
\qed

\subsection{Beyond the simply connected case}\label{s;not-sc}
The proof of the main theorem above assumed that $\H$
is simply connected.
In this section, using the discussion in the simply connected case, 
we will relax this assumption.
It is worth mentioning that for most applications the theorem
in the simply connected case already suffices.

Let $\tH$ denote the simply connected covering of $\H,$ and 
let $\pi:\tH\to\H$ denote the covering map.  
We define~$H'={\H(F)\pi(\tH(\adele))}$, which is closed since it corresponds
to a finite volume orbit of~$\pi(\tH(\adele))$ in~$\H(F)\backslash \H(\adele)$. 
By the properties of the simply connected cover $H'$ is a normal
subgroup of $\H(\adele)$ and $\H(\adele)/H'$ is abelian, see e.g.~\cite[p. 451]{PR}.
As $\H(F)\backslash\H(\adele)$ has finite volume the same applies to $\H(\adele)/H'$
which implies this quotient is compact. 
Let $\nu$ be the probability Haar measure on this compact abelian group. 

Suppose the data $\data$ is fixed as in the introduction, dropping the assumption that $\H$ is simply connected
and let $Y=Y_{\data}$ be as before. We also define the MASH set and measure
\[
\bigl(\widetilde{Y},\tilde{\mu}\bigr)=
\left(\iota\bigl(\H(F)\backslash\H(F)\pi(\tH(\adele))\bigr)g,\tilde{\mu}\right),
\]
which is defined by the simply connected group~$\tH$, the homomorphism~$\iota\circ\pi$,
and the same element~$g\in\G(\adele)$ as for~$Y$. 

Then, we have $\mu=\int_{\H(\adele)/H'}{\tilde\mu}^h d\nu(h)$ 
where ${\tilde\mu}^h$ is the probability Haar measure on
$\iota\left(\H(F)\backslash\H(F)\pi(\tH(\adele))h\right)g.$
 
Moreover, in view of our 
definition of volume and the fact that $\H(\adele)/H'$ is abelian
we have
$\vol(\widetilde{Y})=\vol(\widetilde{Y}g^{-1}\iota(h)g)$
(as those orbits have the same stabilizer group)
Applying Theorem~\ref{adelic} we get 
\[
|{\tilde{\mu}}^h(f)-{\tilde{\mu}}^h(\pi^+(f))|\leq 
\vol(\widetilde{Y})^{-\kappa_0}\Sob(f)\quad
\text{for all $h\in\H(\adele)/H'.$}
\]
All together we thus have
$|\int_X (f-\pi^+(f)) d\mu|\leq \vol(\widetilde{Y})^{-\kappa_0}\Sob(f).$

It seems likely that the argument in \S\ref{ss:Borel-Prasad} could be used to show that
$\vol(\widetilde{Y})\asymp\vol(Y)^\star.$ We will not pursue  this here.

\subsection{Proof of Corollary~\ref{non-divergence-adelic}}\label{sec:proof-adel-nondiv}

We will first consider MASH measures for which the algebraic group~$\H$ is simply connected.

Let~$\epsilon>0$ be arbitrary. Choose some compact~$Z\subset X$
 with~$\mu_{x\G(\adele)^+}(Z)>1-\frac\epsilon2$
for every~$x\in X$. Now choose some~$f_\epsilon\in C_c^\infty (X)$
with~$1_{Z}\leq f_\epsilon\leq 1$. Applying Theorem~\ref{adelic} to~$f_\epsilon$ and any
MASH measure~$\mu_\data$ with~$\data=(\H,\iota,g)$ and~$\H$ simply connected we 
find some~$\constc\label{constcfornondivcor}$ with
\[
  \int f_\epsilon\operatorname{d}\!\mu_\data>
\int_X f_\epsilon\operatorname{d}\!\operatorname{vol}_G
-\ref{constcfornondivcor}\Sob(f_\epsilon)\vol(Y)^{-\finalexponent}.
\]
In particular, there exists some~$\constc\label{const2fornondivcor}=\ref{const2fornondivcor}(\epsilon)$
such that if~$\vol(Y)>\ref{const2fornondivcor}$ 
then
\[
\mu_\data(\supp(f_\epsilon))\geq \int f_\epsilon\operatorname{d}\!\mu_\data>1-\epsilon.
\]

In the case where~$\vol(Y)\leq \ref{const2fornondivcor}$
we first find a good place~$w$ as in \S~\ref{ss;good-place}
with~$q_w\ll_\epsilon 1$ and then apply Lemma~\ref{l;non-div} to find another compact set~$Z'$
with~$\mu_\data(Z')>1-\epsilon$.  The set~$X_\epsilon=\supp(f_\epsilon)\cup Z'$
now satisfies the corollary for all MASH measures with~$\H$ simply connected.

If~$\mu_\data$ is a MASH measure and~$\H$ is not simply connected, then we can 
repeat the argument from the previous subsection to obtain~$\mu_\data(X_\epsilon)>1-\epsilon$ also.

\appendix
\section{Adelic Sobolev norms}\label{sec:appendixA}
We begin by defining, for each finite place $v$, a certain system of projections $\operatorname{pr}_v[m]$ of any unitary $\G(F_v)$-representation; 
these have the property that $\sum_{m\geq 0}\operatorname{pr}_v[m]=1$.
The definitions in the archimedean place likely 
can be handled in a similar fashion using spectral theory applied 
to a certain unbounded self adjoint differential operator (e.g.\ by splitting the spectrum into intervals $[e^m,e^{m+1})$). However, we will work instead more directly with differential operators in the definition of the norm.
 
\subsection{Finite places}\label{s-finite-place}

Let $v$ be a finite place.
Let $\operatorname{Av}_v[m]$ be the averaging projection on $K_v[m]$-invariant vectors, 
put $\operatorname{pr}_v[0]=\operatorname{Av}_v[0]$ and $\operatorname{pr}_v[m] = \operatorname{Av}_v[m] - \operatorname{Av}_v[m-1]$ for $m\geq1$.

We note that, if $\mu$ is any spherical (=$K_v[0]$-bi-invariant)
probability measure on $\G(F_v)$, then convolution with $\mu$
commutes with $\operatorname{pr}_v[m]$ for all $m$.  Indeed, the composition (in either direction)
of $\mu$ with $\operatorname{pr}_v[m]$ is {\em zero} for $m \geq 1$, and equals $\mu$ for $m=0$.

\subsection{Adelization}
We denote by $\underline{m}$ any function on the set of finite places of $F$
to non-negative integers,
which is zero for almost all $v$.
Write $\|\underline{m}\| = \prod_{v} q_v^{m_v}$. Note that
\begin{equation} \label{poly}
\|\underline{m}\| \geq 1\quad\text{and}\quad
 \# \{\underline{m}: \|\underline{m}\| \leq N \}= O_\epsilon(N^{1+\epsilon}),
 \end{equation}
which follows since $\ell^\epsilon$ bounds the number 
of ways in which $\ell$ can be written as a product of $[F:\Q]$ factors.

For such $\underline{m}$, we set
$K[\underline{m}] := \prod_{v\in\places_f} K_v[m_v]$, and
$\operatorname{pr}[\underline{m}] := \prod_{v} \operatorname{pr}_v[m_v]$.
Then $\operatorname{pr}[\underline{m}]$ acts on any unitary $\G(\adele)$-representation, 
and $\sum_{\underline{m}} \operatorname{pr}[\underline{m}] = 1.$
We remark that if $f\in C^\infty(X),$ then $\sum_{\m}\operatorname{pr}[\m]f=f$ and the left
hand side is actually a finite sum. We may refer to this as the decomposition
of~$f$ into pure level components.

If we fix a Haar measure on $\G(\adele_f)$, then
\be\label{eq;volume-Km}
\vol(K[\underline{m}])^{-1} \ll \|\underline{m}\|^{1+\dim(\G)},
\ee
where the implicit constant depends on $\G, \rho$ (cf. \S\ref{notations})
and the choice of Haar measure.
Here one uses a local calculation in order to control
$[K_v[0]:K_v[m_v]]$ for a finite place $v$, see e.g.~\cite[Lemma 3.5]{Nori}.


\subsection{Definition of the Sobolev norms}
For any archimedean place $v$ 
we fix a basis $\{X_{v,i}\}$
for $\mathfrak g_v=\mathfrak g\otimes_F F_v.$
Let $V = L^2(X)$, where, as in the text, $X = \G(F) \backslash \G(\adele)$.
Given an integer $d\geq0$ we define a degree~$d$ Sobolev norm by
\begin{equation} \label{Sobnormdef} 
\Sob_d(f)^2 := \sum_{\m} \left(
 \|\m\|^d\sum_{\mathcal{D}}\|\operatorname{pr}[\m](1+\height(x))^{d}{\mathcal D} f(x)\|_2^2\right).
\end{equation}
where the inner sum is over all monomials 
$\mathcal{D}=\prod_{v\in\places_\infty}\mathcal D_v$
with $\mathcal D_v\in U(\gfrak_v)$  
of degree at most $d_v$ in the given basis $\{X_{v,i}\}$
and $\deg\mathcal D=\sum d_v\leq d.$
For a compactly supported smooth function on $X$ any of these Sobolev norms is finite.
It is easy to see that
\be\label{eq:Sob-inc}
\text{$\Sob_d(f)\leq\Sob_{d'}(f)$ if $d<d'.$}
\ee

Note that since $\height(\cdot)$ is $K_f=K[0]$-invariant, we see that
$\operatorname{pr}[\underline m]$ commutes with multiplication by~$(1+\height(x))$ and with 
the differential operators $\mathcal D_v.$

We note that the contribution of the finite places to the above is related 
to the ``level" of $f,$ since for a finite place $v$ a function of the form
$\operatorname{pr}_v[m_v]f$ should be thought of as having pure level $m_v$ at $v.$
  
Also note that, if $X$ is compact, 
then $\height(\cdot)$ is uniformly bounded and may be dropped from the definition.

\subsection{Property S1.\ -- { Upper bound for} $L^{\infty}$-norms}\label{s;S1}
We shall now verify property S1.\ of the Sobolev norms.
Let us recall from~\eqref{e;injective-Xc} that the map $g\mapsto xg$ is an injection for all $g=(g_\infty,g_f)$
with $g_\infty\in G_\infty=\G(F_{\places_\infty})$ with {$d(g_\infty,1)\leq c_1\height(x)^{-\ref{inj-radius}}$ and $g_f\in K_f$.} 

Let $f$ belong to the completion of $C_c^\infty(X)$ with 
respect to $\Sob_d.$ Suppose first that $f$ is invariant under $K[\underline{m}]$ for some $\m.$
For any $x\in X$ define the function $g\mapsto f(xg)$ on 
\[
\Omega_\infty(x)=\{g\in G_\infty:d(g,1)\leq c_1\height(x)^{-\ref{inj-radius}}\}.
\]
Then by the usual Sobolev inequality, see e.g.~\cite[Lemma 5.1.1]{EMV}, 
there is some integer $d_0>[F:\Q]\dim\G$ so that we have 
\[
|f(x)|^2\ll \sum_{\mathcal D}\tfrac{1}{\vol (\Omega_{\infty}(x))}\int_{\Omega_{\infty}(x)} | \mathcal D f|^2. 
\]
where the sum is taken over all $\mathcal D$ of degree at most $d_0.$

Let {$d\geq 1+{\ref{inj-radius}}d_0,$} if we integrate the above over $K[\underline{m}],$ then in view of the fact that $f$ is invariant under $K[\underline{m}]$ we get 
from~\eqref{eq;volume-Km} and the estimate $\vol(\Omega_\infty(x))^{-1}\ll\height(x)^{{\ref{inj-radius}}[F:\Q]\dim\G}$ that
\begin{align} \label{linftyest} 
|f(x)|^2&\ll \vol(K[\underline{m}])^{-1} \vol(\Omega_\infty(x))^{-1}
\sum_{\mathcal D}\int_{\Omega_\infty(x)\times K[\underline{m}]}| \mathcal D  f|^2\\
\notag&\ll\|\underline{m}\|^{d}\sum_{\mathcal D}\int_{\Omega_\infty(x)\times K[\underline{m}]}
|(1+\height(x))^{d}\mathcal{D}  f|^2\\
\notag&\ll\|\underline{m}\|^{d}\sum_{\mathcal D}
\|(1+\height(x))^{d}\mathcal{D}  f\|_2^2
\end{align}
where again the sum is over all $\mathcal D$ of degree at most $d.$


Let us now drop the assumption
that~$f$ is invariant under a fixed compact subgroup of~$K_f$. 
In this case we may decompose~$f$ into a converging sum $f = \sum_{\m} \operatorname{pr}[\m] f,$ and obtain
\begin{multline}
\label{eq:Cauchy-Schwarz} |f(x)|^2 = |\sum_{\m} \operatorname{pr}[\m] f(x)  |^2
\leq  \sum_{\m} \|\m\|^{-2} \sum_{\m} \|\m\|^{2} |\operatorname{pr}[\m] f(x)|^2
\\ \ll\sum_{\m} \|\m\|^{-2} \sum_{\m, \mathcal D} \|\m\|^{d+2}
\|\operatorname{pr}[\m](1+\height(x))^{d}\mathcal{D}  f\|_2^2
\ll \Sob_{d+2}(f)^2,
\end{multline}
where we used Cauchy-Schwarz, the above, the definition in~\eqref{Sobnormdef} and the estimate
\[
\sum_{\m}\|\m\|^{-2}=\sum_{k\geq 1}\sum_{\m:\|\m\|=k}k^{-2}\ll_\epsilon \sum_k{k^{-2+\epsilon}}<\infty.
\]

\subsection{Property S2.\ -- Trace estimates}
Let $r\geq 0,$ let $\mathcal D_0$ be a monomial of degree at most $r$, and
let $\underline{m}$ be arbitrary.  Furthermore, let $f\in C_c^\infty(X),$ 
and apply \eqref{linftyest} to the function~$\mathcal D_0 \operatorname{pr}[\underline{m}]f,$
multiplying the inequality by $\|\m\|^r(1+\height(x))^r$ we get
\begin{multline*}
\|\underline{m}\|^r|(1+\height(x))^{r}\mathcal D_0\operatorname{pr}[\underline{m}]f(x)|^2\leq \\
c\|\underline{m}\|^{d+r}\sum_{\mathcal D}\int_{\Omega_\infty(x)\times K[\underline{m}]}
|(1+\height(x))^{d+r}\mathcal{D}\operatorname{pr}[\m]  f|^2
\end{multline*}
where the sum is over $\mathcal D$ of degree at most $d+r$ and $d\geq {\ref{inj-radius}}d_0$
is as above.
Moreover, this also gives
\[
\|\underline{m}\|^r|(1+\height(x))^{r}\mathcal D_0\operatorname{pr}[\underline{m}]f(x)|^2\leq
c\|\m\|^{-s}\Sob_{d+r+s}(f),
\]
for all $d$ as above and $s\geq0.$

For $x\in X$ put 
$L_{x,\underline{m}}(f)=\|\m\|^r(1+\height(x))^{r}\mathcal D_0\operatorname{pr}[\underline{m}]f(x).$  
Then the above implies
\[
{\rm Tr}(|L_{x,\m}|^2|\Sob_{d'}^2)\leq c\|\m\|^{-s}\;\;\mbox{ for all $d'\geq d+r+s$ and any $s\geq0$},
\]
see~\cite{BerRez} and~\cite{EMV} for a discussion of relative traces.
 
Integrating over $x\in X,$ using~\eqref{poly} to sum over $\m,$ and summing over $\mathcal D_0$ with $\deg \mathcal D_0\leq r$ we get ${\rm Tr}(\Sob^2_r|\Sob_{d'}^2)\ll1,$ again for all $d'\geq d+r+s$ and $s\geq 2.$

Let us now use the notation of S2: given $d_0$ the above shows that there exists
$d'>d_0$ and $d>d'$ with ${\rm Tr}(\Sob^2_{d_0}|\Sob_{d'}^2)<\infty$ and ${\rm Tr}(\Sob^2_{d'}|\Sob_{d}^2)<\infty.$
To find an orthonormal basis with respect to $\Sob_{d'}$ which is 
orthogonal with respect to $\Sob_d$ as in S2. one may argue as follows.
Recall that $\Sob_{d'}( f )\leq\Sob_d( f)$ and therefore, by Riesz representation theorem, there exists some positive definite operator ${\rm Op}_{d',d}$ so that
\[
\langle f_1,f_2\rangle_{\Sob_{d'}}=\langle{\rm Op}_{d',d}f_1,f_2\rangle_{\Sob_d}\quad\text{for $f_1,f_2\in C_c^\infty(X).$}
\]
This operator satisfies ${\rm Tr}({\rm Op}_{d',d})={\rm Tr}(\Sob_{d'}^2|\Sob_{d}^2)$
and so it is compact.
Now choose an orthonormal basis with respect to $\Sob_d$
consisting of eigenvectors for ${\rm Op}_{d',d}.$
Therefore, this basis is still orthogonal with respect to $\Sob_{d'},$ and S2. follows
from the definition of the relative trace.
 


\subsection{Property S3.\ -- Bounding the distortion by $g\in \G(F_v)$}
Let~$v\in \G(F_v)$ for some~$v\in\places_f$. 
Note that~$g$ commutes with any differential operator~$\mathcal D$ used above
as well as with the averaging and projection operators~$\operatorname{Av}_{v'}[\cdot]$ 
and~$\operatorname{pr}_{v'}[\cdot]$ for~$v'\in\Sigma_f\setminus\{v\}$.

So if~$g\in K_v$ (or more generally~$g\in K_f$) then~$gK_v[m_v]g^{-1}=K_v[m_v]$ for all~$m_v\geq 0$.
This implies that the action of~$g$ commutes also with the decomposition
of~$f\in C^\infty_c(X)$ into pure level components at~$v$, and so~$\Sob_d(g\cdot f)=\Sob(f)$ 
by~\eqref{Htconst} and~\eqref{Sobnormdef}.
 
Let now $g\not\in K_v$ and~$f\in C_c^\infty(X)$. 
This also implies
\[
gK_v[2\log_{q_v}\|g\|+m]g^{-1}\subseteq K_v[m].
\]
Using this, that~$\operatorname{pr}_v[\ell]f$ is invariant under~$K_v[\ell]$ for~$\ell\geq 0$, and
that~$\operatorname{Av}_v[\ell-1](\operatorname{pr}_v[\ell]f)=0$ for~$\ell\geq 1$, we get for all~$m,\ell\geq 0$
 that
\[
\operatorname{pr}_v[m](g\cdot(\operatorname{pr}_v[{\ell}]f))=0
\mbox{ unless }|m-\ell|\leq 2\log_{q_v}\|g\|.
\]  
Applying this and defining~$R=2\log_{q_v}\|g\|$ we get
\begin{align*}
 \|(1+\height)^d\operatorname{pr}_v[m](g\cdot f)\|_2\hspace{-2cm}&\\
&=\Bigl\|\operatorname{pr}_v[m](1+\height)^d\Bigl(g\cdot \sum_{|\ell-m|\leq R}\operatorname{pr}_v[\ell] f\Bigr)\Bigr\|_2\\
&\leq \sum_{|\ell-m|\leq R}\|(1+g^{-1}\cdot\height)^d  \operatorname{pr}_v[\ell] f \|_2\\
&\ll (2R+1)\|g\|^{2d}\max_{|\ell-m|\leq R}\|(1+\height)^d  \operatorname{pr}_v[\ell] f \|_2,
\end{align*}
where we also used~\eqref{HtCont}. Fixing~$f\in C_c^\infty(X)$ 
we now apply this
 for the functions~$\prod_{v'\in\Sigma_f\setminus\{v\}}\operatorname{pr}_{v'}[m_{v'}]\mathcal D f$
and sum over all~$\underline{m}$ and~$\mathcal D$ to get
\begin{multline*}
\Sob_d(g\cdot f)^2
\ll\\ (2R+1)^2\|g\|^{4d}\sum_{\underline{m},\mathcal D}\|\underline{m}\|^d
\sum_{|\ell-m_v|
\leq R}\Bigl\|(1+\height)^d  \operatorname{pr}_v[\ell] 
{\prod_{v'\in\Sigma_f\setminus\{v\}}}\operatorname{pr}_{v'}[m_{v'}]\mathcal Df \Bigr\|_2^2\\
\leq (2R+1)^3\|g\|^{6d}
\Sob_d(f)^2\ll \|g\|^{8d}\Sob_d(f)^2,
\end{multline*}
which gives S3.

For~$v\in\places_\infty$ the argument consists of expressing the element~$\operatorname{Ad}_g(\mathcal D)$
in terms of the basis elements considered in the definition of~$\Sob_d(\cdot)$, and bounding the change of
the height as above.

Let now~$u(\cdot)$ be the unipotent subgroup as in Property S3. In view of the 
definition of $K_w$ we get $\rho\circ\theta_w(\SL_2(\order_w))\subset \SL_N(\order_w).$  
Therefore, $\|u(t)\|\leq |t|_w^{N}$ as was claimed.

\subsection{Property S4. -- Estimating the Lipshitz constant at~$w$}\label{s:new-prop}
Let $f$ belong to the completion of $C_c^\infty(X)$ with 
respect to $\Sob_d.$ First note that if $f$ is invariant under $K[\underline{m}]$ for some $\m,$ then $g\cdot f$ is also invariant under $K[\underline{m}]$ for all $g\in K_w.$
Therefore, $\operatorname{pr}[\m] g\cdot f=g\cdot \operatorname{pr}[\m] f$ for all $g\in K_w.$
Also note that if $g\in K_w[r]$ and $f$ is $K[\m]$ invariant with $m_w\leq r,$ then $g\cdot f=f.$
 

Let now $g\in K_w[r]$ and let $f$ be in the completion of $C_c^\infty(X)$ with 
respect to $\Sob_d.$ Therefore, as in~\eqref{eq:Cauchy-Schwarz} we can use~\eqref{linftyest}  and get
\begin{align*}
|(g\cdot f-f)(x)|^2 &= \left|\sum_{\m} \operatorname{pr}[\m] (g\cdot f-f) (x) \right|^2=\left|\sum_{\m} (g\cdot\operatorname{pr}[\m]f-\operatorname{pr}[\m]f) (x)\right|^2\\
&=\left|\sum_{\m:m_w>r} (g\cdot\operatorname{pr}[\m]f-\operatorname{pr}[\m]f) (x)\right|^2\\
&\leq  \sum_{\m:m_w>r} \|\m\|^{-2} \sum_{\m:m_w>r} \|\m\|^{2} |(g\cdot\operatorname{pr}[\m]f-\operatorname{pr}[\m]f) (x)|^2\\
& \leq  \sum_{\m:m_w>r} \|\m\|^{-2} \sum_{\m} \|\m\|^{2} |\operatorname{pr}[\m](g\cdot f-f) (x)|^2\\
& \ll\sum_{m_w>r} \|\m\|^{-2} \sum_{\m, \mathcal D} \|\m\|^{d+2}
\|\operatorname{pr}[\m](1+\height(x))^{d}\mathcal{D} (g\cdot f-f)\|_2^2\\
&\ll q_w^{-2r} \Sob_{d+2}(g\cdot f-f)^2\ll q_w^{-2r}\Sob_{d+2}(f)^2
\end{align*}
where in the last inequality we used property S3.

\subsection{Property S6. -- Bounds for matrix coefficients}\label{s;S5}
Recall that at the good place~$w$ 
there exists a non-trivial homomorphism $\theta: \SL_2(F_w) \rightarrow H_w\subset \G(F_w)$
such that  $K_{\SL_2}=\theta(\SL_2(\order_w))\subset K_f.$
We also write
$u(t) =\theta\left(\left(\begin{array}{cc} 1 & t \\ 0 & 1 \end{array}\right)\right).$ 

Let $\nu$ be a MASH measure on $X$ which is invariant and ergodic by 
$\theta(\SL_2(F_w)).$ Recall from~\S\ref{ss:spectral-input}  that the
$\SL_2(F_w)$-representation
on
\[
L^2_0(X,\nu)=\Bigl\{f\in L^2(X,\nu):\int fd\nu=0\Bigr\}
\]
is ${1}/{\temp}$-tempered.

Let $f_1, f_2 \in C_c^{\infty}(X)$. Consider 
$I :=  \langle  u(t) f_1, f_2 \rangle_{L^2(\nu)} -\int f_1d\nu \int \bar f_2 d\nu$.
By  \cite{CHH} (see also~\S\ref{eff-decay-sec}) $|I|$ can be bounded above by
\begin{multline} \label{mcunipotent} 
(1+|t|_w)^{-1/2\temp}
\dim (K_{\SL_2}\!\!\cdot\!  f_1)^{1/2} \dim (K_{\SL_2}\!\!\cdot\!  f_2)^{1/2} \|f_1\|_{L^2(\nu)}
\|f_2\|_{L^2(\nu)}.
\end{multline}
Suppose that $f_1$ is fixed by $K[\underline{m}],$
the dimension of $K_{\SL_2}\!\!\cdot\! f_1$ is bounded above
by the number of $K[\underline{m}] \cap K_{\SL_2}$-cosets
in $K_{\SL_2}$, which in turn is bounded by 
\be\label{eq;levelbound-matcoef}
\bigl[K_w[0]: K_w[m_w]\bigr] \ll q_w^{m_w\dim\G}.
\ee

Decomposing $f_1 := \sum \operatorname{pr}_v[\underline{m}] f_1$ and similarly for $f_2$, we see that
 in general:
\begin{align*}
	|I| &\ll (1+|t|_w)^{-1/2\temp} \prod_{i\in\{1,2\}} \left(  \sum_{\underline{m}} \|\m\|^{\frac12\dim\G}  \|\operatorname{pr}[\underline{m}]  f_i \|_{L^2(\nu)}\right)\\
	\end{align*}
We now apply Cauchy-Schwarz inequality and~\eqref{linftyest} to the expression in the parenthesis to get
\begin{align*}
	&\left(\sum_{\underline{m}} \|\m\|^{\frac12\dim\G}  \|\operatorname{pr}[\underline{m}]  f_i \|_{L^2(\nu)}\right)^2\\
	&\ll   \sum_{\underline{m}} \|\m\|^{\dim\G+2}  \|\operatorname{pr}[\underline{m}]  f_i \|^2_{L^2(\nu)}\\
	&\ll\sum_{\m}\|\underline{m}\|^{\dim\G+2+d}\sum_{\mathcal D}
\|(1+\height(x))^{d}\mathcal{D} (\operatorname{pr}[\underline{m}]  f_i)\|_2^2\ll \Sob_{\dim\G+2+d}(f_i).
\end{align*}
This gives property S6.

\subsection{Property S5.\ -- The operator $\mathbb{T}_t$ and Sobolev norms}\label{ss;S3}
We will use the same notation as above.
Recall that we defined the operator $\mathbb{T}_t$ to be $\Av \star \delta_{u(t)} \star \Av$
where $\Av$ is the operation of averaging
over $K_w[\levelbound],$ with $\levelbound>0$ as in Proposition \ref{effgen}.

Here we will modify the argument in the proof of property S1.\ to get the desired property.
Let us note again that $\height(\cdot)$ is invariant under $K[0].$

Let $d\geq\ref{inj-radius}d_0$
and let $f$ be an arbitrary smooth compactly supported
function. Then
\be\label{e;const-der0}
\mathcal D \operatorname{pr}[\m]\mathbb{T}_t\Gpp f=\operatorname{pr}[\m]\mathcal D\mathbb{T}_t\Gpp f=0,\quad
\text{whenever $\deg\mathcal D\geq 1;$}
\ee
Indeed $\Gpp(f)$ is invariant under $\G(\adele)^+$
and the latter contains $\exp(\mathfrak g_v)$ for all $v\in\places_\infty.$
We also note that $\mathbb{T}_t\pi^+=\pi^+$ because $\G(\adele)^+$
contains $\{u(t)\}$ and $\levelbound\geq1$ satisfies $K_w[\levelbound]\subset\G(\adele)^+.$
Given $\m$ let us put $\operatorname{pr}^{(w)}[\m]=\prod_{v\neq w}\operatorname{pr}[m_v].$ Note that
\be\label{eq;Tt-commutes}
\mbox{$\mathcal D\mathbb{T}_t=\mathbb{T}_t\mathcal D\quad$ and 
$\quad \operatorname{pr}^{(w)}[\m]\mathbb{T}_t=\mathbb{T}_t\operatorname{pr}^{(w)}[\m].$}
\ee

For any $\m$ put $\Phi_{\m}(x)=\operatorname{pr}[\m]\mathbb{T}_t(f-\Gpp f )(x),$
note that $\Phi_{\m}$ is $K[\m]$ invariant.
Arguing as in~\eqref{linftyest} for the function $\Phi_{\m}(\cdot)$ 
and using~\eqref{eq;Tt-commutes} we get
\begin{align}
\label{e;Tt-bound1}|\Phi_{\m}(x)|^2
&\ll\vol(K[\underline{m}])^{-1} \vol(\Omega_\infty(x))^{-1}
\sum_{\mathcal D}\int_{\Omega_\infty(x)\times K[\underline{m}]}| \mathcal D  \Phi_{\m}|^2\\
\notag&\ll\|\m\|^d\height(x)^{d}
\sum_{\mathcal D}\|\operatorname{pr}[\m]\mathbb{T}_t(\mathcal{D}(f-\Gpp f))\|_2^2\\
\notag&\ll\|\m\|^d\height(x)^{d}
\|\mathbb{T}_t\|_{2,0}^2\sum_{\mathcal D}\|\operatorname{pr}^{(w)}[\m]\mathcal D f\|^2_2.
\end{align}
where in the last step we used the fact that both $\operatorname{pr}_w[m_w]$ and $\pi^+$
are projections, together with~\eqref{e;const-der0} and~\eqref{eq;Tt-commutes}.

Since $\mathbb{T}_t=\Av\star\delta_{u(t)}\star \Av,$ 
we have: $\operatorname{pr}[\m]\mathbb{T}_t=0$ for all $m_w>\levelbound.$
Also recall that $\sum_{\m}\operatorname{pr}[\m]=1.$ Therefore, 
\[
\mathbb{T}_t(f-\Gpp f )(x)=\sum_{\m: m_w\leq\levelbound}\underbrace{\operatorname{pr}[\m]\mathbb{T}_t(f-\Gpp f )(x)}_{\Phi_{\m}(x)}.
\]

Arguing as in the last paragraph in \S\ref{s;S1}, using~\eqref{e;Tt-bound1} and the above
identity we get
\begin{align*}
|\mathbb{T}_t(f&-\Gpp f )(x)|^2\ll\height(x)^{d}\|\mathbb{T}_t\|_{2,0}^2
\sum_{\substack{\m:m_w\leq\levelbound\\\mathcal D}}\|\m\|^{d+2}\|\operatorname{pr}^{(w)}[\m]\mathcal D f\|^2_2\\
&\ll\height(x)^{d}\|\mathbb{T}_t\|_{2,0}^2(\levelbound+1)q_w^{(d+2)\levelbound}\sum_{\substack{\m:m_w=0\\\mathcal D}}\|\m\|^{d+2}\|\operatorname{pr}^{(w)}[\m]\mathcal D f\|^2_2\\
&\ll\height(x)^{d}\|\mathbb{T}_t\|_{2,0}^2q_w^{(d+2)\levelbound}\sum_{\substack{\m:m_w=0\\\mathcal D}}\|\m\|^{d+2}\bigl\|\mbox{$\sum_{m_w}$}\operatorname{pr}[m_w](\operatorname{pr}^{(w)}[\m]\mathcal D f)\bigr\|^2_2\\
&\ll\height(x)^{d}\|\mathbb{T}_t\|_{2,0}^2q_w^{(d+2)\levelbound}\sum_{\m,\mathcal D}\|\m\|^{d+2}\|\operatorname{pr}[\m]\mathcal D f\|^2_2
\end{align*}
which implies S5.

Let us note that the argument in \S\ref{s;S5} applies to the 
representation of $\SL_2(F_w)$ on $L^2_0(X,\vol_G),$ i.e.\ the orthogonal complement of $\G(\adele)^+$-invariant functions. Suppose this representation is $1/\temp$-tempered,  
then similar to~\eqref{mcunipotent} 
we get
\begin{multline*}
\bigl|\langle u(t)\cdot f_1,f_2\rangle-\langle\Gpp f_1,\Gpp f_2\rangle\bigr|\ll\\
(1+|t|_w)^{-1/2\temp}
\dim (K_{\SL_2}\!\!\cdot\!f_1)^{1/2} \dim (K_{\SL_2}\!\!\cdot\! f_2)^{1/2} \|f_1\|_2
\|f_2\|_2.
\end{multline*}
This estimate and~\eqref{eq;levelbound-matcoef}, in view of the definition of $\mathbb T_t,$ imply
\be\label{eq:T-L2-bound}
\|\mathbb T_t\|_{2,0}\ll (1+|t|_w)^{-1/2\temp} q_w^{2 d \levelbound}.
\ee

\section{The discriminant of a homogeneous set}\label{sec;discrim}
The paper \cite{ELMV} defined the {\em discriminant} of a homogeneous set 
in the case when the stabilizer is a torus. 
Here we shall adapt this definition to the case at hand, see also~\cite[Sec.\ 17]{EMV}.

Let $\H$ be a semisimple, simply connected group defined over $F.$ 
As in \S\ref{sec:intro} we fix a nontrivial  $F$-homomorphism $\iota:\H\to\G$
with central kernel and let $(g_v)_v\in\G(\adele).$ Put $Y=\iota(\H(F)\backslash\H(\adele))(g_v)_v.$

We choose a differential form $\omega$ of top degree on $\mathbf{H}$ 
and an $F$-basis $\{f_1,\dots,f_r\}$ for $\Lie(\H)$ such that $\omega(z)=1$
for 
\[
z=f_1\wedge\dots\wedge f_r \in \wedge^{r} \Lie(\mathbf{H}).
\] 

Using $\rho:\mathbf{G}\rightarrow \SL_N$ and $\iota$ we have
$\rho\circ\iota(z) \in \wedge^r \mathfrak{sl}_N$. 
We put $z_v := \rho\circ\Ad(g_{v})^{-1}\circ\iota(z)$. 
For each $v\in\places,$ we denote by $\omega_v$ the form of top degree on $\H(F_v)$ induced by $\omega.$

Let $\|\;\|_v$ be a compatible system of norms
on the vector spaces $\wedge^r \mathfrak{sl}_N \otimes F_v$. 
In particular, we require at all the finite places $v$ that the norm $\|\;\|_v$ is the max norm. 
Denote by $B$ the bilinear form on $\wedge^r \mathfrak h$ induced by the Killing form. 
We define the {\em discriminant} of the homogeneous set~$Y$ by
\be\label{e;disc}
\disc(Y) = \tfrac{\mathcal{D(\H)}}{\Ecal{(\H)}}\prod_{v} \disc_v(Y) =\tfrac{\mathcal D(\H)}{\Ecal(\H)} \prod_{v} \|z_v\|_v,
\ee
where 
\begin{enumerate}
\item $\disc_v(Y) = |B(\omega_v, \omega_v)|_v^{1/2}\|z_v\|_v$
is the {\em local discriminant} at $v,$ and is independent of the choice of the $F$-basis,
\item $\mathcal{D}(\H)\geq1$ is defined in~\eqref{eq:field-disc}, and
\item $0<\Ecal(\H)\leq1$ is defined in~\eqref{eq:zeta-L}.
\end{enumerate}  
The second equality in~\eqref{e;disc} 
uses the fact that $\prod_{v} |B(\omega_v, \omega_v)|_v = 1$ which is a consequence of the product formula and the equality 
$B(\omega_v, \omega_v) = B(\omega, \omega)$.

One key feature of this definition is that it is closely related to the volume in the sense that
\be\label{eq:disc-vol-statement}
\vol(Y)^\star\ll\disc(Y)\ll\vol(Y)^\star.
\ee
We outline a proof of this in this section. 

Before doing that let us use~\eqref{eq:disc-vol-statement} to complete the discussion from
\S\ref{sec:quadratic}.

\subsection{Proof of Lemma~\ref{lem:genus-spingenus}} \label{finishingproof}
We use the notation from~\S\ref{sec:quadratic}. 
In particular,~$F=\Q$, $Q$ is a positive definite integral quadratic form in~$n$ variables, 
$\H'=\SO(Q)$ and $\H={\rm Spin}(Q).$ 

Let $g_Q\in\PGL(n,\R)$ be so that~$g_Q^{-1}\H'(\R)g_Q=\SO(n,\R)$
and put
\[
Y=Y_Q=\pi(\H(\Q)\backslash\H(\adele))(g_Q,e,\ldots).
\]

We recall that~$K'$ and~$K'(\infty)$ are compact open subgroups of~$\H'(\adele_f)$ and~$\H'(\adele)$ respectively.
Also recall the notation
$\tK=K'\cap\pi(\H(\adele_f))$, and 
$
\tK(\infty)=\pi(\H(\R))\tK.
$
Finally put
\[
\tK_{Q}(\infty)=g_Q^{-1}K^*(\infty)g_Q=g_Q^{-1}K'(\infty)g_Q\cap H_\data.
\]

\begin{lem}\label{lem:spin-genus-K*} We have the following
\be\label{eq:spingenus-volume-main}
\vol(Y)^\star\ll\operatorname{spin\ genus}(Q)\ll\vol(Y).
\ee
\end{lem}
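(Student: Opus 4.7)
The plan is to identify the spin genus of $Q$ with the set of $\tK_Q(\infty)$-orbits on $Y$ and then compare the resulting counting formula with the definition of $\operatorname{vol}(Y)$.

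From the discussion in \S\ref{setup-qf} one has
\[
\operatorname{spin\ genus}(Q)=\pi(\H(\Q))\backslash\pi(\H(\adele))/\tK(\infty).
\]
Under the natural map $\pi(\H(\Q))\backslash\pi(\H(\adele))\to Y$, whose kernel and cokernel are finite and bounded in terms of $\dim\H$ by the Stabilizer Lemma~\ref{s:stabilizer-lem} and the discussion of \S\ref{ss:Borel-Prasad}, this transports to the action of $\tK_Q(\infty)$ on $Y$, so up to a bounded factor $|\operatorname{spin\ genus}(Q)|$ equals the number of $\tK_Q(\infty)$-orbits in $Y$. Each orbit $y\tK_Q(\infty)$ carries $\mu_\data$-mass $m_\data(\tK_Q(\infty))/|\stab_{\tK_Q(\infty)}(y)|$, hence from $\mu_\data(Y)=1$ we obtain
\[
\sum_{y\tK_Q(\infty)\subset Y}\frac{1}{|\stab_{\tK_Q(\infty)}(y)|}\;=\;\frac{1}{m_\data(\tK_Q(\infty))}.
\]

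The main obstacle is bounding these stabilizers uniformly in $Q$. Each $\stab_{\tK_Q(\infty)}(y)$ lies in both the compact group $\tK_Q(\infty)$ and (via the orbit map) the discrete group $\pi(\H(\Q))\subset\PGL(n,\Q)$, so it is a finite subgroup of $\PGL(n,\Q)$. By Minkowski's classical bound on torsion in $\GL(n,\Q)$ its order is at most an absolute constant $M(n)$ depending only on $n$, whence
\[
m_\data(\tK_Q(\infty))^{-1}\;\leq\;|\operatorname{spin\ genus}(Q)|\;\leq\;M(n)\,m_\data(\tK_Q(\infty))^{-1}.
\]

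Finally, I compare $m_\data(\tK_Q(\infty))$ with $m_\data(H_\data\cap\Omega_0)=\operatorname{vol}(Y)^{-1}$ by splitting across places. We have $\tK_Q(\infty)=g_Q^{-1}\pi(\H(\R))g_Q\times\tK$ while $H_\data\cap\Omega_0=(g_Q^{-1}\pi(\H(\R))g_Q\cap\Omega_\infty)\times(\pi(\H(\adele_f))\cap K_f)$; the finite-place factors agree because $\pi(\H(\adele_f))\cap K_f=\tK$. Therefore
\[
\frac{m_\data(\tK_Q(\infty))}{m_\data(H_\data\cap\Omega_0)}\;=\;\frac{m_\infty(g_Q^{-1}\pi(\H(\R))g_Q)}{m_\infty\bigl(g_Q^{-1}\pi(\H(\R))g_Q\cap\Omega_\infty\bigr)}.
\]
Since $Q$ is positive definite, $\pi(\operatorname{Spin}(Q)(\R))=\SO(Q)(\R)$, and by the defining property of $g_Q$ the conjugate $g_Q^{-1}\SO(Q)(\R)g_Q=\SO(n,\R)$ is a fixed compact subgroup of $\PGL(n,\R)$ independent of $Q$. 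The displayed ratio therefore depends only on $n$ and $\Omega_\infty$; combining this with the previous step gives $|\operatorname{spin\ genus}(Q)|\asymp\operatorname{vol}(Y)$, which is stronger than the statement of the lemma.
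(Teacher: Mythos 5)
Your orbit count and the archimedean comparison are sound and agree with what the paper does: the number of $\tK_Q(\infty)$-orbits on $Y$, weighted by stabilizers whose order is bounded by Minkowski's theorem on finite subgroups of $\PGL_n(\Q)$, is indeed $\asymp\vol(Y)$ (this is the paper's~\eqref{eq:vol-spingen}), and the comparison of $m_\data(\tK_Q(\infty))$ with $m_\data(H_\data\cap\Omega_0)$ via the fixed compact group $\SO(n,\R)$ is fine. The gap is in your very first reduction. The spin genus is defined in \S\ref{setup-qf} as $\H'(\Q)\backslash\H'(\Q)\pi(\H(\adele))K'(\infty)/K'(\infty)$, not as $\pi(\H(\Q))\backslash\pi(\H(\adele))/\tK(\infty)$, and the natural surjection from the second set onto the first does \emph{not} have fibers bounded in terms of $\dim\H$. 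Two elements $x,y\in\pi(\H(\adele))$ are identified in the spin genus when $y=\gamma x k$ with $\gamma\in\H'(\Q)$ and $k\in K'(\infty)$, and such $\gamma$ need not lie in $\pi(\H(\Q))$ nor $k$ in $\tK(\infty)$: the obstruction is measured by the quotient $\widetilde\Lambda/\Lambda$ of \S\ref{ss:Borel-Prasad}, i.e.\ by rational points of $\SO(Q)$ which lift to the spin group everywhere locally but not globally modulo the chosen compacta. Lemma~\ref{s:stabilizer-lem} says nothing about this index, and \S\ref{ss:Borel-Prasad} bounds it only by~\eqref{eq:index-cont}, which involves the class number and relative discriminant of the splitting field $L$ of $\operatorname{Spin}(Q)$ (for $n$ even this is the discriminant field of $Q$, which varies with $Q$ and whose class number is unbounded). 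So the fiber size is controlled only by a power of $\vol(Y)$, not by a constant, and your appeal to those two results for a bounded factor is a misreading of both.

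This is exactly why the lemma asserts $\vol(Y)^\star\ll\operatorname{spin\ genus}(Q)\ll\vol(Y)$ rather than $\asymp$; the fact that your argument yields the stronger two-sided comparability should itself have been a warning. What you have legitimately proved is the upper bound, since passing from $K^*(\infty)$-double cosets to the coarser $K'(\infty)$-double cosets with the larger discrete group can only decrease the count. The paper's proof of the lower bound supplies precisely the missing step: it uses strong approximation at a good place $p$ to place double-coset representatives in $\H(\Q_p)$, shows the fibers have size at most $[\Lambda':\Lambda]\,[K_p':K_p^*]$, and then invokes the Borel--Prasad index bound together with the \emph{equivalence of volume definitions} proposition of \S\ref{ss:Borel-Prasad} to conclude that this loss is at most a fixed power of $\vol(Y)$. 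Without that input the lower bound in~\eqref{eq:spingenus-volume-main} is unproved.
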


\begin{proof}
Using the definition of the volume as in~\eqref{volume}, we have
up to a multiplicative constant depending on $\Omega_0,$ 
that $\vol(Y)\asymp m_{Y}(\tK_Q(\infty))^{-1}.$
On the other hand we have
\[
\mbox{$1=\mu_\data(Y)=\sum_h \tfrac{m_{Y}\bigl(\tK_Q(\infty)\bigr)}{\ell_h}$}
\] 
where $1\leq \ell_h\leq \#\bigl(\H'(\Q)\cap hK'(\infty)h^{-1}\bigr)$ for every double coset representative 
\[
\H'(\Q)hK^*(\infty)\in \H'(\Q)\backslash \H'(\Q)\pi(\H(\adele))/K^*(\infty).
\]
Since $\ell_h$ is bounded by the maximum of orders of finite subgroups of $\PGL_n(\bbq),$ see e.g.~\cite[LG, Ch.\ IV, App.\ 3, Thm.\ 1]{Serre-LG} we get
that, up to a constant depending on $\Omega_0,$ we have 
\be\label{eq:vol-spingen}
\vol(Y)\asymp\#\bigl(\H'(\Q)\backslash \H'(\Q)\pi(\H(\adele))/K^*(\infty)\bigr).
\ee
Recall that
\[
\operatorname{spin\ genus}(Q)=\#\bigl(\H'(\Q)\backslash \H'(\Q)\pi(\H(\adele))K'(\infty)/K'(\infty)\bigr).
\]
Hence~\eqref{eq:vol-spingen} implies the claimed upper bound in the lemma.

We now turn to the proof of the lower bound. The idea is to use 
strong approximation and discussion in \S\ref{ss:Borel-Prasad} to
relate the orbit space appearing on {the right side of~\eqref{eq:vol-spingen}}
to the spin genus. 

Let $p$ be a good prime for $Y$ given by the proposition in \S\ref{ss:existence-good}.
In view of the strong approximation theorem applied to the simply connected group $\H,$ the choice of $p,$ and the definition of $K^*(\infty)$ we have
\be\label{eq:strong-app-genus}
\pi(\H(\adele))=\pi(\H(\Q))\pi(\H(\Q_p))K^*(\infty).
\ee
where we have identified $\H(\Q_p)$ as a subgroup of $\H(\adele).$

Therefore,
every double coset $\H'(\Q)\pi( h)K'(\infty)$ has a representative in $\pi(\H(\Q_p)).$
That is: we use~\eqref{eq:strong-app-genus} and write $\pi(h)=\pi(\delta,\delta,\ldots)\pi\bigl((h'_p,e,\ldots)\bigr)k^*,$
where $\delta\in\H(\Q),$ $h'_p\in\H(\Q_p)$ and $k^*\in K^*(\infty)$.

Let now $h^{(1)}_p,h_p^{(2)}\in\H(\Q_p)$ be so that 
\be\label{eq:double-coset}
\pi\bigl((h^{(2)}_p,e,\ldots)\bigr)=(\gamma,\gamma,\ldots)\pi\bigl((h^{(1)}_p,e,\ldots)\bigr)k
\ee 
where $\gamma\in\H'(\Q),$ $k\in K'(\infty)$. 
Let us write $k=\bigl(k_p,(k_q)_{q\neq p}\bigr);$ we note that $q=\infty$ is allowed. Then we have 
\[
\gamma k_q=1\quad\text{for all $q\neq p.$}
\] 
Hence we get $k=(k_p,\gamma^{-1},\gamma^{-1},\ldots).$
This, in particular, implies $\gamma\in K'_q$ for all $q\neq p,$ that is
\[
(\gamma,\gamma,\ldots)\in\H'(\Q)\cap\H'(\Q_p)K'(\infty).
\]

Put $\Lambda':=\H'(\Q)\cap\H'(\Q_p)K'(\infty)$ and
$\Lambda:=\pi(\H(\Q))\cap\H'(\Q_p)K'(\infty).$ 
{Taking their projections into $\H'(\Q_p)$, we identify $\Lambda$ and $\Lambda'$ as two lattices in $\H'(\Q_p)$.}
Note that $\Lambda$ is a normal subgroup of $\Lambda'.$ 
We write
\[
\Lambda'/\Lambda=\cup_{i=1}^r\Lambda\gamma_i.
\] 
Also write $K'_p/K^*_p=\cup_{j=1}^{s} k_j K^*_p.$
The above discussion thus implies
\be\label{eq:class-numb-genus}
\pi(h_p^{(2)})\in \bigsqcup_{i,j} \Lambda\gamma_i\pi(h^{(1)}_p)k_j K^*_p.
\ee

Define the natural surjective map from $\pi(\H(\Q))\backslash \pi(\H(\adele))/K^*(\infty)$ to 
$\H'(\Q)\backslash \H'(\Q)\pi(\H(\adele))K'(\infty)/K'(\infty)$ by
\[
\pi(\H(\Q))\bigl(\pi(h_p),e,\ldots\bigr)K^*(\infty)\mapsto \H'(\Q)\bigl(\pi(h_p),e,\ldots\bigr)K'(\infty).
\]
Then since $\Lambda\subset\pi(\H(\Q))$ and $K_p^*\subset K^*(\infty)$ 
we get from~\eqref{eq:class-numb-genus} that the pre-image of 
$\H'(\Q)\bigl(\pi(h_p),e,\ldots\bigr)K'(\infty)$ is contained in $\cup_{i,j}D_{i,j}$
where
\[
D_{i,j}:=\bigl\{\pi(\H(\Q))\bigl(\pi(h'_p),e,\ldots\bigr)K^*(\infty): 
\gamma_i\pi(h_p)k_j\in \Lambda\pi(h'_p)K_p^*\bigr\}.
\]

Note also that $D_{i,j}$ has at most one element.

Putting this all together, we get the following.
\begin{align}
\notag\operatorname{spin\ genus}(Q)&=\#\bigl(\H'(\Q)\backslash \H'(\Q)\pi(\H(\adele))K'(\infty)/K'(\infty)\bigr)\\
\notag{}^{\text{the above discussion}}\leadsto&\gg \frac{\#\bigl(\H'(\Q)\backslash \H'(\Q)\pi(\H(\adele))/K^*(\infty)\bigr)}{[\Lambda':\Lambda][K_p':K_p^*]}\\
\label{eq:class-number-genus-final}{}^{\text{~\eqref{eq:vol-spingen}}}\leadsto&\gg\frac{\vol(Y)}{[\Lambda':\Lambda][K_p':K_p^*]}.
\end{align}

We now bound the denominator in~\eqref{eq:class-number-genus-final}.
First note that 
\[
[K'_p:K_p^*]\leq [\H'(\Q_p):\pi(\H(\Q_p))]\leq\newletter
\]
where $\newletter\ll1$ is an absolute constant.

Bounding the term $[\Lambda':\Lambda]$ is far less trivial and relies on results in \S\ref{ss:Borel-Prasad}.
Put $\widetilde{\Lambda}=\H'(\Q)\cap\pi(\H(\Q_p))K'(\infty).$ Then
\[
[\Lambda':\Lambda]=[\Lambda':\widetilde{\Lambda}][\widetilde{\Lambda}:\Lambda]\leq 
[\H'(\Q_p):\pi(\H(\Q_p))][\widetilde{\Lambda}:\Lambda]\leq \newletter[\widetilde{\Lambda}:\Lambda].
\]
Finally the index $[\widetilde{\Lambda}:\Lambda]$ is controlled as in~\eqref{eq:index-cont}. We note that the quantities 
appearing on the right side of~\eqref{eq:index-cont}, in particular $\places^\flat$, are the same for the group $\pi(\H(\adele)),$
which we used to obtain the bound in~\eqref{eq:class-number-genus-final}, as well  
as for $(g_Q,e,\ldots)^{-1}\pi(\H(\adele))(g_Q,e,\ldots),$ which is used to define $Y.$
Therefore, in view of the {\it equivalence of volume definitions} proposition in \S\ref{ss:Borel-Prasad} we get
\[
\operatorname{spin\ genus}(Q)\gg \vol(Y)^\star
\] 
which is the claimed lower bound.
\end{proof}

By~\eqref{eq:disc-vol-statement} we also know~$\vol(Y)\asymp\disc(Y)^\star$. So it remains to discuss
the genus of~$Q$. For this we are making the following  

\begin{claim}
For any given~$T$
the number of equivalence classes of quadratic forms~$Q$ with~$|\operatorname{spin\ genus}(Q)|<T$
is~$\ll T^\star$. 
\end{claim}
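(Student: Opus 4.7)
The plan is to convert the bound on $|\operatorname{spin\ genus}(Q)|$ into a bound on the classical discriminant $\disc(Q):=|\det A|$ (where $A$ is the Gram matrix of $Q$), and then to count $\GL_n(\Z)$-equivalence classes of positive definite integral $n$-ary quadratic forms with bounded discriminant using Minkowski's reduction theory.

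First, by Lemma~\ref{lem:genus-spingenus} the hypothesis $|\operatorname{spin\ genus}(Q)|<T$ already implies $|\operatorname{genus}(Q)|\ll T^{\star}$.

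The second step would pass from this to a bound on the classical discriminant by invoking Siegel's mass formula,
\[
\sum_{[Q']\in\operatorname{genus}(Q)}\frac{1}{|\Aut(Q')|}\;=\;c_n\,|\det A|^{(n+1)/2}\prod_p\alpha_p(Q)^{-1},
\]
where $c_n>0$ depends only on $n$ and $\alpha_p(Q)$ are the local densities. Standard bounds yield $\prod_p\alpha_p(Q)^{-1}\gg_{n,\epsilon}|\det A|^{-\epsilon}$ for every $\epsilon>0$, and a classical theorem of Minkowski bounds $|\Aut(Q')|$ by a constant $c'_n$ depending only on $n$ (the automorphism group of a positive definite integral form in $n$ variables is a finite subgroup of $\GL_n(\Z)$). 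Together these give $|\operatorname{genus}(Q)|\gg_{n,\epsilon}|\det A|^{(n+1)/2-\epsilon}$, and combining with the previous step we obtain $|\det A|\ll T^{\star}$.

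For the third step, Minkowski's reduction theory produces in each $\GL_n(\Z)$-equivalence class of positive definite integral $n$-ary forms a reduced representative whose Gram matrix has integer entries bounded in absolute value by a constant (depending only on $n$) times $|\det A|$. Consequently the number of equivalence classes with $|\det A|\leq D$ is $\ll_n D^{n(n+1)/2}$. Applied with $D\ll T^{\star}$, this proves the claim.

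The main external ingredient is Siegel's mass formula; this is arguably heavier than needed. An alternative, staying internal to the paper's framework, is to unwind the definition~\eqref{e;disc}: the Pl\"ucker coordinate $\iota(f_1)\wedge\cdots\wedge\iota(f_r)\in\wedge^r\mathfrak{sl}_n$ of the subspace $\iota(\mathfrak{so}(Q))\subset\mathfrak{sl}_n$ encodes $Q$ up to a bounded ambiguity, and its Arakelov-type height (with the infinite-place factor $\mathrm{Ad}(g_Q^{-1})$ producing a positive power of $|\det A|$) is polynomially comparable to $|\det A|$; combined with~\eqref{eq:disc-vol-statement} this also gives $|\det A|\ll T^{\star}$. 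In either route, the crux is the polynomial comparison between the adelic/Plücker discriminant and the classical one.
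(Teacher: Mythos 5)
Your first step is circular. The Claim you are proving is itself a step \emph{inside} the paper's proof of Lemma~\ref{lem:genus-spingenus}: the only nontrivial implication of that lemma in the direction you use, namely $|\operatorname{genus}(Q)|\ll|\operatorname{spin\ genus}(Q)|^{\star}$, is deduced \emph{from} the Claim (the Claim bounds the number of spin genera of volume $\leq V$, hence the number $S$ of spin genera inside $\operatorname{genus}(Q)$, all of which have the same volume). What is available to you at this point is only Lemma~\ref{lem:spin-genus-K*}, i.e.\ $\vol(Y)^{\star}\ll|\operatorname{spin\ genus}(Q)|\ll\vol(Y)$, together with \eqref{eq:disc-vol-statement}; you may not pass to the genus. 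The circularity is repairable without leaving your framework: apply the mass formula to the spin genus rather than the genus. All spin genera within a genus have equal mass, and their number is a power of $2$ bounded by $2^{O(\omega(2\det A))}\ll_{\epsilon}(\det A)^{\epsilon}$ (Eichler--Kneser), so $T>|\operatorname{spin\ genus}(Q)|\geq\operatorname{mass}(\operatorname{spin\ genus}(Q))\gg_{\epsilon}(\det A)^{\delta_n-\epsilon}$ with $\delta_n>0$ for $n\geq3$, and then Minkowski reduction counts the classes with $\det A\ll T^{\star}$ as you say. With that fix your route is correct and genuinely different from the paper's: it is classical (mass formula plus reduction theory) and, notably, does not invoke Corollary~\ref{non-divergence-adelic}, which the paper's proof of the Claim does use and which itself rests on Theorem~\ref{adelic}.

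Your ``alternative route'' is closer to the paper's argument but leaves precisely the hard point unproved: the assertion that the archimedean factor $\|\wedge^{r}\Ad(g_Q^{-1})\iota(z)\|_{\infty}$ in \eqref{e;disc} is polynomially comparable to $|\det A|$ is the crux, and the paper deliberately does \emph{not} prove it. Instead it uses Corollary~\ref{non-divergence-adelic} to produce a form $Q_h$ in the \emph{same spin genus} whose conjugator $g_{Q_h}$ lies in a fixed compact subset of $\PGL(n,\R)$; for that representative the archimedean factor is $O(1)$, the product formula reduces $\disc(Y_h')$ to the norm of the primitive integral vector $z(h)\in\wedge^{r}\mathfrak{sl}_N$ determining $\H_{Q_h}$, and one simply counts integral vectors of norm $\ll T^{\star}$. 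So the paper's compactness step is playing the role that Minkowski reduction plays for you; if you want the height-comparison route you must either prove the archimedean comparison for a reduced representative or borrow the paper's non-divergence input.
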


\begin{proof}
Let $X'\subset X$ be a compact set so that $\mu(X')>0.9$ for any MASH {measure $\mu$}; 
this exists by Corollary~\ref{non-divergence-adelic}. 
Suppose $Q$ is a quadratic form with $|\operatorname{spin\ genus}(Q)|<T$. We see 
that 
\[
\text{$\G(\Q)(g_Q,e,\ldots)H_\data\cap X'$ is non-empty.} 
\]

We may assume that~$X'$
is invariant under~$\SO(n,\R)$, then the above also gives some
$\iota(h)\in \iota(\H(\adele_f))$ with~$\G(\Q)(g_Q,\iota(h))\in X'$.
By the correspondence between the spin genus of~$Q$ and the~$g_Q^{-1}K'(\infty)g_Q$-orbits in~$Y$ we have found a quadratic form~$Q_h$
in the spin genus of~$Q$ for which the conjugating matrix~$g_{Q_h}$ 
can be chosen from a fixed compact subset~$L\subset\PGL(n,\R)$. 

Let us now note that the spin genus of~$Q_h$ equals the spin genus of~$Q.$  
So in view of~\eqref{eq:spingenus-volume-main} 
the MASH set~$Y_{Q_h}$
associated to the quadratic form~$Q_h$ has volume which 
is bounded above and below by powers of the volume of the MASH set~$Y_Q$.

We will use the assumption~$g_{Q_h}\in L$ in order to relate the size of the spin genus with the volume of the rational MASH set 
$Y_h'=\pi(\H_{Q_h}(\Q)\backslash\H_{Q_h}(\adele))$, where~$\H_{Q_h}$ is the spin cover of the orthogonal
group of~$Q_h$ (and we intentionally did not include~$g_{Q_h}$ in the definition). 
Indeed, since $g_{Q_h}\in L,$ 
we get from \eqref{eq:disc-vol-statement} and~\eqref{eq:spingenus-volume-main} that 
\be\label{eq:spingenus-disc}
|\operatorname{spin\ genus}(Q)|\asymp\vol(Y_{Q_h})^\star\asymp\vol(Y_h')^\star\asymp\disc(Y_h')^\star.
\ee
The definition of $\disc(Y_h'),$ see \eqref{e;disc},
gives 
\[
\disc(Y_h')=\tfrac{\mathcal D(\H_{Q_h})}{\Ecal(\H_{Q_h})}\prod_{v\in\places}\|z_v\|_v=
\tfrac{\mathcal D(\H_{Q_h})}{\Ecal(\H_{Q_h})}\|z(h)\|_\infty
\] 
where $z(h)\in \wedge^r \mathfrak{sl}_N$ is the primitive integral vector 
which is a rational multiple of $(z_v)_v;$ and in the second equality we used the product formula. 
Note that $z(h)=f_1\wedge\cdots\wedge f_r$ determines the group $\H_{Q_h}$ and hence the form $Q_h$ (up to homotheties).  
In particular, since ${\mathcal D(\H_{Q_h})}/{\Ecal(\H_{Q_h})}\geq1$ we get 
$\|z(h)\|_{\infty}\ll T^\star$. 
As there are only~$\ll T^\star$ many integral vectors 
of norm~$\ll T^\star$ in~$\wedge^r\mathfrak{sl}_N$
we obtain the claimed estimate.
\end{proof} 

We can now finish the proof of Lemma~\ref{lem:genus-spingenus}.
Let us recall from the definitions that
\[
\operatorname{genus}(Q)=\bigsqcup_i\operatorname{spin\ genus}(Q_i)
\]
where $Q_i\in\operatorname{genus}(Q).$ 
Let $Y_{Q_i}$ denote the MASH's which correspond to $Q_i$ as above.
Then by~\eqref{eq:spingenus-volume-main} we have
$\operatorname{spin\ genus}(Q_i)\asymp\vol(Y_i)^\star.$ Note however that
$\vol(Y_i)=\vol(Y_j)$ for all $i,j$ since the corresponding algebraic stabilizers are the same;
indeed $Y_i=Y_jh$ for some $h\in\SO(Q)$ and $\SO(Q)$ normalizes the algebraic stabilizer
of $Y_i.$  
Suppose now~$\operatorname{genus}(Q)=S$. 
Then the above MASH sets all have the same volume $\vol(Y_i)=V$
which, in view of the above claim, gives~$S\ll V^\star$ and finishes the proof of 
Lemma~\ref{lem:genus-spingenus}.

\subsection{Expressing the discriminant in terms of the volume}\label{ss:infinite-place}
Recall the notation from the proof of part (3) in the proposition\footnote{We note that the standing assumption in \S\ref{ss;k-w-star} was that $\H$ is $F$-simple. However, the proof of part (3) in the proposition in \S\ref{ss;k-w-star} works for the case of semisimple groups.} in \S\ref{ss;k-w-star}. In particular, we fixed finitely many standard homomorphisms of $\H(F_v)$ into $\G(F_v)$ for any archimedean place $v.$ 
Recall also that corresponding to {the standard homomorphism $\jmap_0=\Ad(g_0)\circ\jmap$} 
we have a Euclidean structure $\mathfrak p_0$ and that $\|\;\|_{\mathfrak p_0}$ denotes the corresponding 
Euclidean norm.

In this section we have fixed a compatible family of norms $\|\;\|_v.$
Note that for any archimedean place $v$ we have $\|\;\|_v\asymp\|\;\|_{\mathfrak p_0}$ 
with constants depending only on the dimension.
Therefore, without loss of generality we may and will assume that $\{f_1,\ldots,f_r\}$ are chosen so that
\be\label{eq:basis-arch-appB}
1/c\leq\|\wedge^r\jmap_0(f_1\wedge\cdots\wedge f_r)\|_{\mathfrak p_0}\leq c
\ee
for any archimedean place $v$ where $c$ is a universal constant.

Fix an archimedean place $v,$ as in the proof of part (3) in the proposition in \S\ref{ss;k-w-star} we have
\begin{multline}\label{eq:vol-arch-appB-1}
u_1\wedge\cdots\wedge u_r=\\\tfrac{\|u_1\wedge\cdots\wedge u_r\|_{\mathfrak p_0}}{\|\wedge^r\Ad(g_0^{-1}) (\wedge^r\jmap_0(f_1\wedge\cdots\wedge f_r))\|_{\mathfrak p_0}}\wedge^r\Ad(g_0^{-1}) (\wedge^r\jmap_0(f_1\wedge\cdots\wedge f_r)).
\end{multline}
where $\{u_1,\ldots,u_r\}$ is chosen as in there.

Note that $1/c\leq\|\wedge^r\Ad(g_0^{-1}) (\wedge^r\jmap_0(f_1\wedge\cdots\wedge f_r))\|_{\mathfrak p_{\jmap}}\leq c,$ by~\eqref{eq:basis-arch-appB}. Hence we get
\be\label{eq:vol-arch-appB-2}
J_v\asymp\|u_1\wedge\cdots\wedge u_r\|_{\mathfrak p_{\jmap}}\asymp\tfrac{1}{\|\wedge^r\Ad(g_0^{-1}) (\wedge^r\jmap_0(f_1\wedge\cdots\wedge f_r))\|_{\mathfrak p_0}}\asymp\tfrac{1}{\|z_v\|_v}
\ee
where the implied constants are absolute.

Therefore, {in view of~\eqref{eq:def-Jv-Arch} and~\eqref{e;vol-est}}, in order to prove~\eqref{eq:disc-vol-statement} we need to control the contribution from finite places. 

We will use the notation from \S\ref{s;good-place}. Since $\H$ is simply connected we can write $\H$ as the direct product $\H=\H_1\cdots\H_k$ of its $F$-almost simple factors. Let $F_j'/F$ be a finite extension so that $\H_j={\rm Res}_{F_j'/F}(\H_j')$ where $\H_j'$ is absolutely almost simple $F_j'$-group for all $1\leq j\leq k.$
Then $[F_j':F]$ is bounded by $\dim\H.$ 
Let $\mathcal H'_j$ and $L_j/F'_j$ be defined as in \S\ref{volumes}.  Put
\be\label{eq:field-disc}
\mathcal D(\H)=\Bigl(\textstyle\prod_j D_{L_j/F_j}^{\mathfrak s(\Hcal'_j)} D_{F'_j}^{\dim\H'_j}\Bigr)^{{1}/{2}}.
\ee

For each $j$ let $\omega'_j$ denote a differential form of top degree on $\mathbf{H}'_j$ 
and choose an $F'_j$-basis $\{f^{(j)}_1,\dots,f^{(j)}_{r_j}\}$ for $\Lie(\H'_j)$ so that 
$\omega'_j(z^{(j)})=1$
where
\[
z^{(j)}=f^{(j)}_1\wedge\dots\wedge f^{(j)}_{r_j} \in \wedge^{r_j} \Lie(\mathbf{H}'_j).
\] 

We may and will work with the $F$-basis $\{f_1,\ldots,f_r\}$ for $\Lie(\H)$ obtained from
$\{f^{(j)}_i\}$ using the restriction of scalars, i.e.\ we assume fixed a basis $\{e^{(j)}_l\}$ for $F'_j/F$ and write $f^{(j)}_i$
in this basis for each $1\leq i\leq r_j$. 
As before put $z=f_1\wedge\dots\wedge f_r$
and let $\omega$ be a form of top degree on $\H$ so that $\omega(z)=1.$

For each $v'\in\places_{F_j'},$ let $\omega'_{j,v'}$ denote the 
form of top degree on $\H'_j(F'_{j,v'})$ induced by $\omega_j'.$ 
Similarly, for any $v\in\places$ let $\omega_v$ denote the form of top degree 
on $\H(F_v)$ induced by $\omega.$

Given $v\in\places$ we define a form of top degree on $\H(F_v)$
by $\widetilde\omega_v:=\bigl((\omega'_{j,v'})_{v'|v}\bigr)_j.$
Since $\H(F_v)$ is naturally isomorphic to $\prod_j\prod_{v'|v}\H'_j(F'_{j,v'}),$ 
it follows from the definitions that $\widetilde\omega_v(z)=1.$
Therefore, for every $v\in\places$ we have $\widetilde\omega_v=\omega_v.$

Let $v\in\places_{F,f},$
following our notation in \S\ref{s;volume-form}, we denote by
$|\omega_{v}|$ the measure induced on $\Lie(\mathbf{H}) \otimes F_{v}$ and abusing the notation  
the measure on $\H(F_{v})$.
Since $\omega_v(z)=1,$ 
the $\order_v$-span of the $\{f_i\}$ has volume $1$ with respect to $|\omega_v|$.  
Applying a suitable change of basis {to the $\{f_i\}$}, 
we may assume that there is an integral basis, $\{e_1, \dots, e_{N^2-1}\},$ for $\mathfrak{sl}_N(\order_v)$ 
with the property that each $\rho\circ\Ad(g_{v}^{-1})\circ\iota(f_i) = c_i e_i$, for $1 \leq i \leq r$. Then, $\|z_v\|_v = \prod_i |c_i|_v$ where $\|\;\|_v$ denotes the compatible family of norms which we fixed before
and $z_v=\rho\circ\Ad(g_{v}^{-1})\circ\iota(z).$

For every $v\in\places_f$ let  
$\mathsf H_v'$ be the scheme theoretic closure of $\rho(g_v^{-1}\iota(\H)g_v)$ 
in $\SL_N/\mathcal \order_v.$ Then for each $v\in\places_f$
we have 
\[
\mathsf H_v'(\order_v)=\rho(g_v^{-1}\iota(\H)g_v)\cap \SL_N(\order_v).
\]
Put $\mathsf H_v:=\iota^{-1}(g_v\rho^{-1}(\mathsf H_v')g_v^{-1})$ for any $v\in\places_f.$ 

Recall that $K_v^*=\iota^{-1}(g_v\rho^{-1}(\SL_N(\order_v))g_v^{-1});$
using the above notation we have $K_v^*=\mathsf H_v(\order_v).$ 

We write $\Lie(K_v^*)=D\rho\circ\Ad(g_{v}^{-1})\circ\iota (\mathrm{Lie}(\H) \otimes F_v) \cap   \mathfrak{sl}_N(\order_v)$.

Let $\red_v$ denote reduction mod $\varpi_v$ with respect to the scheme structure 
induced by $\mathsf H_v.$ In particular $\red_v K_v^*=K_v^*/(K_v^*)^{(1)}$
where {$(K_v^*)^{(1)}$ is the first congruence subgroup of $\mathsf H_v(\order_v).$}
For $p_v\gg1$, $(K_v^*)^{(1)}$ is the image under the exponential map of the first congruence subalgebra of $\Lie(K_v^*)$.


Let us recall that $k_v$ is the residue field
of $F_v$ with ${\rm char}(k_v)=p_v$ and $\# k_v=q_v=p_v^l$ for some $l\leq [F:\Q].$ Similarly $k'_{j,v'},$ 
is the residue field of $F'_{j,v'}$ and $\# k'_{j,v'}=q'_{j,v'}=p_v^{l_j}.$ 

With this notation, the above discussion implies that for $p_v\gg1$ we have
\begin{align*}
\|z_v\|_v &= \textstyle \prod_i|c_i|_v=\bigl|\omega_v(c_1^{-1}f_1\wedge\dots\wedge c_r^{-1}f_r)\bigr|^{-1}\\
&=\bigl(|\omega_v|(\{u \in  \mathrm{Lie}(\H) \otimes F_v: D\rho\circ\Ad(g_{v}^{-1})\circ\iota (u) \in   \mathfrak{sl}_N(\order_v) \})\bigr)^{-1}\\
& =\bigl(|\omega_v|(\Lie(K_v^*))\bigr)^{-1}\\
&=\bigl(|\omega_v|(K_v^*)\bigr)^{-1}(\# (\red_v K_v^*)\cdot q_v^{-\dim\H}).
\end{align*}

In the last equality, we used the fact that $\exp$ is measure preserving 
diffeomorphism on $\mathfrak{sl}^{(1)}_N(\order_v)$ and $\mathfrak h[1]$ for all $p_v\gg1.$

For small $p_v$, not covered above,
$\exp$ is a measure preserving diffeomorphism on $\mathfrak{h}[m]$ for large enough $m;$
see \S\ref{properties}, in particular~\eqref{dderiv} and discussion in that paragraph.
Hence the contribution of these small primes is $\ll1$.

Recall that $\H=\H_1\cdots\H_k$ is a direct product and $|\omega_v|=\prod_j\prod_{v'|v}|\omega'_{j,v'}|.$ 
Therefore, the above, 
in view of~\eqref{e;vol-est},~\eqref{e;disc} and~\eqref{eq:vol-arch-appB-2}, implies 
\be\label{eq:disc-vol}
\prod_v\|z_v\|\asymp \tfrac{1}{D(\{F'_j\})}\vol(Y) \prod_{v\in\places_f} {\# (\red_v K_v^*)}\cdot q_v^{-\dim\H}.
\ee
where $D(\{F'_j\})=\prod_j D_{F'_j}^{\dim\H'_j/2}.$ 

\subsection{The upper bound}
Let the notation be as in~\S\ref{ss:integral-norm}, in particular, for all $j$ and all $v'\in\places_{F'_j,f}$ the parahoric subgroup $\mathcal{P}'_{j,v'}$ of maximum volume in $\Hcal'_j(F'_{j,v'})$ is fixed as in that section. 
Abusing the notation, we denote the corresponding smooth $\order'_{j,v'}$ 
group scheme by $\mathcal{P}'_{j,v'}.$ 
Given a $v\in\places_{F,f}$ put $\mathcal P_v:=\prod_j\prod_{v'|v}\mathcal P_{j,v'}.$
Define
\begin{align}
\label{eq:zeta-L}
\mathcal E(\H):&=\textstyle\prod_{v\in\places_{F,f}}\#\underline{\mathcal P_v}(k_v)q_v^{-\dim\H}\\
\notag&=\textstyle{\prod_j\prod_{v'\in\places_{F'_j,f}}} \# \underline{\mathcal P'_{j,v'}}(k'_{j,v'})\cdot (q'_{j,v'})^{-\dim \H'_j}.
\end{align}

Using Prasad's volume formula and the order
of almost simple finite groups of Lie type, see~\cite[Rmk. 3.11]{Pr}, we have the following.
The quantity $\mathcal E(\H)$ is a product of the values of the Dedekind zeta functions of $F'_j$ and certain Dirichlet $L$-functions attached to $L_j/F'_j$ at some integer points. 
In particular, $\mathcal E(\H)$ is a positive constant depending on {$F'_j$, $L_j,$ and $\mathcal H'_j$.} Moreover,~\cite[\S2.5, \S2.9]{Pr} imply 
\[
\# \underline{\mathcal P'_{j,v'}}(k'_{j,v})\cdot (q'_{j,v})^{-\dim \H'_j}<1.
\]
All together we have shown
\be\label{eq:EH-estimate}
0<\Ecal(\H)\leq 1.
\ee

Let $\places^\flat_{\rm ur}$ denote the set of places $v\in\places_{F,f}$
so that
\begin{itemize}
\item for all $j$ and all $v'\in\places_{F'_j,f}$ with $v'|v$ we have $v'$ is unramified 
in $L_j,$ and
\item one of the following holds
\begin{itemize}
\item there is some $j$ and some $v'|v$ so that the group $\H'_j$ is not quasisplit over $F'_j,$ or
\item all $\H'_j$'s are quasisplit over $F'_{j,v'},$ hence $\H_j'$ is isomorphic to $\Hcal_j'$ over $F_{j,v'}'$ for all $j$ and all $v'|v,$ but $K_v^*$ is not hyperspecial.
\end{itemize}
\end{itemize}
It was shown in~\eqref{eq:local-ind-improve} that we have
\be\label{eq:part(ii)-push}
{\lambda_v|\omega_v|(K_v^*)\leq\tfrac{p_v}{p_v^2+1}\quad \text{for all $v\in\places^\flat_{\rm ur}$}}
\ee
if $q_v>13.$

Let $\places_{\rm r}^\flat$ be the set of places $v\in\places_{F,f}$ so that 
there exists some $j$ and some $v'|v$ in $F'_j$ which ramified in $L_j.$ Put
$\places^\flat:=\places_{\rm ur}^\flat\cup\places_{\rm r}^\flat.$

Put $D(\{L_j\},\{F'_j\}):=\prod_j D_{L_j/F'_j}^{\mathfrak s(\Hcal'_j)/2}.$
Then, as we got~\eqref{eq;volume-upperbd} from~\eqref{e;vol-est}, 
in view of~\eqref{eq:part(ii)-push} we get
\be\label{eq:sigma-flat-disc}
\vol(Y)\gg D(\{L_j\},\{F'_j\}) D(\{F'_j\})\prod_{v\in\places_{\rm ur}^\flat} p_v.
\ee  

Combining~\eqref{eq:disc-vol} and~\eqref{eq:sigma-flat-disc} we get the upper bound
as follows.
\begin{align*}
\disc(Y)&=\tfrac{\mathcal D(\H)}{\mathcal E(\H)}\prod_v\|z_v\|_v\\
&\asymp D(\{L_j\},\{F'_j\})\vol(Y) \tfrac{\prod_{v\in\places_f} \# (\red_vK_v^*)\cdot q_v^{-\dim\H}}{\mathcal E(\H)}\\
&\ll\vol(Y)^\star \tfrac{\textstyle\prod_{v\in\places_f} \# (\red_v\mathsf H_v({k_v}))\cdot q_v^{-\dim\H}}{\mathcal E(\H)}\\
&\ll\vol(Y)^\star\prod_{v\in\places^{\flat}}\tfrac{\# (\red_v\mathsf H_v({k_v}))}{\# \underline{\mathcal P_v}(k_v) }\ \ \ll\ \  \vol(Y)^\star
\end{align*}
where in the last inequality we used $\# (\red_v\mathsf H_v({k_v}))\leq q_v^\star,$
and also the fact that for any $v\in\places_{\rm r}^\flat$ we have $p_v|D(\{L_j\},\{F_j\}).$ 

\subsection{The lower bound}
We now turn to the lower bound. 
For this part we work with normalized volume forms. Fix the notation
\[
\lambda_v|\omega_v|:=\textstyle{\prod_j\prod_{v'|v}}\lambda'_{j,v'}\omega'_{j,v'},
\] 
see \S\ref{ss:integral-norm} for the notation on the right side of the above.

We will need the following. If $\mathsf M$ is a connected linear algebraic group over $k_v,$ then
\be\label{eq:order-conn-finite}
(q_v-1)^{\dim\mathsf M}\leq \#\mathsf M(k_v)\leq (q_v+1)^{\dim\mathsf M};
\ee
see e.g.~\cite[Lemma 3.5]{Nori}.

Given a parahoric subgroup $P_v$ of $\H(F_v),$
let $\mathfrak{P}_v$ denote the smooth $\order_v$ group scheme
associated to it by Bruhat-Tits theory. Recall from \S\ref{ggo} that
$P_v$ maps onto $\underline{\mathfrak P_v}(k_v)$. 
We also remark that since $\H$ is simply connected the $k_v$-group scheme 
$\underline{\mathfrak{P}_v}$ is connected, see~\cite[3.5.3]{Ti}.

Let $v\in \places_{f},$ for any $j$ and any $v'|v$
we choose a parahoric subgroup $P'_{j,v'}$ which is minimal among those parahoric subgroups containing $\pi_{j,v'} K_v^*;$ here $\pi_{j,v'}$ denotes the natural projection. 
Put 
\[
P_v:=\prod_j\prod_{v'|v} P'_{j,v'}.
\] 
Then $K_v^*\subset P_v.$   

We will prove the lower bound in a few steps.
First we prove some local estimates, i.e.\ we bound  
 terms appearing in the product on the right side of~\eqref{eq:disc-vol} for all $v\in\places_f$. 
Taking the product of these estimates then we will get the lower bound. 

\medskip
\noindent
{\bf Step 1.}
We have
\begin{align}
\label{eq:step1}&\bigl(\lambda_v|\omega_v|(K_v^*)\bigr)^{-1}\bigl(\# (\red_v K_v^*)\cdot q_v^{-\dim\H}\bigr)=\\
\notag&\bigl(\lambda_v|\omega_v|(P_v)\bigr)^{-1}[P_v:K_v^*]\bigl(\# (\red_vK_v^*)\cdot q_v^{-\dim\H}\bigr)=\\
\notag&\bigl(\lambda_v|\omega_v|(P_v)\bigr)^{-1}\tfrac{[P_v:P_v^{(1)}][P^{(1)}_v:(K_v^*)^{(1)}]}{[K_v^*:(K_v^*)^{(1)}]}(\# \bigl(\red_v K_v^*)\cdot q_v^{-\dim\H}\bigr)
={}^{\text{by (3) in~\S\ref{ggo}}}\\
\notag&\bigl(\lambda_v|\omega_v|(P_v)\bigr)^{-1}[P^{(1)}_v:(K_v^*)^{(1)}](\#\underline{\mathfrak{P}_v}(k_w))\cdot q_v^{-\dim\H})
\end{align}
where $P_v^{(1)}$ and $(K_v^*)^{(1)}$ denote the first congruence subgroups
defined using the $\order_v$-scheme structures $\mathfrak P_v$ and $\mathsf H_v$ respectively.

\medskip
\noindent
{\bf Step 2.}
In this step we will estimate the contribution coming from the product
$\prod_{v\in\places_f}\bigl(\#\underline{\mathfrak{P}_v}(k_v)\bigr)\cdot q_v^{-\dim\H}$.

The fact that for any $v\in\places_{\rm r}^\flat$ we have 
$p_v|D(\{L_j\},\{F_j\})$ together with~\eqref{eq:sigma-flat-disc} implies $\#\Sigma^\flat\ll\log(\vol(Y)).$

Now since $\underline{\mathfrak{P}_v}$ is connected, we can use~\eqref{eq:order-conn-finite} together with the definition of~$\mathcal E(\H)$ and get
\begin{align}
\notag
\prod_{v\in\Sigma_f}\Bigl(\#\underline{\mathfrak{P}_v}(k_v)\cdot q_v^{-\dim\H}\Bigr)&=
{\prod_{v\in\Sigma_f}\bigl(\#\underline{\mathcal{P}_v}(k_v)\cdot q_v^{-\dim\H}\bigr)}
\prod_{v\in\Sigma^\flat}\tfrac{\#\underline{\mathfrak{P}_v}(k_v)\cdot q_v^{-\dim\H}}
{\#\underline{\mathcal{P}_v}(k_v)\cdot q_v^{-\dim\H}}\\
\label{eq:lower-bd-1}&\gg \mathcal{E}(\H)\bigl(\log\vol(Y)\bigr)^{-\ref{log-est}}
\end{align}
for some $\consta>0\label{log-est}$ depending only on $F$ and $\G.$ 

\medskip
\noindent
{\bf Step 3.}
We will now get a control over $\bigl(\lambda_v|\omega_v|(P_v)\bigr)^{-1}[P^{(1)}_v:(K_v^*)^{(1)}]$. 

\medskip
We claim that there exists some 
$0<\consta<1\label{dlexp},$ depending only on $\dim_F\G,$ so that for all $v\in\places_f$ at least one of the following holds: either
\begin{align}
\notag
\bigl(\lambda_v|\omega_v|(P_v)\bigr)^{-1}[P^{(1)}_v:(K_v^*)^{(1)}]&\geq\bigl(\lambda_v|\omega_v|(K_v^*)\bigr)^{-\ref{dlexp}}\\\label{eq:local-lower} &=\bigl(\lambda_v|\omega_v|(P_v)\bigr)^{-\ref{dlexp}}\left(\tfrac{(\#\underline{\mathfrak{P}_v}(k_v))[P^{(1)}_v:(K_v^*)^{(1)}]}{\# (\red_vK_v^*)}\right)^{\ref{dlexp}},
\end{align}
or $v\in \places^\flat_{\rm r}$ and
\begin{equation}\label{eq:local-lower-ramified}
\bigl(\lambda_v|\omega_v|(P_v)\bigr)^{-1}[P^{(1)}_v:(K_v^*)^{(1)}]\geq1\geq\bigl(\lambda_v|\omega_v|(K_v^*)\bigr)^{-\ref{dlexp}} p_v^{-1/2}.
\end{equation}

Let us first recall that $\lambda_v|\omega_v|(P_v)\leq1,$ see~\cite[Prop.\ 2.10]{Pr}.
Therefore, if $P_v=K_v^*,$ then~\eqref{eq:local-lower} holds for any $0<\ref{dlexp}<1.$ 
In particular, if $v\not\in\places_f\setminus\places^\flat,$ then~\eqref{eq:local-lower} holds for any $0<\ref{dlexp}<1.$

Recall that $K_v^*\subset P_v$.
Assume first that $(K_v^*)^{(1)}\subsetneq P_v^{(1)}.$

Then, since $P_v^{(1)}$ is a pro-$p_v$ group we have
$
[P_v^{(1)}:(K_v^*)^{(1)}]\geq p_v.
$
We again note that by~\cite[Prop.\ 2.10]{Pr} we have 
\be\label{eq:prasad-overandover}
q_v^{-\dim\H}\leq\lambda_v|\omega_v|(P_v)\leq 1.
\ee
Therefore,~\eqref{eq:local-lower} follows if we show
\begin{align*}
\#(\red_vK_v^*)^{\ref{dlexp}}[P_v^{(1)}:(K_v^*)^{(1)}]^{1-\ref{dlexp}}&\geq 
\bigl(\#\underline{\mathfrak P_v}(k_v)\bigr)^{\ref{dlexp}}\\
&\geq \bigl(\lambda_v|\omega_v|(P_v)\bigr)^{1-\ref{dlexp}}\bigl(\#\underline{\mathfrak P_v}(k_v)\bigr)^{\ref{dlexp}}.
\end{align*}

The second inequality holds for any
$0<\ref{dlexp}<1$ in view of the upper bound in~\eqref{eq:prasad-overandover}. 
The first inequality follows from the upper bound in~\eqref{eq:order-conn-finite} 
and our assumption $[P_v^{(1)}:(K_v^*)^{(1)}]\geq p_v$ if we take $0<\ref{dlexp}<1$ to be small enough.

Similarly, if $\lambda_v|\omega_v|(P_v)\leq 2/p_v$, then~\eqref{eq:local-lower} becomes
\begin{align*}
\bigl(\lambda_v|\omega_v|(P_v)\bigr)^{-1+\ref{dlexp}}[P^{(1)}_v:(K_v^*)^{(1)}]^{1-\ref{dlexp}}&\geq (p_v/2)^{-1+\ref{dlexp}}\\
&\geq \left(\tfrac{\#\underline{\mathfrak{P}_v}(k_v)}{\# (\red_vK_v^*)}\right)^{\ref{dlexp}}.
\end{align*}

Since $\tfrac{\#\underline{\mathfrak{P}_v}(k_v)}{\# (\red_vK_v^*)}=p_v^\star$, the above estimate, and hence~\eqref{eq:local-lower}, hold for all small enough $\ref{dlexp}$ provided that $\lambda_v|\omega_v|(P_v)\leq 2/p_v$.

In view of these observation and~\cite[Prop.~2.10]{Pr} we get that~\eqref{eq:local-lower} holds 
unless $P_v^{(1)}=(K_v^*)^{(1)}$ and we are in one of the following cases. 
\begin{itemize}
\item $v\in\places^\flat_{\rm ur}$, $\H$ is $F_v$-quasi-split, 
and $P_v$ is a hyperspecial parahoric subgroup, or  
\item $v\in\places^\flat_{\rm r}$ and $P_v$ is a special parahoric subgroup. 
\end{itemize}

First note that under the assumption $P_v^{(1)}=(K_v^*)^{(1)}$, the estimate in~\eqref{eq:local-lower-ramified} follows from~\eqref{eq:prasad-overandover} so long as we choose $\ref{dlexp}$ small enough. This establishes the claim for $v\in\places^\flat_{\rm r}$. 

Therefore, we may now assume that $P_v^{(1)}=(K_v^*)^{(1)}$, $v\in\places^\flat_{\rm ur}$, and $P_v$ is hyperspecial.
We claim that these imply $P_v=K_v^*$ if $p_v\gg 1$ which then implies that~\eqref{eq:local-lower} holds for any $0<\ref{dlexp}<1.$

Assume $p_v\gg1$ is large enough, so that the exponential map 
is a diffeomorphism from ${\mathfrak sl}_N^{(1)}(\order_v)$ onto $\SL_N^{(1)}(\order_v).$
Our assumption $P_v^{(1)}=(K_v^*)^{(1)}$ implies that $\Lie(K_v^*)=\Lie(\mathfrak P_v)$.  
Since $P_v$ is hyperspecial, we have $P_v/P_v^{(1)}$ is the $k_v$-points of a 
semisimple group which is generated by unipotent subgroups. These unipotent 
subgroups are reduction mod $\varpi_v$ of unipotent subgroups of $P_v$, \cite[\S3.5.1]{Ti}.
In view of our assumption $p_v\gg1$, unipotent subgroups of $P_v$ are obtained using the exponential map from
$\Lie(\mathfrak P_v)=\Lie(K_v^*)$.
Hence $K_v^*$ surjects onto $P_v/P_v^{(1)}$. Since $P_v^{(1)}=(K_v^*)^{(1)},$ this implies $P_v=K_v^*$ 
as we claimed.

\medskip
\noindent
{\bf Step 4.} We now conclude the proof of the lower bound.
We use the notation 
\[
J_\infty=\Bigl(\textstyle\prod_{v\in\places_\infty}J_v\Bigr)^{-1}.
\]

Recall that in view of part (iii) of the proposition in \S\ref{ss;k-w-star} 
we have $J_v\ll1$ for all the archimedean places $v$.  
Taking the product of~\eqref{eq:local-lower} over all $v\in\places_f,$ using the fact that $p_v|D(\{L_j\},\{F'_j\})$
for all $v\in\places^\flat_{\rm r}$ and~\eqref{eq:local-lower-ramified},
and arguing as in~\S\ref{s;vol-hom} we get the lower bound as follows.

\begin{align*}
&\disc(Y)=\tfrac{\mathcal D(\H)}{\mathcal E(\H)} \prod_v\|z_v\|_v\\
&\asymp D(\{L_j\},\{F'_j\})\vol(Y)\tfrac{\prod_{v\in\places_f} \# (\red_vK_v^*)\cdot q_v^{-\dim\H}}{\mathcal E(\H)}&&\text{\eqref{eq:disc-vol}}\\
&\gg \mathcal D(\H)\prod_{v\in\places}\bigl(\lambda_v|\omega_v|(K_v^*)\bigr)^{-1}\tfrac{\prod_{v\in\places_f} \# (\red_vK_v^*)\cdot q_v^{-\dim\H}}{\mathcal E(\H)}&&\text{\eqref{eq;volume-upperbd}}\\
&\gg \mathcal D(\H)J_\infty\tfrac{\prod_{v\in\places_f}\bigl(\lambda_v|\omega_v|(K_v^*)\bigr)^{-1}\bigl(\# (\red_v K_v^*)\cdot q_v^{-\dim\H}\bigr)}{\mathcal E(\H)}\\
&\gg \tfrac{\mathcal D(\H)}{(\log\vol(Y))^{\ref{log-est}}}J_\infty\prod_{v\in\places_f}{\bigl(\lambda_v|\omega_v|(P_v)\bigr)^{-1}[P^{(1)}_v:(K_v^*)^{(1)}]}&&\text{~\eqref{eq:step1}, \eqref{eq:lower-bd-1}}\\
&\gg\tfrac{\mathcal D(\H)^{1/2}}{(\log\vol(Y))^{\ref{log-est}}}J_\infty{\prod_{v\in\places_f}\bigl(\lambda_v|\omega_v|(K_v^*)\bigr)^{-\ref{dlexp}}}&&\text{\eqref{eq:local-lower}, \eqref{eq:local-lower-ramified} }\\
&\gg \tfrac{\mathcal D(\H)^{\ref{dlexp}}}{(\log\vol(Y))^{\ref{log-est}}}\prod_{v\in\places}\bigl(\lambda_v|\omega_v|(K_v^*)\bigr)^{-\ref{dlexp}}&&\text{$J_\infty, \mathcal D(\H)\gg1$}\\
&\gg \vol(Y)^{\consta}&&\text{\eqref{eq;volume-upperbd}}.
\end{align*}
The proof of the lower bound in~\eqref{eq:disc-vol-statement} is now complete.

\printindex

\bibliographystyle{plain}

\end{document}